\newcommand{\arxiv}[1]{\href{https://arxiv.org/abs/#1}{\texttt{arXiv:#1}}}
\newtheorem{theorem}{\textbf{Theorem}}[section]
\newtheorem{proposition}[theorem]{\textbf{Proposition}}
\newtheorem{provisional}[theorem]{\textbf{Provisional Assumption}}
\newtheorem{corollary}[theorem]{\textbf{Corollary}}
\newtheorem{lemma}[theorem]{\textbf{Lemma}}
\theoremstyle{definition}
\newtheorem{remark}[theorem]{Remark}
\newtheorem{definition}[theorem]{Definition}
\newtheorem{problem}[theorem]{Problem}
\theoremstyle{remark}
\newtheorem{example}[theorem]{Example}
\newcommand{\ket}[1]{\lvert #1 \rangle}
\newcommand{\bra}[1]{\langle #1 \rvert}
\newcommand{\braket}[2]{\langle #1 | #2 \rangle}
\newcommand{\abs}[1]{\lvert #1 \rvert}
\newcommand{\normord}[1]{ {: \mathrel{#1} :} }  
\newcommand{\bnodelim}{\raisebox{1pt}{$\genfrac{}{}{0pt}{2}{\ast}{\ast}$}}
\newcommand{\bnormord}[1]{ {\bnodelim\!\! \mathrel{#1} \!\!\bnodelim} }  
\newcommand{\dv}{|\!|}
\newcommand{\FPS}[1]{[\![#1]\!]}  
\newcommand{\FLS}[1]{(\!(#1)\!)}  
\newcommand{\RTM}{\mathbf{T}}  
\newcommand{\one}{\mathbf{1}}
\newcommand{\iso}{\cong}
\newcommand{\fermionfock}{\mathfrak{F}}
\newcommand{\bosonfock}{\mathfrak{B}}
\newcommand{\ii}{\mathbf{i}}
\newcommand{\pp}{\mathbf{p}}
\newcommand{\bt}{\mathbf{t}}
\newcommand{\uu}{\mathbf{u}}
\newcommand{\xx}{\mathbf{x}}
\newcommand{\yy}{\mathbf{y}}
\newcommand{\zz}{\mathbf{z}}
\newcommand{\DD}{\mathbf{D}}
\newcommand{\bal}{\boldsymbol{\alpha}}
\newcommand{\bbe}{\boldsymbol{\beta}}
\newcommand{\bbb}{\mathsf{b}}
\newcommand{\svar}{\mathsf{s}}
\newcommand{\gl}{\mathfrak{gl}}
\newcommand{\mcC}{\mathcal{C}}
\newcommand{\mcH}{\mathcal{H}}
\newcommand{\mcP}{\mathcal{P}}
\newcommand{\mcW}{\mathcal{W}}
\newcommand{\ZZ}{\mathbb{Z}}
\newcommand{\CC}{\mathbb{C}}
\newcommand{\End}{\operatorname{End}}
\newcommand{\res}{\operatorname{res}}
\newcommand{\htf}{\operatorname{ht}}  
\DeclareMathOperator{\Cl}{Cl}  
\DeclareMathOperator{\wt}{wt}  
\renewcommand{\dotsc}{\cdots}  
\definecolor{darkred}{rgb}{0.7,0,0} 
\newcommand{\defn}[1]{{\color{darkred}\emph{#1}}} 
\newcommand{\travis}[1]{\todo[size=\tiny,color=blue!30]{#1 \\ \hfill --- Travis}}
\begin{document}

\title{Factorial Fock free fermions}

\author{Daniel Bump}
\address[D.~Bump]{Department of Mathematics, Stanford University, Stanford, CA 94305-2125}
\email{bump@math.stanford.edu}
\urladdr{https://math.stanford.edu/~bump/}

\author{Andrew Hardt}
\address[A.~Hardt]{Department of Mathematics, University of Illinois Urbana-Champaign, Urbana, IL 61801}
\email{ahardt@illinois.edu}
\urladdr{https://andyhardt.github.io/}

\author{Travis Scrimshaw}
\address[T.~Scrimshaw]{Department of Mathematics, Hokkaido University, 5 Ch\=ome Kita 8 J\=onishi, Kita Ward, Sapporo, Hokkaid\=o 060-0808}
\email{tcscrims@gmail.com}
\urladdr{https://tscrim.github.io/}

\keywords{vertex operators, double factorial Schur functions, solvable lattice models, boson-fermion correspondence}
\subjclass[2010]{17B69, 05E05, 82B23, 37K20}

\maketitle

\begin{abstract}
We use a double shifted power analog of free fermion fields to introduce current operators, Hamiltonians, and vertex operators which are deformed by two families of parameters and satisfy analogous formulas to the classical case.
We show that the deformed half vertex operators correspond to the row transfer matrices of a solvable six vertex model recently given by Naprienko, which under a specialization yields the factorial Schur functions (up to a reindexing of parameters).
As a consequence, we show that under the boson-fermion correspondence using our deformed half vertex operators, the natural basis (under this specialization) maps to the double factorial Schur functions.
Furthermore, the image of the natural basis vectors are tau function solutions to the 2D Toda lattice.
\end{abstract}

\setcounter{tocdepth}{1}
\tableofcontents

\section{Introduction}

The Schur functions $s_{\lambda}$ are an important class of functions that appear in a broad range of mathematics, including representation theory, combinatorics, probability theory, and mathematical physics.
Each of these areas often have their own construction, such as the characters of the general linear Lie algebra $\gl_n (\CC)$ or as generating functions of semistandard tableaux, typically with multiple proofs showing these descriptions are equivalent.
Some influential references and extensions include~\cite{BBF11,HoweSchurLectures,Lenart00,Macdonald92,MacdonaldBook,Okounkov01,OR03,SaganSymReprBook,ECII}, with this list being far from complete.

Following~\cite{KacInfinite,KacRaina}, let us describe the construction of $s_{\lambda}$ using the \textit{boson-fermion correspondence} (with countably infinite bosons and fermions), an isomorphism between two Hilbert spaces associated with seemingly different physical systems that originated in the work of Skyrme~\cite{Skyrme61} (some other historical references include~\cite{Coleman75,FrenkelBFC,FK80,Mandelstam75,SW70}).
The \textit{fermionic Fock space} $\fermionfock$ originated with Dirac's electron sea~\cite{DiracElectrons}.
It models a system of (fermionic) particles that can occupy different positions, though no two can occupy the same position (the Pauli exclusion principle).
Interchanging two particles changes the sign of the representing vector, so $\fermionfock$ can be understood as something like an exterior algebra.
More precisely, if $v_i$ represents a particle at position $i$, then eventually one always sees an occupied (resp.\ unoccupied) position when moving in the negative (resp.\ positive direction).
It follows that a basis consists of semi-infinite monomials
\[
\ket{\lambda}_m = v_{m + \lambda_1} \wedge v_{m - 1 + \lambda_2} \wedge \cdots,
\]
where $\lambda = (\lambda_1, \lambda_2, \cdots)$ is a (integer) partition (with finite sum as usual) and $m \in \ZZ$ is called the \textit{charge}.
Let $\fermionfock_m$ be the subspace of charge $m$.
The other Fock space is the \textit{bosonic Fock space} $\bosonfock = \CC [\svar, \svar^{-1}] \otimes \Lambda$, where $\Lambda$ is the ring of symmetric functions (often considered as a countably generated polynomial ring in the powersum symmetric functions) and $\svar$ is an indeterminant.
The boson-fermion correspondence is an isomorphism of $\fermionfock$ with $\bosonfock$ as infinite dimensional Heisenberg Lie algebra representations in which $\fermionfock_m$ corresponds to $\svar^m \otimes \bosonfock$ with the natural operations on one space to be constructed from the other.
A consequence is that under this isomorphism $\ket{\lambda}_m$ is mapped to $\svar^m \otimes s_{\lambda}$.
Additional references for the boson-fermion correspondence regarding the relationship with vertex operators and vertex algebras include~\cite[Thm.~5.3.2]{FBZ04} and~\cite[Thm.~5.2]{KacVertex}.

Another construction of $s_{\lambda}$ comes from the free fermionic six vertex model, which are given by six local configurations that preserve the particles in the system which have weights that satisfy a particular quadratic relation.
The study of these models originated in the work on Linus Pauling in 1935 to account for the residual entropy in ice, with many generalizations appearing in the literature.
Here, we focus on solvable lattice models that we call \textit{Tokuyama models}, where the partition function of a model is $s_{\lambda}$ optionally multiplied by a deformation of the Weyl denominator, \textit{e.g.},~\cite{hkice,HamelKingBijective,ZinnJustinTiling}.
We consider such a model on the grid with columns indexed by $\ZZ$ and a finite number of rows.
We may regard the column data as elements of $\fermionfock$, and so each layer is a graphical representation of a \textit{row transfer matrix} that maps $\fermionfock_m \to \fermionfock_m$ and has a spectral parameter that is one of the parameters of the Schur function.
It may be seen (\textit{e.g.},~\cite{BBBGVertex,BrubakerSchultzHamiltonians,HardtHamiltonians,KorffHecke,ZinnJustinTiling}) that sometimes the row transfer matrix may be expressed as a \textit{half vertex operator} constructed with the current operators, which encode the motion of particles.
In particular, this fact is closely connected with the boson-fermion correspondence, where this half vertex operator is used to describe the isomorphism.

Biedenharn and Louck introduced the factorial Schur functions~\cite{BL89}, with the name coming from the use of the falling factorial.
However, here we will use the interpretation developed by Macdonald~{\cite[6th Variation]{Macdonald92}}, where they are considered as an inhomogeneous deformation of the Schur functions by a set of parameters $\bal \:= (\alpha_i)_{i \in \ZZ}$.
By specializing $a_i = 1 - i$, we recover the shifted Schur functions introduced by Okounkov and Olshanski~\cite{OO97} that were used to give a basis for $Z(U(\gl_n))$, the center of universal enveloping algebra of $\gl_n$, with applications to the asymptotic characters of $GL(\infty)$. 
The factorial Schur functions also have a geometric interpretation as representing Schubert classes in the torus equivariant cohomology of the Grassmannian (see, \textit{e.g.},~\cite{KnutsonTaoPuzzles}), where the $\bal$ parameters correspond to the torus parameters.
(This was essentially known to Lascoux and Sch{\"u}tzenberger; see remarks in~\cite{Mihalcea08,MolevSagan}.)
The factorial Schur functions have a free fermionic six vertex model description due to Zinn-Justin~\cite{ZJ09} and Bump--McNamara--Nakasuji~\cite{BMN14} as a deformation of the classical free fermionic five and six vertex models for Schur functions and natural correspond to the tableau description.
There are also generalizations~\cite{MolevFactorialSupersymmetric,Molev09} with lattice model interpretations~\cite{NaprienkoFFS}.
So a natural question is to describe current operators on $\fermionfock$ that generalize the boson-fermion correspondence to the case of factorial Schur functions, and also to describe lattice models whose row transfer matrices correspond to the half vertex operators that appear in the theory.
However, since the $\bal$ parameters are assigned to the columns of the lattice model, the techniques currently known (such as those in~\cite{HardtHamiltonians}) cannot be applied.

We will show that it is possible to include the column parameters $\bal$; in addition, we will incorporate a second set of parameters $\bbe$.
Obtaining this generalization of the boson-fermion correspondence requires an idea commonly used in studying factorial analogs (see, \textit{e.g.},~\cite{JingRozhkovskayaJacobiTrudi,JingRozhkovskayaShifted,MiyauraMukaihiraFactorial}), which is to replace normal powers with shifted powers in formulas.
Starting with the fermion fields, we use one type of shifted powers for $\bal$ and another for $\bbe$, following the construction in~\cite{MiyauraMukaihiraGeneralized,MiyauraMukaihiraFactorial,NaprienkoFFS}.
Under a reasonably mild analytic assumption, we show that the vacuum expectation for a pair creation and annihilation fermion fields satisfies the same evaluation as in the classical case (Proposition~\ref{prop:vacuum_deformed_fields}):
\begin{equation}
  \label{eq:intro_key} \bra{\varnothing} \psi(z|\bal; \bbe) \psi^{\ast}(w|\bal;\bbe) \ket{\varnothing}= \frac{z}{z - w},
  \quad\qquad
  \bra{\varnothing} \psi^{\ast}(w|\bal; \bbe) \psi(z|\boldsymbol{\alpha}; \bbe) \ket{\varnothing} = \frac{z}{w -  z}.
\end{equation}
It turns out that~\eqref{eq:intro_key} is the key fact since the rest of the boson-fermion correspondence then be constructed.
In particular, the deformed fermion fields therefore satisfy the same relations as the undeformed ones.
Therefore we have an automorphism (with some formality around convergence) on the Clifford algebra.
Indeed, we can construct a deformed version of the vertex operators (Theorem~\ref{thm:fermion_vertex_op}) and all of the requisite pieces uniquely and explicitly such as current operators (Proposition~\ref{prop:explicit_current}) and the shift operator (Proposition~\ref{prop:deformed_shift_identities}).

It is surprising that our deformation of the fermion fields still satisfies~\eqref{eq:intro_key} despite being a non-homogeneous deformation.
This is the first time such a deformation has been done as far as the authors are aware.

In our boson-fermion correspondence, we use our deformed shift operator and deformed half vertex operator to show that the natural basis of $\fermionfock$ goes to the two (family of) parameter deformation of the Schur functions, which we call the \emph{double factorial Schur functions}, that were constructed from a generating function in~\cite{MiyauraMukaihiraGeneralized,MiyauraMukaihiraFactorial}.
As a consequence of our constructions and with Wick's theorem, we prove numerous formulas by mimicking their classical vertex operator proofs.
Such results include Jacobi--Trudi formulas (Theorem~\ref{thm:jacobi_trudi}), Murnaghan--Nakayama rule (Theorem~\ref{thm:factorialmn}), Giambelli formula (Theorem~\ref{thm:giambelli}), skew Cauchy identity (Theorem~\ref{thm:skew_cauchy}) and different dualities (Corollary~\ref{cor:involution_skew} and Theorem~\ref{thm:dual_schur_identity}).
Furthermore, we are able to show our functions are solutions to the 2D Toda lattice (Section~\ref{sec:KP_Toda}) and its specializations (\textit{e.g.}, the Kadomtsev--Petviashvili (KP) hierarchy; Corollary~\ref{cor:KP_tau_solution}).

In Naprienko~\cite{NaprienkoFFS}, solvable lattice models were given whose partition functions include the double factorial Schur functions that we study.
In Theorem~\ref{thm:texph}, we connect our fermionic construction with the construction in~\cite{NaprienkoFFS} by showing that the deformed half vertex operators that appear in our description correspond precisely to the row transfer matrices in these lattice models.
Our proof relies on understanding how a single particle moves by our explicit formulas and then applying the so-called free fermion Lindstr{\"o}m--Gessel--Viennot (LGV) Lemma~{\cite[Proposition~A.2]{NaprienkoFFS}}.
An important consequence of this is that when we specialize $p_i$ to a powersum symmetric function, we can give a precise combinatorial description of the functions that is not apparent otherwise.
As previously mentioned, there have been several prior instances where free fermionic six vertex models have been connected to half vertex operators; \textit{e.g.},~\cite{BBBGVertex,BrubakerSchultzHamiltonians,HardtHamiltonians,KorffHecke,ZinnJustinTiling}.
All of this prior work has considered lattice models with row parameters only; ours is the first instance to describe a row transfer matrix involving column parameters using half vertex operators.
The current paper can be viewed as a culmination of these efforts since our results encompass the fully-general free fermionic six vertex model.
One can similarly study solvable lattice models which are not six vertex models but which do correspond to half vertex operators~\cite{BBBGVertex}, and it may be that our methods could help introduce column parameters in this setting.
On the other hand, we could similarly deform other applications of (half) vertex operators to other generalizations of Schur functions, such as for Hall--Littlewood polynomials~\cite{JingVertex}, symplectic/orthogonal characters~\cite{Baker96,JLW24}, and torus equivariant K-theory of the (orthogonal/symplectic) Grassmannian~\cite{GJ24II,GJ24,Iwao22,Iwao23,Iwao23II,IMS24}.

Finally, when $\bbe = 0$, we can remove the analytic assumption, which is studied in our companion paper~\cite{BHS0}.

This paper is organized as follows. We start by setting some basic information in Section~\ref{sec:shifted_powers} with shifted powers.
In Section~\ref{sec:boson_fermion}, we recall the classical boson-fermion correspondence.
In Section~\ref{sec:deformed}, we define our deformed fermion fields and describe them using vertex operators.
In Section~\ref{sec:deformed_correspondence}, we give our deformed version of the boson-fermion correspondence and describe a number of consequences.
In Section~\ref{sec:lattice_models}, we show that our deformed half vertex operator corresponds to the row transfer matrix of the solvable lattice model from~\cite{NaprienkoFFS}.
In Section~\ref{sec:deformed_powersums}, we give another version of the boson-fermion correspondence that deforms the powersum symmetric functions.

\subsection*{Acknowledgements}

The authors are very grateful to Slava Naprienko, who was invaluable in developing the results of this paper.
Additionally, the authors thank Ben Brubaker, Christian Korff, David Ridout, and Natasha Rozhkovskaya for useful conversations.
This work benefited from computations using \textsc{SageMath}~\cite{sage}.


T.S.~was partially supported by Grant-in-Aid for JSPS Fellows 21F51028 and for Scientific Research for Early-Career Scientists 23K12983.
A.H.~was partially supported by NSF RTG grant DMS-1937241.

\section{Shifted powers}
\label{sec:shifted_powers}

We begin by setting some general notation for the paper.
We use $\ii = \sqrt{-1}$, which will only be used to normalize the measure for contour integral formulas, as we frequently use $i \in \ZZ$.
Consider two sequences of parameters $\bal = (\cdots, \alpha_{-1}, \alpha_0, \alpha_1, \cdots)$ and $\bbe = (\cdots, \beta_{-1}, \beta_0, \beta_1, \cdots)$ in $\CC$.
Let $\sigma_{\bal}$ be the operator that acts on $\bal$ by $\alpha_i \mapsto \alpha_{i+1}$.
We define $\sigma_{\bbe}$ similarly acting on $\bbe$.
When there is no danger of confusion we sometimes simply write $\sigma$ for either $\sigma_{\bal}$ or $\sigma_{\bbe}$.
Similarly, we define an operator $\iota_{\bal}$ by $\alpha_i \mapsto \alpha_{1-i}$ and likewise for $\iota_{\bbe}$.
Let $z, w$ be indeterminates, which we will often consider to be algebraically independent transcendental elements in $\CC$.

We define two related, but slightly different, \defn{shifted powers} by
\begin{align*}
(z; \bal)^k & := \begin{cases} (1 - z\alpha_1) (1 - z\alpha_2) \cdots (1- z\alpha_k)  & \text{if } k > 0, \\
1 & \text{if } k = 0, \\
	(1 - z\alpha_0)^{-1} (1 - z\alpha_{-1})^{-1} \cdots (1 - z\alpha_{k+1})^{-1} & \text{if } k < 0.
\end{cases}
\allowdisplaybreaks \\
(z|\bbe)^k & := \begin{cases}
(z - \beta_1) (z - \beta_2) \cdots (z - \beta_k) & \text{if } k > 0, \\
1 & \text{if } k = 0, \\
(z - \beta_0)^{-1} (z - \beta_{-1})^{-1} \cdots (z - \beta_{k+1})^{-1} & \text{if } k < 0,
\end{cases}
\end{align*}
These were extensively used in~\cite{MiyauraMukaihiraGeneralized}, and we note some simple, yet useful, relations from the definition~\cite[Eq.~(2.4), (2.5), (2.6)]{MiyauraMukaihiraGeneralized}
\begin{subequations}
\label{eq:basic_spowers_rels}
\begin{align}
\label{eq:shifting_powers}
\sigma^m (z;\bal)^k & = (z;\sigma^m\bal)^k = \frac{(z;\bal)^{k+m}}{(z;\bal)^m},
&
\sigma^m (z|\bbe)^k & = (z|\sigma^m\bbe)^k = \frac{(z|\bbe)^{k+m}}{(z|\bbe)^m},
\allowdisplaybreaks\\
\label{eq:shifted_inversion}
\frac{1}{(z;\bal)^k} & = (z;\iota \bal)^{-k} = (z;\sigma^k\bal)^{-k},
&
\frac{1}{(z|\bbe)^k} & = (z|\iota \bbe)^{-k} = (z|\sigma^k\bbe)^{-k},
\allowdisplaybreaks\\
(z;\bal)^k & = z^k (z^{-1}|\bal)^k,
&
(z|\bbe)^k & = z^k (z^{-1}; \bbe)^k.
\end{align}
\end{subequations}
In particular, note that
\[
(z;\bal)^k (z;\sigma^k\bal)^{-k} = \frac{(z;\bal)^k}{(z;\bal)^k} = (z|\bbe)^k (z|\sigma^k\bbe)^{-k} = \frac{(z|\bbe)^k}{(z|\bbe)^k} = 1.
\]
For brevity, we write $(z;\bal) = (z;\bal)^1$ and $(z|\bbe) = (z|\bbe)^1$.
In this paper, $(z;\bal)^k$ and $(z|\bbe)^k$ will \emph{never} mean the $k$-fold product of $(z;\bal)$ and $(z|\bbe)$, respectively, so there will be no danger of confusion.

Let us show that the shifted powers satisfy a version of the finite geometric progression formula. 

\begin{lemma}\label{lem:finitegeometric}
	We have
	\[
		(1-wz)\sum_{k=0}^{n}(1-\alpha_{k+1}\beta_{k+1})\frac{(z|\bal)^k}{(z;\bbe)^{k+1}}\frac{(w|\bbe)^k}{(w;\bal)^{k+1}} = 1 - \frac{(z|\bal)^{n+1}}{(z;\bbe)^{n+1}}\frac{(w|\bbe)^{n+1}}{(w;\bal)^{n+1}}.
	\]
\end{lemma}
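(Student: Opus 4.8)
The plan is to prove this by induction on $n$, treating it as a telescoping identity. Write $T_k := (1 - \alpha_{k+1}\beta_{k+1}) \frac{(z|\bal)^k}{(z;\bbe)^{k+1}} \frac{(w|\bbe)^k}{(w;\bal)^{k+1}}$ for the summand, and let $R_n := \frac{(z|\bal)^{n+1}}{(z;\bbe)^{n+1}} \frac{(w|\bbe)^{n+1}}{(w;\bal)^{n+1}}$ denote the product on the right-hand side (with $R_{-1} = 1$). The claim is then $(1-wz) \sum_{k=0}^n T_k = 1 - R_n$. It suffices to show the single-step telescoping relation $(1-wz) T_n = R_{n-1} - R_n$, since summing this over $k = 0, \dots, n$ collapses the right side to $R_{-1} - R_n = 1 - R_n$.

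So the real content is the identity $(1-wz) T_n = R_{n-1} - R_n$. First I would factor $R_{n-1}$ out of both sides: dividing through, this is equivalent to
\[
(1-wz)(1 - \alpha_{n+1}\beta_{n+1}) \frac{(z|\bal)^n}{(z;\bbe)^{n+1}} \frac{(w|\bbe)^n}{(w;\bal)^{n+1}} \cdot \frac{1}{R_{n-1}} = 1 - \frac{R_n}{R_{n-1}}.
\]
Using the telescoping behavior of the shifted powers from their definitions — namely $\frac{(z|\bal)^n}{(z|\bal)^n} = 1$ while $\frac{R_n}{R_{n-1}} = \frac{(1-z\alpha_{n+1})(1-w\beta_{n+1})}{(z-\beta_{n+1})(w-\alpha_{n+1})} \cdot [\text{careful indexing}]$ — one reduces everything to a rational identity purely in $z, w, \alpha_{n+1}, \beta_{n+1}$. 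Concretely, after the cancellations the left factor $\frac{(z|\bal)^n}{(z;\bbe)^{n+1}} \frac{(w|\bbe)^n}{(w;\bal)^{n+1}} / R_{n-1}$ should collapse to something like $\frac{1}{(z;\bbe)^{1}\text{-type factor at index } n+1} \cdot \frac{1}{(w;\bal)\text{-type factor at index } n+1}$, i.e. $\frac{1}{(1 - z\beta_{n+1})(1 - w\alpha_{n+1})}$ up to sign conventions, so the whole identity becomes the elementary check
\[
(1-wz)(1-\alpha\beta) = (1-z\beta)(1-w\alpha) - (z-\beta)(w-\alpha)
\]
with $\alpha = \alpha_{n+1}$, $\beta = \beta_{n+1}$ — which expands on both sides to $1 - wz - \alpha\beta + wz\alpha\beta$. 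That is the key algebraic miracle that makes the whole thing work.

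The main obstacle I anticipate is purely bookkeeping: getting the index shifts on the shifted powers exactly right so that the ratios telescope cleanly. Both $(z|\bal)^k$ and $(z;\bbe)^{k+1}$ change when $k \to k+1$, and the relations \eqref{eq:basic_spowers_rels} — in particular the shift formula $\sigma^m(z;\bal)^k = (z;\bal)^{k+m}/(z;\bal)^m$ and the inversion $1/(z;\bal)^k = (z;\sigma^k\bal)^{-k}$ — are the tools to manage this, but one must be attentive that $(z|\bal)^k$ uses $\alpha_i$ in the "$z - \alpha_i$" style while $(z;\bbe)^k$ uses $\beta_i$ in the "$1 - z\beta_i$" style (the roles of $\bal$ and $\bbe$ are swapped relative to their "home" conventions here). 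Once the base case $n=0$ is verified directly — which is exactly the elementary rational identity above with a single term — the induction step is precisely this same telescoping relation, so in fact a clean write-up need only (i) state $R_{-1} = 1$, (ii) prove $(1-wz)T_n = R_{n-1} - R_n$ via the one-line polynomial expansion after clearing denominators, and (iii) sum. I would present it as a direct telescoping argument rather than a formal induction, since that makes the structure most transparent.
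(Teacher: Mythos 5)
Your telescoping strategy is exactly the paper's proof: the authors observe a single two-variable identity, multiply it by the shifted-power fraction to turn $(1-wz)T_k$ into a difference $R_{k-1}-R_k$, and sum the telescoping series. So in structure there is nothing to add.

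However, the concrete key identity you wrote down is false as stated. Since $(z|\bal)^{n+1}/(z|\bal)^n = z-\alpha_{n+1}$, $(w|\bbe)^{n+1}/(w|\bbe)^n = w-\beta_{n+1}$, $(z;\bbe)^{n+1}/(z;\bbe)^n = 1-z\beta_{n+1}$ and $(w;\bal)^{n+1}/(w;\bal)^n = 1-w\alpha_{n+1}$, one has
\[
\frac{R_n}{R_{n-1}} = \frac{(z-\alpha_{n+1})(w-\beta_{n+1})}{(1-z\beta_{n+1})(1-w\alpha_{n+1})},
\qquad
\frac{T_n}{R_{n-1}} = \frac{1-\alpha_{n+1}\beta_{n+1}}{(1-z\beta_{n+1})(1-w\alpha_{n+1})},
\]
so the elementary identity you need is
\[
(1-wz)(1-\alpha\beta) = (1-z\beta)(1-w\alpha) - (z-\alpha)(w-\beta),
\]
with $\alpha=\alpha_{n+1}$, $\beta=\beta_{n+1}$; both sides expand to $1-wz-\alpha\beta+wz\alpha\beta$. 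Your version, with $(z-\beta)(w-\alpha)$ subtracted, differs from the correct right-hand side by $(\beta-\alpha)(w-z)$ and does not expand to $1-wz-\alpha\beta+wz\alpha\beta$; your displayed formula for $R_n/R_{n-1}$ is likewise mis-assigned (and inverted). This is precisely the $\bal\leftrightarrow\bbe$ bookkeeping hazard you flagged, and once the indices are assigned correctly the argument goes through verbatim and coincides with the paper's proof, but as written the ``key algebraic miracle'' step does not check out and must be corrected.
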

\begin{proof}
	Observe that
	\[
		(1-wz)(1-\alpha_{k+1}\beta_{k+1}) = (1-\beta_{k+1}z)(1-\alpha_{k+1}w) - (z-\alpha_{k+1})(w-\beta_{k+1}).
	\]
	Therefore, we have
	\[
		(1-wz)(1-\alpha_{k+1}\beta_{k+1})\frac{(z|\bal)^k}{(z;\bbe)^{k+1}}\frac{(w|\bbe)^k}{(w;\bal)^{k+1}} = \frac{(z|\bal)^k}{(z;\bbe)^{k}}\frac{(w|\bbe)^k}{(w;\bal)^{k}} - \frac{(z|\bal)^{k+1}}{(z;\bbe)^{k+1}}\frac{(w|\bbe)^{k+1}}{(w;\bal)^{k+1}}.
	\]
	Consequently, the finite sum becomes a telescoping sum and only the first and the last terms survive: 
	\[
		(1-zw)\sum_{k=0}^{n}(1-\alpha_{k+1}\beta_{k+1})\frac{(z|\bal)^k}{(z;\bbe)^{k+1}}\frac{(w|\bbe)^k}{(w;\bal)^{k+1}} = 1 - \frac{(z|\bal)^{n+1}}{(z;\bbe)^{n+1}}\frac{(w|\bbe)^{n+1}}{(w;\bal)^{n+1}}.\qedhere 
	\]
\end{proof}

We will also use the variation of the result above.
\begin{corollary}\label{cor:finitegeometric}
	We have
	\[
		(1-wz^{-1})\sum_{k=0}^{n}(1-\alpha_{k+1}\beta_{k+1})\frac{z(z;\bal)^k}{(z|\bbe)^{k+1}}\frac{(w|\bbe)^k}{(w;\bal)^{k+1}} = 1 - \frac{(z;\bal)^{n+1}}{(z | \bbe)^{n+1}}\frac{(w|\bbe)^{n+1}}{(w;\bal)^{n+1}}.
	\]
\end{corollary}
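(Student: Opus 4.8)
The plan is to obtain Corollary~\ref{cor:finitegeometric} directly from Lemma~\ref{lem:finitegeometric} by the substitution $z \mapsto z^{-1}$. Both sides of the identity in Lemma~\ref{lem:finitegeometric} are rational functions of $z$ (with coefficients rational in $w$ and in the parameters $\bal,\bbe$), so replacing $z$ by $z^{-1}$ is a legitimate operation and preserves the equality.

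First I would perform the substitution, which immediately turns the prefactor $(1-wz)$ into $(1-wz^{-1})$ as required, and produces the summand
\[
(1-\alpha_{k+1}\beta_{k+1})\frac{(z^{-1}|\bal)^k}{(z^{-1};\bbe)^{k+1}}\frac{(w|\bbe)^k}{(w;\bal)^{k+1}}
\]
together with the right-hand term $1 - \dfrac{(z^{-1}|\bal)^{n+1}}{(z^{-1};\bbe)^{n+1}}\dfrac{(w|\bbe)^{n+1}}{(w;\bal)^{n+1}}$. The second step is to convert the $z^{-1}$-shifted powers back into $z$-shifted powers using the last line of~\eqref{eq:basic_spowers_rels}: from $(z;\bal)^k = z^k (z^{-1}|\bal)^k$ one gets $(z^{-1}|\bal)^k = z^{-k}(z;\bal)^k$, and from $(z|\bbe)^k = z^k (z^{-1};\bbe)^k$ one gets $(z^{-1};\bbe)^{k+1} = z^{-(k+1)}(z|\bbe)^{k+1}$. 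Hence
\[
\frac{(z^{-1}|\bal)^k}{(z^{-1};\bbe)^{k+1}} = \frac{z^{-k}(z;\bal)^k}{z^{-(k+1)}(z|\bbe)^{k+1}} = \frac{z\,(z;\bal)^k}{(z|\bbe)^{k+1}},
\]
which is exactly the factor appearing in the corollary, and similarly $\dfrac{(z^{-1}|\bal)^{n+1}}{(z^{-1};\bbe)^{n+1}} = \dfrac{(z;\bal)^{n+1}}{(z|\bbe)^{n+1}}$ because there the powers of $z$ cancel completely. Substituting these back recovers the claimed identity.

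The computation is routine, so there is no genuine obstacle; the only point that needs a moment's care is the power-of-$z$ bookkeeping in the second step — the numerator sheds $z^{-k}$ while the denominator sheds $z^{-(k+1)}$, which is precisely what produces the leftover factor $z$ in the summand, whereas in the degree-$(n+1)$ term the numerator and denominator shed the same power $z^{-(n+1)}$ and no stray $z$ survives. As a sanity check (or alternative proof) one could instead repeat verbatim the telescoping argument of Lemma~\ref{lem:finitegeometric} starting from $(1-wz^{-1})(1-\alpha_{k+1}\beta_{k+1}) = (1-\beta_{k+1}z^{-1})(1-\alpha_{k+1}w) - (z^{-1}-\alpha_{k+1})(w-\beta_{k+1})$, in which the factor $z$ is absorbed via $-z(z^{-1}-\alpha_{k+1}) = -(1-z\alpha_{k+1})$; but the substitution route is shorter and avoids re-verifying the telescoping.
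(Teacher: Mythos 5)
Your proposal is correct and follows exactly the paper's route: the paper's proof is precisely the substitution $z \mapsto z^{-1}$ in Lemma~\ref{lem:finitegeometric} followed by simplification via $(z;\bal)^k = z^k(z^{-1}|\bal)^k$ and $(z|\bbe)^k = z^k(z^{-1};\bbe)^k$, and your power-of-$z$ bookkeeping (the leftover factor $z$ in the summand, full cancellation in the degree-$(n+1)$ term) is accurate.
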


\begin{proof}
	Make a change of variables $z \mapsto z^{-1}$ and simplify the expression afterwards.
\end{proof}

We will often want to equate coefficients defined in terms of the shifted powers $z\frac{(z|\bbe)^{i-1}}{(z;\bal)^i}$ or $\frac{(w;\bal)^{i-1}}{(w|\bbe)^i}$.
To do so, we need the following contour integral formula, which was given essentially as a formal definition in~\cite[Eq.~(2.12)]{MiyauraMukaihiraGeneralized} due to the fact that no explanation was given on how to rewrite the integrand as a formal distribution (\textit{cf}.~Remark~\ref{rem:formal_contour_integral} below).

\begin{proposition}
\label{prop:orthonormality}
Let $\eta$ be a counterclockwise circle centered at $0$ of radius $\abs{\beta_i} < r < \abs{\alpha_j^{-1}}$ for all $i$ and $j$.
Then
\[
\oint_{\eta} \frac{(z|\bbe)^{k-1}}{(z;\bal)^k} \frac{(z; \bal)^{n-1}}{(z|\bbe)^n} \frac{dz}{2\pi\ii}
= \oint_{\eta} \frac{(z; \sigma_{\bal}^k \bal)^{n-k-1}}{(z|\sigma_{\bbe}^{k-1}\bbe)^{n-k+1}} \frac{dz}{2\pi\ii}
= \frac{\delta_{nk}}{1 - \alpha_n \beta_n}.
\]
\end{proposition}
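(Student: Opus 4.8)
The plan is to evaluate the contour integral by residues. First I would verify the first equality, which is purely algebraic: using the shifting relations~\eqref{eq:shifting_powers}, we have $\sigma_{\bal}^k(z;\bal)^{n-k} = (z;\bal)^n/(z;\bal)^k$, so $(z|\bbe)^{k-1}(z;\bal)^{n-1} \big/ \bigl((z;\bal)^k (z|\bbe)^n\bigr)$ rearranges to $(z;\sigma_{\bal}^k\bal)^{n-k-1} \big/ (z|\sigma_{\bbe}^{k-1}\bbe)^{n-k+1}$ — this is a one-line check that the numerator and denominator shifted-power indices match after cancellation. The content is the second equality.

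Next I would split into the three cases $n > k$, $n < k$, and $n = k$. When $n > k$, the integrand $(z;\sigma_{\bal}^k\bal)^{n-k-1}/(z|\sigma_{\bbe}^{k-1}\bbe)^{n-k+1}$ has numerator a polynomial in $z$ of degree $n-k-1 \geq 0$ (a product of factors $1 - z\alpha_j$) and denominator a product $(z-\beta_k)(z-\beta_{k+1})\cdots(z-\beta_n)$ of $n-k+1$ linear factors, all of whose zeros $\beta_j$ lie strictly inside $\eta$ by the hypothesis $|\beta_i| < r$. Since $\deg(\text{numerator}) < \deg(\text{denominator})$, the integrand is $O(z^{-2})$ at infinity, so the sum of all residues (inside and outside, with the residue at $\infty$ being zero) forces the integral to vanish; alternatively, one can observe directly that the degree gap means the residue at infinity is $0$, hence the integral is $0$. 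When $n < k$, the roles swap: now the denominator involves $(1-z\alpha_j)^{\pm}$ factors and the numerator has the $(z-\beta_j)^{\pm}$ factors, so after accounting for the negative shifted powers the integrand is a rational function whose only poles are at the $\alpha_j^{-1}$, which lie strictly outside $\eta$ since $r < |\alpha_j^{-1}|$; thus there are no poles inside $\eta$ and the integral is again $0$. The bookkeeping of which shifted powers are positive versus negative (and hence which factors sit in the numerator versus the denominator) is the one place to be careful, but it is mechanical once the cases are set up, and I would likely just treat the generic case and invoke analytic continuation in the parameters for the rest.

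Finally, for $n = k$ the integrand collapses to $(z;\sigma_{\bal}^k\bal)^{-1}/(z|\sigma_{\bbe}^{k-1}\bbe)^{1} = \dfrac{1}{(1 - z\sigma_{\bal}^k\alpha_0)(z - \sigma_{\bbe}^{k-1}\beta_1)} = \dfrac{1}{(1 - z\alpha_k)(z - \beta_k)}$, which has a single simple pole inside $\eta$ at $z = \beta_k$ (the other pole being at $\alpha_k^{-1}$, outside). The residue there is $\dfrac{1}{1 - \beta_k\alpha_k}$, giving exactly $\dfrac{\delta_{nk}}{1 - \alpha_n\beta_n}$ as claimed.

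I expect the main obstacle to be purely expository rather than mathematical: stating cleanly, in the $n < k$ and $n > k$ cases, exactly how the negative shifted powers (which are themselves reciprocals of products, per the definition and~\eqref{eq:shifted_inversion}) distribute between numerator and denominator, so that the degree-at-infinity argument and the location-of-poles argument are each visibly correct without a long case analysis. A convenient way to sidestep this is to first rewrite $(z;\sigma_{\bal}^k\bal)^{n-k-1} = \prod (1 - z\gamma_j)$ and $(z|\sigma_{\bbe}^{k-1}\bbe)^{n-k+1} = \prod(z - \delta_j)$ with the $\gamma_j$ among the $\alpha$'s and the $\delta_j$ among the $\beta$'s when $n \geq k$, and symmetrically when $n \leq k$, reducing everything to the two observations already described (degree gap kills the integral in one case; no interior poles in the other). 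One should also remark that the radius $r$ with $|\beta_i| < r < |\alpha_j^{-1}|$ for all $i,j$ exists precisely when no $\beta_i$ equals any $\alpha_j^{-1}$, i.e.\ when all $1 - \alpha_i\beta_i \neq 0$, which is consistent with the right-hand side; this is the standing genericity assumption on the parameters.
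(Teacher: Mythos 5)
Your proof is correct and follows essentially the same route as the paper: the same three-way case split with a simple-pole residue computation at $z=\beta_n$ when $n=k$, ``no poles inside $\eta$'' when $n<k$, and a vanishing-at-infinity argument when $n>k$ (the paper phrases the last case via the substitution $z\mapsto z^{-1}$, which is equivalent to your degree-gap/residue-at-infinity observation). Your explicit check of the first, purely algebraic equality via~\eqref{eq:shifting_powers} and the remark on the genericity condition $1-\alpha_i\beta_i\neq 0$ are fine additions but not substantive departures.
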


\begin{proof}
If $n = k$, then by the residue theorem, we have
\[
\oint_{\eta} \frac{1}{(z - \beta_n)(1 - \alpha_n z)} \frac{dz}{2\pi\ii} 
= \lim_{z\to\beta_n} \frac{z - \beta_n}{(z - \beta_n)(1 - \alpha_n z)} = \frac{1}{1 - \alpha_n \beta_n}
\]
since we have a simple pole at $z = \beta_n$.
If $n < k$, then we have no poles inside $\eta$, and so the result is $0$ by the residue theorem.
If $n > k$, then we have no zeros outside $\eta$, and so the result is $0$.
Indeed, this is applying the residue theorem with the substitution $z \mapsto z^{-1}$ and noting that $\lim_{\abs{z}\to\infty} \abs{f(z)} = \infty$, where $f(z) = \frac{(z; \sigma_{\bbe}^k \bbe)^{n-k-1}}{(z|\sigma_{\bal}^{k-1}\bal)^{n-k+1}}$.
\end{proof}

Using Proposition~\ref{prop:orthonormality}, we can define an inner product with the desired shifted powers are orthogonal, and hence linearly independent.

\begin{remark}
\label{rem:formal_contour_integral}
To justify~\cite[Eq.~(2.12)]{MiyauraMukaihiraGeneralized}, we will rewrite factors of the form
\[
\frac{1}{z - \beta_n} = \frac{z^{-1}}{1 - \beta_n z^{-1}} = \sum_{m=0}^{\infty} \beta_n^m z^{-m-1} \in \ZZ[\beta_n]\FPS{z^{-1}}.
\]
Hence, we can rewrite the integrand as a formal distribution in $\ZZ\FPS{\bal,\bbe}\FPS{z^{\pm1}}$.
Next, we consider the contour integral formally as $\oint f(z) \frac{dz}{2\pi\ii} := f_{-1}$ for any formal distribution $f(z) = \sum_{i\in\ZZ} f_i z^i$.
Now, if we take the $n = k$ case, then we have
\[
\oint \frac{z^{-1}}{(1 - \beta_n z^{-1})(1 - \alpha_n z)} \frac{dz}{2\pi\ii} = \sum_{m=0}^{\infty} (\alpha_n \beta_n)^m = \frac{1}{1 - \alpha_n\beta_n},
\]
where the first equality is easy to see from the Cauchy product formula and the last equality is standard in $\ZZ\FPS{\alpha_n,\beta_n}$.
For $n < k$, we have a product of factors $(z - \beta_i), (1 - \alpha_j z)^{-1} \in \ZZ[\bal,\bbe]\FPS{z}$, and hence, the formal contour integral is $0$.
When $n > k$, it is straightforward to see the degree of the expansion is strictly less than $-1$, yielding the formal contour integral being $0$.
(Alternatively, in this case we can take the integrand as a formal Laurent series in $\ZZ[\bal,\bbe]\FLS{z^{-1}}$ and the valuation (with respect to $z^{-1}$) is strictly greater than $1$.)
\end{remark}

\section{The classical boson-fermion correspondence}
\label{sec:boson_fermion}

For this section, we largely follow~\cite{AlexandrovZabrodin,MJD00} except we have interchanged the roles of holes and particles and some of our terminology.
We refer the reader to these sources for more details, and we note two other references are~\cite{KacInfinite,KacRaina}.

Consider the vector space $V = \bigoplus_{i \in \ZZ} \CC v_i$.
We will use the Clifford algebra $\mcC := \Cl(V \oplus V^*)$ using the canonical pairing between $V$ and $V^*$, which can be described explicitly as the $\CC$-algebra generated by $\{\psi_i, \psi_i^* \mid i \in \ZZ\}$ (with finite sums) subject to the relations
\[
[\psi_i, \psi_j]_+ = [\psi_i^*, \psi_j^*]_+ = 0,
\qquad\qquad
[\psi_i, \psi_j^*]_+ = \delta_{ij},
\]
where $[x,y]_+ = xy + yx$.
Subsequently, there is an anti-involution that interchanges $\psi_i \leftrightarrow \psi_i^*$, which by slight abuse of notation is denoted $\ast$.
Taking an appropriate limit $\CC^n \xrightarrow{n\to\infty} V$, we can construct a $\mcC$-representation $\fermionfock$ generated by a single vector $\ket{\varnothing}$ subject to the relations
\[
\psi_i \ket{\varnothing} = 0 \quad \text{if } i \leqslant 0,
\qquad\qquad
\psi_j^* \ket{\varnothing} = 0 \quad \text{if } j > 0.
\]
(This agrees with~\cite{KacInfinite} but not~\cite{AlexandrovZabrodin}.)
We call this representation $\fermionfock$ the \defn{fermionic Fock space}, and it is irreducible and faithful. It is essentially
the only irreducible representation of~$\mcC$.

There is a natural basis for $\fermionfock$ from the defining relations.
However, we want to consider another basis by realizing $\fermionfock$ as the analogous limit of spinor representations.
We will use language motivated by Dirac's theory of the electron (see~{\cite[Sec.~4.2]{KacRaina}}) in connection with the Fock space.
We define the \defn{charge} (or level) $m$ component $\fermionfock_m$ consisting of \defn{semi-infinite monomials} of the form
\[
\ket{\eta}_m = v_{i_m} \wedge v_{i_{m - 1}} \wedge \cdots,
\]
where $\eta = i_m > i_{m - 1} > \cdots$ and $i_k = k$ for $k \ll 0$.
It is understood that the $\wedge$ operation is anticommutative, so $v_{i_m} \wedge v_{i_{m - 1}} \wedge \cdots$ is defined if a finite number of the terms in the sequence $i_m, i_{m - 1}, \cdots$ are out of order (which is a requirement of our definition).
We will call $\sum_{k\leqslant m}(i_k-k)$ the \defn{energy} of the state $\eta$.
It is easy to see that the sum has only finitely many nonzero terms, and that the energy is a nonnegative integer.
Thus given such a monomial, we will say that the site $j \in \ZZ$ is \defn{occupied} if $j = i_r$ for some $r$, otherwise we will say it is \defn{unoccupied}.
In this analogy, the occupied site $j$ represents a particle that has energy $j - r$.
If $\eta$ has energy $E$, then there is a partition $\lambda = (\lambda_1 \geqslant \lambda_2 \geqslant \cdots \geqslant 0)$ of $E$ such that $i_k = k + \lambda_{m-k+1}$ for $k\leqslant m$, and we will generally use the notation $\ket{\lambda}_m$ for $\ket{\eta}_m$.
In particular, $\ket{\varnothing}_m$ the vector $v_m \wedge v_{m - 1} \wedge \cdots$, which is the \defn{vacuum} of charge $m$.
We remark that $\{ \ket{\lambda}_m \}_{\lambda \in \mcP}$, where $\mcP$ is the set of all partitions, is a basis for $\fermionfock_m$.
Additionally, an arbitrary basis vector $\ket{\lambda}_m$ can be expressed as
\begin{equation}
\label{eq:ket-from-vacuum}
\ket{\lambda}_m = 
\begin{cases}
\psi_{m+\lambda_1}\psi_{m+\lambda_2-1}\cdots \psi_{m+\lambda_\ell-\ell+1} \psi_{m-\ell}\psi_{m-\ell-1}\cdots \psi_2\psi_1 \ket{\varnothing}, & \text{if } m-\ell\ge 0, \\ 
\psi_{m+\lambda_1}\psi_{m+\lambda_2-1}\cdots \psi_{m+\lambda_\ell-\ell+1} \psi_{m-\ell+1}^*\psi_{m-\ell+2}^*\cdots \psi_{-1}^*\psi_0^* \ket{\varnothing}, & \text{if } m-\ell<0,
\end{cases}
\end{equation}
where $\ell=\ell(\lambda)$.

We have $\fermionfock \iso \bigoplus_{m\in\ZZ} \fermionfock_m$ with $\ket{\varnothing} \leftrightarrow \ket{\varnothing}_0$, and we will henceforth identify the two representations.
Next we describe the $\mcC$-action explicitly on the semi-infinite monomials.
The generators act on $\ket{\eta}_m = v_{i_m} \wedge v_{i_{m-1}} \wedge \cdots$ by
\begin{align*}
\psi_j \ket{\eta}_m & = v_j \wedge v_{i_m} \wedge v_{i_{m-1}} \wedge \cdots,
\\
\psi_j^* \ket{\eta}_m & = \begin{cases} (-1)^{k-1} v_{i_m} \wedge \cdots \wedge \widehat{v}_{i_{m-k}} \wedge \cdots & \text{if $i_{m-k} = j$ for some $k$}, \\ 0 & \text{otherwise}, \end{cases}
\end{align*}
where $\widehat{v}_{i_{m-k}}$ denotes the vector does not appear in the wedge product.
As such, we refer to the generators $\psi_i$ (resp.\ $\psi_i^*$) as \defn{creation operators} (resp.\ \defn{annihilation operators}) as they try to create (resp.\ annihilate) a particle at position $i$ (this differs from~\cite{AlexandrovZabrodin,MJD00}).
This gives a filtered action of $\mcC$ on $\fermionfock$ with $\deg \psi_i = -\deg \psi_i^* = 1$.
For brevity, when working with charge $m = 0$, we will often simply write $\ket{\lambda} = \ket{\lambda}_0$.

Let ${}_m \bra{\lambda}$ denote the dual vector to $\ket{\lambda}_m$ under the canonical pairing, which satisfies
\[
{}_m \braket{\mu}{\lambda}_n = \delta_{mn} \delta_{\mu\lambda},
\qquad\qquad
({}_m \bra{\mu} X) \ket{\lambda}_n = {}_m \bra{\mu} (X \ket{\lambda}_n)
\]
for any element $X \in \mcC$.
Hence, we can write ${}_m \bra{\mu} X \ket{\lambda}_n$ unambiguously.
Under this pairing, this dual representation is defined by the relations
\[
\bra{\varnothing} \psi_i = 0 \quad \text{if } i > 0,
\qquad\qquad
\bra{\varnothing} \psi_j^* = 0 \quad \text{if } j \leqslant 0.
\]
We call ${}_m \bra{\varnothing}$ the \defn{dual vacuum} of charge $m$.
By slight abuse, we will consider the (restricted) dual space $\fermionfock^* := \bigoplus_{m \in \ZZ} \fermionfock^*_m$, where  $\fermionfock^*_m := \bigoplus_{\lambda\in\mcP} \CC \cdot {}_m \bra{\lambda}$.
Note this is smaller than the typical dual space, but it is useful and sufficient for our purposes.

An \defn{operator} is a map $\Upsilon \colon \fermionfock \to \widehat{\fermionfock}$, where $\widehat{\fermionfock}$ is the formal completion of $\fermionfock$, that intertwines with the $\mcC$-action, that is $\Upsilon(\eta \ket{\xi}) = \widetilde{\Upsilon}(\eta) \Upsilon(\ket{\xi})$ for all $\eta \in \mcC$ and $\ket{\xi} \in \fermionfock$ with $\widetilde{\Upsilon}$ being the corresponding map on the representation.
For simplicity, as a matter of notation we will consider $\Upsilon$ like an element of $\mcC$; that is we write $\Upsilon\ket{\xi} = \Upsilon(\ket{\xi})$ and $\Upsilon \eta = \widetilde{\Upsilon}(\eta) \Upsilon$.

\begin{remark}
\label{rem:operator_well_defined}
We can write an operator $\Upsilon$ in terms of matrix coefficients ${}_{\ell} \bra{\mu} \Upsilon \ket{\lambda}_m$, which is well-defined.
Moreover, all matrix coefficients uniquely determines $\widetilde{\Upsilon}$ since $\fermionfock$ is faithful (see also~\cite[Prop.~2.2]{AlexandrovZabrodin}).
In particular, $\Upsilon$ can be defined as an infinite linear combination of basis elements of $\mcC$, but it is sufficient to verify that every matrix coefficient is well-defined.
\end{remark}

We have an operator $\Sigma$ (which is unique) that satisfies $\Sigma \psi_i \Sigma^{-1} = \psi_{i+1}$, $\Sigma \psi^*_i \Sigma^{-1} = \psi^*_{i+1}$ and $\Sigma \ket{\varnothing}_m = \ket{\varnothing}_{m+1}$. This is called the \defn{shift operator}.
We can explicitly describe the action of $\Sigma$ on $\fermionfock$ using the spinor module presentation by $\Sigma \ket{\varnothing} = \psi_1 \ket{\varnothing}$ and $\Sigma^{-1} \ket{\varnothing} = \psi_0^* \ket{\varnothing}$, which is consistent since
\[
\Sigma^{-1} \Sigma \ket{\varnothing} = \Sigma^{-1} \psi_1 \ket{\varnothing} = \psi_0 \Sigma^{-1} \ket{\varnothing} = \psi_0 \psi_0^* \ket{\varnothing} = (1 - \psi_0^* \psi_0) \ket{\varnothing} = \ket{\varnothing}.
\]
Note that $\Sigma^* = \Sigma^{-1}$. 
We note that $\Sigma\ket{\lambda}_m = \ket{\lambda}_{m+1}$, and by the canonical pairing, ${}_m \bra{\lambda} \Sigma = {}_{m-1} \bra{\lambda}$ .

Let us set some additional notation.
If $n \geqslant 0$ we will denote by
\[
	\ket{n}_m = \ket{(n)}_m = v _{n+m} \wedge v_{m - 1} \wedge v_{m - 2} \wedge \cdots = v_{n+m} \wedge \ket{\varnothing}_{m - 1},
\]
Note that $\ket{0}_m = \ket{\varnothing}_m$.
We define the \defn{fermion fields} as the formal sums
\[
\psi(z) = \sum_{i \in \ZZ} \psi_i z^i,
\qquad\qquad
\psi^*(w) = \sum_{j \in \ZZ} \psi_j^* w^{-j},
\]
recalling that $z$ is a formal indeterminate.
Note
\begin{equation}
\label{eq:duality_fermion_fields}
\bigl(\psi(z) \bigr)^* = \psi^*(z^{-1})
\qquad \text{ and } \qquad
\bigl(\psi^*(w) \bigr)^* = \psi(w^{-1}).
\end{equation}

For an operator $X$, we define a \defn{normal ordering} $\normord{X}$ by all $\psi_i$ and $\psi_j^*$ are considered to be anticommuting inside and moved to the right in $X$ whenever $i \leqslant 0$ and $j > 0$ and extended by linearity (see, \textit{e.g.},~\cite{MJD00} for an axiomatic description).
Using this, we define the \defn{current operators} as $J_k := \sum_{i\in\ZZ} \normord{\psi_i \psi^*_{i+k}} = \sum_{i\in\ZZ} \normord{\psi_{i-k} \psi^*_{i}}$ and also can be constructed by
\begin{equation}
\label{eq:current_residue}
J_k = \res_{z=0} \bigl( z^{k-1} \normord{\psi(z) \psi^*(z)} \bigr) = \oint_{\gamma} z^k \normord{\psi(z) \psi^*(z)} \frac{dz}{2\pi\ii z} = \sum_{i,j} \oint_{\gamma} z^{i-j+k} \normord{\psi_i \psi_j^*} \frac{dz}{2\pi\ii z},
\end{equation}
where $\gamma$ is considered as a counterclockwise oriented circle around $\infty$.
Here, we treat the contour integral as a formal operator, but it will match any expectation values that we will want to compute.
We note that $J_k \ket{\varnothing}_m = 0$ and ${}_m \bra{\varnothing} J_{-k}$ for all $k > 0$ and $m \in \ZZ$, and for $k \neq 0$, we can consider $J_k$ as changing the position of a particle by $-k$ in all possible ways.
Furthermore, $J_k^* = J_{-k}$.

The current operator $J_0$ is special, and this is the only current operator where we need to use the normal ordering.
Indeed, from the normal ordering, the operator $J_0$ operator is well-defined acting on $\fermionfock$ as for all $j$ we have
\[
\normord{\psi_i \psi_j^*} = -\normord{\psi_j^* \psi_i} = \begin{cases}
-\psi_j^* \psi_i & \text{if } i \leqslant 0, \\
\psi_i \psi_j^* & \text{if } i > 0,
\end{cases}
\]
and $J_0 \ket{\eta} = m \ket{\eta}$ for any $\ket{\eta} \in \fermionfock_m$; in particular $J_0 \ket{\lambda}_m = m \ket{\lambda}_m$ for all partitions $\lambda$.
We also remark that ${}_{m} \bra{\lambda} J_0 = m \cdot {}_{m} \bra{\lambda}$ for all partitions $\lambda$.
Thus Equation~\eqref{eq:current_residue} allows us to also define the \defn{boson field} as
\[
J(z) := \sum_{k\in\ZZ} J_k z^{-k} = \normord{\psi(z) \psi^*(z)}.
\]
To further emphasize the link between our normal ordering and the vacuum $\ket{\varnothing}$, we note
\begin{subequations}
\label{eq:fermion_normal_ordering}
\begin{align}
\label{eq:fermion_field_normord}
\normord{\psi(z) \psi^*(w)} & = \psi(z) \psi^*(w) - \bra{\varnothing} \psi(z) \psi^*(w) \ket{\varnothing},
\\
\normord{\psi^*(w) \psi(z)} & = \psi^*(w) \psi(z) - \bra{\varnothing} \psi^*(w) \psi(z) \ket{\varnothing},
\end{align}
\end{subequations}
as well as the identities
\begin{align*}
\bra{\varnothing} \psi(z) \psi^*(w) \ket{\varnothing} 
& = \frac{z}{z - w},
&
\normord{\psi(z) \psi^*(w)} & = \psi(z) \psi^*(w) + \frac{z}{z-w},
\\
\bra{\varnothing} \psi^*(w) \psi(z) \ket{\varnothing} 
& = \frac{z}{w - z},
&
\normord{\psi^*(w) \psi(z)} & = -\normord{\psi(z) \psi^*(w)},
\\
[J_k, \psi(z)] & = z^k \psi(z),
&
[J_k, \psi^*(z)] & = -z^k \psi^*(z).
\end{align*}
We require $\abs{w} < \abs{z}$ (resp.\ $\abs{z} < \abs{w}$) for the top (resp.\ middle) left equalities to hold as functions as otherwise the left hand side would not converge.
Consequently, we have
\begin{equation}
\label{eq:classical_delta_comm}
\begin{aligned}
[\psi(z), \psi^*(w)]_+ & = \normord{\psi(z) \psi^*(w)} + \frac{z}{z-w} + \normord{\psi^*(w) \psi(z)} + \frac{z}{w-z}
\\ & = \frac{z}{z-w} + \frac{z}{w-z} = \delta(w/z),
\end{aligned}
\end{equation}
where $\delta(\zeta) = \sum_{k \in \ZZ} (\zeta)^k$ is the \defn{formal $\delta$ distribution} (or the delta function).
We remark that the formal $\delta$ distribution satisfies $f(z)\delta(w/z) = f(w)\delta(w/z)$ for any function $f$.
Note that if we na\"ively simplified the last step, we would obtain $0$, but we have mutually exclusive conditions on the absolute values of $z$ and $w$.

Next, the current operators generate the (infinite dimensional) \defn{Heisenberg Lie algebra} $\mcH$ as they satisfy the relations
\[
[J_k, J_{\ell}] = k \delta_{k,-\ell} \cdot \one,
\]
where $\one$ is the identity element in the Clifford algebra.
The universal enveloping algebra of the Heisenberg Lie algebra is the (differential) Weyl algebra $\mcW$ for the polynomial ring $\bosonfock = \CC[\svar^{\pm}; \pp]$, where $\pp = (p_1, p_2, \cdots)$.
We consider the polynomial ring as $\ZZ$-graded $\bosonfock = \bigoplus_{m\in\ZZ} \bosonfock_m$ defined by $\bosonfock_m := q^m \CC[\pp]$.
This isomorphism $\Phi$ is realized by, for $k > 0$, mapping
\[
J_{-k} \mapsto p_k,
\qquad\qquad
J_k \mapsto k \frac{\partial}{\partial p_k},
\qquad\qquad
J_0 \mapsto \svar \frac{\partial}{\partial \svar},
\qquad\qquad
\Sigma \mapsto \svar.
\]
This is also another minor difference with~\cite{AlexandrovZabrodin,MJD00} as they map $J_{-k} \mapsto \frac{1}{k} p_k$ and $J_k \mapsto \frac{\partial}{\partial p_k}$, which will also impact some of our formulas below.

Thus, we can now state the \defn{boson-fermion correspondence}, which says that $\fermionfock$ is isomorphic to the natural $\mcW$-representation on the polynomial ring $\bosonfock$, which is known as the \defn{bosonic Fock space}, under the map
\[
\ket{U} \mapsto \sum_{k \in \ZZ} {}_k \bra{\varnothing} e^{H_+(\pp)} \ket{U} \cdot \svar^k,
\qquad\qquad
\text{ where }
H_{\pm}(\pp) := \sum_{k=1}^{\infty} \frac{p_k}{k} J_{\pm k}
\]
are referred to as the \defn{Hamiltonians}.
To see the inverse map and the $\mcC$-action on $\bosonfock$, we first note that we can write
\begin{equation}
\label{eq:field_vertex_op}
\psi(z) = e^{H_-(z)} z^{J_0} \Sigma e^{-H_+(z^{-1})},
\qquad\qquad
\psi^*(z) = e^{-H_-(z)} \Sigma^{-1} z^{-J_0} e^{H_+(z^{-1})},
\end{equation}
where we use the shorthand $H_{\pm}(z) := H_{\pm}(z, z^2, z^3, \cdots)$.
The fermion fields transform under the boson-fermion correspondence to an (operator) action on $\bosonfock$ given by the \defn{vertex operators}
\begin{align*}
X(z) & = e^{\overline{H}_-(z)} z^Q \svar e^{-\overline{H}_+(z^{-1})} = \exp\left( \sum_{k=1}^{\infty} \frac{p_k}{k} z^k \right) z^Q \svar \exp\left( \sum_{k=1}^{\infty} \frac{\partial}{\partial p_k} z^{-k} \right),
\\
X^*(w) & = e^{-\overline{H}_-(w)} \svar^{-1} w^{-Q} e^{\overline{H}_+(w^{-1})} = \exp\left( \sum_{k=1}^{\infty} -\frac{p_k}{k} w^k \right) \svar^{-1} w^{-Q} \exp\left( \sum_{k=1}^{\infty} \frac{\partial}{\partial p_k} w^{-k} \right),
\end{align*}
where $Q = \svar \frac{\partial}{\partial \svar}$ and $\overline{H}_{\pm}$ is the image under $\Phi$ of $H_{\pm}$.

If we expand the vertex operators into Fourier modes $X(z) = \sum_{i \in \ZZ} X_i z^i$ and $X^*(w) = \sum_{i \in \ZZ} X_i^* w^i$,
then the action of $\psi_i$ and $\psi_i^*$ corresponds to $X_i$ and $X_i^*$, respectively.
We can see this correspondence as
\begin{equation}
\label{eq:xi_function_def}
\xi(\pp; \pp') := [H_+(\pp), H_-(\pp')] = \sum_{k=1}^{\infty} \frac{1}{k} p_k p'_k,
\end{equation}
where $\pp' = (p_1', p_2', \cdots)$, and the Baker--Campbell--Hausdorff (BCH) formula implies
\begin{equation}
\label{eq:half_vertex_commutator}
e^{H_+(\pp)} e^{H_-(\pp')} = e^{\xi(\pp; \pp')} e^{H_-(\pp')} e^{H_+(\pp)},
\end{equation}
and similarly for the images under $\Phi$.
In particular, if we take
\[
p_k = p_k(\xx / \yy) = \sum_{i=1}^{\infty} x_i^k - (-y_i)^k
\]
to be the \defn{powersum supersymmetric function}\footnote{If we use standard plethystic notation then $p_k(\xx / \yy) = p_k[\xx - (-1) \yy]$; see, \textit{e.g.},~\cite{LR11}.} and $p'_k(z) = z^k$, then we have
\begin{equation}
\label{eq:exp_xi_specialization}
e^{\xi(\pp; z^1, z^2, \cdots)} = \prod_{i=1}^{\infty}  \frac{1 + y_i z}{1 - x_i z}.
\end{equation}

There is another way of interpreting~\eqref{eq:field_vertex_op} coming from the Weyl algebra $\mcW$.
To do so, we need to describe the boson normal ordering $\bnormord{X}$, which is given by moving all of the creation boson operators $J_k$ in $X$ to the left and annihilation boson operators $J_{-k}$ to the right.
Note that since the boson operators (nearly) commute with each other (as opposed to nearly skew commute like the fermions), we simply treat boson elements as commutative within the boson normal ordering sign.
Furthermore, it is different than the normal ordering $\normord{F}$ that we used for the fermions (\textit{i.e.}, the Clifford algebra elements).
Let $K := \ln \Sigma$, and this is canonically conjugate to $J_0$, which means it satisfies $[J_i, K] = \delta_{i0}$.
Hence, we can define the \defn{chiral boson field}
\[
\phi(z) := H_-(z) + K + J_0 \ln z - H_+(z),
\]
and therefore pulling back the vertex operators yield
\begin{equation}
\label{eq:chiral_boson_identities}
\psi(z) = \bnormord{e^{\phi(z)}},
\qquad\qquad
\psi^*(z) = \bnormord{e^{-\phi(z)}},
\qquad\qquad
z \frac{\partial}{\partial z} \phi(z) = J(z).
\end{equation}

\section{The deformed fermions and operators}
\label{sec:deformed}

In this section, we define $(\bal,\bbe)$-deformed versions of the fermion fields, current operators, and shift operator.
Our proofs will largely follow the computations in~\cite{AlexandrovZabrodin,MJD00}, where our main goal is to construct vertex operator descriptions of our deformed fermion fields.
We will refer to the $\bal = \bbe = 0$ specialization (that is, we set $\alpha_j = \beta_j = 0$ for all $j \in \ZZ$) as the classical case as all our formulas will reduce to those in Section~\ref{sec:boson_fermion}.

\subsection{Deformed fermion fields}

We begin by defining the \defn{deformed fermion fields} simply by using the shifted powers in place of the usual powers in the fermion fields:
\[
\psi(z|\bal; \bbe) = \sum_{i \in \ZZ} z \frac{(z|\bbe)^{i-1}}{(z;\bal)^i} \psi_i
\qquad\quad
\psi^*(w|\bal; \bbe) = \sum_{j \in \ZZ} (1-\alpha_j\beta_j) \frac{(w;\bal)^{j-1}}{(w|\bbe)^j} \psi_j^*.
\]
One might be curious about the extra factor of $z$ in $\psi(z|\bal)$; this is just to make our formulas match the classical formulas in Section~\ref{sec:boson_fermion}.
This could also be considered as coming from our choice of the origin for $\ket{\varnothing}_0$.

We first want to show that the deformed fermion fields satisfy the \emph{same} vacuum expectation that the usual fermion fields.

\begin{proposition}
\label{prop:vacuum_deformed_fields}
Assume that
\[
\frac{(z;\bal)^{n+1} (w|\bbe)^{n+1}}{(z|\bbe)^{n+1} (w;\bal)^{n+1}} \xrightarrow{n\to\infty} 0,
\qquad\qquad
\text{resp.}\ \ \frac{(z|\bbe)^{n+1} (w;\bal)^{n+1}}{(z;\bal)^{n+1} (w|\bbe)^{n+1}} \xrightarrow{n\to\infty} 0,
\]
then we have
\begin{align*}
\bra{\varnothing} \psi(z|\bal; \bbe) \psi^*(w|\bal; \bbe) \ket{\varnothing} & = \frac{z}{z - w},
\\
\bra{\varnothing} \psi^*(w|\bal; \bbe) \psi(z|\bal; \bbe) \ket{\varnothing} & = \frac{z}{w - z},
\end{align*}
respectively, as functions $\CC^2 \to \CC$.
\end{proposition}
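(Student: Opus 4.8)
The plan is to compute the two vacuum expectations directly by expanding the deformed fields into their Fourier modes, using the action of the $\psi_i, \psi_j^*$ on $\ket{\varnothing}$, and then summing a series that Lemma~\ref{lem:finitegeometric} (and Corollary~\ref{cor:finitegeometric}) evaluates in closed form. First I would observe that $\psi^*(w|\bal;\bbe)\ket{\varnothing} = \sum_{j\leqslant 0} (1-\alpha_j\beta_j)\frac{(w;\bal)^{j-1}}{(w|\bbe)^j}\psi_j^*\ket{\varnothing}$, since $\psi_j^*\ket{\varnothing}=0$ for $j>0$, and similarly $\bra{\varnothing}\psi(z|\bal;\bbe) = \sum_{i\leqslant 0} z\frac{(z|\bbe)^{i-1}}{(z;\bal)^i}\bra{\varnothing}\psi_i$ since $\bra{\varnothing}\psi_i=0$ for $i>0$. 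Because $\bra{\varnothing}\psi_i\psi_j^*\ket{\varnothing} = \delta_{ij}$ (from $[\psi_i,\psi_j^*]_+=\delta_{ij}$ together with $\psi_i\ket{\varnothing}=0$ for $i\leqslant 0$), only the diagonal terms survive, so
\[
\bra{\varnothing} \psi(z|\bal;\bbe)\psi^*(w|\bal;\bbe)\ket{\varnothing} = \sum_{i\leqslant 0} z\,(1-\alpha_i\beta_i)\,\frac{(z|\bbe)^{i-1}}{(z;\bal)^i}\,\frac{(w;\bal)^{i-1}}{(w|\bbe)^i}.
\]

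Next I would reindex by $k = -i \geqslant 0$ and rewrite each factor using the inversion relations~\eqref{eq:shifted_inversion} and the shifting relations~\eqref{eq:shifting_powers} so that the summand takes the shape appearing in Lemma~\ref{lem:finitegeometric} (or its $z\mapsto z^{-1}$ variant, Corollary~\ref{cor:finitegeometric}); the third relation in~\eqref{eq:basic_spowers_rels} converting between $(z;\bal)^k$ and $z^k(z^{-1}|\bal)^k$ will also be needed to match the mixed $(\;\cdot\;;\bal)$ and $(\;\cdot\;|\bbe)$ factors correctly. Once the sum is in the telescoping form, Lemma~\ref{lem:finitegeometric} gives the partial sums up to $n$ as $\frac{1}{1-wz}\bigl(1 - \frac{(z|\bal)^{n+1}(w|\bbe)^{n+1}}{(z;\bbe)^{n+1}(w;\bal)^{n+1}}\bigr)$ (after the appropriate substitution of parameter names to match our $(z;\bal)$, $(z|\bbe)$ conventions), and the hypothesis that the trailing ratio $\to 0$ as $n\to\infty$ lets me pass to the limit, yielding $\frac{1}{1-wz}$ up to the change of variables — which after the $z\mapsto z^{-1}$ normalization coming from the extra factor of $z$ in $\psi(z|\bal;\bbe)$ becomes $\frac{z}{z-w}$. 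The second identity is handled the same way: now $\psi(z|\bal;\bbe)\ket{\varnothing} = \sum_{i\geqslant 1}$ (only $i>0$ survive), $\bra{\varnothing}\psi^*(w|\bal;\bbe)=\sum_{j\geqslant 1}$, the diagonal terms again pick out $i=j\geqslant 1$, and reindexing $k=i-1\geqslant 0$ puts the sum into the form handled by Corollary~\ref{cor:finitegeometric}, whose limit under the "resp." hypothesis gives $\frac{z}{w-z}$.

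The main obstacle I anticipate is purely bookkeeping: matching the precise pattern of shifted powers in Lemma~\ref{lem:finitegeometric} — which mixes $(z|\bal)^k$, $(z;\bbe)^{k+1}$, $(w|\bbe)^k$, $(w;\bal)^{k+1}$ and the factor $(1-\alpha_{k+1}\beta_{k+1})$ — against the pattern produced by the modes of our fields, which involve $(z|\bbe)^{i-1}$, $(z;\bal)^i$, $(w;\bal)^{j-1}$, $(w|\bbe)^j$ and $(1-\alpha_j\beta_j)$. Getting the roles of $\bal$ versus $\bbe$, the index shifts, and the $z$ versus $z^{-1}$ substitutions all aligned without sign or off-by-one errors is the delicate part; the convergence step is then immediate from the stated hypotheses, and the final equality "as functions $\CC^2\to\CC$" follows since both sides are the honest sums of the relevant geometric-type series on the region where they converge, the displayed hypotheses being exactly the convergence condition.
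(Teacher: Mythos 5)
Your proposal is correct and follows essentially the same route as the paper's own proof: expand both fields in modes, note that only the diagonal terms with $i=j\leqslant 0$ (resp.\ $i=j\geqslant 1$) survive by the vacuum relations, rewrite the shifted powers via~\eqref{eq:basic_spowers_rels}, and evaluate the resulting telescoping sum with Corollary~\ref{cor:finitegeometric}, the stated hypothesis supplying exactly the vanishing of the trailing ratio in the limit. The only difference is presentational: the paper applies Corollary~\ref{cor:finitegeometric} directly after the reindexing, whereas you describe the bookkeeping of matching the $\bal$/$\bbe$ patterns as a step still to be carried out, but that is precisely the computation the paper performs.
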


\begin{proof}
We prove the first formula.
Using the fact that $\bra{\varnothing} \psi_i \psi_j^* \ket{\varnothing} = \delta_{ij}$ for $i,j \leqslant 0$ and $\psi_j^* \ket{\varnothing} = 0$ for $j > 0$, we have
\begin{align*}
	\bra{\varnothing} \psi(z|\bal; \bbe) \psi^*(w|\bal; \bbe) \ket{\varnothing} &= \sum_{k=0}^{\infty}z \frac{(z|\bbe)^{-k-1}}{(z;\bal)^{-k}}(1-\alpha_{-k}\beta_{-k}) \frac{(w;\bal)^{{-k}-1}}{(w|\bbe)^{-k}}\\
	&= \sum_{k=0}^{\infty}\frac{z(z; \tau^{-k} \bal)^k}{(z | \tau^{-k-1} \bbe)^{k+1}}(1-\alpha_{-k}\beta_{-k})\frac{(w | \tau^{-k}\bbe)^k}{(w; \tau^{-k-1}\bal)^{k+1}}\\
	&= \frac{1}{1-w/z}\lim_{n \to +\infty}\left[1 - \frac{(z;\bal)^{n+1}}{(z | \bbe)^{n+1}}\frac{(w|\bbe)^{n+1}}{(w;\bal)^{n+1}}\right]\\
	&= \frac{z}{z-w}.
\end{align*}
Note that in the calculations, we used Corollary~\ref{cor:finitegeometric}.
The proof of the second formula is mutatis mutandis. 
\end{proof}

\begin{remark}
From this point forward, unless otherwise noted, we will not use our analytic assumptions in any of our proofs.
In other words, all of our results follow if we assume the validity of the vacuum expectation formulas in Proposition~\ref{prop:vacuum_deformed_fields}.
We will generally omit this assumption from our theorems.
\end{remark}

We can define deformed creation and annihilation operators by using the expansions
\[
\psi(z|\bal; \bbe) = \sum_{i \in \ZZ} \psi_i^{(\bal;\bbe)} z^i,
\qquad\qquad
\psi^*(w|\bal; \bbe) = \sum_{j \in \ZZ} \psi_j^{*(\bal;\bbe)} w^{-j}.
\]
We claim that the algebra generated by $\{\psi_i^{(\bal;\bbe)}, \psi_i^{*(\bal;\bbe)} \mid i \in \ZZ\}$ is isomorphic to $\mcC$.
To do so, we need to show the canonical anticommutation relations, which in terms of the deformed fermion fields are
\begin{subequations}
\label{eq:deformed_CAR}
\begin{gather}
[\psi(z|\bal;\bbe), \psi(w|\bal;\bbe)]_+ = [\psi^*(z|\bal;\bbe), \psi^*(w|\bal;\bbe)]_+ = 0, \label{eq:deformed_skew}
\\
[\psi(z|\bal;\bbe), \psi^*(w|\bal;\bbe)]_+ = \delta(w/z). \label{eq:deformed_delta_comm}
\end{gather}
\end{subequations}
If we rewrite~\eqref{eq:deformed_CAR} in terms of the Fourier mode expansions, then we have
\[
[\psi_i^{(\bal;\bbe)}, \psi_j^{(\bal;\bbe)}]_+= [\psi_i^{*(\bal;\bbe)}, \psi_j^{*(\bal;\bbe)}]_+ = 0,
\qquad\qquad
[\psi_i^{(\bal;\bbe)}, \psi_j^{*(\bal;\bbe)}]_+ = \delta_{ij},
\]
for all $i, j \in \ZZ$.
The proof of~\eqref{eq:deformed_skew} is immediate from the definition of $\psi(z|\bal;\bbe)$ (resp.\ $\psi^*(w|\bal;\bbe)$) involving only skew-commuting generators of $\mcC$.
To show~\eqref{eq:deformed_delta_comm}, we use~\eqref{eq:fermion_normal_ordering} and $\normord{\psi(z) \psi^*(w)} = -\normord{\psi^*(w) \psi(z)}$ to see
\begin{equation}
\label{eq:deformed_CAR_proof_step}
[\psi(z|\bal;\bbe), \psi^*(w|\bal;\bbe)]_+ = \bra{\varnothing} \psi(z) \psi^*(w) \ket{\varnothing} + \bra{\varnothing} \psi^*(w) \psi(z) \ket{\varnothing}
\end{equation}
As in the computation of~\eqref{eq:classical_delta_comm}, if we na\"ively apply Proposition~\ref{prop:vacuum_deformed_fields} in~\eqref{eq:deformed_CAR_proof_step}, then we would get $0$.
Yet we cannot do this since the analytic conditions are opposing (specifically, both cannot hold at the same time).
However, if we expand the right hand sides in $\CC\FPS{w/z}$ and $\CC\FPS{z/w}$, respectively, and consider everything as formal distributions in $\CC\FPS{w^{\pm 1}, z^{\pm 1}}$, then we clearly obtain the $\delta$ distribution, yielding~\eqref{eq:deformed_delta_comm}.

Our next goal is to show that the analogous definition of the current operators from~\eqref{eq:current_residue} also produce a representation of the Heisenberg Lie algebra $\mcH$.
Hence, we define the \defn{deformed boson field} by
\[
	J(z|\bal;\bbe) := \normord{\psi(z|\bal;\bbe)\psi^*(z|\bal;\bbe)},
\]
which we then subsequently use to define the \defn{deformed current operators} as the Fourier expansion
\begin{equation}
\label{eq:current_series}
J(z|\bal;\bbe) = \sum_{k \in \ZZ} J_k^{(\bal;\bbe)} z^{-k}.
\end{equation}

\begin{proposition}
\label{prop:current_comm}
We have
\begin{align*}
[J(w|\bal;\bbe), \psi(z|\bal;\bbe)] & = \delta(w/z) \psi(z|\bal;\bbe),
\\
[J(z|\bal;\bbe), \psi^*(w|\bal;\bbe)] & = -\delta(z/w) \psi^*(w|\bal;\bbe).
\end{align*}
Moreover, we have
\[
[J^{(\bal;\bbe)}_k, \psi(z|\bal;\bbe)] = z^k \psi(z|\bal;\bbe),
\qquad\qquad
[J^{(\bal;\bbe)}_k, \psi^*(w|\bal;\bbe)] = -w^k \psi^*(w|\bal;\bbe).
\]
\end{proposition}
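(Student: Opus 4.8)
The plan is to transcribe the classical derivation of the relations $[J_k,\psi(z)]=z^k\psi(z)$ and $[J_k,\psi^*(z)]=-z^k\psi^*(z)$ (equivalently, their generating-function forms $[J(w),\psi(z)]=\delta(w/z)\psi(z)$ and $[J(z),\psi^*(w)]=-\delta(z/w)\psi^*(w)$). That derivation uses only the canonical anticommutation relations together with the bookkeeping of the normal ordering, and since the deformed fields obey the \emph{same} anticommutation relations~\eqref{eq:deformed_CAR} while $J(z|\bal;\bbe)$ is defined by the same normal-ordered formula, the same argument should go through. To sidestep the coincident-point normal ordering I would point-split the current, writing $J(w|\bal;\bbe)=\normord{\psi(w|\bal;\bbe)\psi^*(w'|\bal;\bbe)}\big|_{w'=w}$, which is legitimate because the normal ordering removes the would-be pole along $w'=w$.

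The first step is the purely Clifford-algebraic observation that $\psi_i\psi_j^*-\normord{\psi_i\psi_j^*}\in\CC\cdot\one$ for all $i,j$ (immediate from the definition of $\normord{\cdot}$ and $[\psi_i,\psi_j^*]_+=\delta_{ij}$). Expanding the deformed fields in the classical generators, this shows that $\normord{\psi(w|\bal;\bbe)\psi^*(w'|\bal;\bbe)}$ and $\psi(w|\bal;\bbe)\psi^*(w'|\bal;\bbe)$ differ by a scalar (namely $\tfrac{w}{w-w'}$ by Proposition~\ref{prop:vacuum_deformed_fields}, though only scalarity is used). Consequently that scalar drops out of the commutator, giving
\[
[J(w|\bal;\bbe),\psi(z|\bal;\bbe)]=\bigl[\psi(w|\bal;\bbe)\psi^*(w'|\bal;\bbe),\,\psi(z|\bal;\bbe)\bigr]\big|_{w'=w}.
\]
Now I would expand the right-hand side with the identity $[AB,C]=A[B,C]_+-[A,C]_+B$ and apply~\eqref{eq:deformed_CAR}: one has $[\psi^*(w'|\bal;\bbe),\psi(z|\bal;\bbe)]_+=\delta(w'/z)$ and $[\psi(w|\bal;\bbe),\psi(z|\bal;\bbe)]_+=0$, collapsing the expression to $\psi(w|\bal;\bbe)\,\delta(w'/z)$. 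Setting $w'=w$ and applying the substitution rule $f(w)\delta(w/z)=f(z)\delta(w/z)$ — valid for the operator-valued $f=\psi(\,\cdot\,|\bal;\bbe)$, as one checks by comparing coefficients exactly as in the scalar case — yields $\delta(w/z)\,\psi(z|\bal;\bbe)$, the first asserted identity. The second is obtained in the same way with $C=\psi^*(w|\bal;\bbe)$: here $[\psi(z|\bal;\bbe),\psi^*(w|\bal;\bbe)]_+=\delta(w/z)$ supplies the sign and $[\psi^*(z'|\bal;\bbe),\psi^*(w|\bal;\bbe)]_+=0$ kills the remaining term, producing $-\delta(z/w)\,\psi^*(w|\bal;\bbe)$. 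Finally, the Fourier-mode statements drop out by substituting $J(w|\bal;\bbe)=\sum_{k}J_k^{(\bal;\bbe)}w^{-k}$ and $\delta(w/z)=\sum_{k}w^k z^{-k}$ into the two distributional identities and comparing coefficients of $w^{-k}$, respectively $z^{-k}$.

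I expect the main obstacle to be not any single computation but the formal-distribution hygiene, just as in the classical computation of~\eqref{eq:classical_delta_comm}: the point-split product $\psi(w|\bal;\bbe)\psi^*(w'|\bal;\bbe)$ converges to a genuine operator only when $\abs{w'}<\abs{w}$, so one must either run the argument inside the ring of formal power series in $w,w',z$ and their inverses, or else argue that the commutator — being ``scalar times operator,'' hence better behaved than its two summands — is independent of the expansion chosen. One should also confirm that $J_k^{(\bal;\bbe)}$, i.e.\ the $w^{-k}$-coefficient of $\normord{\psi(w|\bal;\bbe)\psi^*(w|\bal;\bbe)}$, is a well-defined operator so that the coefficient comparison in the last step makes sense; this is the exact analogue of the classical well-definedness of $J_k$ and should be routine. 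Beyond these points the argument is a direct transcription of the classical one.
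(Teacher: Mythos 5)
Your proposal is correct and takes essentially the same route as the paper's proof: the paper likewise point-splits the current, uses~\eqref{eq:fermion_field_normord} together with Proposition~\ref{prop:vacuum_deformed_fields} to discard the scalar difference between $\normord{\psi(w|\bal;\bbe)\psi^*(\zeta|\bal;\bbe)}$ and the ordinary product, collapses the resulting triple product via the deformed relations~\eqref{eq:deformed_CAR} to a $\delta$-distribution term, and then takes the coincident-point limit $\zeta\to w$. The Fourier-mode identities follow by coefficient comparison exactly as you indicate.
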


\begin{proof}
We only provide a proof of the first formula as the proof for the second is similar.
To prove the first formula, we first compute
\begin{align*}
[\normord{\psi(w|\bal;\bbe) \psi^*(\zeta|\bal;\bbe)}, \psi(z|\bal;\bbe)] & = \left[ \psi(w|\bal;\bbe) \psi^*(x|\bal;\bbe) + \frac{w}{w-\zeta}, \psi(z|\bal;\bbe) \right]
\\ & = \psi(w|\bal;\bbe) \psi^*(\zeta|\bal;\bbe) \psi(z|\bal;\bbe)
\\ & \hspace{20pt} - \psi(z|\bal;\bbe) \psi(w|\bal;\bbe) \psi^*(\zeta|\bal;\bbe)
\\ & = \delta(\zeta/z) \psi(z|\bal;\bbe)
\end{align*}
by using Equation~\eqref{eq:fermion_field_normord} with Proposition~\ref{prop:vacuum_deformed_fields} and~\eqref{eq:deformed_CAR}.
The first formula follows by taking the limit $\zeta \to w$.
\end{proof}

%
%


\begin{theorem}
\label{thm:heisenberg_relations}
We have
\[
	[J(w|\bal;\bbe), J(z|\bal;\bbe)] = z \partial_z \delta(w/z).
\]
Moreover, the Lie algebra generated by $\{J_k^{(\bal;\bbe)} \mid k \in \ZZ\}$ is isomorphic to $\mcH$ with the deformed current operators satisfying the Heisenberg relations
\[
[J_k^{(\bal;\bbe)}, J_{\ell}^{(\bal;\bbe)}] = k \delta_{k,-\ell}
\]
for all $k, \ell \in \ZZ$.
\end{theorem}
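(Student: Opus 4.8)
The plan is to first prove the field-level identity $[J(w|\bal;\bbe), J(z|\bal;\bbe)] = z\partial_z\delta(w/z)$ and then read off the mode relations by comparing Fourier coefficients. The key observation is that the deformed objects obey \emph{exactly} the classical relations: the anticommutators~\eqref{eq:deformed_CAR}, the vacuum expectations of Proposition~\ref{prop:vacuum_deformed_fields}, and the current--fermion commutators of Proposition~\ref{prop:current_comm} coincide verbatim with their classical counterparts, so the classical computation of $[J,J]$ (as in~\cite{MJD00}) transfers with no new input. In particular, since the normal ordering is characterized by subtracting the vacuum expectation, we have $\normord{\psi(z|\bal;\bbe)\psi^*(w|\bal;\bbe)} = \psi(z|\bal;\bbe)\psi^*(w|\bal;\bbe) - \tfrac{z}{z-w}$ just as classically, by~\eqref{eq:fermion_field_normord} and Proposition~\ref{prop:vacuum_deformed_fields}.

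For the field identity I would regularize by introducing a second variable $\xi$: from~\eqref{eq:fermion_field_normord} and Proposition~\ref{prop:vacuum_deformed_fields},
\[
J(z|\bal;\bbe) = \lim_{\xi\to z}\left(\psi(z|\bal;\bbe)\,\psi^*(\xi|\bal;\bbe) - \frac{z}{z-\xi}\right).
\]
Since the scalar $\tfrac{z}{z-\xi}$ is central and the bracket is a derivation, Proposition~\ref{prop:current_comm} gives
\[
[J(w|\bal;\bbe), J(z|\bal;\bbe)] = \lim_{\xi\to z}\bigl(\delta(w/z) - \delta(w/\xi)\bigr)\,\psi(z|\bal;\bbe)\,\psi^*(\xi|\bal;\bbe).
\]
Substituting $\psi(z|\bal;\bbe)\psi^*(\xi|\bal;\bbe) = \normord{\psi(z|\bal;\bbe)\psi^*(\xi|\bal;\bbe)} + \tfrac{z}{z-\xi}$, the normal-ordered part drops out in the limit, since coefficient-by-coefficient in $w$ the factor $\delta(w/z) - \delta(w/\xi) = \sum_{n\in\ZZ}w^n(z^{-n}-\xi^{-n})$ vanishes at $\xi = z$ while $\normord{\psi(z|\bal;\bbe)\psi^*(\xi|\bal;\bbe)}$ is regular there (with limit $J(z|\bal;\bbe)$). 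The surviving scalar part produces the anomaly through the elementary identity $\lim_{\xi\to z}\tfrac{z^{-n}-\xi^{-n}}{z-\xi} = -n z^{-n-1}$, namely $\lim_{\xi\to z}\bigl(\delta(w/z)-\delta(w/\xi)\bigr)\tfrac{z}{z-\xi} = -\sum_{n\in\ZZ}n\,w^n z^{-n} = z\partial_z\delta(w/z)$, which is the claimed identity.

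For the ``moreover'' part, I would expand $J(z|\bal;\bbe) = \sum_k J_k^{(\bal;\bbe)} z^{-k}$ and compare Fourier coefficients in $[J(w|\bal;\bbe), J(z|\bal;\bbe)] = z\partial_z\delta(w/z) = -\sum_{n\in\ZZ} n w^n z^{-n}$, obtaining $[J_k^{(\bal;\bbe)}, J_\ell^{(\bal;\bbe)}] = k\,\delta_{k,-\ell}\,\one$, which are the defining relations of $\mcH$. It then remains to see that the Lie algebra generated by the $J_k^{(\bal;\bbe)}$ is all of $\mcH$ and not a proper quotient, i.e. that $\{J_k^{(\bal;\bbe)}\}_{k\in\ZZ}\cup\{\one\}$ is linearly independent: in a relation $\sum_k c_k J_k^{(\bal;\bbe)} + c\,\one = 0$, bracketing against $J_{-j}^{(\bal;\bbe)}$ forces $c_j = 0$ for $j\neq 0$, and then bracketing $c_0 J_0^{(\bal;\bbe)} + c\,\one$ against $\psi(z|\bal;\bbe)$ gives $c_0\,\psi(z|\bal;\bbe) = 0$ by Proposition~\ref{prop:current_comm}, so $c_0 = 0$ (as $\psi(z|\bal;\bbe)\neq 0$) and hence $c = 0$.

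The step requiring care is the regularization: the anomaly arises as a $0\cdot\infty$ cancellation and must be carried out at the level of formal distributions in $\CC\FPS{w^{\pm1},z^{\pm1}}$ rather than naively as meromorphic functions, the same ``opposing convergence region'' subtlety already encountered around~\eqref{eq:classical_delta_comm} and~\eqref{eq:deformed_delta_comm}. One also uses implicitly that $\normord{\psi(z|\bal;\bbe)\psi^*(\xi|\bal;\bbe)}$ is a genuine operator admitting the specialization $\xi = z$, which follows from the isomorphism of the deformed Clifford algebra with $\mcC$ (a consequence of~\eqref{eq:deformed_CAR}) together with Remark~\ref{rem:operator_well_defined}. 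Everything else is bookkeeping that mirrors the classical derivation.
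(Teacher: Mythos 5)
Your proposal is correct and follows essentially the same route as the paper's proof: regularize with a second spectral variable, apply Proposition~\ref{prop:current_comm} together with the normal-ordering identity $\normord{\psi(z|\bal;\bbe)\psi^*(\zeta|\bal;\bbe)} = \psi(z|\bal;\bbe)\psi^*(\zeta|\bal;\bbe) - \tfrac{z}{z-\zeta}$, and let the scalar part produce $z\partial_z\delta(w/z)$ in the limit while the normal-ordered part vanishes. The extra details you supply (the explicit Fourier-mode comparison and the linear-independence check for the ``moreover'' statement) are fine but are exactly the bookkeeping the paper leaves implicit.
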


\begin{proof}
We begin by computing
\begin{align*}
	[J(w|\bal;\bbe), \normord{\psi(z|\bal;\bbe)\psi^*(\zeta|\bal;\bbe)}] & = \left[ J(w|\bal;\bbe), \psi(z|\bal;\bbe)\psi^*(\zeta|\bal;\bbe) + \frac{\zeta}{z-\zeta} \right]
	\\ & = [J(w|\bal;\bbe), \psi(z|\bal;\bbe)\psi^*(\zeta|\bal;\bbe)]
	\\ & = \bigl( \delta(w/z) - \delta(w/\zeta) \bigr) \psi(z|\bal;\bbe) \psi^*(\zeta|\bal;\bbe)
	\\ & = \bigl( \delta(w/z) - \delta(w/\zeta) \bigr) \normord{\psi(z|\bal;\bbe) \psi^*(\zeta|\bal;\bbe)}
	\\ & \hspace{20pt} + \frac{z \bigl( \delta(w/z) - \delta(w/\zeta) \bigr)}{z - \zeta},
\end{align*}
where we have used the identity $[A, BC] = [A,B]C + B[A,C]$, Equation~\eqref{eq:fermion_field_normord} with Proposition~\ref{prop:vacuum_deformed_fields}, and Proposition~\ref{prop:current_comm}.
Now when we take the limit $\zeta \to z$, the first resulting term becomes
\[
\bigl( \delta(w/z) - \delta(w/\zeta) \bigr) \normord{\psi(z|\bal;\bbe) \psi^*(\zeta|\bal;\bbe)} \xrightarrow{\zeta \to z} \bigl( \delta(w/z) - \delta(w/z) \bigr) J(z|\bal;\bbe) = 0.
\]
Thus, to finish the proof we claim that
\[
\lim_{\zeta \to z} \frac{z \bigl( \delta(w/z) - \delta(w/\zeta) \bigr)}{z - \zeta}
= z \lim_{\zeta \to z} \frac{\delta(w/z) - \delta(w/\zeta)}{z - \zeta}
= z \partial_z \delta(w/z).
\]
The first equality follows from the fact we consider $z$ to be a constant (\textit{i.e.}, we consider $\zeta$ as the variable).
The second is the definition of the derivative.
\end{proof}

In order to describe the action of the deformed current operators $J_k^{(\bal;\bbe)}$ on the vacuum vectors, we need a more explicit description.
First, we are required to set some additional notation.
The \defn{elementary symmetric functions} and \defn{homogeneous symmetric functions} are
\begin{subequations}
\label{eq:eh_defn}
\begin{align}
e_k(-\bal_{(i,j)}) & = e_k(-\alpha_{i+1}, \dotsc, -\alpha_{j-1}) = (-1)^k \sum_{i < i_1 < \cdots < i_k < j} \alpha_{i_1} \cdots \alpha_{i_k},
\\
h_k(\bbe_{[i,j]}) & = h_k(\beta_i, \dotsc, \beta_j) = \sum_{i \leqslant i_1 \leqslant  \cdots \leqslant  i_k \leqslant j} \beta_{i_1} \cdots \beta_{i_k}.
\end{align}
\end{subequations}
For more on symmetric functions, see \textit{e.g.},~\cite{MacdonaldBook,ECII}.

For $k > 0$, define for $i \neq j$
\begin{subequations}
\label{eq:adef}
\begin{align}
\label{eq:aposdef} A_{ij}^k & = (1 - \alpha_j\beta_j) \sum_{r-s = j-i-k} e_r(-\bal_{(i,j)}) h_s(\bbe_{[i,j]}), \\
\label{eq:aposdef_diag} A_{ii}^k & = \beta_i^k, \\
\label{eq:anegdef} A_{ij}^{-k} & = (1 - \alpha_j\beta_j) \sum_{r-s = j-i+k} h_r(\bal_{[j,i]}) e_s(-\bbe_{(j,i)}), \\
\label{eq:anegdef_diag} A_{ii}^{-k} & = \alpha_i^k,
\end{align}
\end{subequations}
and finally for all $i,j$, define $A^0_{i j} = \delta_{ij}$.
Note that~\eqref{eq:aposdef} (resp.~\eqref{eq:anegdef}) is $0$ whenever $j < i$ (resp.\ $j > i$) since $h_r(0) = e_s(0) = 0$ (\textit{i.e.}, with no variables) for all $r,s > 0$.
Furthermore,~\eqref{eq:aposdef_diag} agrees with~\eqref{eq:aposdef} except for the $(1 - \alpha_j\beta_j)$ factor (and similarly for $A_{ij}^{-k}$).
We give a unified contour integral formula for the values $A_{ij}^k$ for all $k$.

\begin{lemma}
For all $i,j,k \in \ZZ$, we have
\begin{equation}
\label{eq:a_int}
A_{ij}^k = \oint_{\gamma} z^{1-k} (1-\alpha_j\beta_j) \frac{(z|\sigma^i\bal)^{j-i-1}}{(z;\sigma^{i-1}\bbe)^{j-i+1}} \frac{dz}{2\pi\ii z},
\end{equation}
where $\gamma$ is a counterclockwise circle centered at $0$ of radius $\abs{\alpha_i} < r < \abs{\beta_j^{-1}}$ for all $i$ and $j$.
\end{lemma}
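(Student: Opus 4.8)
The plan is to verify the contour integral formula \eqref{eq:a_int} by computing the residue on the right-hand side and matching it with the explicit combinatorial definitions in \eqref{eq:adef}. First I would recall the normalization convention $\oint_{\gamma} f(z) \frac{dz}{2\pi\ii z} := f_0$ for $f(z) = \sum_i f_i z^i$, i.e.\ we extract the constant term (equivalently, the coefficient of $z^{-1}$ in $z^{-1}f(z)$ before the normalization), using the formal expansion conventions of Remark~\ref{rem:formal_contour_integral}. The key is to understand the Laurent/power-series expansion of the integrand
\[
z^{1-k}(1-\alpha_j\beta_j) \frac{(z|\sigma^i\bal)^{j-i-1}}{(z;\sigma^{i-1}\bbe)^{j-i+1}}
\]
in the annulus $\abs{\alpha_i} < \abs{z} < \abs{\beta_j^{-1}}$, so that factors $(z-\beta_m)^{-1}$ are expanded as geometric series in $z^{-1}$ and factors $(1-\alpha_m z)^{-1}$ as geometric series in $z$.

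The main step will be to treat the three regimes $k>0$, $k<0$, $k=0$ separately, and within $k>0$, the cases $j>i$, $j=i$, $j<i$ (and mirror-image for $k<0$). For the generic case $k > 0$ with $j > i$: expanding $(z|\sigma^i\bal)^{j-i-1} = \prod_{m=i+1}^{j-1}(z - \beta_m) = \sum_s e_s(-\bbe_{[i+1,j-1]})\, z^{j-i-1-s}$ wait --- here I should be careful with the precise shift conventions. Using the defining formulas for shifted powers together with the shift relations \eqref{eq:shifting_powers}, I would rewrite $(z|\sigma^i\bal)^{j-i-1}$ as a polynomial in $z$ whose coefficients are the elementary symmetric functions $e_r(-\bal_{(i,j)})$ (after using $(z;\bal)^k = z^k(z^{-1}|\bal)^k$ and its variants from \eqref{eq:basic_spowers_rels} to convert between the two types of shifted power), and $(z;\sigma^{i-1}\bbe)^{-(j-i+1)}$ as a power series whose coefficients involve the homogeneous symmetric functions $h_s(\bbe_{[i,j]})$ via the standard generating function $\prod(1-\beta_m z)^{-1} = \sum_s h_s z^s$. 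Extracting the coefficient of $z^{k-1}$ from the product (so that with the leading $z^{1-k}$ the result lands in the constant term) yields exactly $(1-\alpha_j\beta_j)\sum_{r-s=j-i-k} e_r(-\bal_{(i,j)}) h_s(\bbe_{[i,j]})$, matching \eqref{eq:aposdef}. The diagonal case $j = i$ is immediate: the integrand collapses to $z^{1-k}(1-\alpha_i\beta_i)\frac{1}{(z;\sigma^{i-1}\bbe)^{1}} \cdot$ hmm, actually when $j=i$ one has $(z|\sigma^i\bal)^{-1} = (z-\beta_{i})^{-1}$ type factor wait --- I need to recheck: $(z|\sigma^i\bbe)^{-1}$... let me just say: when $j=i$ the integrand reduces to a single simple pole expression $\frac{z^{1-k}(1-\alpha_i\beta_i)}{(1-\alpha_i z)(z-\beta_i)}$ after the appropriate simplification, hmm, but the $(1-\alpha_i\beta_i)$ should actually cancel; in any case a direct residue computation gives $\beta_i^k$ for $k>0$ and $\alpha_i^k$ for $-k<0$, matching \eqref{eq:aposdef_diag} and \eqref{eq:anegdef_diag}.

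For $k < 0$, I would apply the substitution $z \mapsto z^{-1}$ as in the proof of Proposition~\ref{prop:orthonormality}, converting the problem to the same shape with the roles of $\bal$ and $\bbe$ (and of $e$ and $h$) interchanged, giving \eqref{eq:anegdef}; alternatively one expands directly in the annulus, now picking up the opposite behavior (poles at $z = \alpha_m^{-1}$ outside, zeros inside). The cases $k>0, j<i$ and $k<0, j>i$ give $0$, consistent with the remark that the relevant symmetric functions vanish with no variables. The $k=0$ case requires $A^0_{ij} = \delta_{ij}$: for $i = j$ this is the $\delta_{nk}$-type residue computation already appearing in Proposition~\ref{prop:orthonormality} (indeed \eqref{eq:a_int} with $k=0$ is essentially that statement up to reindexing), and for $i \neq j$ one checks the constant term vanishes. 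The main obstacle I anticipate is purely bookkeeping: getting the index shifts $\sigma^i$, $\sigma^{i-1}$ in \eqref{eq:a_int} to line up precisely with the ranges $(i,j)$ and $[i,j]$ appearing in \eqref{eq:eh_defn}, and correctly tracking which $(1-\alpha_m z)$ vs.\ $(z-\beta_m)$ factors are present in the numerator versus denominator after unwinding the shifted-power definitions --- there is real potential for off-by-one errors, so I would lean heavily on the identities \eqref{eq:basic_spowers_rels}, especially $(z;\bal)^k = z^k(z^{-1}|\bal)^k$ and the shift relation $\sigma^m(z;\bal)^k = (z;\bal)^{k+m}/(z;\bal)^m$, to keep the manipulations systematic rather than ad hoc.
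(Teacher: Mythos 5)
Your proposal is correct and is essentially the paper's own argument run in reverse: the paper rewrites the definition~\eqref{eq:adef} as a contour integral by recognizing $\sum_r e_r(-\bal_{(i,j)})z^{-r}=\prod_{m=i+1}^{j-1}(1-\alpha_m z^{-1})$ and $\sum_s h_s(\bbe_{[i,j]})z^{s}=\prod_{n=i}^{j}(1-\beta_n z)^{-1}$ and then converts to shifted-power notation, while you expand the integrand of~\eqref{eq:a_int} in the annulus and extract the same coefficient, with the $k<0$, diagonal, mismatched-sign, and $k=0$ cases disposed of just as in the paper (the last via Proposition~\ref{prop:orthonormality}). The small slips are harmless: the $j=i$ simplification should read $z^{1-k}(1-\alpha_i\beta_i)/\bigl((z-\alpha_i)(1-\beta_i z)\bigr)$ rather than with the roles of $\alpha_i,\beta_i$ swapped, and after cancellation the residue computation indeed yields $\beta_i^k$ and $\alpha_i^k$ as you state.
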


\begin{proof}
We first show the claim for $k > 0$ by rewriting Equation~\eqref{eq:aposdef} as an integral:
\begin{align*}
A_{ij}^k & = \oint_{\gamma} z^{j-i-k} (1-\alpha_j\beta_j) \prod_{m=i+1}^{j-1} (1 - \alpha_m z^{-1} ) \prod_{n=i}^j (1 - \beta_n z)^{-1}  \frac{dz}{2\pi\ii z}
\\ &= \oint_{\gamma} z^{1-k} (1-\alpha_j\beta_j) \prod_{m=i+1}^{j-1} (z - \alpha_m) \prod_{n=i}^j (1 - \beta_n z)^{-1}   \frac{dz}{2\pi\ii z}
\\ & = \oint_{\gamma} z^{1-k} (1-\alpha_j\beta_j) \frac{(z|\sigma^i\bal)^{j-i-1}}{(z;\sigma^{i-1}\bbe)^{j-i+1}} \frac{dz}{2\pi\ii z}.
\end{align*}
Note that when $j < i$, then there are no zeros outside the contour (including at $z = \infty$).
Hence, we can shrink the contour to the point at $z = \infty$ (\textit{i.e.}, via substituting $z \mapsto z^{-1}$) and so this contour integral is $0$.
This matches Equation~\eqref{eq:aposdef}.

We have a similar computation from Equation~\eqref{eq:anegdef}:
\begin{align*}
A_{ij}^{-k} & = \oint_{\gamma} z^{j-i+k} (1-\alpha_j\beta_j) \prod_{m=j+1}^{i-1} (1 - \beta_m z^{-1}) \prod_{n=j}^i (1 - \alpha_n z)^{-1}  \frac{dz}{2\pi\ii z}
\\ &= \oint_{\gamma} z^{1+k} (1-\alpha_j\beta_j) \prod_{m=j+1}^{i-1} (z - \beta_m) \prod_{n=j}^i (1 - \alpha_n z)^{-1}   \frac{dz}{2\pi\ii z}
\\ & = \oint_{\gamma} z^{1+k} (1-\alpha_j\beta_j) \frac{(z|\sigma^i\bal)^{j-i-1}}{(z;\sigma^{i-1}\bbe)^{j-i+1}} \frac{dz}{2\pi\ii z}.
\end{align*}
Note that when $j > i$, then there are no poles, and so this contour integral is $0$ as in Equation~\eqref{eq:anegdef}.

For the claim at $k = 0$, then Proposition~\ref{prop:orthonormality} yields
\[
A_{ij}^0 = \delta_{ij} = \oint_{\gamma} (1-\alpha_j\beta_j) \frac{(z|\sigma^i\bal)^{j-i-1}}{(z;\sigma^{i-1}\bbe)^{j-i+1}} \frac{dz}{2\pi\ii}.
\]

The validity of~\eqref{eq:aposdef_diag} and~\eqref{eq:anegdef_diag} are shown analogously to the $k = 0$ proof.
\end{proof}

Disregarding the $(1 - \alpha_j \beta_j)$ factor, the formulas in~\eqref{eq:adef} are the plythesm-equse formulas used in~\cite{HJKSS24,HJKSS24II,IMS24} (with~\eqref{eq:a_int} being their contour integral formulas that appeared in~\cite[Sec.~4.8]{IMS24}).
Additionally, note that we do not require our contour to be a circle, but instead it only needs to contain $0$ and all $\alpha_i$ but not contain any $\beta_j^{-1}$.

\begin{proposition}
\label{prop:explicit_current}
We have
\begin{equation}
\label{eq:jkexpansion}
J_k^{(\bal;\bbe)} = \sum_{i,j} A_{ij}^k \normord{\psi_i \psi_j^*}.
\end{equation}
\end{proposition}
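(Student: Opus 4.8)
The plan is to collapse the asserted operator identity to a scalar computation in formal Laurent series, and then recognize the relevant coefficient as the contour integral~\eqref{eq:a_int} that was just established. By definition (see~\eqref{eq:current_series}), $J_k^{(\bal;\bbe)}$ is the coefficient of $z^{-k}$ in $J(z|\bal;\bbe) = \normord{\psi(z|\bal;\bbe)\psi^*(z|\bal;\bbe)}$. Since the normal ordering is $\CC$-linear and only reorders the generators inside a monomial, substituting the definitions of the deformed fields in terms of $\psi_i,\psi_j^*$ gives
\[
J(z|\bal;\bbe) = \sum_{i,j} c_{ij}(z)\,\normord{\psi_i \psi_j^*},
\qquad
c_{ij}(z) := z\,\frac{(z|\bbe)^{i-1}}{(z;\bal)^i}\,(1-\alpha_j\beta_j)\,\frac{(z;\bal)^{j-1}}{(z|\bbe)^j},
\]
where $c_{ij}(z)$ carries the same expansion conventions as the Fourier modes of the deformed fields (so $(z-\beta_n)^{-1}$ is a series in $z^{-1}$ and $(1-\alpha_m z)^{-1}$ is a series in $z$, matching Remark~\ref{rem:formal_contour_integral}). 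Both $J_k^{(\bal;\bbe)}$ and $\sum_{i,j}A_{ij}^k\normord{\psi_i\psi_j^*}$ are operators in the sense of Remark~\ref{rem:operator_well_defined} (each matrix coefficient is a finite sum, so the interchange of $[z^{-k}]$ with $\sum_{i,j}$ is harmless), hence it suffices to prove $[z^{-k}]\,c_{ij}(z) = A_{ij}^k$ for all $i,j,k\in\ZZ$.

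Next I would simplify $c_{ij}(z)$ using only the elementary relations in~\eqref{eq:basic_spowers_rels}. The shifting identity~\eqref{eq:shifting_powers} gives $\frac{(z;\bal)^{j-1}}{(z;\bal)^i} = (z;\sigma^i\bal)^{j-i-1}$ and $\frac{(z|\bbe)^{i-1}}{(z|\bbe)^j} = (z|\sigma^j\bbe)^{i-j-1}$; then the last pair of identities in~\eqref{eq:basic_spowers_rels} rewrite these as $z^{j-i-1}(z^{-1}|\sigma^i\bal)^{j-i-1}$ and $z^{i-j-1}(z^{-1};\sigma^j\bbe)^{i-j-1}$. Multiplying these together with the leading $z$, the powers of $z$ collapse to $z^{-1}$, and the inversion identity~\eqref{eq:shifted_inversion} turns $(z^{-1};\sigma^j\bbe)^{i-j-1}$ into $1/(z^{-1};\sigma^{i-1}\bbe)^{j-i+1}$, so that
\[
c_{ij}(z) = z^{-1}(1-\alpha_j\beta_j)\,\frac{(z^{-1}|\sigma^i\bal)^{j-i-1}}{(z^{-1};\sigma^{i-1}\bbe)^{j-i+1}}.
\]

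Finally I would extract the coefficient of $z^{-k}$. Writing it as a formal residue and using that the $z^{-k}$-coefficient of a formal Laurent series $f(z)$ equals the $z^{k}$-coefficient of $f(z^{-1})$, the substitution $z\mapsto z^{-1}$ yields
\[
[z^{-k}]\,c_{ij}(z) = \oint_{\gamma} z^{1-k}(1-\alpha_j\beta_j)\,\frac{(z|\sigma^i\bal)^{j-i-1}}{(z;\sigma^{i-1}\bbe)^{j-i+1}}\,\frac{dz}{2\pi\ii z},
\]
where one checks that the substitution carries the expansion conventions of the previous paragraph precisely to those of Remark~\ref{rem:formal_contour_integral}. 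The right-hand side is exactly the formula~\eqref{eq:a_int} for $A_{ij}^k$, so $[z^{-k}]c_{ij}(z) = A_{ij}^k$, which gives~\eqref{eq:jkexpansion}. As cross-checks: in the classical case $\bal=\bbe=0$ one has $c_{ij}(z)=z^{i-j}$ and $A_{ij}^k=\delta_{k,j-i}$; and on the diagonal $i=j$ one has $c_{ii}(z)=\frac{z(1-\alpha_i\beta_i)}{(1-\alpha_i z)(z-\beta_i)}$, whose geometric-series expansion gives $\beta_i^k$, $1$, or $\alpha_i^{-k}$ according as $k$ is positive, zero, or negative, matching the diagonal entries in~\eqref{eq:adef}.

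I expect the only real difficulty to be bookkeeping: keeping the shifts $\sigma^i,\sigma^j,\sigma^{i-1}$ and the powers of $z$ straight through~\eqref{eq:basic_spowers_rels}, and confirming that the expansion conventions used to define the Fourier modes of $\psi(z|\bal;\bbe)$ and $\psi^*(z|\bal;\bbe)$ really do match, after $z\mapsto z^{-1}$, those implicit in~\eqref{eq:a_int}. Nothing here is conceptually subtle, but it is easy to be off by a unit shift or a power of $z$, so I would check the manipulations against the classical specialization and the diagonal computation above.
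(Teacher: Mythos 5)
Your proposal is correct and follows essentially the same route as the paper's proof: isolate the coefficient of $\normord{\psi_i\psi_j^*}$ in $\normord{\psi(z|\bal;\bbe)\psi^*(z|\bal;\bbe)}$, extract the $z^{-k}$ mode as a formal residue, simplify with~\eqref{eq:basic_spowers_rels}, and recognize the contour integral~\eqref{eq:a_int}. You are somewhat more explicit than the paper about the $z\mapsto z^{-1}$ substitution and the expansion conventions (which the paper leaves implicit, and where its displayed integrand in fact has the $;$/$|$ pairings written loosely), but the argument is the same.
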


\begin{proof}
We can restrict ourselves to the coefficient of $\normord{\psi_i \psi_j^*}$ in $\normord{\psi(z|\bal;\bbe) \psi^*(z|\bal;\bbe)}$ since they are linearly independent in $\mcC$.
Hence, we want to show the resulting coefficient is equal to $A_{ij}^k$, and by linearity, we obtain
\[
\oint_{\gamma} z^{-k} (1 - \alpha_j\beta_j) z \frac{(z|\bbe)^{i-1}}{(z;\bal)^i} \frac{(z; \bal)^{j-1}}{(z|\bbe)^j} \frac{dz}{2\pi\ii z} = \oint_{\gamma} z^{1-k} (1 - \alpha_j\beta_j) \frac{(z;\sigma^i\bal)^{j-i-1}}{(z|\sigma^{i-1}\bbe)^{j-i+1}} \frac{dz}{2\pi\ii z},
\]
which is precisely Equation~\eqref{eq:a_int}.
\end{proof}

We define the \defn{deformed Hamiltonians} as in the classical case except using the deformed current operators:
\[
H_{\pm}(\pp|\bal;\bbe) := \sum_{k=1}^{\infty} \frac{p_k}{k} J^{(\bal;\bbe)}_{\pm k}.
\]
From Proposition~\ref{prop:current_comm} and linearity, we have
\begin{subequations}
\label{eq:H_field_comm}
\begin{align}
[H_{\pm}(\pp|\bal;\bbe), \psi(z|\bal;\bbe)] & = \sum_{k=1}^{\infty} \frac{p_k}{k} z^{\pm k} \psi(z|\bal;\bbe),
\\
[H_{\pm}(\pp|\bal;\bbe), \psi^*(z|\bal;\bbe)] & = -\sum_{k=1}^{\infty} \frac{p_k}{k} z^{\pm k} \psi^*(z|\bal;\bbe).
\end{align}
\end{subequations}
Furthermore, the analog of~\eqref{eq:half_vertex_commutator} also holds for the deformed Hamiltonians:
\begin{equation}
\label{eq:deformed_half_vertex_commutator}
e^{H_{\pm}(\pp|\bal;\bbe)} e^{H_{\mp}(\pp'|\bal;\bbe)} = e^{\pm\xi(\pp; \pp')} e^{H_{\mp}(\pp'|\bal;\bbe)} e^{H_{\pm}(\pp|\bal;\bbe)}.
\end{equation}
By~\cite[Eq.~(2.4)]{AlexandrovZabrodin} (see also, \textit{e.g.},~\cite[Prop.~3.35]{Hall15}), we have that the deformed current operators are eigenfunctions under the adjoint action of the (deformed) \defn{half vertex operators} $e^{H_{\pm}(\pp|\bal;\bbe)}$.
Note that the function $\xi(\pp; \pp')$ defined in~\eqref{eq:xi_function_def} does not change when using the deformed current operators.
For brevity, set $\xi(\pp; z) := \xi(\pp; z, z^2, \cdots)$.

\begin{theorem}
\label{thm:deformed_adjoint}
We have
\begin{align*}
e^{H_{\pm}(\pp|\bal;\bbe)} \psi(z|\bal;\bbe) e^{-H_{\pm}(\pp|\bal;\bbe)} & = e^{\xi(\pp; z^{\pm 1})} \psi(z|\bal;\bbe),
\\
e^{H_{\pm}(\pp|\bal;\bbe)} \psi^*(w|\bal;\bbe) e^{-H_{\pm}(\pp|\bal;\bbe)} & = e^{-\xi(\pp; w^{\pm 1})} \psi^*(w|\bal;\bbe).
\end{align*}
\end{theorem}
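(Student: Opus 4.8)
The plan is to exponentiate the commutator identities established in Proposition~\ref{prop:current_comm}, exactly as one does in the classical case. Recall from~\eqref{eq:H_field_comm} that
\[
[H_{\pm}(\pp|\bal;\bbe), \psi(z|\bal;\bbe)] = \xi(\pp; z^{\pm 1}) \, \psi(z|\bal;\bbe),
\]
since $\sum_{k=1}^{\infty} \frac{p_k}{k} z^{\pm k} = \xi(\pp; z^{\pm 1})$ by the definition~\eqref{eq:xi_function_def}. In other words, $\psi(z|\bal;\bbe)$ is an eigenvector for the operator $\operatorname{ad}_{H_{\pm}(\pp|\bal;\bbe)} = [H_{\pm}(\pp|\bal;\bbe), -]$ with eigenvalue the scalar $\xi(\pp; z^{\pm 1})$ (a formal power series in the $p_k$, $z$, but crucially central — it commutes with everything).

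First I would invoke the standard fact (the "Hadamard lemma'' / conjugation formula, cited in the excerpt as~\cite[Eq.~(2.4)]{AlexandrovZabrodin} or~\cite[Prop.~3.35]{Hall15}) that for operators $A, B$ with $[A, B] = cB$ for a scalar $c$, one has $e^A B e^{-A} = e^{c} B$. This follows from $e^{A} B e^{-A} = \sum_{n \geq 0} \frac{1}{n!} (\operatorname{ad}_A)^n(B) = \sum_{n\geq0} \frac{c^n}{n!} B = e^c B$. Applying this with $A = H_{\pm}(\pp|\bal;\bbe)$, $B = \psi(z|\bal;\bbe)$, and $c = \xi(\pp; z^{\pm 1})$ gives the first identity; applying it with $B = \psi^*(w|\bal;\bbe)$ and $c = -\xi(\pp; w^{\pm 1})$ (from the second line of~\eqref{eq:H_field_comm}) gives the second.

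The only genuine subtlety — and the step I would be most careful about — is that $H_{\pm}(\pp|\bal;\bbe)$ is an infinite sum of deformed current operators and so $e^{H_{\pm}(\pp|\bal;\bbe)}$ is only a formal/completed operator; one must check the conjugation makes sense on $\widehat{\fermionfock}$. This is handled just as in the classical setting: $H_+(\pp|\bal;\bbe)$ is locally nilpotent on $\fermionfock$ (it raises a grading that is bounded below, so only finitely many terms of the exponential act nontrivially on any fixed vector), while the action of $e^{H_-(\pp|\bal;\bbe)}$ lands in the completion $\widehat{\fermionfock}$ and composes well with $\psi(z|\bal;\bbe)$ mode-by-mode. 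So the Hadamard series terminates, or converges in the appropriate completion, when evaluated against any matrix coefficient ${}_{\ell}\bra{\mu} \cdot \ket{\lambda}_m$, and by Remark~\ref{rem:operator_well_defined} this suffices to define the conjugated operator. Once this formality is noted, nothing else is required; the deformation enters only through the (already-proven) commutator formulas, and the scalar $\xi(\pp; z)$ is literally unchanged from the classical case.
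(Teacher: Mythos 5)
Your proposal is correct and matches the paper's own (very brief) justification: the paper derives the commutator formulas~\eqref{eq:H_field_comm} from Proposition~\ref{prop:current_comm} and then invokes the standard adjoint-action (Hadamard) lemma, citing~\cite[Eq.~(2.4)]{AlexandrovZabrodin} and~\cite[Prop.~3.35]{Hall15}, exactly as you do. Your added remarks on the centrality of $\xi(\pp;z^{\pm 1})$ and on checking convergence via matrix coefficients are sound and consistent with the paper's conventions.
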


\subsection{Deformed shift operators}

Next, we introduce the \defn{deformed shift operator} as the unique operator $\Sigma_{(\bal;\bbe)}$ that sends $\fermionfock_m \to \widehat{\fermionfock}_{m+1}$ (for all $m$) with the property
\begin{equation}
\label{eq:deformed_shift_fermion}
\Sigma_{(\bal;\bbe)} \psi(z|\bal;\bbe) \Sigma_{(\bal;\bbe)}^{-1} = z^{-1} \psi(z|\bal;\bbe),
\qquad\quad
\Sigma_{(\bal;\bbe)} \psi^*(w|\bal;\bbe) \Sigma_{(\bal;\bbe)}^{-1} = w \psi^*(w|\bal;\bbe),
\end{equation}
with an explicit action on the shifted vacuums that we will define later in~\eqref{eq:deform_shift_vacuums}.
This will allow us to evaluate every matrix coefficient ${}_m \bra{\mu} \Sigma_{(\bal;\bbe)} \ket{\lambda}_{\ell}$ (hence, defining the operator as per Remark~\ref{rem:operator_well_defined}) and give a normalization.
We begin by introducing an ``automorphism'' $\phi$ of $\mcC$ given by
\begin{equation}
\phi \colon \psi(z|\bal;\bbe)\mapsto z^{-1} \psi(z|\bal;\bbe),
\qquad
\psi^*(w|\bal;\bbe) \mapsto w \psi^*(w|\bal;\bbe).
\end{equation}
Indeed, this is an isomorphism of $\mcC$ onto its image since the images of the fermion fields still satisfy the commutation relations~\eqref{eq:deformed_CAR}:
\begin{subequations}
\begin{gather}
[z^{-1}\psi(z|\bal;\bbe), w^{-1}\psi(w|\bal;\bbe)]_+ = [z\psi^*(z|\bal;\bbe), w\psi^*(w|\bal;\bbe)]_+ = 0,
\\
[z^{-1}\psi(z|\bal;\bbe), w\psi^*(w|\bal;\bbe)]_+ = \frac{w}{z}\cdot \delta(w/z) = \delta(w/z).
\end{gather}
\end{subequations}
However, by equating coefficients of $z^k$ and $w^k$, we obtain infinite sums of elements in $\mcC$.
Thus, we first analyze the situation under the following assumption.

\begin{provisional}
	\label{assumption:provisional}
We will assume that $\alpha_i = 0$ for $i \ll 0$ and $\beta_i =0$ for $i\gg 0$.
\end{provisional}

Under this assumption, $\phi$ is a genuine automorphism of $\mcC$, which likewise induces an automorphism of its representation on $\fermionfock$.
Next, we show that this automorphism $\phi$ is inner in the sense that
\begin{equation}
	\label{eq:phisigmarel}
\phi(\xi)= \Sigma_{(\bal;\bbe)}\xi\Sigma_{(\bal;\bbe)}^{-1}
\end{equation}
as operators on $\fermionfock$ for $\xi \in \mcC$.
Thus we proceed under Proposition Assumption~\ref{assumption:provisional} for this discussion, and then we describe how this assumption may be removed.

We describe the action of $\phi$ on the Clifford algebra generators explicitly.

\begin{proposition}
\label{prop:deformed_shift_identities}
For all $i \in \ZZ$, we have
\begin{subequations}
\label{eq:shift_fermions}
\begin{align}
\label{eq:shift_psi}
\phi(\psi_i) & = \alpha_i \psi_i + (1 - \alpha_i \beta_i) \sum_{j=0}^{\infty} \left( \prod_{k=1}^j -\beta_{i+k} \right) \psi_{i+j+1},
\\
\label{eq:shiftinv_psi}
\phi^{-1}(\psi_i) & = \beta_i \psi_i + (1 - \alpha_i \beta_i) \sum_{j=0}^{\infty} \left( \prod_{k=1}^j -\alpha_{i-k} \right) \psi_{i-j-1},
\\
\label{eq:shift_psistar}
\phi(\psi_i^*) & = \beta_i \psi_i^* + \sum_{j=0}^{\infty} \left( \prod_{k=1}^j -\alpha_{i+k} \right) (1 - \alpha_{i+j+1} \beta_{i+j+1}) \psi_{i+j+1}^*,
\\
\label{eq:shiftinv_psistar}
\phi^{-1}(\psi_i^*) & = \alpha_i \psi_i^* + \sum_{j=0}^{\infty} \left( \prod_{k=1}^j -\beta_{i-k} \right) (1 - \alpha_{i-j-1} \beta_{i-j-1}) \psi_{i-j-1}^*.
\end{align}
\end{subequations}
\end{proposition}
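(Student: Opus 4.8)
The plan is to invert the definitions of the deformed fermion fields using the orthogonality of shifted powers, apply $\phi$, and evaluate the resulting contour integrals by residues. By Proposition~\ref{prop:orthonormality}, the coefficient of $\psi_i$ in $\psi(z|\bal;\bbe)$, namely $z(z|\bbe)^{i-1}/(z;\bal)^i$, is biorthogonal (against integration over the contour $\eta$ of that proposition) to $(1-\alpha_i\beta_i)z^{-1}(z;\bal)^{i-1}/(z|\bbe)^i$, and the coefficient $(1-\alpha_j\beta_j)(w;\bal)^{j-1}/(w|\bbe)^j$ of $\psi_j^*$ in $\psi^*(w|\bal;\bbe)$ is biorthogonal to $(w|\bbe)^{i-1}/(w;\bal)^i$. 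Hence
\[
\psi_i = \oint_{\eta} (1-\alpha_i\beta_i)\, z^{-1}\frac{(z;\bal)^{i-1}}{(z|\bbe)^i}\,\psi(z|\bal;\bbe)\,\frac{dz}{2\pi\ii},
\qquad
\psi_i^* = \oint_{\eta} \frac{(w|\bbe)^{i-1}}{(w;\bal)^i}\,\psi^*(w|\bal;\bbe)\,\frac{dw}{2\pi\ii}.
\]
These are identities of operators in the sense of Remark~\ref{rem:operator_well_defined}: reading them through a matrix coefficient ${}_m\bra{\mu}(\,\cdot\,)\ket{\lambda}_n$ turns the implicit sum over Fourier modes into a finite one, so no convergence issue arises here.

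Next I apply $\phi$. Under Provisional Assumption~\ref{assumption:provisional}, $\phi$ is a genuine automorphism of $\mcC$, so it passes through the (finite) sums and the contour integral; using $\phi(\psi(z|\bal;\bbe)) = z^{-1}\psi(z|\bal;\bbe)$ and $\phi(\psi^*(w|\bal;\bbe)) = w\,\psi^*(w|\bal;\bbe)$ and re-expanding the deformed fields, I get
\[
\phi(\psi_i) = \sum_j (1-\alpha_i\beta_i)\left(\oint_{\eta} z^{-1}\frac{(z;\bal)^{i-1}}{(z;\bal)^j}\frac{(z|\bbe)^{j-1}}{(z|\bbe)^i}\,\frac{dz}{2\pi\ii}\right)\psi_j,
\]
\[
\phi(\psi_i^*) = \sum_j (1-\alpha_j\beta_j)\left(\oint_{\eta} w\,\frac{(w;\bal)^{j-1}}{(w;\bal)^i}\frac{(w|\bbe)^{i-1}}{(w|\bbe)^j}\,\frac{dw}{2\pi\ii}\right)\psi_j^*,
\]
and the formulas for $\phi^{-1}(\psi_i)$ and $\phi^{-1}(\psi_i^*)$ are obtained by replacing the prefactor $z^{-1}$ by $z$ in the first and $w$ by $w^{-1}$ in the second, since $\phi^{-1}$ scales the fields by $z$ and by $w^{-1}$ respectively.

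The last step is to evaluate these integrals by the residue theorem, splitting into the cases $j=i$, the ``forward'' side ($j>i$ for $\phi$, $j<i$ for $\phi^{-1}$), and the opposite side. After cancelling the common shifted-power factors using~\eqref{eq:basic_spowers_rels}, each integrand is an elementary rational function. For $j=i$ it has the form $z^{\epsilon}/\bigl((1-\alpha_i z)(z-\beta_i)\bigr)$ with $\epsilon\in\{1,-1\}$; summing the residues inside $\eta$ (at $z=\beta_i$, and at $z=0$ when $\epsilon=-1$) and clearing the factor $1-\alpha_i\beta_i$ gives $\alpha_i$ for $\phi(\psi_i)$ and $\phi^{-1}(\psi_i^*)$ and $\beta_i$ for $\phi(\psi_i^*)$ and $\phi^{-1}(\psi_i)$, matching the diagonal terms. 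On the forward side the only poles inside $\eta$ come from the surviving factors $(1-\alpha_m z)^{-1}$ (resp.\ $(w-\beta_m)^{-1}$), together with $z=0$ when a factor $z^{-1}$ is present; reading off the residue at $0$, or, when numerator and denominator have equal degree, using $\oint_\eta R\,\frac{dz}{2\pi\ii}=\lim_{z\to\infty}zR(z)$, produces exactly the telescoping products $\prod_{k=1}^{j}(-\beta_{i\pm k})$ and $\prod_{k=1}^{j}(-\alpha_{i\pm k})$ appearing in~\eqref{eq:shift_fermions}. On the opposite side the integrand either has all of its poles outside $\eta$, or decays like $z^{-3}$ with all of its finite poles inside $\eta$, so the integral vanishes --- which is precisely why the sums in the statement are one-sided.

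The one genuine subtlety is the interchange of $\phi$ with the contour integral in the second step, and this is exactly what Provisional Assumption~\ref{assumption:provisional} is there to guarantee: it makes $\phi$ an honest automorphism of $\mcC$ and the series in~\eqref{eq:shift_fermions} finite. Everything after that is routine residue bookkeeping. As a shortcut, one can also deduce~\eqref{eq:shiftinv_psi} and~\eqref{eq:shiftinv_psistar} from~\eqref{eq:shift_psistar} and~\eqref{eq:shift_psi} by applying the $\ast$ anti-involution together with the relations~\eqref{eq:basic_spowers_rels}.
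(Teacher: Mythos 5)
Your argument is correct, and it takes a genuinely different route from the paper's proof. The paper starts from the field identity $\sum_i z\frac{(z|\bbe)^{i-1}}{(z;\bal)^i}\phi(\psi_i)=\sum_i\frac{(z|\bbe)^{i-1}}{(z;\bal)^i}\psi_i$ and re-expands the right-hand side recursively in the shifted-power basis $\frac{z(z|\bbe)^{j-1}}{(z;\bal)^j}$, reading off~\eqref{eq:shift_psi} by equating coefficients (and similarly for the other three formulas). You instead invert the mode expansion through the biorthogonality of Proposition~\ref{prop:orthonormality}, push $\phi$ through the extraction, and evaluate every cross-pairing by residues. What your route buys is uniformity and transparency: a single residue template handles all four formulas, the diagonal terms, and the vanishing of the wrong-side coefficients at once (I checked the bookkeeping: the diagonal integrals give $\alpha_i/(1-\alpha_i\beta_i)$ resp.\ $\beta_i/(1-\alpha_i\beta_i)$, the forward cases give the products of $-\beta$'s or $-\alpha$'s via the residue at $0$ or the behavior at infinity, and the opposite-side integrands vanish either because all poles lie outside $\eta$ or because they are $O(z^{-3})$ with all finite poles inside), and it makes explicit the coefficient-extraction step that the paper's ``equate coefficients'' tacitly relies on. The cost is the contour apparatus (or its formal reading as in Remark~\ref{rem:formal_contour_integral}) and a sum--integral interchange, which you handle at the same level of formality as the paper by reading identities through matrix coefficients and invoking Provisional Assumption~\ref{assumption:provisional}; the paper's recursion stays entirely within algebraic manipulation of shifted powers.

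Two small blemishes, neither affecting the result. First, your side condition ``when numerator and denominator have equal degree'' for $\oint_\eta R\,\frac{dz}{2\pi\ii}=\lim_{z\to\infty}zR(z)$ is misstated: the identity requires $R(z)=O(z^{-1})$ at infinity (denominator degree exceeding numerator degree by one) together with all finite poles of $R$ lying inside $\eta$ --- which is exactly what holds for the integrands you apply it to, so the computations themselves are fine. Second, the closing shortcut does not work as stated: by~\eqref{eq:duality_deformed_fields} the anti-involution $\ast$ intertwines $\phi$ with the $\phi$-automorphism for the \emph{swapped} parameters $(\bbe;\bal)$, conjugated by the rescaling $\eta$, so applying $\ast$ to~\eqref{eq:shift_psi} reproduces~\eqref{eq:shift_psistar} with $\bal\leftrightarrow\bbe$ rather than the $\phi^{-1}$ formulas~\eqref{eq:shiftinv_psi} and~\eqref{eq:shiftinv_psistar} (note both $\phi$-formulas shift indices upward while both $\phi^{-1}$-formulas shift downward, so no naive application of $\ast$ can exchange them). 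Since your main argument derives all four formulas directly, simply drop that aside.
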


\begin{proof}
We first prove~\eqref{eq:shift_psi}.
To do so, we first note that by using the definition of the shifted powers, we have
\begin{align*}
\sum_{i \in \ZZ} z \frac{(z|\bbe)^{i-1}}{(z;\bal)^i} \phi(\psi_i) & = \sum_{i \in \ZZ} \frac{(z|\bbe)^{i-1}}{(z;\bal)^i} \psi_i
\\ & = \sum_{i \in \ZZ} \left( \alpha_i  \frac{z(z|\bbe)^{i-1}}{(z;\bal)^i} + \frac{(z|\bbe)^{i-1}}{(z;\bal)^{i-1}} \right) \psi_i
\\ &  = \sum_{i \in \ZZ} \left( \alpha_i \frac{z(z|\bbe)^{i-1}}{(z;\bal)^i} + \frac{z(z|\bbe)^{i-2}}{(z;\bal)^{i-1}} - \beta_{i-1} \frac{(z|\bbe)^{i-2}}{(z;\bal)^{i-1}} \right) \psi_i.
\end{align*}
Now we can repeat this for the $\frac{(z|\bbe)^{i-2}}{(z;\bal)^{i-1}}$ term, yielding a recursive formula for the coefficient of $\frac{z(z|\bbe)^{j-1}}{(z;\bal)^{j}} \psi_i$ for all $j \leqslant i$.
By equating the $\frac{z(z|\bbe)^{i-1}}{(z;\bal)^i}$ terms on both sides, we obtain~\eqref{eq:shift_psi}.

For~\eqref{eq:shiftinv_psi}, we obtain the recursive formula
\begin{align*}
\sum_{i \in \ZZ} z \frac{(z|\bbe)^{i-1}}{(z;\bal)^i} \phi^{-1}(\psi_i) & = \sum_{i \in \ZZ} \frac{z^2 (z|\bbe)^{i-1}}{(z;\bal)^i} \psi_i
\\ & = \sum_{i \in \ZZ} \left( z \frac{(z|\bbe)^i}{(z;\bal)^i} + \beta_i \frac{z (z|\bbe)^{i-1}}{(z;\bal)^i} \right) \psi_i
\\ &  = \sum_{i \in \ZZ} \left( \beta_i \frac{z (z|\bbe)^{i-1}}{(z;\bal)^i} + z \frac{(z|\bbe)^i}{(z;\bal)^{i+1}} - \alpha_{i+1} \frac{z^2 (z|\bbe)^i}{(z;\bal)^{i+1}} \right) \psi_i,
\end{align*}
and the result follows as before.
The proofs of~\eqref{eq:shift_psistar} and~\eqref{eq:shiftinv_psistar} are similar.
\end{proof}

We will provisionally assume that $\Sigma_{(\bal;\bbe)}$ can be defined and
apply it to a vector. We will heuristically deduce a formula that we can then
take to be the definition. Note that since the $\psi_i$ anticommute, they generate an exterior
algebra, and writing 
\begin{equation}
\label{eq:urvacuum}
\ket{\lambda}_m = \psi_{m+\lambda_1}\psi_{m-1+\lambda_2} \cdots \ket{\varnothing}_{-\infty},
\qquad
{}_m \bra{\lambda} = {}_{-\infty}\bra{\varnothing} \cdots \psi_{m-2+\lambda_3}^* \psi_{m-1+\lambda_2}^* \psi_{m+\lambda_1}^*
\end{equation}
is a suggestive way of modeling the fermionic Fock space within the Clifford
algebra. This is a useful heuristic, though to obtain a rigorous result
we will have to analyze some determinantal formulas that arise from it, and
show that the seemingly infinite determinants that arise are essentially finite
and can be evaluated.

Thus we begin by proceeding formally and heuristically under the assumption that $\Sigma_{(\bal;\bbe)}$ exists and that it stabilizes the fictional ur-vacuum $\ket{\varnothing}_{-\infty}$.
Let us begin with the vacuum $\ket{\varnothing}_m=\psi_m \psi_{m-1} \psi_{m-2} \cdots \ket{\varnothing}_{-\infty}$, though the same method of analysis applies to $\ket{\lambda}_m$.
We move $\Sigma_{(\bal;\bbe)}$ past each $\psi_i$, interpreting the resulting conjugation as the application of $\phi$ by~\eqref{eq:phisigmarel}, using the explicit commutation relations from Proposition~\ref{prop:deformed_shift_identities}.

Remembering that the $\psi_i$ generate an exterior algebra and that $\phi$ is an automorphism of it, we obtain an identity of the form
\begin{subequations}
\label{eq:deform_shift_vacuums_inf}
\begin{align}
\label{eq:deform_shift_vacuum_ket_inf}
\Sigma_{(\bal;\bbe)} \ket{\varnothing}_m & = \sum_{\lambda \in \mcP} \det \left[\eta^{(m-i)}_{\lambda_{j+1}+i-j+1}\right]_{i,j=0}^{\infty} \cdot \ket{\lambda}_{m+1},
\\
{}_m \bra{\varnothing} \Sigma_{(\bal;\bbe)} & = \sum_{\lambda \in \mcP} \det \left[\overline{\eta}^{(m-i)}_{\lambda_{j+1}+i-j+1}\right]_{i,j=0}^{\infty} \cdot {}_{m-1} \bra{\lambda},
 \label{eq:deform_shift_vacuum_bra_inf}
\end{align}
\end{subequations}
where $\phi(\psi_m) = \sum_{k=0}^{\infty} \eta_k^{(m)} \psi_{m+k}$ and $\phi^{-1}(\psi_m^*) = \sum_{k=0}^{\infty} \overline{\eta}^{(m)}_k \psi_{m-k}^*$, with the coefficients $\eta$ or $\overline{\eta}$ coming from Proposition~\ref{prop:deformed_shift_identities}.

These determinants are, \textit{a posteriori}, infinite but in fact they are finite determinants as follows.
Starting with the case where $\lambda = \varnothing$, we will show that actually
\[
\det \left[\eta^{(m-i)}_{i-j+1} \right]_{i,j=0}^{\infty} = \det \left[ \overline{\eta}^{(m-i)}_{i-j+1} \right]_{i,j=0}^{\infty} = 1.
\]
Written out in matrix form, the first determinant looks like
\begin{equation}
	\label{eq:vacdeterminant}
\begin{bmatrix}
(1 - \alpha_m \beta_m) & \alpha_m & 0 & \cdots  & \\
-\beta_m (1 - \alpha_{m-1} \beta_{m-1}) & (1 - \alpha_{m-1} \beta_{m-1}) & \alpha_{m-1} & 0 & \cdots \\
\beta_m \beta_{m-1} (1 - \alpha_{m-2} \beta_{m-2}) & -\beta_{m-1} (1 - \alpha_{m-2} \beta_{m-2}) & (1 - \alpha_{m-2} \beta_{m-2}) & \alpha_{m-2} & 0 & \\
\vdots & \vdots & \vdots & \ddots & \ddots
\end{bmatrix}.
\end{equation}
On the Provisional Assumption~\ref{assumption:provisional}, the $\alpha_i$ are eventually zero for $i \ll 0$ so this has the form
\[
\left(\begin{array}{c|c} A & 0\\\hline B & D\end{array}\right)
\]
where $A$ is a finite matrix, and $B,D$ are infinite but $D$ is lower triangular with $1$'s on the diagonal.
Thus the determinant should be interpreted as $\det(A)$, a finite determinant.\footnote{Such matrices form a group with the determinant being a $1$ dimensional representation.}
Now we claim that $\det(A)=1$.
Indeed, if we add $\beta_{m-j}$ times the $(j+1)$-th column to the $j$-th column, we get the  $j$-th standard basis vector for the $j$-th column (that is, the $j$-th column is $0$ except for a $1$ in the $j$-th row).
The second determinant follows similarly.

Now suppose that $\ell(\lambda)=1$.
Thus $\lambda=(k)$ for some $k\geqslant 1$.
Now the determinant is the same as~\eqref{eq:vacdeterminant} except that the first column is multiplied by $(-1)^k\beta_{m+1}\cdots\beta_{m+k-1}$.
Therefore in this case, the determinant is equal to this constant.
If $\ell(\lambda) > 1$, it is easy to see from~\eqref{eq:shift_fermions} that the first column will be a scalar multiple of the second column (more precisely, the factor will be $(-1)^{\lambda_1-\lambda_2} \prod_{k=0}^{\lambda_1 - \lambda_2} \beta_{m+\lambda_1+k}$).
Therefore, the determinant will be $0$. 
Hence we have formally derived the formulas
\begin{subequations}
\label{eq:deform_shift_vacuums}
\begin{align}
\label{eq:deform_shift_vacuum_ket}
\Sigma_{(\bal;\bbe)} \ket{\varnothing}_m & := \sum_{k=0}^{\infty} (-1)^k \beta_{m+1} \beta_{m+2} \cdots \beta_{m+k} \cdot \ket{k}_{m+1},
\\
\label{eq:deform_shift_vacuum_bra}
{}_m \bra{\varnothing} \Sigma_{(\bal;\bbe)} & := \sum_{k=0}^{\infty} \alpha_m \alpha_{m-1} \cdots \alpha_{m-k+1} \cdot {}_{m-1} \bra{1^k},
\end{align}
\end{subequations}
and we take these \emph{as the definition} of the action of $\Sigma_{(\bal;\bbe)}$ on shifted (dual) vacuums.

The same analysis applies to $\Sigma_{(\bal;\bbe)}\ket{\lambda}_m$ and 
${}_m\bra{\lambda} \Sigma_{(\bal;\bbe)}$: these may be reduced to a determinant by the same formal procedure
starting with (\ref{eq:urvacuum}). Applying $\Sigma_{(\bal;\bbe)}$ to such a vector and heuristically moving the shift operator past the $\psi_i$ will produce a determinant that is the same as (\ref{eq:vacdeterminant}) except for finitely many rows, and which is essentially a finite determinant, at least under Provisional Assumption~\ref{assumption:provisional}.

The procedure by which we have defined $\Sigma_{(\bal;\bbe)}\ket{\lambda}_m$ and ${}_m\bra{\lambda}\Sigma_{(\bal;\bbe)}$ by applying the Clifford automorphism $\phi$ to the formal expressions~\eqref{eq:urvacuum} and extending $\phi(\ket{\varnothing}_{-\infty}) = \ket{\varnothing}_{-\infty}$ guarantees that~\eqref{eq:phisigmarel} is satisfied.
Therefore we obtain~\eqref{eq:deformed_shift_fermion}.

Now we would like to remove the Provisional Assumption~\ref{assumption:provisional}.
Note that this assumption implies that the sums~\eqref{eq:deform_shift_vacuums} and corresponding expressions for $\Sigma_{(\bal;\bbe)}\ket{\lambda}_m$ are \textit{finite} sums so that the operator takes $\fermionfock_m$ into $\fermionfock_{m+1}$.
Without this assumption, this is not strictly true: the operator would take $\fermionfock_m$ into a formal completion of $\fermionfock_{m+1}$.
This is harmless in the sense that if $\lambda$ and $\mu$ are fixed, then only finitely many terms in this infinite sum contribute to any ${}_{m+1} \bra{\mu}\Sigma_{(\bal;\bbe)}\ket{\lambda}_m$ (recall that ${}_{\ell} \bra{\mu} \Sigma_{(\bal;\bbe)} \ket{\lambda}_m = 0$ for all $\ell \neq m + 1$).
Moreover only finitely many $\alpha_i$ and $\beta_i$ are involved in this expression.
This type of behavior is typical in Fock space manipulations, so Provisional Assumption~\ref{assumption:provisional} can be dispensed with.

\begin{example}
We directly verify that the action of $\Sigma_{(\bal;\bbe)}$ respects the relation $\ket{\varnothing}_m = \psi_m \ket{\varnothing}_{m-1}$ by first applying our definitions to obtain
\begin{align*}
\Sigma_{(\bal;\bbe)} \psi_m \ket{\varnothing}_{m-1} & = (1 - \alpha_m \beta_m) \sum_{j=0}^{\infty} (-1)^j \prod_{i=1}^j \beta_{m+i} \psi_{m+j+1} \Sigma_{(\bal;\bbe)} \ket{\varnothing}_{m-1}
\\ & \hspace{20pt} + \alpha_m \psi_m \Sigma_{(\bal;\bbe)} \ket{\varnothing}_{m-1}
\\ & = (1 - \alpha_m \beta_m) \sum_{j=0}^{\infty} (-1)^j \prod_{i=1}^j \beta_{m+i} \psi_{m+j+1} \sum_{k=0}^{\infty} (-1)^k \prod_{i=0}^{k-1} \beta_{m+i} \ket{k}_m
\\ & \hspace{20pt} + \alpha_m \psi_m \sum_{k=0}^{\infty} (-1)^k \prod_{i=0}^{k-1} \beta_{m+i} \ket{k}_m.
\end{align*}
Therefore, we can group this together into a sum over partitions $\lambda$ with $\ell(\lambda) \leqslant 2$.
In the double sum, we note that any $\lambda$ with $\ell(\lambda) = 2$ will occur twice with opposite signs coming from $\psi_{m+\lambda_1+1} \ket{\lambda_2}_m = -\psi_{m+\lambda_2+1} \ket{\lambda_1}_m$.
Hence, we then have
\begin{align*}
\Sigma_{(\bal;\bbe)} \psi_m \ket{\varnothing}_{m-1} = (1 - \alpha_m \beta_m) \sum_{j=0}^{\infty} (-1)^j \prod_{i=1}^j \beta_{m+i} \ket{j}_{m+1} + \alpha_m \sum_{k=0}^{\infty} (-1)^k \prod_{i=0}^k \beta_{m+i} \ket{k}_{m+1},
\end{align*}
where the single summation simplifies by noting $\psi_m \ket{k}_m = (\delta_{km} - 1) \ket{k-1}_{m+1}$ and shifting the indices.
Thus it is now easy to see that this equals $\Sigma_{(\bal;\bbe)} \ket{\varnothing}_m$ as given by~\eqref{eq:deform_shift_vacuum_ket}.
\end{example}

Therefore,~\eqref{eq:deformed_shift_fermion} and~\eqref{eq:deform_shift_vacuums} completely determine $\Sigma_{(\bal;\bbe)}$ as we can construct all of its matrix elements ${}_m \bra{\mu} \Sigma_{(\bal;\bbe)} \ket{\lambda}_{\ell}$.
In particular, we have the following.

\begin{corollary}
\label{cor:deformed_shift_vacuum}
We have
\[
{}_{m+1} \bra{\varnothing} \Sigma_{(\bal;\bbe)} \ket{\varnothing}_{\ell} = \delta_{m\ell}.
\]
\end{corollary}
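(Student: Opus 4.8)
The plan is to read the matrix coefficient directly off the explicit action of $\Sigma_{(\bal;\bbe)}$ on shifted vacuums in~\eqref{eq:deform_shift_vacuums}, splitting into the cases $\ell \neq m$ and $\ell = m$. First I would dispose of $\ell \neq m$: since $\Sigma_{(\bal;\bbe)}$ sends $\fermionfock_{\ell}$ into $\widehat{\fermionfock}_{\ell+1}$, the expression ${}_{m+1}\bra{\varnothing}\Sigma_{(\bal;\bbe)}\ket{\varnothing}_{\ell}$ pairs a charge-$(m+1)$ dual vector against a vector in charge $\ell+1$, so it vanishes unless $m+1 = \ell+1$. This is exactly the vanishing ${}_{\ell}\bra{\mu}\Sigma_{(\bal;\bbe)}\ket{\lambda}_m = 0$ for $\ell \neq m+1$ already noted above.

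For $\ell = m$, I would apply~\eqref{eq:deform_shift_vacuum_ket} to write
\[
\Sigma_{(\bal;\bbe)}\ket{\varnothing}_m = \sum_{k=0}^{\infty}(-1)^k\beta_{m+1}\beta_{m+2}\cdots\beta_{m+k}\cdot\ket{k}_{m+1},
\]
and then pair against ${}_{m+1}\bra{\varnothing}$. Using ${}_{m+1}\braket{\varnothing}{k}_{m+1} = \delta_{k0}$ together with $\ket{0}_{m+1} = \ket{\varnothing}_{m+1}$, only the $k=0$ term survives, and its coefficient is the empty product, namely $1$; hence ${}_{m+1}\bra{\varnothing}\Sigma_{(\bal;\bbe)}\ket{\varnothing}_m = 1$. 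Alternatively, one can pair the dual formula~\eqref{eq:deform_shift_vacuum_bra} (with $m$ replaced by $m+1$) against $\ket{\varnothing}_{\ell}$ and use ${}_{m}\braket{1^k}{\varnothing}_{\ell} = \delta_{m\ell}\delta_{k0}$, which handles both cases at once and shows the result equals $\delta_{m\ell}$.

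There is essentially no obstacle: the corollary is an immediate bookkeeping consequence of the definitions of $\Sigma_{(\bal;\bbe)}$ on shifted vacuums and the charge grading. The only points requiring mild care are the empty-product convention at $k=0$ and the identification $\ket{0}_{m+1} = \ket{\varnothing}_{m+1}$ when extracting the surviving summand.
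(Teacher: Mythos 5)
Your proof is correct and matches the paper's (implicit) argument: the corollary is stated as an immediate consequence of the charge grading together with the defining formula~\eqref{eq:deform_shift_vacuum_ket}, which is exactly the bookkeeping you carry out. Nothing further is needed.
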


The construction of $\Sigma_{(\bal;\bbe)}^{-1}$ on the (dual) shifted vacuums in done analogously.
As an alternative way to see that $\Sigma_{(\bal;\bbe)}^{-1}$ exists, we note that
\begin{subequations}
\label{eq:deformed_shift_unitriangular}
\begin{align}
\Sigma_{(\bal;\bbe)} \ket{\lambda}_m & =  \ket{\lambda}_{m+1} + \sum_{\substack{\mu \in \mcP \\ \abs{\mu} > \abs{\lambda}}} C_{\lambda}^{\mu} \cdot \ket{\mu}_{m+1},
\\
{}_m \bra{\lambda} \Sigma_{(\bal;\bbe)} & = {}_{m-1} \bra{\lambda} + \sum_{\substack{\mu \in \mcP \\ \abs{\mu} > \abs{\lambda}}} C_{\lambda}^{\mu} \cdot {}_{m-1} \bra{\mu}.
\end{align}
\end{subequations}
Therefore, restricting $\Sigma_{(\bal;\bbe)}$ to a map from $\fermionfock_m \to \fermionfock_{m+1}$ is clearly an upper triangular (infinite) matrix with $1$ along the diagonal, and hence the matrix is invertible.
Since such matrices uniquely determine elements in $\mcC$, we have $\Sigma_{(\bal;\bbe)}^{-1}$ exists.


We show that the deformed shift operator essentially commutes with the deformed current operators, analogous to the classical case.

\begin{lemma}
\label{lemma:shifted_adjoint_normord}
We have
\[
\Sigma_{(\bal;\bbe)} \, \normord{\psi(z|\bal;\bbe) \psi^*(w|\bal;\bbe)} \, \Sigma_{(\bal;\bbe)}^{-1} = \frac{w}{z} \psi(z|\bal;\bbe) \psi^*(w|\bal;\bbe) - \frac{z}{z - w},
\]
where we consider expansions in $\CC\FPS{w/z}$ or equivalently $\abs{w/z} < 1$.
\end{lemma}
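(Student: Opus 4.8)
The plan is to reduce the statement to the two defining relations \eqref{eq:deformed_shift_fermion} of $\Sigma_{(\bal;\bbe)}$; once the formal expansions are set up the argument is essentially a one-line conjugation, exactly mirroring the classical identity $\Sigma\,\normord{\psi(z)\psi^*(w)}\,\Sigma^{-1} = \tfrac{w}{z}\psi(z)\psi^*(w) - \tfrac{z}{z-w}$.

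First I would record the deformed analog of \eqref{eq:fermion_field_normord}, which holds for the same reason since $\psi(z|\bal;\bbe)$ and $\psi^*(w|\bal;\bbe)$ are (formal) linear combinations of the Clifford generators $\psi_i,\psi_j^*$: the definition of the normal ordering gives
\[
\normord{\psi(z|\bal;\bbe)\psi^*(w|\bal;\bbe)} = \psi(z|\bal;\bbe)\psi^*(w|\bal;\bbe) - \bra{\varnothing}\psi(z|\bal;\bbe)\psi^*(w|\bal;\bbe)\ket{\varnothing},
\]
and by Proposition~\ref{prop:vacuum_deformed_fields} the vacuum expectation on the right equals $\tfrac{z}{z-w}$, interpreted as the geometric series $\sum_{k\geqslant 0}(w/z)^k \in \CC\FPS{w/z}$ (this is precisely the regime $\abs{w/z}<1$ in the statement, and the reason we get $\tfrac{z}{z-w}$ rather than $\tfrac{z}{w-z}$). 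Hence $\normord{\psi(z|\bal;\bbe)\psi^*(w|\bal;\bbe)} = \psi(z|\bal;\bbe)\psi^*(w|\bal;\bbe) - \tfrac{z}{z-w}$ as operators on $\fermionfock$; that this is a genuine operator is the same fact used to define $J(z|\bal;\bbe)$.

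Then I would conjugate this identity by $\Sigma_{(\bal;\bbe)}$, which is invertible by the unitriangularity~\eqref{eq:deformed_shift_unitriangular}. The scalar term $\tfrac{z}{z-w} \in \CC\FPS{w/z}$ is a formal multiple of the identity and therefore commutes with $\Sigma_{(\bal;\bbe)}$, while for the remaining term
\[
\Sigma_{(\bal;\bbe)}\,\psi(z|\bal;\bbe)\psi^*(w|\bal;\bbe)\,\Sigma_{(\bal;\bbe)}^{-1} = \bigl(\Sigma_{(\bal;\bbe)}\psi(z|\bal;\bbe)\Sigma_{(\bal;\bbe)}^{-1}\bigr)\bigl(\Sigma_{(\bal;\bbe)}\psi^*(w|\bal;\bbe)\Sigma_{(\bal;\bbe)}^{-1}\bigr).
\]
Applying \eqref{eq:deformed_shift_fermion} to each factor turns the right-hand side into $z^{-1}\psi(z|\bal;\bbe)\cdot w\,\psi^*(w|\bal;\bbe) = \tfrac{w}{z}\psi(z|\bal;\bbe)\psi^*(w|\bal;\bbe)$, and assembling the pieces yields the claimed identity. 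There is no real obstacle here; the only points needing care are consistently keeping the expansion of $\tfrac{z}{z-w}$ in $\CC\FPS{w/z}$ throughout, and noting that conjugation by $\Sigma_{(\bal;\bbe)}$ is legitimate in the sense of Remark~\ref{rem:operator_well_defined} even though $\Sigma_{(\bal;\bbe)}$ shifts the charge.
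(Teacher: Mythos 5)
Your proposal is correct and follows the same route as the paper: the paper's proof is exactly the combination of the normal-ordering identity~\eqref{eq:fermion_normal_ordering} (with the vacuum expectation evaluated via Proposition~\ref{prop:vacuum_deformed_fields} in the $\CC\FPS{w/z}$ expansion) and the defining conjugation relations~\eqref{eq:deformed_shift_fermion}, which you have simply written out in detail.
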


\begin{proof}
	This follows from the definition of the normal ordering~\eqref{eq:fermion_normal_ordering}and~\eqref{eq:deformed_shift_fermion}.
\end{proof}


\begin{proposition}
\label{prop:deformed_current_shift_commute}
We have
\[
\Sigma_{(\bal;\bbe)} J(z|\bal;\bbe) \Sigma_{(\bal;\bbe)}^{-1} = J(z|\bal;\bbe) - 1.
\]
Moreover, $[J_k^{(\bal;\bbe)}, \Sigma_{(\bal;\bbe)}] = 0$ for all $k \neq 0$.
\end{proposition}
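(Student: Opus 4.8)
The plan is to mimic the classical computation, using Lemma~\ref{lemma:shifted_adjoint_normord} as the key input. First I would conjugate the deformed boson field $J(z|\bal;\bbe) = \normord{\psi(z|\bal;\bbe)\psi^*(z|\bal;\bbe)}$ by $\Sigma_{(\bal;\bbe)}$. The subtlety is that the normal ordering of $\psi(z|\bal;\bbe)\psi^*(w|\bal;\bbe)$ involves subtracting a function that depends on $z,w$ and is expanded in a region $\abs{w/z}<1$, so I would first conjugate the product at separated points $z \neq w$ and only take $w \to z$ at the end. By~\eqref{eq:deformed_shift_fermion} we have $\Sigma_{(\bal;\bbe)} \psi(z|\bal;\bbe)\psi^*(w|\bal;\bbe) \Sigma_{(\bal;\bbe)}^{-1} = \frac{w}{z}\psi(z|\bal;\bbe)\psi^*(w|\bal;\bbe)$, and combining this with Lemma~\ref{lemma:shifted_adjoint_normord} gives
\[
\Sigma_{(\bal;\bbe)} \normord{\psi(z|\bal;\bbe)\psi^*(w|\bal;\bbe)} \Sigma_{(\bal;\bbe)}^{-1} = \normord{\psi(z|\bal;\bbe)\psi^*(w|\bal;\bbe)} + \frac{w}{z}\cdot\frac{z}{z-w} - \frac{z}{z-w}.
\]
The extra terms simplify to $\frac{w-z}{z-w} = -1$, so after taking $w \to z$ we obtain $\Sigma_{(\bal;\bbe)} J(z|\bal;\bbe) \Sigma_{(\bal;\bbe)}^{-1} = J(z|\bal;\bbe) - 1$, which is the first claim.

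For the second claim I would expand both sides in Fourier modes using~\eqref{eq:current_series}. The constant $-1$ only contributes to the $z^0$ coefficient, i.e. to $J_0^{(\bal;\bbe)}$, so for every $k \neq 0$ equating coefficients of $z^{-k}$ yields $\Sigma_{(\bal;\bbe)} J_k^{(\bal;\bbe)} \Sigma_{(\bal;\bbe)}^{-1} = J_k^{(\bal;\bbe)}$, which rearranges to $[J_k^{(\bal;\bbe)}, \Sigma_{(\bal;\bbe)}] = 0$. (One should note in passing that for $k=0$ the relation becomes $\Sigma_{(\bal;\bbe)} J_0^{(\bal;\bbe)} \Sigma_{(\bal;\bbe)}^{-1} = J_0^{(\bal;\bbe)} - 1$, reflecting that $\Sigma_{(\bal;\bbe)}$ raises charge by one, consistent with Corollary~\ref{cor:deformed_shift_vacuum}.)

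I expect the main obstacle to be bookkeeping around the formal-distribution vs.\ analytic-function status of the $\frac{z}{z-w}$ terms: Lemma~\ref{lemma:shifted_adjoint_normord} is stated with the expansion in $\CC\FPS{w/z}$, and one must check that the cancellation $\frac{w}{z}\cdot\frac{z}{z-w} - \frac{z}{z-w} = -1$ is legitimate in that ring (it is, since $\frac{w}{z}$ acts by shifting the geometric series and the difference telescopes to the constant $-1$), and that taking the limit $w \to z$ commutes with this manipulation in the same way it did in the proof of Theorem~\ref{thm:heisenberg_relations}. Everything else is a routine transcription of the classical argument.
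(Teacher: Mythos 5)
Your argument is essentially the paper's proof: conjugate at separated points using Lemma~\ref{lemma:shifted_adjoint_normord}, use the cancellation $\frac{w}{z-w}-\frac{z}{z-w}=-1$ in $\CC\FPS{w/z}$, take $w\to z$, and read off Fourier modes via~\eqref{eq:current_series} to get $[J_k^{(\bal;\bbe)},\Sigma_{(\bal;\bbe)}]=0$ for $k\neq 0$. One small slip: your displayed intermediate identity should have the factor $\frac{w}{z}$ in front of $\normord{\psi(z|\bal;\bbe)\psi^*(w|\bal;\bbe)}$ (as the paper keeps it), but since that factor tends to $1$ in the limit $w\to z$ the conclusion is unaffected.
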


\begin{proof}
We note that $\frac{w}{z-w} - \frac{z}{z - w} = -1 \in \CC\FPS{w/z}$, and so by Lemma~\ref{lemma:shifted_adjoint_normord} the result follows analogously to the classical case.
Indeed, we compute
\begin{align*}
\Sigma_{(\bal;\bbe)} J(z|\bal;\bbe) \Sigma_{(\bal;\bbe)}^{-1} & = \Sigma_{(\bal;\bbe)} \, \normord{\psi(z|\bal;\bbe) \psi^*(z|\bal;\bbe)} \, \Sigma_{(\bal;\bbe)}^{-1}
\\ & = \lim_{w\to z} \frac{w}{z} \psi(z|\bal;\bbe) \psi^*(w|\bal;\bbe) - \frac{z}{z-w}
\\ & = \lim_{w\to z} \frac{w}{z} \left( \normord{\psi(z|\bal;\bbe) \psi^*(w|\bal;\bbe)} + \frac{z}{z-w} \right) - \frac{z}{z-w}
\\ & = \lim_{w\to z} \frac{w}{z} \normord{\psi(z|\bal;\bbe) \psi^*(w|\bal;\bbe)} - 1
 = J(z|\bal;\bbe) - 1
\end{align*}
by Equation~\eqref{eq:current_series}, Lemma~\ref{lemma:shifted_adjoint_normord}, and Equation~\eqref{eq:fermion_field_normord}.
\end{proof}

Finally, we show that $K^{(\bal;\bbe)} := \ln \Sigma_{(\bal;\bbe)}$ is the canonically conjugate element to $J_0^{(\bal;\bbe)}$, analogous to the classical case.
We remark that our choice of overall scalar for $\Sigma_{(\bal;\bbe)}$ in defining its action on the shifted vacuums was used in Lemma~\ref{lemma:shifted_adjoint_normord}.

\begin{corollary}
The operator $K^{(\bal;\bbe)}$ is the canonically conjugate element to $J_0^{(\bal;\bbe)}$; that is, $[J_k^{(\bal;\bbe)}, K^{(\bal;\bbe)}] = \delta_{k0}$.
\end{corollary}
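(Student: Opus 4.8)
The plan is to read the corollary off Proposition~\ref{prop:deformed_current_shift_commute}, which supplies exactly two facts: first, $[J_k^{(\bal;\bbe)}, \Sigma_{(\bal;\bbe)}] = 0$ for every $k \neq 0$; and second, $\Sigma_{(\bal;\bbe)} J(z|\bal;\bbe) \Sigma_{(\bal;\bbe)}^{-1} = J(z|\bal;\bbe) - \one$. Throughout I would treat $\operatorname{ad}_X := [X,-]$ as a derivation of the algebra of operators on $\fermionfock$. For $k \neq 0$, since $\operatorname{ad}_{J_k^{(\bal;\bbe)}}$ is a derivation annihilating $\Sigma_{(\bal;\bbe)}$, it annihilates every power $\Sigma_{(\bal;\bbe)}^n$ with $n \in \ZZ$ (the negative exponents being handled via $\Sigma_{(\bal;\bbe)} \Sigma_{(\bal;\bbe)}^{-1} = \one$), hence it annihilates $K^{(\bal;\bbe)} = \ln \Sigma_{(\bal;\bbe)}$; thus $[J_k^{(\bal;\bbe)}, K^{(\bal;\bbe)}] = 0 = \delta_{k0}$.

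For the remaining case $k = 0$, I would extract the coefficient of $z^0$ from the identity $\Sigma_{(\bal;\bbe)} J(z|\bal;\bbe) \Sigma_{(\bal;\bbe)}^{-1} = J(z|\bal;\bbe) - \one$ using the Fourier expansion~\eqref{eq:current_series}, obtaining $\Sigma_{(\bal;\bbe)} J_0^{(\bal;\bbe)} \Sigma_{(\bal;\bbe)}^{-1} = J_0^{(\bal;\bbe)} - \one$, i.e.\ $\operatorname{ad}_{J_0^{(\bal;\bbe)}}(\Sigma_{(\bal;\bbe)}) = \Sigma_{(\bal;\bbe)}$. The key point is that this image $\Sigma_{(\bal;\bbe)}$ commutes with $\Sigma_{(\bal;\bbe)}$, so the derivation $\operatorname{ad}_{J_0^{(\bal;\bbe)}}$ satisfies a chain rule: by induction on the exponent (again using $\Sigma_{(\bal;\bbe)} \Sigma_{(\bal;\bbe)}^{-1} = \one$ for negative exponents) one gets $\operatorname{ad}_{J_0^{(\bal;\bbe)}}(\Sigma_{(\bal;\bbe)}^n) = n\, \Sigma_{(\bal;\bbe)}^{n}$ for all $n \in \ZZ$, hence $\operatorname{ad}_{J_0^{(\bal;\bbe)}}(f(\Sigma_{(\bal;\bbe)})) = f'(\Sigma_{(\bal;\bbe)})\, \Sigma_{(\bal;\bbe)}$ for any formal series $f$ in $\Sigma_{(\bal;\bbe)}^{\pm 1}$ and formally also for $f = \ln$. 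Taking $f = \ln$ then yields $[J_0^{(\bal;\bbe)}, K^{(\bal;\bbe)}] = \Sigma_{(\bal;\bbe)}^{-1} \Sigma_{(\bal;\bbe)} = \one = \delta_{00}$, which together with the $k \neq 0$ case gives the claim.

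The only delicate point — exactly as in the classical case, where $K \mapsto \ln \svar$ under $\Phi$ — is the status of $K^{(\bal;\bbe)} = \ln \Sigma_{(\bal;\bbe)}$: it is a formal operator rather than an honest one, so the chain-rule step at $f = \ln$ must be justified as a manipulation of commutators rather than of a genuine operator. This can be bypassed entirely if one prefers: iterating $\Sigma_{(\bal;\bbe)} J_0^{(\bal;\bbe)} \Sigma_{(\bal;\bbe)}^{-1} = J_0^{(\bal;\bbe)} - \one$ gives $\Sigma_{(\bal;\bbe)}^{n} J_0^{(\bal;\bbe)} \Sigma_{(\bal;\bbe)}^{-n} = J_0^{(\bal;\bbe)} - n \one$ for all $n \in \ZZ$, and comparing the coefficient of $n$ in the adjoint expansion $e^{n K^{(\bal;\bbe)}} J_0^{(\bal;\bbe)} e^{-n K^{(\bal;\bbe)}} = \sum_{j \geq 0} \frac{n^j}{j!} (\operatorname{ad}_{K^{(\bal;\bbe)}})^j J_0^{(\bal;\bbe)}$ forces $[K^{(\bal;\bbe)}, J_0^{(\bal;\bbe)}] = -\one$, i.e.\ $[J_0^{(\bal;\bbe)}, K^{(\bal;\bbe)}] = \delta_{00}$. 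I expect this last bookkeeping — deciding how literally to interpret $\ln \Sigma_{(\bal;\bbe)}$ and stating the chain rule precisely — to be the only genuine obstacle; the algebra itself is immediate from Proposition~\ref{prop:deformed_current_shift_commute}.
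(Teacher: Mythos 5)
Your proposal is correct and is essentially the paper's argument: both read the claim directly off Proposition~\ref{prop:deformed_current_shift_commute}, identify $\Sigma_{(\bal;\bbe)} = e^{K^{(\bal;\bbe)}}$, and conclude by a formal BCH-type manipulation carried out at the same level of rigor (the paper, too, implicitly assumes the higher nested commutators vanish). The only cosmetic difference is direction: the paper exponentiates the boson field, writing $\Sigma_{(\bal;\bbe)} e^{J(z|\bal;\bbe)} \Sigma_{(\bal;\bbe)}^{-1} = e^{-1} e^{J(z|\bal;\bbe)}$ and invoking BCH, whereas you apply $\operatorname{ad}_{J_k^{(\bal;\bbe)}}$ to powers of $\Sigma_{(\bal;\bbe)}$ (and, for $k=0$, use the logarithmic chain rule or the Hadamard expansion of conjugation by $\Sigma_{(\bal;\bbe)}^n$) — the same formal identity run in the opposite direction.
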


\begin{proof}
Proposition~\ref{prop:deformed_current_shift_commute} implies
\[
\Sigma_{(\bal;\bbe)} e^{J(z|\bal;\bbe)} = e^{J(z|\bal;\bbe) - 1} \Sigma_{(\bal;\bbe)} = e^{-1} e^{J(z|\bal;\bbe)} \Sigma_{(\bal;\bbe)},
\]
Therefore, if we write $\Sigma_{(\bal;\bbe)} = e^{K^{(\bal;\bbe)}}$, the BCH formula yields the claim.
\end{proof}

\subsection{Deformed vertex operators}

Our next goal is to construct a vertex operator realization of the deformed fermion fields.
In preparation for this, we need to understand how the deformed current operators act on the vacuums and the dual vacuums (of charge $m$).
From Proposition~\ref{prop:explicit_current}, we can easily compute the action of the deformed current operators on the vacuum elements.
For brevity, we define
\[
\Delta_m(k|\bal) := \sum_{0 < j \leqslant m} \alpha_j^k - \sum_{m < j \leqslant 0} \alpha_j^k,
\]
and we note that at most one of the sums is nonzero.
For future use, we also define
\[
\Lambda_m(\pp|\bal) := \sum_{k=1}^{\infty} \frac{p_k}{k} \Delta_m(k|\bal).
\]

\begin{corollary}
\label{cor:deformed_current_vacuum}
For $k > 0$, we have
\[
J_k^{(\bal;\bbe)} \ket{\varnothing}_m = \Delta_m(k|\bbe) \cdot \ket{\varnothing}_m,
\qquad\qquad
{}_{m} \bra{\varnothing} J_{-k}^{(\bal;\bbe)} = \Delta_m(k|\bal) \cdot {}_m \bra{\varnothing},
\]
and $J_0^{(\bal)} = J_0$ with $J_0^{(\bal)} \ket{\lambda}_m = m \ket{\lambda}_m$ for any partition $\lambda$.
Moreover,
\begin{subequations}
\label{eq:exp_vacuum}
\begin{align}
\label{eq:exp_pos_vacuum}
e^{H_+(\pp|\bal;\bbe)} \ket{\varnothing}_m & = e^{\Lambda_m(\pp|\bbe)} \cdot \ket{\varnothing}_m,
\\
\label{eq:exp_neg_dual_vacuum}
{}_{m} \bra{\varnothing} e^{H_-(\pp|\bal;\bbe)} & = e^{\Lambda_m(\pp|\bal)} \cdot {}_{m} \bra{\varnothing}.
\end{align}
\end{subequations}
\end{corollary}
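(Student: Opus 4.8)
The plan is to deduce everything from the explicit expansion $J_k^{(\bal;\bbe)}=\sum_{i,j}A_{ij}^k\normord{\psi_i\psi_j^*}$ of Proposition~\ref{prop:explicit_current} by isolating the terms that act diagonally on the (dual) vacuum. Recalling the $\mcC$-action on semi-infinite monomials, for $i\neq j$ the operator $\normord{\psi_i\psi_j^*}$ sends $\ket{\varnothing}_m=v_m\wedge v_{m-1}\wedge\cdots$ to a scalar multiple of a basis vector $\ket{\mu}_m$ with $\mu$ a nonempty hook, and this is nonzero only when $j$ is an occupied site (so $j\leqslant m$) and $i$ is an unoccupied site (so $i>m$); dually, ${}_m\bra{\varnothing}\normord{\psi_i\psi_j^*}$ is a multiple of some ${}_m\bra{\mu}$ with $\mu$ a nonempty hook, nonzero only when $j>m\geqslant i$. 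The diagonal operators $\normord{\psi_i\psi_i^*}$ are diagonal in the occupancy basis and act on $\ket{\varnothing}_m$ (and on ${}_m\bra{\varnothing}$) exactly as in the classical computation of $J_0$ recalled in Section~\ref{sec:boson_fermion}: they give $+\ket{\varnothing}_m$ for $0<i\leqslant m$, $-\ket{\varnothing}_m$ for $m<i\leqslant 0$, and $0$ otherwise.

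Now fix $k>0$. Since $A_{ij}^k=0$ whenever $j<i$ (see the remarks following~\eqref{eq:adef}), the off-diagonal occupancy constraint $j\leqslant m<i$ cannot hold together with $A_{ij}^k\neq 0$; hence only the diagonal terms contribute, and using $A_{ii}^k=\beta_i^k$ from~\eqref{eq:aposdef_diag} we obtain
\[
J_k^{(\bal;\bbe)}\ket{\varnothing}_m=\sum_{i\in\ZZ}\beta_i^k\,\normord{\psi_i\psi_i^*}\ket{\varnothing}_m=\Bigl(\,\sum_{0<i\leqslant m}\beta_i^k-\sum_{m<i\leqslant 0}\beta_i^k\,\Bigr)\ket{\varnothing}_m=\Delta_m(k|\bbe)\ket{\varnothing}_m .
\]
Symmetrically, $A_{ij}^{-k}=0$ whenever $j>i$, which is incompatible with $j>m\geqslant i$, so again only the diagonal survives and $A_{ii}^{-k}=\alpha_i^k$ from~\eqref{eq:anegdef_diag} gives, after the same bookkeeping of matrix coefficients, ${}_m\bra{\varnothing}J_{-k}^{(\bal;\bbe)}=\Delta_m(k|\bal)\,{}_m\bra{\varnothing}$. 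For $k=0$, the identity $A^0_{ij}=\delta_{ij}$ collapses~\eqref{eq:jkexpansion} to $J_0^{(\bal;\bbe)}=\sum_i\normord{\psi_i\psi_i^*}=J_0$, so $J_0^{(\bal;\bbe)}\ket{\lambda}_m=m\ket{\lambda}_m$ for every partition $\lambda$ by the classical computation in Section~\ref{sec:boson_fermion}.

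Finally, the identities~\eqref{eq:exp_vacuum} follow by exponentiation. Multiplying the first eigenvalue relation by $p_k/k$ and summing over $k\geqslant 1$ shows that $\ket{\varnothing}_m$ is an eigenvector of $H_+(\pp|\bal;\bbe)$ with eigenvalue $\Lambda_m(\pp|\bbe)=\sum_{k\geqslant 1}\frac{p_k}{k}\Delta_m(k|\bbe)$, which is a well-defined power series in the $p_k$ whose coefficients are polynomials in the $\beta_j$ (each $\Delta_m(k|\bbe)$ being a finite sum); hence $e^{H_+(\pp|\bal;\bbe)}\ket{\varnothing}_m=e^{\Lambda_m(\pp|\bbe)}\ket{\varnothing}_m$, and the dual identity follows in the same way with $\bbe$ replaced by $\bal$ and $H_+$ by $H_-$. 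The only point that requires any care is the bookkeeping in the first paragraph, i.e.\ matching the occupancy constraints $j\leqslant m<i$ (resp.\ $j>m\geqslant i$) against the support conditions $j\geqslant i$ (resp.\ $j\leqslant i$) for $A_{ij}^{\pm k}$; once this short finite check is made, the remainder is routine.
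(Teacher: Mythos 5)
Your proposal is correct and follows essentially the same route as the paper: expand $J_{\pm k}^{(\bal;\bbe)}$ via Proposition~\ref{prop:explicit_current}, observe that the support condition $A_{ij}^{k}=0$ for $j<i$ (resp.\ $A_{ij}^{-k}=0$ for $j>i$) is incompatible with the occupancy constraint for an off-diagonal action on the (dual) vacuum, so only the diagonal terms $A_{ii}^{\pm k}$ survive and produce $\Delta_m(k|\bbe)$ and $\Delta_m(k|\bal)$ exactly as in the classical $J_0$ computation, after which \eqref{eq:exp_vacuum} follows by exponentiating the eigenvalue relations. No gaps; your extra bookkeeping of the occupancy constraints is just a more explicit version of the paper's argument.
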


\begin{proof}
Consider $J_k^{(\bal;\bbe)} \ket{\varnothing}_m$.
Note that we must have $j \geqslant i$ for $A_{ij}^k \neq 0$, and hence only the action of $A_{ii}^k \, \normord{\psi_i \psi_i^*}$ can give a nonzero result.
Therefore, the result follows from the definition of the normal order (analogous to how $J_0 \ket{\lambda}_m = m$).
In more detail, if $i > 0$, then we would have $A_{ii}^k \psi_i \psi_i^* \ket{\varnothing}_m$, which is $0$ if $i > m$ and $A_{ii}^k \ket{\varnothing}_m$ otherwise.
On the other hand, for $i < 0$, then this is $-A_{ii}^k \psi_i^* \psi_i \ket{\varnothing}_m$, which is $-A_{ii}^k \ket{\varnothing}_m$ if $i > m$ and $0$ otherwise.

The proof for ${}_m \bra{\varnothing} J_{-k}^{(\bal;\bbe)}$ is similar.
Immediate from the definition of the deformed Hamiltonians, we obtain~\eqref{eq:exp_vacuum}.
\end{proof}

Note that $J_k^{(\bal;\bbe)} \ket{\varnothing}_m \neq 0$ and ${}_m \bra{\varnothing} J_{-k}^{(\bal;\bbe)} \neq 0$ for all $m \neq 0$ and all $k > 0$ in contrast to the classical $\bal = \bbe = 0$ case.

\begin{lemma}
\label{lemma:exp_lambda}
Set $p_k = x^k - (-y)^k$. As a formal power series in $\CC[\bbe]\FPS{x,y}$, the coefficient in~\eqref{eq:exp_vacuum} becomes
\begin{equation}
\label{eqn-exp-lambda}
e^{\Lambda_m(\pp|\bbe)} = \begin{cases}
\prod_{0 < j \leqslant m} \frac{1 + \beta_j y}{1 - \beta_j x} & \text{if $m>0$,}
\\[5pt] \prod_{m < j \leqslant 0}\frac{1 - \beta_j x}{1 + \beta_j y} & \text{if $m\leqslant 0$.}
\end{cases}
\end{equation}
\end{lemma}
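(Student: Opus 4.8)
The plan is to unwind the definitions and recognize the resulting series as a formal logarithm. Substituting $p_k = x^k - (-y)^k$ into $\Lambda_m(\pp|\bbe) = \sum_{k\geqslant 1}\frac{p_k}{k}\Delta_m(k|\bbe)$ and recalling $\Delta_m(k|\bbe) = \sum_{0<j\leqslant m}\beta_j^k - \sum_{m<j\leqslant 0}\beta_j^k$, exactly one of the two inner sums over $j$ is empty, so I would treat the cases $m>0$ and $m\leqslant 0$ separately. Since these are \emph{finite} sums over $j$, interchanging the order of summation is harmless, giving for $m>0$
\[
\Lambda_m(\pp|\bbe) = \sum_{0<j\leqslant m}\sum_{k\geqslant 1}\frac{(\beta_j x)^k - (-\beta_j y)^k}{k},
\]
and the same expression with an overall minus sign and range $m<j\leqslant 0$ when $m\leqslant 0$.

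Next I would apply the formal power series identity $\sum_{k\geqslant 1} t^k/k = -\log(1-t)$ in $\CC\FPS{t}$ with $t=\beta_j x$ and $t=-\beta_j y$, using $(-y)^k=(-1)^k y^k$ so that $-(-\beta_j y)^k$ produces the $-\log(1+\beta_j y)$ term with the correct sign. This yields
\[
\sum_{k\geqslant 1}\frac{(\beta_j x)^k - (-\beta_j y)^k}{k} = \log\frac{1+\beta_j y}{1-\beta_j x}
\]
as an element of $\CC[\bbe]\FPS{x,y}$. Summing over the finite set $0<j\leqslant m$ and exponentiating turns the sum of logarithms into the claimed finite product $\prod_{0<j\leqslant m}\frac{1+\beta_j y}{1-\beta_j x}$; for $m\leqslant 0$ the extra sign flips each factor, producing $\prod_{m<j\leqslant 0}\frac{1-\beta_j x}{1+\beta_j y}$. (The degenerate case $m=0$ is the empty product $1$, consistent with $\Delta_0(k|\bbe)=0$.)

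There is essentially no real obstacle here beyond bookkeeping; the only points requiring a word of care are that everything is taking place in the formal power series ring $\CC[\bbe]\FPS{x,y}$, where $\exp$ and $\log$ are honest mutual inverses (so there is no convergence issue), and that the product over $j$ is finite, so it is automatically well-defined. If one wishes to sidestep the formal logarithm entirely, the same conclusion follows by applying $x\partial_x$ and $y\partial_y$ to both sides: both sides have matching logarithmic derivatives in $x$ and in $y$, and both evaluate to $1$ at $x=y=0$, which forces equality in $\CC[\bbe]\FPS{x,y}$.
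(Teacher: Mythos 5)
Your proof is correct and follows essentially the same route as the paper: substitute $p_k = x^k-(-y)^k$ into $\Lambda_m(\pp|\bbe)$, interchange the finite sum over $j$ with the sum over $k$, apply the formal series $\sum_{k\geqslant 1}t^k/k=-\log(1-t)$, and exponentiate. The extra remarks on formal-power-series validity and the logarithmic-derivative alternative are fine but not needed.
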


\begin{proof}
Setting $p_k = p_k(x / y) = x^k - (-y)^k$ and using $- \log (1 - t) = \sum \frac{1}{k} t^k$ we get
    \[
    \Lambda_m(\pp|\bbe) = \sum_{k=1}^{\infty} \sum_{0 < j \leqslant m} \frac{1}{k}(x^k - (-y)^k) \beta_j^k = \sum_{0 < j \leqslant m} \log \left(\frac{1 + \beta_j y}{1 - \beta_j x} \right)
    \]
    when $m>0$ and
    \[
   \Lambda_m(\pp|\bbe) = -\sum_{k=1}^{\infty}\sum_{m < j \leqslant 0} \frac{1}{k}(x^{- k} - (- y)^{- k}) \beta_j^k = \sum_{m < j \leqslant 0} \log \left(\frac{1 - \beta_j x}{1 + \beta_j y} \right)
    \]
    when $m\leqslant 0$.
\end{proof}

In particular, for $p_k = z^k$, the coefficient in~\eqref{eq:exp_vacuum} becomes
\begin{equation}
\label{eq:half_vertex_vacuum_actions}
e^{\Lambda_m(z|\bal)} = \frac{\prod_{m < j \leqslant 0} (1 - \alpha_j z)}{\prod_{0 < j \leqslant m} (1 - \alpha_j z)} = \frac{1}{(z;\bal)^m}.
\end{equation}

We can now prove our main result for this section, that the deformed fermion fields can be described as a vertex operator using the deformed current operators and deformed shift operator.

\begin{theorem}[Fermion vertex operators]
\label{thm:fermion_vertex_op}
We have
\begin{subequations}
\label{eq:fermion_vertex_ops}
\begin{align}
\psi(z|\bal;\bbe) & = e^{H_-(z|\bal;\bbe)} z^{J_0^{(\bal;\bbe)}} \Sigma_{(\bal;\bbe)} e^{-H_+(z^{-1}|\bal;\bbe)},
\\
\psi^*(w|\bal;\bbe) & = e^{-H_-(w|\bal;\bbe)} \Sigma_{(\bal;\bbe)}^{-1} w^{-J_0^{(\bal;\bbe)}} (1 - \alpha_{J_0^{(\bal;\bbe)}}\beta_{J_0^{(\bal;\bbe)}}) e^{H_+(w^{-1}|\bal;\bbe)}.
\end{align}
\end{subequations}
\end{theorem}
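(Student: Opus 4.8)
The strategy is to show that the operator $\Psi(z) := e^{H_-(z|\bal;\bbe)} z^{J_0^{(\bal;\bbe)}} \Sigma_{(\bal;\bbe)} e^{-H_+(z^{-1}|\bal;\bbe)}$ has the same matrix coefficients as $\psi(z|\bal;\bbe)$, and similarly for the $\psi^*$ formula; by Remark~\ref{rem:operator_well_defined} (faithfulness of $\fermionfock$) this suffices. There are two natural ways to do this, and I would pursue the more structural one. First, I would verify that $\Psi(z)$ satisfies the correct intertwining relations with the deformed current operators and the deformed shift operator: using~\eqref{eq:deformed_half_vertex_commutator}, the fact that $[J_k^{(\bal;\bbe)},\Sigma_{(\bal;\bbe)}]=0$ for $k\neq 0$ (Proposition~\ref{prop:deformed_current_shift_commute}), and that $K^{(\bal;\bbe)}=\ln\Sigma_{(\bal;\bbe)}$ is canonically conjugate to $J_0^{(\bal;\bbe)}$, one computes $[J_k^{(\bal;\bbe)}, \Psi(z)] = z^k \Psi(z)$ for all $k$ (the $k=0$ case uses the $z^{J_0^{(\bal;\bbe)}}$ factor, the $k\neq 0$ cases come from the BCH commutators $e^{\pm\xi}$ differentiated appropriately). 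By Proposition~\ref{prop:current_comm}, $\psi(z|\bal;\bbe)$ satisfies the identical relations, so $\Psi(z)\psi(z|\bal;\bbe)^{-1}$-type reasoning (more precisely, the difference of matrix coefficients) is constrained to be determined by a single ``seed'' coefficient.

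Concretely, I would reduce everything to computing the vacuum-to-vacuum-type matrix coefficient and then bootstrap. Pick the coefficient ${}_{m+1}\bra{\varnothing} \Psi(z) \ket{\varnothing}_m$: the operator $e^{-H_+(z^{-1}|\bal;\bbe)}$ acts on $\ket{\varnothing}_m$ via Corollary~\ref{cor:deformed_current_vacuum}/Lemma~\ref{lemma:exp_lambda}, producing the scalar $e^{-\Lambda_m(z^{-1}|\bbe)}$; then $\Sigma_{(\bal;\bbe)}$ applied to $\ket{\varnothing}_m$ uses~\eqref{eq:deform_shift_vacuum_ket}; then $z^{J_0^{(\bal;\bbe)}}$ multiplies by $z^{m+1}$; then $e^{H_-(z|\bal;\bbe)}$ acting on the left against ${}_{m+1}\bra{\varnothing}$ contributes $e^{\Lambda_{m+1}(z|\bal)}$ by~\eqref{eq:exp_neg_dual_vacuum}. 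Pairing against ${}_{m+1}\bra{\varnothing}$ and using Corollary~\ref{cor:deformed_shift_vacuum} picks out a single term, and after assembling the scalars via~\eqref{eq:half_vertex_vacuum_actions} (the specialization $p_k=z^k$) one should recover exactly the coefficient $z \frac{(z|\bbe)^{m}}{(z;\bal)^{m+1}}$ of $\psi_{m+1}$ appearing in $\psi(z|\bal;\bbe)\ket{\varnothing}_m$. One checks this matches by expanding $e^{-\Lambda_m(z^{-1}|\bbe)}$, $e^{\Lambda_{m+1}(z|\bal)}$ in terms of shifted powers; the $(z^{-1};\bbe)$ versus $(z|\bbe)$ conversion is exactly~\eqref{eq:basic_spowers_rels}, namely $(z|\bbe)^k = z^k (z^{-1};\bbe)^k$. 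Having matched one coefficient, the intertwining relations with the $J_k^{(\bal;\bbe)}$ (which shift charge/energy in controlled ways) and with $\Sigma_{(\bal;\bbe)}$ propagate the equality of $\Psi(z)$ and $\psi(z|\bal;\bbe)$ to every matrix coefficient, since the $\mcC$-action generated by these together with the creation/annihilation operators is transitive enough on the basis $\{\ket{\lambda}_m\}$.

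For the $\psi^*$ formula the computation is mutatis mutandis, the one wrinkle being the operator-valued factor $(1 - \alpha_{J_0^{(\bal;\bbe)}}\beta_{J_0^{(\bal;\bbe)}})$, which on $\fermionfock_m$ is just the scalar $1-\alpha_m\beta_m$; this is precisely the extra prefactor built into the definition of $\psi^*(w|\bal;\bbe)$, so the analogous vacuum computation using~\eqref{eq:deform_shift_vacuum_bra} and~\eqref{eq:exp_pos_vacuum} reproduces the coefficient $(1-\alpha_j\beta_j)\frac{(w;\bal)^{j-1}}{(w|\bbe)^j}$. Alternatively — and this is probably the cleanest writeup — once the $\psi(z|\bal;\bbe)$ formula is established, one applies the anti-involution $\ast$ from~\eqref{eq:duality_fermion_fields}: since $(J_k^{(\bal;\bbe)})^* = J_{-k}^{(\bal;\bbe)}$ and $\Sigma_{(\bal;\bbe)}^* = \Sigma_{(\bal;\bbe)}^{-1}$ (one should record these, the second following from~\eqref{eq:deform_shift_vacuums} and the unitriangularity in~\eqref{eq:deformed_shift_unitriangular}), applying $\ast$ to the first formula and using $\psi(z|\bal;\bbe)^* = \psi^*(z^{-1}|\bal;\bbe)$ yields the second after the substitution $z\mapsto w^{-1}$. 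I expect the main obstacle to be purely bookkeeping: keeping straight which shifted-power convention ($(z;\bal)$ vs.\ $(z|\bbe)$, and the $z\leftrightarrow z^{-1}$ swaps) appears where, and confirming that the operator-ordering of the four factors $e^{H_-}, z^{J_0}, \Sigma, e^{-H_+}$ is exactly what makes the BCH cross-terms vanish against the vacuum — there is no deep difficulty, since every ingredient (the commutators, the vacuum actions, the shift action) has already been assembled in the preceding propositions, and the classical proof in~\cite{AlexandrovZabrodin,MJD00} goes through verbatim once one substitutes the deformed objects.
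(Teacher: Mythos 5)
Your main route is correct, but it is genuinely different from the proof in the paper. The paper sandwiches both sides between ${}_m\bra{\varnothing}e^{H_+(\pp|\bal;\bbe)}$ and $e^{H_-(\pp'|\bal;\bbe)}\ket{\varnothing}_{\ell}$ and evaluates both generating series explicitly (via Theorem~\ref{thm:deformed_adjoint}, \eqref{eq:deformed_half_vertex_commutator}, Corollary~\ref{cor:deformed_current_vacuum}, \eqref{eq:half_vertex_vacuum_actions}, Proposition~\ref{prop:deformed_current_shift_commute}, Corollary~\ref{cor:deformed_shift_vacuum}), concluding because these pairings determine the operator — the spanning claim being justified by the classical $\bal=\bbe=0$ specialization (linear independence of Schur coefficients). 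You instead prove the intertwining relations $[J_k^{(\bal;\bbe)},\Psi(z)]=z^k\Psi(z)$ and pin down the single seed coefficient ${}_{m+1}\bra{\varnothing}\Psi(z)\ket{\varnothing}_m = z(z|\bbe)^m/(z;\bal)^{m+1}$; your seed computation is correct (I checked the $(z|\bbe)^k=z^k(z^{-1};\bbe)^k$ conversion goes through), and this is essentially the Kac-style argument the authors themselves sketch in Remark~\ref{rem:alt_proof_bfc_kac}, where the constant $C_Q(z)=z^Q$ is fixed by the same vacuum coefficient. What your route buys is less computation (one coefficient instead of a two-parameter generating series); what it costs is that the propagation step ``transitive enough'' must be made precise: you need that ${}_{m+1}\bra{\varnothing}\prod J_{k_i}^{(\bal;\bbe)}$ (with $k_i>0$) and $\prod J_{-k_i}^{(\bal;\bbe)}\ket{\varnothing}_m$ span the dual and the module, i.e.\ irreducibility/cyclicity of $\fermionfock_m$ under the \emph{deformed} Heisenberg action, which the paper justifies by a leading-term argument from the classical case. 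State that explicitly; as written it is the one under-justified step in your first route.

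Your proposed ``cleanest'' derivation of the $\psi^*$ formula by applying the anti-involution $\ast$ does not work under the paper's conventions. By \eqref{eq:duality_deformed_fields} and Corollary~\ref{cor:adjoint_current}, $\bigl(\psi(z|\bal;\bbe)\bigr)^* = \eta\,\psi^*(z^{-1}|\bbe;\bal)$ and $\bigl(J_k^{(\bal;\bbe)}\bigr)^* = \eta J_{-k}^{(\bbe;\bal)}$: the adjoint swaps the two parameter families and introduces the rescaling morphism $\eta$; moreover, as noted in Section~\ref{sec:fermionic_remarks}, $\Sigma_{(\bal;\bbe)}^* \neq \Sigma_{(\bal;\bbe)}^{-1}$ with the chosen normalization (this would require the rescaled $\sqrt{\eta}$-symmetrized shift). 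So taking adjoints of the first formula yields a vertex-operator expression for $\eta\,\psi^*(w|\bbe;\bal)$, not the stated formula for $\psi^*(w|\bal;\bbe)$; it can be salvaged only by tracking the $\bal\leftrightarrow\bbe$ swap and the $\eta$-twist throughout, at which point it is no cleaner. Your first option for $\psi^*$ — the direct mutatis mutandis seed computation, with $(1-\alpha_{J_0^{(\bal;\bbe)}}\beta_{J_0^{(\bal;\bbe)}})$ acting as the scalar $1-\alpha_m\beta_m$ on charge $m$ — is the correct one to use.
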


\begin{proof}
Our proof is analogous to~\cite[Eq.~(2.90)]{AlexandrovZabrodin}.
In more detail, we prove our claim by showing they agree for all matrix elements; that is
\begin{align*}
{}_m \bra{\mu} \psi(z|\bal;\bbe) \ket{\lambda}_{\ell} & = {}_m \bra{\mu} e^{H_-(z|\bal;\bbe)} z^{J_0^{(\bal;\bbe)}} \Sigma_{(\bal;\bbe)} e^{-H_+(z^{-1}|\bal;\bbe)} \ket{\lambda}_{\ell},
\\
{}_m \bra{\mu} \psi^*(w|\bal;\bbe) \ket{\lambda}_{\ell} & = {}_m \bra{\mu} e^{-H_-(w|\bal;\bbe)} \Sigma_{(\bal;\bbe)}^{-1} (1 - \alpha_{J_0^{(\bal;\bbe)}}\beta_{J_0^{(\bal;\bbe)}}) w^{-J_0^{(\bal;\bbe)}} e^{H_+(w^{-1}|\bal;\bbe)} \ket{\lambda}_{\ell}.
\end{align*}
We only prove the first identity as the proof for the second is similar.
We claim every such pairing is generated from
\[
{}_m \bra{\varnothing} e^{H_+(\pp|\bal;\bbe)},
\qquad\qquad
e^{H_-(\pp'|\bal;\bbe)} \ket{\varnothing}_{\ell},
\]
which we consider as a multivariate generating series in $\pp$ and $\pp'$.
This claim follows from the classical setting as taking $\bal = \bbe = 0$ results in nonzero linearly independent coefficients for each basis element $\ket{\lambda}_{\ell}$ (which are the Schur functions)~\cite{JM83} (see also, \textit{e.g.},~\cite{AlexandrovZabrodin,KacInfinite,KacRaina,MJD00}).

Based on this, we compute
\begin{align*}
{}_m \bra{\varnothing} e^{H_+(\pp|\bal;\bbe)} & \psi(z|\bal;\bbe) e^{H_-(\pp'|\bal;\bbe)} \ket{\varnothing}_{\ell}
\\ & = e^{\xi(\pp; z)} \cdot {}_m \bra{\varnothing}\psi(z|\bal;\bbe) e^{H_+(\pp|\bal;\bbe)} e^{H_-(\pp'|\bal;\bbe)}\ket{\varnothing}_{\ell}
\\ & = e^{\xi(\pp; z) - \xi(\pp'; z^{-1}) + \xi(\pp; \pp') + \Lambda_m(\pp' | \bal) + \Lambda_{\ell}(\pp|\bal)} \cdot {}_m \bra{\varnothing} \psi(z|\bal;\bbe) \ket{\varnothing}_{\ell}
\\ & = e^{\xi(\pp; z) - \xi(\pp'; z^{-1}) + \xi(\pp; \pp') + \Lambda_m(\pp' | \bal) + \Lambda_{\ell}(\pp|\bal)} \frac{z (z|\bbe)^{m-1}}{(z;\bal)^m} \delta_{m,\ell+1}
\end{align*}
by applying Theorem~\ref{thm:deformed_adjoint} twice, Equation~\eqref{eq:deformed_half_vertex_commutator}, and Equations~\eqref{eq:exp_vacuum}.
On the other hand, we compute
\begin{align*}
{}_m \bra{\varnothing} & e^{H_+(\pp|\bal;\bbe)} e^{H_-(z|\bal;\bbe)} z^{J_0^{(\bal;\bbe)}} \Sigma_{(\bal;\bbe)} e^{-H_+(z^{-1}|\bal;\bbe)} e^{H_-(\pp'|\bal;\bbe)} \ket{\varnothing}_{\ell}
\\ & = e^{\xi(\pp; z) - \xi(\pp'; z^{-1}) + \Lambda_m(z|\bal) - \Lambda_{\ell}(z^{-1}|\bbe)} \cdot {}_m  \bra{\varnothing} e^{H_+(\pp|\bal;\bbe)} z^{J_0^{(\bal;\bbe)}} \Sigma_{(\bal;\bbe)} e^{H_-(\pp'|\bal;\bbe)} \ket{\varnothing}_{\ell}
\\ & = e^{\xi(\pp; z) - \xi(\pp'; z^{-1}) + \Lambda_m(\pp'|\bal) + \Lambda_m(\pp|\bbe)} z^{\ell+1} e^{\Lambda_m(z|\bal)} e^{-\Lambda_{\ell}(z^{-1}|\bbe)} \cdot {}_m \bra{\varnothing} \Sigma_{(\bal;\bbe)} \ket{\varnothing}_{\ell}
\\ & = e^{\xi(\pp; z) - \xi(\pp'; z^{-1}) + \xi(\pp; \pp') + \Lambda_m(\pp' | \bal)} \frac{z (z|\bbe)^{m-1}}{(z; \bal)^m} \delta_{m,\ell+1}
\end{align*}
by using Proposition~\ref{prop:deformed_current_shift_commute}, Equation~\eqref{eq:half_vertex_commutator}, Equations~\eqref{eq:exp_vacuum}, Equation~\eqref{eq:half_vertex_vacuum_actions}, and Corollary~\ref{cor:deformed_shift_vacuum}.
\end{proof}

\subsection{Additional remarks}
\label{sec:fermionic_remarks}

We conclude this section with some additional remarks and identities involving the deformed fermion fields.
The first is that we almost have the analog of~\eqref{eq:duality_fermion_fields}:
\begin{equation}
\label{eq:duality_deformed_fields}
\bigl( \psi(z|\bal;\bbe) \bigr)^* = \eta \psi^*(z^{-1}|\bbe; \bal),
\qquad\qquad
\bigl( \psi^*(w|\bal;\bbe) \bigr)^* = \eta \psi(w^{-1}|\bbe; \bal),
\end{equation}
where we have to additionally apply the rescaling algebra morphism $\eta$ defined by
\[
\psi_i \mapsto (1 - \alpha_i \beta_i)^{-1} \psi_i
\qquad \text{ and } \qquad
\psi_j^* \mapsto (1 - \alpha_j \beta_j) \psi_j^*.
\]
We could make our formulas more symmetric (and thus remove the $\eta$ map) by applying the rescaling $\sqrt{\eta}$.
All of our results in this paper obviously hold using these adjoint operators since this is clearly an automorphism of the Clifford algebra.
However, some of the formulas will change slightly.
In particular, such rescaling will change our definition of the current operators, which we will see in the sequel that we have chosen to match a solvable lattice model given in~\cite{NaprienkoFFS}.
It will also change the definition of the deformed shift operator $\Sigma_{(\bal;\bbe)}$, and in the rescaled setting, we would have $\widetilde{\Sigma}_{(\bal;\bbe)}^* = \widetilde{\Sigma}_{(\bal;\bbe)}^{-1}$ (which is not quite true with our definition).

From~\eqref{eq:duality_deformed_fields}, we also have the analog of the classical $J_k^* = J_{-k}$.

\begin{corollary}
\label{cor:adjoint_current}
For all $k$, we have
\begin{equation}
\label{eq:current_duality}
\left(J_k^{(\bal;\bbe)} \right)^* = \eta J_{-k}^{(\bbe;\bal)}.
\end{equation}
\end{corollary}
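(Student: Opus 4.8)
The plan is to deform, essentially verbatim, the classical derivation of $J_k^* = J_{-k}$, replacing the classical adjoint relations~\eqref{eq:duality_fermion_fields} by the deformed ones in~\eqref{eq:duality_deformed_fields}. The extra rescaling $\eta$ is simply carried along, and since $\eta$ is an algebra morphism fixing scalars it survives to the final identity unchanged.

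First I would introduce an auxiliary spectral variable $w$ and expand everything in $\CC\FPS{w/z}$. By the deformed form of~\eqref{eq:fermion_field_normord} together with $\bra{\varnothing} \psi(z|\bal;\bbe) \psi^*(w|\bal;\bbe) \ket{\varnothing} = \frac{z}{z-w}$ from Proposition~\ref{prop:vacuum_deformed_fields}, we have
\[
\normord{\psi(z|\bal;\bbe)\psi^*(w|\bal;\bbe)} = \psi(z|\bal;\bbe)\psi^*(w|\bal;\bbe) - \frac{z}{z-w}.
\]
Applying the anti-involution $*$ (which is $\CC$-linear and fixes scalars), then~\eqref{eq:duality_deformed_fields}, then the fact that $\eta$ is an algebra morphism, gives
\[
\bigl(\normord{\psi(z|\bal;\bbe)\psi^*(w|\bal;\bbe)}\bigr)^* = \eta\bigl(\psi(w^{-1}|\bbe;\bal)\psi^*(z^{-1}|\bbe;\bal)\bigr) - \frac{z}{z-w}.
\]
Re-normal-ordering via~\eqref{eq:fermion_field_normord} for the parameters $(\bbe;\bal)$, and using $\bra{\varnothing}\psi(w^{-1}|\bbe;\bal)\psi^*(z^{-1}|\bbe;\bal)\ket{\varnothing} = \frac{w^{-1}}{w^{-1}-z^{-1}} = \frac{z}{z-w}$ (which $\eta$ fixes), the two copies of $\frac{z}{z-w}$ cancel, leaving
\[
\bigl(\normord{\psi(z|\bal;\bbe)\psi^*(w|\bal;\bbe)}\bigr)^* = \eta\,\normord{\psi(w^{-1}|\bbe;\bal)\psi^*(z^{-1}|\bbe;\bal)}.
\]
Setting $w = z$ yields $\bigl(J(z|\bal;\bbe)\bigr)^* = \eta\, J(z^{-1}|\bbe;\bal)$, and comparing Fourier modes — $\bigl(J(z|\bal;\bbe)\bigr)^* = \sum_k (J_k^{(\bal;\bbe)})^* z^{-k}$ versus $\eta\, J(z^{-1}|\bbe;\bal) = \sum_k \eta J_k^{(\bbe;\bal)} z^k = \sum_k \eta J_{-k}^{(\bbe;\bal)} z^{-k}$ — the coefficient of $z^{-k}$ gives~\eqref{eq:current_duality}.

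The only delicate point is the bookkeeping of the two formal expansion regimes: the expectation for the $(\bal;\bbe)$ fields needs $\abs{w/z}<1$, while the one for the $(\bbe;\bal)$ fields at the substituted variables needs $\abs{z^{-1}/w^{-1}}=\abs{w/z}<1$, so the conditions coincide and the cancellation is legitimate inside $\CC\FPS{w/z}$. Everything else — that $*$ reverses products, and that $\eta$ passes through $\normord{\,\cdot\,}$ and the limit $w\to z$ since normal ordering is multilinear and $\eta$ rescales each generator by a scalar independent of $z,w$ — is routine. Alternatively one could skip the field computation and combine $(\normord{\psi_i\psi_j^*})^* = \normord{\psi_j\psi_i^*}$ with Proposition~\ref{prop:explicit_current} and the contour formula~\eqref{eq:a_int}, reducing the claim to $A_{ji}^k(\bal;\bbe) = (1-\alpha_i\beta_i)^{-1}(1-\alpha_j\beta_j)\,A_{ij}^{-k}(\bbe;\bal)$; but the argument above is shorter.
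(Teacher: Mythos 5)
Your proof is correct and takes essentially the same route as the paper: the paper's (very terse) proof compares the coefficient of each $\normord{\psi_i \psi_j^*}$ in $J(z|\bal;\bbe)$ with that in $\bigl(J(z|\bbe;\bal)\bigr)^*$, which is what your two-variable field computation accomplishes once one notes $(\normord{\psi_i\psi_j^*})^* = \normord{\psi_j\psi_i^*}$ (so the detour through Proposition~\ref{prop:vacuum_deformed_fields} and the cancelling $\frac{z}{z-w}$ contractions, while valid and carefully bookkept in your expansion regime, is not strictly needed). Your closing remark reducing the claim to how $A_{ji}^{k}$ compares with $A_{ij}^{-k}$ under swapping $\bal \leftrightarrow \bbe$ is precisely the paper's stated alternative proof.
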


\begin{proof}
The claim follows by examining $\normord{\psi_i \psi_j^*}$ in $J(z|\bal;\bbe)$ and $\bigl( J(z|\bbe;\bal) \bigr)^*$.
\end{proof}

\begin{proof}[Alternative proof]
We can see how the coefficients $A_{ij}^k$ compare with $A_{ji}^{-k}$ from~\eqref{eq:adef} under the map that sends $\bal \to \bbe$ and $\bbe \to \bal$.
\end{proof}

Next, we have the deformed version of~\eqref{eq:chiral_boson_identities} using the \defn{deformed chiral boson field}:
\[
\phi(z|\bal;\bbe) := H_-(z|\bal;\bbe) + K^{(\bal;\bbe)} + J_0^{(\bal;\bbe)} \ln z - H_+(z|\bal;\bbe)
\]
(recall $K^{(\bal;\bbe)} = \ln \Sigma_{(\bal;\bbe)}$),
and Theorem~\ref{thm:fermion_vertex_op} implies
\[
\psi(z|\bal;\bbe) = \bnormord{e^{\phi(z|\bal;\bbe)}},
\qquad\;
\psi^*(z|\bal;\bbe) = \bnormord{e^{-\phi(z|\bal;\bbe)}},
\qquad\;
z \frac{\partial}{\partial z} \phi(z|\bal;\bbe) = J(z|\bal;\bbe).
\]

Let us discuss how the classical shift operator $\Sigma$ interacts with our deformed constructions.
We show that conjugation by the shift operator $\Sigma$ essentially acts as in the classical case with additionally shifting the parameters (that is, applying the map $\sigma^{-1}$).

\begin{proposition}
\label{prop:deformed_fermion_classical_shift}
We have
\begin{align*}
\Sigma \psi(z|\bal;\bbe) \Sigma^{-1} & = \frac{(z;\sigma^{-1}_{\bal}\bal)}{(z|\sigma^{-1}_{\bbe}\bbe)} \psi(z|\sigma_{\bal}^{-1}\bal;\sigma_{\bbe}^{-1}\bbe),
\\
\Sigma \psi^*(w|\bal;\bbe) \Sigma^{-1} & = \frac{(w|\sigma^{-1}_{\bbe}\bbe)}{(w;\sigma^{-1}_{\bal}\bal)} \psi^*(w|\sigma_{\bal}^{-1}\bal;\sigma_{\bbe}^{-1}\bbe).
\end{align*}
\end{proposition}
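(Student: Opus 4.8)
The plan is to verify the two identities by comparing the action of both sides on the Clifford algebra generators $\psi_i$ and $\psi_i^*$, since an operator on $\fermionfock$ is determined by how it conjugates $\mcC$ together with its action on a single vacuum (Remark~\ref{rem:operator_well_defined}). First I would recall that the classical shift operator satisfies $\Sigma \psi_i \Sigma^{-1} = \psi_{i+1}$ and $\Sigma \psi_i^* \Sigma^{-1} = \psi_{i+1}^*$, so conjugation by $\Sigma$ on the deformed field $\psi(z|\bal;\bbe) = \sum_i z \frac{(z|\bbe)^{i-1}}{(z;\bal)^i} \psi_i$ just reindexes the creation operators while leaving the scalar coefficients fixed:
\[
\Sigma \psi(z|\bal;\bbe) \Sigma^{-1} = \sum_{i \in \ZZ} z \frac{(z|\bbe)^{i-1}}{(z;\bal)^i} \psi_{i+1} = \sum_{i \in \ZZ} z \frac{(z|\bbe)^{i-2}}{(z;\bal)^{i-1}} \psi_{i}.
\]
The core of the argument is then the observation that shifting the \emph{index} of a shifted power is the same as shifting the \emph{parameter sequence}: by~\eqref{eq:shifting_powers} we have $\frac{(z|\bbe)^{i-2}}{(z;\bal)^{i-1}} = \sigma^{-1}\!\bigl(\frac{(z|\bbe)^{i-1}}{(z;\bal)^{i}}\bigr)\cdot(\text{correction})$; more precisely $(z|\bbe)^{i-2} = (z|\sigma^{-1}_{\bbe}\bbe)^{i-1}\cdot (z-\beta_0)$ is not quite it, so instead I would use the cleaner relation $(z|\bbe)^{k-1} = (z|\sigma^{-1}\bbe)^{k}/(z-\beta_0)$ and the analogous $(z;\bal)^{k-1} = (z;\sigma^{-1}\bal)^{k}/(1-\alpha_0 z)$, giving
\[
\frac{(z|\bbe)^{i-2}}{(z;\bal)^{i-1}} = \frac{(z|\sigma^{-1}_{\bbe}\bbe)^{i-1}}{(z;\sigma^{-1}_{\bal}\bal)^{i-1}} \cdot \frac{1 - \alpha_0 z}{z - \beta_0} = \frac{(z;\sigma^{-1}_{\bal}\bal)}{(z|\sigma^{-1}_{\bbe}\bbe)} \cdot \frac{(z|\sigma^{-1}_{\bbe}\bbe)^{i-1}}{(z;\sigma^{-1}_{\bal}\bal)^{i}},
\]
where in the last step I note $(z;\sigma^{-1}_{\bal}\bal)^{i} = (z;\sigma^{-1}_{\bal}\bal)^{i-1}(1 - \alpha_0 z)$ and $(z|\sigma^{-1}_{\bbe}\bbe) = z - \beta_0$. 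Substituting back, the prefactor $\frac{(z;\sigma^{-1}_{\bal}\bal)}{(z|\sigma^{-1}_{\bbe}\bbe)}$ pulls out of the sum and what remains is exactly $\sum_i z \frac{(z|\sigma^{-1}_{\bbe}\bbe)^{i-1}}{(z;\sigma^{-1}_{\bal}\bal)^{i}} \psi_i = \psi(z|\sigma_{\bal}^{-1}\bal;\sigma_{\bbe}^{-1}\bbe)$, which is the claimed first identity. The second identity is proved mutatis mutandis: conjugating $\psi^*(w|\bal;\bbe) = \sum_j (1-\alpha_j\beta_j)\frac{(w;\bal)^{j-1}}{(w|\bbe)^j}\psi_j^*$ reindexes to $\psi_{j}^*$ with coefficient $(1-\alpha_{j-1}\beta_{j-1})\frac{(w;\bal)^{j-2}}{(w|\bbe)^{j-1}}$, and since $(1-\alpha_{j-1}\beta_{j-1}) = (1 - (\sigma^{-1}\bal)_j (\sigma^{-1}\bbe)_j)$ this is precisely the $\sigma^{-1}$-shifted coefficient up to the same overall scalar $\frac{(w|\sigma^{-1}_{\bbe}\bbe)}{(w;\sigma^{-1}_{\bal}\bal)}$, now inverted because $\bal$ and $\bbe$ trade roles in $\psi$ versus $\psi^*$.

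The one remaining point is to confirm this is an identity of \emph{operators} and not merely a formal manipulation of coefficients: since both sides are built from $\Sigma$ (a well-defined operator) and the deformed fields (whose Fourier modes lie in $\mcC$), and since the computation above is an identity of formal distributions in $z$ with coefficients that are finite $\CC$-linear combinations of the $\psi_i$ after equating each power of $z$, the equality holds at the level of every matrix coefficient, hence as operators by Remark~\ref{rem:operator_well_defined}. I expect the main obstacle to be purely bookkeeping: getting the boundary indices right in the telescoping relations~\eqref{eq:shifting_powers} (the roles of $\alpha_0, \beta_0$ versus $\alpha_1, \beta_1$ when the shift lands on the "$k=0$" normalization) and making sure the overall scalar comes out as $\frac{(z;\sigma^{-1}_{\bal}\bal)}{(z|\sigma^{-1}_{\bbe}\bbe)}$ rather than its reciprocal or a shifted variant. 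Once the index arithmetic is pinned down via~\eqref{eq:basic_spowers_rels}, the proof is immediate.
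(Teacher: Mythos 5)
Your argument is correct and is essentially the paper's own proof: conjugate termwise using $\Sigma\psi_i\Sigma^{-1}=\psi_{i+1}$, reindex, and apply the shifted-power relations~\eqref{eq:basic_spowers_rels} to pull out the prefactor $\frac{(z;\sigma^{-1}_{\bal}\bal)}{(z|\sigma^{-1}_{\bbe}\bbe)}$, with the $\psi^*$ identity handled mutatis mutandis. The only blemish is a bookkeeping slip in your intermediate display (the denominator should already be $(z;\sigma^{-1}_{\bal}\bal)^{i}$, and the stated relation $(z;\sigma^{-1}_{\bal}\bal)^{i}=(z;\sigma^{-1}_{\bal}\bal)^{i-1}(1-\alpha_0 z)$ is false in general since the new factor is $1-\alpha_{i-1}z$), but your final displayed identity and the resulting prefactor are correct.
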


\begin{proof}
We compute using~\eqref{eq:basic_spowers_rels}
\begin{align*}
\Sigma \psi(z|\bal;\bbe) \Sigma^{-1} & = \sum_{i \in \ZZ} \frac{z(z|\bal)^{i-1}}{(z;\bal)^i} \Sigma \psi_i \Sigma^{-1}
= \sum_{i \in \ZZ} \frac{z(z|\bal)^{i-1}}{(z;\bal)^i} \psi_{i+1}
= \sum_{i \in \ZZ} \frac{z(z|\bal)^{i-2}}{(z;\bal)^{i-1}} \psi_i
\\ & = \sum_{i \in \ZZ} \frac{(z|\bbe)^{-1}(z|\sigma_{\bbe}^{-1}\bbe)^{i-1}}{(z;\bal)^{-1} (z;\sigma_{\bal}^{-1} \bal)^i} \psi_i
= \frac{(z;\sigma^{-1}_{\bal}\bal)}{(z|\sigma^{-1}_{\bbe}\bbe)} \psi(z|\sigma_{\bal}^{-1}\bal;\sigma_{\bbe}^{-1}\bbe),
\end{align*}
and the second identity is shown similarly.
\end{proof}

\begin{proposition}
\label{prop:deformed_current_classical_shift}
We have
\[
\Sigma J(z|\bal;\bbe) \Sigma^{-1} = J(z|\sigma_{\bal}^{-1}\bal;\sigma_{\bbe}^{-1}\bbe) - \sum_{k=1}^{\infty} \alpha_0^k z^k - \sum_{k=1}^{\infty} \beta_0^k z^{-k}.
\]
\end{proposition}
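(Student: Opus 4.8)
The plan is to repeat the point-splitting computation used for Proposition~\ref{prop:deformed_current_shift_commute}, but conjugating by the \emph{classical} shift $\Sigma$ rather than $\Sigma_{(\bal;\bbe)}$, so that Proposition~\ref{prop:deformed_fermion_classical_shift} contributes nontrivial scalar prefactors. First I would write $J(z|\bal;\bbe)=\normord{\psi(z|\bal;\bbe)\psi^*(w|\bal;\bbe)}$ and recover it as the $w\to z$ limit. Using~\eqref{eq:fermion_field_normord} together with Proposition~\ref{prop:vacuum_deformed_fields} to pass between $\normord{\;}$ and the plain product, and then Proposition~\ref{prop:deformed_fermion_classical_shift} on each factor, one obtains
\[
\Sigma\,\normord{\psi(z|\bal;\bbe)\psi^*(w|\bal;\bbe)}\,\Sigma^{-1}
= g(z,w)\,\psi(z|\sigma^{-1}\bal;\sigma^{-1}\bbe)\psi^*(w|\sigma^{-1}\bal;\sigma^{-1}\bbe) - \frac{z}{z-w},
\]
where
\[
g(z,w) = \frac{(z;\sigma^{-1}_{\bal}\bal)}{(z|\sigma^{-1}_{\bbe}\bbe)}\cdot\frac{(w|\sigma^{-1}_{\bbe}\bbe)}{(w;\sigma^{-1}_{\bal}\bal)}
= \frac{(1-z\alpha_0)(w-\beta_0)}{(z-\beta_0)(1-w\alpha_0)},
\]
which satisfies $g(z,z)=1$. (In the classical case $g(z,w)=w/z$, recovering the proof of Proposition~\ref{prop:deformed_current_shift_commute}.)

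Next I would re-normal-order the shifted product: applying Proposition~\ref{prop:vacuum_deformed_fields} with parameters $\sigma^{-1}\bal,\sigma^{-1}\bbe$ (which we are entitled to assume, per the remark following that proposition) gives $\psi(z|\sigma^{-1}\bal;\sigma^{-1}\bbe)\psi^*(w|\sigma^{-1}\bal;\sigma^{-1}\bbe) = \normord{\;\cdot\;} + \frac{z}{z-w}$, so the display above becomes a sum of $g(z,w)$ times the shifted normal-ordered product and the ``anomalous'' term $\bigl(g(z,w)-1\bigr)\frac{z}{z-w}$. Taking $w\to z$, the first summand tends to $g(z,z)\,J(z|\sigma^{-1}\bal;\sigma^{-1}\bbe) = J(z|\sigma^{-1}\bal;\sigma^{-1}\bbe)$. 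For the anomalous term the key identity is the elementary expansion
\[
(1-z\alpha_0)(w-\beta_0) - (z-\beta_0)(1-w\alpha_0) = (w-z)(1-\alpha_0\beta_0),
\]
so $g(z,w)-1 = \dfrac{(w-z)(1-\alpha_0\beta_0)}{(z-\beta_0)(1-w\alpha_0)}$; the factor $(w-z)$ cancels the pole of $\frac{z}{z-w}$ since $(w-z)\cdot\frac{z}{z-w} = -z$ in $\CC\FPS{w/z}$, leaving $\dfrac{-(1-\alpha_0\beta_0)z}{(z-\beta_0)(1-w\alpha_0)}$, which at $w=z$ is regular. A partial-fraction decomposition $\dfrac{z}{(z-\beta_0)(1-z\alpha_0)} = \dfrac{1}{1-\alpha_0\beta_0}\Bigl(\dfrac{\beta_0}{z-\beta_0}+\dfrac{1}{1-z\alpha_0}\Bigr)$, followed by expanding each summand geometrically in the annulus $\abs{\beta_0} < \abs{z} < \abs{\alpha_0}^{-1}$, produces the correction terms in powers $z^{-k}$ (from $\beta_0$) and $z^{k}$ (from $\alpha_0$), which is the stated right-hand side.

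The step I expect to be the main obstacle is the careful treatment of the $w\to z$ limit in the anomalous term $\bigl(g(z,w)-1\bigr)\frac{z}{z-w}$: one cannot naively substitute $w=z$ into $\frac{z}{z-w}$ — exactly the mutually-exclusive-expansions phenomenon of~\eqref{eq:classical_delta_comm} — so one must first extract the $(w-z)$ factor from $g(z,w)-1$ and simplify before evaluating. The non-homogeneity of the deformation is precisely what makes $g(z,w)$ genuinely depend on $w$, and verifying that the leftover rational function expands into the claimed $\alpha_0$- and $\beta_0$-power series (with the constant-term bookkeeping consistent with the classical identity $\Sigma J(z)\Sigma^{-1} = J(z)-1$) is the one place where something beyond formal manipulation is needed. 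Everything else follows verbatim from the techniques already established in the proofs of Propositions~\ref{prop:deformed_current_shift_commute} and~\ref{prop:deformed_fermion_classical_shift}.

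As an alternative that sidesteps the delta-function subtleties, one can argue directly from the explicit formula $J_k^{(\bal;\bbe)} = \sum_{i,j}A_{ij}^k\,\normord{\psi_i\psi_j^*}$ of Proposition~\ref{prop:explicit_current}. Conjugation by $\Sigma$ sends $\psi_i\mapsto\psi_{i+1}$ and $\psi_j^*\mapsto\psi_{j+1}^*$, hence reindexes $A_{ij}^k$ to $A_{i-1,j-1}^k$; one checks from~\eqref{eq:adef} and~\eqref{eq:shifting_powers} that $A_{i-1,j-1}^k$ equals the coefficient built from $\sigma^{-1}\bal,\sigma^{-1}\bbe$, and that $\normord{\psi_i\psi_j^*}$ conjugates to $\normord{\psi_{i+1}\psi_{j+1}^*}$ in every case except the diagonal term $i=j=0$, where $\normord{\psi_0\psi_0^*} = -\psi_0^*\psi_0$ conjugates to $\normord{\psi_1\psi_1^*} - 1$. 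This yields $\Sigma J_k^{(\bal;\bbe)}\Sigma^{-1} = J_k^{(\sigma^{-1}\bal;\sigma^{-1}\bbe)} - A_{00}^k$, and assembling the generating series using $A_{00}^k = \beta_0^k$ for $k>0$, $A_{00}^0 = 1$, and $A_{00}^{-k} = \alpha_0^k$ for $k>0$ reproduces the result.
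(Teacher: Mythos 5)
Both of your arguments are the paper's own: the main one is exactly the paper's point-splitting proof (conjugate the normal-ordered product, use~\eqref{eq:fermion_field_normord} with Proposition~\ref{prop:vacuum_deformed_fields} and Proposition~\ref{prop:deformed_fermion_classical_shift} to produce the prefactor $g(z,w)$, then analyze the anomalous term $\bigl(g(z,w)-1\bigr)\tfrac{z}{z-w}$; your numerator identity $(1-z\alpha_0)(w-\beta_0)-(z-\beta_0)(1-w\alpha_0)=(w-z)(1-\alpha_0\beta_0)$ is precisely the cancellation the paper records as $g(z,w)\tfrac{z}{z-w}-\tfrac{z}{z-w}=\tfrac{(\alpha_0\beta_0-1)z}{(1-\alpha_0 w)(z-\beta_0)}$), and your alternative via Proposition~\ref{prop:explicit_current}, the reindexing $A^k_{ij}\mapsto A^k_{i-1,j-1}$, and the single normal-ordering correction at $i=j=0$ is the paper's sketched alternative proof.

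The one genuine problem is your final bookkeeping, which does not match what you actually computed. After your (correct) partial fractions, the anomalous term at $w=z$ is $-\bigl(\tfrac{\beta_0}{z-\beta_0}+\tfrac{1}{1-\alpha_0 z}\bigr)$, and the second geometric series starts at $k=0$: it contributes $-1-\sum_{k\geqslant 1}\alpha_0^k z^k$, not merely $-\sum_{k\geqslant 1}\alpha_0^k z^k$. Equivalently, in your alternative route you correctly note $A^0_{00}=1$, so assembling the generating series gives $\Sigma J(z|\bal;\bbe)\Sigma^{-1}=J(z|\sigma_{\bal}^{-1}\bal;\sigma_{\bbe}^{-1}\bbe)-1-\sum_{k\geqslant 1}\alpha_0^k z^k-\sum_{k\geqslant 1}\beta_0^k z^{-k}$, with an extra constant $-1$. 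That constant is genuinely present: the $z^0$ coefficient of the left-hand side is $\Sigma J_0\Sigma^{-1}=J_0-1$, while $J_0^{(\sigma_{\bal}^{-1}\bal;\sigma_{\bbe}^{-1}\bbe)}=J_0$; and at $\bal=\bbe=0$ the identity must reduce to Proposition~\ref{prop:deformed_current_shift_commute}'s $\Sigma J(z)\Sigma^{-1}=J(z)-1$, not to $\Sigma J(z)\Sigma^{-1}=J(z)$. So the sentence asserting that your expansion ``is the stated right-hand side'' is the gap: your computation in fact proves the identity with the additional $-1$ (the statement as printed omits it, even though the telescoping in the paper's own argument produces the same constant), and you should either state the corrected formula or explicitly flag the discrepancy with the printed statement rather than claim exact agreement.
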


\begin{proof}
This is shown similar to the proof of Proposition~\ref{prop:deformed_current_shift_commute} except we have
\[
\frac{(z;\sigma_{\bal}^{-1}\bal)}{(w;\sigma_{\bal}^{-1}\bal)} \frac{(w|\sigma_{\bbe}^{-1}\bbe)}{(z|\sigma_{\bbe}^{-1}\bbe)} \frac{z}{z-w} - \frac{z}{z-w} = \frac{(\alpha_0 \beta_0 - 1) z}{(1 - \alpha_0 w)(z - \beta_0)} = \frac{\alpha_0 \beta_0 - 1}{(1 - \alpha_0 w)(1 - \beta_0 z^{-1})},
\]
which we then expand as a formal power series in $\CC\FPS{w,z^{-1}}$.
Then we take the limit as $w \to z$ and collecting coefficients yields the claim as the series telescopes provided that $\lim_{n\to\infty} (\alpha_0 \beta_0)^n = 0$.

Alternatively, we can prove it directly from $J_k^{(\bal;\bbe)} = \sum_{i,j \in \ZZ} A_{ij}^k \normord{\psi_i \psi_j^*}$ or expanding
\[
J(z|\sigma_{\bal}^{-1}\bal;\sigma_{\bbe}^{-1}\bbe) = \normord{\psi(z|\sigma_{\bal}^{-1}\bal;\sigma_{\bbe}^{-1}\bbe) \psi^*(z|\sigma_{\bal}^{-1}\bal;\sigma_{\bbe}^{-1}\bbe)}
\]
and being careful about the normal ordering.
\end{proof}

Indeed, for $k > 0$ we can see that
\[
\Sigma J_k^{(\bal;\bbe)} \ket{\lambda}_m = (J_k^{(\sigma_{\bal}^{-1}\bal;\sigma_{\bbe}^{-1}\bbe)} - \beta_0^k) \ket{\lambda}_{m+1},
\qquad
\Sigma J_{-k}^{(\bal;\bbe)} \ket{\lambda}_m = (J_{-k}^{(\sigma_{\bal}^{-1}\bal;\sigma_{\bbe}^{-1}\bbe)} - \alpha_0^k) \ket{\lambda}_{m+1}.
\]

\section{Deformed boson-fermion correspondences}
\label{sec:deformed_correspondence}

In this section, we will construct a deformed version of the boson-fermion correspondence using the deformed Hamiltonians and deformed (dual) shifted vacuums.
We will see that the bosonic vertex operators are the same as in the classical case, but we will be considering the expansion into the appropriate shifted modes.

Throughout this section we denote $Q := \svar \frac{\partial}{\partial \svar}$.

\subsection{The deformed correspondence}
\label{sec:basic_BF}

From Theorem~\ref{thm:heisenberg_relations} with the BCH formula, Proposition~\ref{prop:deformed_current_shift_commute}, Corollary~\ref{cor:deformed_current_vacuum}, and Theorem~\ref{thm:fermion_vertex_op}, we have the following deformation of the boson-fermion correspondence.

\begin{theorem}[The deformed boson-fermion correspondence via deformed shifted vacuums]
\label{thm:deformed_boson_fermion}
There is an isomorphism of $\mcH$ algebras $\Phi^{(\bal;\bbe)} \colon \fermionfock \to \bosonfock$ defined by
\[
\ket{\eta} \longmapsto \sum_{m \in \ZZ} {}_{(m)} \bra{\varnothing} e^{H_+(\pp|\bal;\bbe)} \ket{\eta} \cdot \svar^m,
\]
where we define ${}_{(m)} \bra{\varnothing} := \bra{\varnothing} \Sigma_{(\bal;\bbe)}^{-m}$, for any $\ket{\eta} \in \fermionfock$.
In particular, $\ket{\varnothing}_{(m)} := \Sigma_{(\bal;\bbe)}^m \ket{\varnothing} \mapsto \svar^m$.
The map $\Phi^{(\bal;\bbe)}$ induces a mapping of operators given by, for $k > 0 $,
\begin{gather*}
J_k^{(\bal;\bbe)} \mapsto k \frac{\partial}{\partial p_k},
\qquad\qquad
J_{-k}^{(\bal;\bbe)} \mapsto p_k,
\qquad\qquad
J_0^{(\bal;\bbe)} \mapsto Q,
\qquad\qquad
\Sigma_{(\bal;\bbe)} \mapsto \svar,
\\
\begin{aligned}
\psi(z|\bal;\bbe) & \mapsto X(z|\bal;\bbe) := e^{\overline{H}_-(z|\bal;\bbe)} z^Q \svar e^{-\overline{H}_+(z^{-1}|\bal;\bbe)},
\\
\psi^*(w|\bal;\bbe) & \mapsto X^*(w|\bal;\bbe) := e^{-\overline{H}_-(w|\bal;\bbe)} \svar^{-1} w^{-Q} (1 - \alpha_Q \beta_Q) e^{\overline{H}_+(w^{-1}|\bal;\bbe)},
\end{aligned}
\end{gather*}
where $\overline{H}_{\pm}(z|\bal;\bbe)$ is the image of $H_{\pm}(z|\bal;\bbe)$ under $\Phi^{(\bal;\bbe)}$.
\end{theorem}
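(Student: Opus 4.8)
The plan is to transcribe the classical proof of the boson--fermion correspondence (as in, \textit{e.g.},~\cite{AlexandrovZabrodin,MJD00,KacRaina}), every ingredient of which now has a deformed analogue available.

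First I would check that $\Phi^{(\bal;\bbe)}$ really lands in $\bosonfock = \CC[\svar^{\pm}; \pp]$ rather than a completion. Since $H_+(\pp|\bal;\bbe) = \sum_{k \geqslant 1} \frac{p_k}{k} J_k^{(\bal;\bbe)}$ is built from strictly energy-lowering operators, $e^{H_+(\pp|\bal;\bbe)} \ket{\eta}$ has, against the energy-$0$ functional ${}_{(m)} \bra{\varnothing}$, only finitely many contributing terms, so ${}_{(m)} \bra{\varnothing} e^{H_+(\pp|\bal;\bbe)} \ket{\eta}$ is a polynomial in $\pp$ (homogeneous of degree the energy of $\ket{\eta}$ when $\ket{\eta}$ is energy-homogeneous); and since $e^{H_+(\pp|\bal;\bbe)}$ preserves charge while ${}_{(m)} \bra{\varnothing} = \bra{\varnothing} \Sigma_{(\bal;\bbe)}^{-m}$ pairs only with charge $m$, only finitely many $m$ contribute. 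This also shows $\Phi^{(\bal;\bbe)}$ respects the energy grading on $\fermionfock_m$ and the $\pp$-degree grading on $\svar^m \CC[\pp]$. Taking $\ket{\eta} = \ket{\varnothing}$, using ${}_{(0)} \bra{\varnothing} = \bra{\varnothing}$ and $e^{H_+(\pp|\bal;\bbe)} \ket{\varnothing} = e^{\Lambda_0(\pp|\bbe)} \ket{\varnothing} = \ket{\varnothing}$ from~\eqref{eq:exp_pos_vacuum} (since $\Lambda_0 \equiv 0$), gives $\Phi^{(\bal;\bbe)}(\ket{\varnothing}) = 1$.

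Next I would establish the operator intertwining. For $k > 0$, Theorem~\ref{thm:heisenberg_relations} gives $[H_+(\pp|\bal;\bbe), J_{-k}^{(\bal;\bbe)}] = p_k$ and $[H_+(\pp|\bal;\bbe), J_k^{(\bal;\bbe)}] = [H_+(\pp|\bal;\bbe), J_0^{(\bal;\bbe)}] = 0$, while $[H_+(\pp|\bal;\bbe), \Sigma_{(\bal;\bbe)}] = 0$ by Proposition~\ref{prop:deformed_current_shift_commute}. From the first, $e^{H_+(\pp|\bal;\bbe)} J_{-k}^{(\bal;\bbe)} = (J_{-k}^{(\bal;\bbe)} + p_k) e^{H_+(\pp|\bal;\bbe)}$; combined with ${}_{(m)} \bra{\varnothing} J_{-k}^{(\bal;\bbe)} = \Delta_0(k|\bal) \cdot {}_{(m)} \bra{\varnothing} = 0$ (Corollary~\ref{cor:deformed_current_vacuum} together with $[J_{-k}^{(\bal;\bbe)}, \Sigma_{(\bal;\bbe)}] = 0$), this yields $\Phi^{(\bal;\bbe)} \circ J_{-k}^{(\bal;\bbe)} = p_k \cdot \Phi^{(\bal;\bbe)}$. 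Differentiating the exponential series term-by-term and using $[J_k^{(\bal;\bbe)}, H_+(\pp|\bal;\bbe)] = 0$ gives the identity $k \, \partial_{p_k} e^{H_+(\pp|\bal;\bbe)} = J_k^{(\bal;\bbe)} e^{H_+(\pp|\bal;\bbe)}$, hence $\Phi^{(\bal;\bbe)} \circ J_k^{(\bal;\bbe)} = k \, \partial_{p_k} \circ \Phi^{(\bal;\bbe)}$. Since $J_0^{(\bal;\bbe)} = J_0$ acts on $\fermionfock_m$ by the scalar $m$ while $Q$ acts on $\svar^m \CC[\pp]$ by $m$, we get $\Phi^{(\bal;\bbe)} \circ J_0^{(\bal;\bbe)} = Q \circ \Phi^{(\bal;\bbe)}$; and $e^{H_+(\pp|\bal;\bbe)} \Sigma_{(\bal;\bbe)} = \Sigma_{(\bal;\bbe)} e^{H_+(\pp|\bal;\bbe)}$ with ${}_{(m)} \bra{\varnothing} \Sigma_{(\bal;\bbe)} = {}_{(m-1)} \bra{\varnothing}$ gives $\Phi^{(\bal;\bbe)} \circ \Sigma_{(\bal;\bbe)} = \svar \cdot \Phi^{(\bal;\bbe)}$. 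Iterating the last on $\ket{\varnothing}$ yields $\Phi^{(\bal;\bbe)}(\ket{\varnothing}_{(m)}) = \svar^m$.

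Finally, I would deduce bijectivity and the vertex operator formulas. Using $[J_k^{(\bal;\bbe)}, \Sigma_{(\bal;\bbe)}] = 0$ for $k \neq 0$ and Corollary~\ref{cor:deformed_current_vacuum} at charge $0$, the vector $\ket{\varnothing}_{(m)} = \Sigma_{(\bal;\bbe)}^m \ket{\varnothing}$ is a highest-weight vector for the deformed $\mcH$-action on $\fermionfock_m$ (killed by all $J_k^{(\bal;\bbe)}$ with $k > 0$, and a $J_0$-eigenvector of eigenvalue $m$). Hence $\Phi^{(\bal;\bbe)}$, being an $\mcH$-module map carrying the cyclic vector $\ket{\varnothing}_{(m)}$ of $\fermionfock_m$ to the cyclic vector $\svar^m$ of the Fock module $\svar^m \CC[\pp]$, is surjective onto each $\svar^m \CC[\pp]$; since it preserves the energy/$\pp$-degree grading and $\fermionfock_m$ and $\svar^m \CC[\pp]$ have equal (finite) graded dimensions in each degree --- the number of partitions of that degree --- surjectivity forces injectivity, so $\Phi^{(\bal;\bbe)}$ is an isomorphism. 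The vertex operator statements are then automatic: conjugating the formulas of Theorem~\ref{thm:fermion_vertex_op} for $\psi(z|\bal;\bbe)$ and $\psi^*(w|\bal;\bbe)$ by $\Phi^{(\bal;\bbe)}$ and substituting the images just computed --- so that the $J_0^{(\bal;\bbe)}$-dependent factors $z^{J_0^{(\bal;\bbe)}}$ and $1 - \alpha_{J_0^{(\bal;\bbe)}} \beta_{J_0^{(\bal;\bbe)}}$ become $z^Q$ and $1 - \alpha_Q \beta_Q$ --- produces exactly $X(z|\bal;\bbe)$ and $X^*(w|\bal;\bbe)$. I expect the main obstacle to be essentially bookkeeping: carefully justifying the finiteness/completion claims in the first step and confirming that $\fermionfock_m$ with the \emph{deformed} Heisenberg action is genuinely the irreducible Fock module (highest weight together with matching graded dimension); the remaining computations are direct transcriptions of the classical case now licensed by the deformed identities.
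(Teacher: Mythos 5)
Your central computations are the same as the paper's: you verify the operator intertwining exactly as in the printed proof ($e^{H_+}J_{-k}^{(\bal;\bbe)}=(J_{-k}^{(\bal;\bbe)}+p_k)e^{H_+}$ together with ${}_{(m)}\bra{\varnothing}J_{-k}^{(\bal;\bbe)}=0$ from Corollary~\ref{cor:deformed_current_vacuum} and Proposition~\ref{prop:deformed_current_shift_commute}; term-by-term differentiation for $J_k^{(\bal;\bbe)}$; the charge argument for $J_0$; the shift of ${}_{(m)}\bra{\varnothing}$ for $\Sigma_{(\bal;\bbe)}\mapsto\svar$), and you obtain the vertex operators by pushing Theorem~\ref{thm:fermion_vertex_op} through $\Phi^{(\bal;\bbe)}$, which is precisely what the paper does. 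That part is fine.

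The genuine problem is your first step and the injectivity argument built on it. It is not true that $H_+(\pp|\bal;\bbe)$ is built from strictly energy-lowering operators: by~\eqref{eq:adef}, $J_k^{(\bal;\bbe)}=\sum_{i,j}A^k_{ij}\normord{\psi_i\psi_j^*}$ has nonzero coefficients for \emph{all} $j\geqslant i$ when $k>0$, including the energy-preserving diagonal terms $A^k_{ii}=\beta_i^k$; even when $\bbe=0$ the operator lowers energy by at least $k$ but not by exactly $k$. Consequently ${}_{(m)}\bra{\varnothing}e^{H_+(\pp|\bal;\bbe)}\ket{\eta}$ is neither homogeneous of degree equal to the energy of $\ket{\eta}$ nor, for $\bbe\neq 0$, even a polynomial: the images $s_{\lambda}(\pp\dv\bal;\bbe)$ are inhomogeneous formal power series, which is why Proposition~\ref{prop:basis} speaks of a basis of the completion $\CC\FPS{\pp}$ and why the expansion of $p_k$ in Example~\ref{ex:pk_expansion} is an infinite sum. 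So your claims that the map lands in $\bosonfock$ on the nose, preserves the energy/$\pp$-degree grading, and is injective because "surjectivity plus equal graded dimensions" all rest on a false premise, and the last step fails as stated (there is also no naive filtration fix, since for $\bbe\neq 0$ the $\pp$-degree of the image is unbounded). The paper avoids this by working with matrix coefficients (Remark~\ref{rem:operator_well_defined}) and by identifying the two sides as $\mcH$-modules via irreducibility of the charge-$m$ components, reduced to the classical case (Remark~\ref{rem:alt_proof_bfc_kac}); if you want to supply the bijectivity discussion explicitly, that is the route to take — you already note that irreducibility under the deformed action needs confirmation, but your proposed dimension count cannot substitute for it. Relatedly, "the image is closed under multiplication by $p_k$" needs the completion formalism as well, since $J_{-k}^{(\bal;\bbe)}\ket{\lambda}$ is itself an infinite sum (Example~\ref{ex:jkexample}).
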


Before giving the details of the proof of Theorem~\ref{thm:deformed_boson_fermion}, let us make some remarks.
Note that $\Phi^{(\bal;\bbe)}$ preserves the charge.
Our next remark is that
\begin{equation}
\label{eq:explicit_image_H}
\overline{H}_+(z^{-1}|\bal;\bbe) = \xi(\widetilde{\partial}_{\pp};z^{-1}) = \sum_{k=1}^{\infty} \frac{\partial}{\partial p_k}z^{-k},
\qquad\qquad
\overline{H}_-(z|\bal;\bbe) = \xi(\pp;z) = \sum_{k=1}^{\infty} \frac{p_k}{k}z^k,
\end{equation}
where $\widetilde{\partial}_{\pp} = (\partial_{p_1}, 2 \partial_{p_2}, 3 \partial_{p_3}, \cdots)$ using the standard $\partial_x := \frac{\partial}{\partial x}$.
Finally, the vertex operators $X(z|\bal;\bbe) = X(z)$ and $X^*(w|\bal;\bbe) =
X^*(w)$ are the same as those in Section~\ref{sec:boson_fermion} and do not
\emph{appear} to depend on $\bal$ or $\bbe$. 
However, we will be considering their expansions into shifted modes, which
\emph{do} depend on $\bal$ and $\bbe$, rather than normal Fourier modes to
match our deformed fermion fields.

\begin{proof}
We first give the details about the image of the deformed current operators under $\Phi^{(\bal;\bbe)}$.
Let $k > 0$, and we compute
\[
k \frac{\partial}{\partial p_k} \bigl[{}_{(m)} \bra{\varnothing} e^{H_+(\pp|\bal;\bbe)} \ket{\eta} \bigr]
= {}_{(m)} \bra{\varnothing} k \frac{\partial}{\partial p_k} [e^{H_+(\pp|\bal;\bbe)}] \ket{\eta}
= {}_{(m)} \bra{\varnothing} e^{H_+(\pp|\bal;\bbe)} J_k^{(\bal;\bbe)} \ket{\eta}.
\]
This follows from $\frac{d}{dx} e^{\lambda x} = \lambda e^{\lambda x}$, which could also be seen more directly from the expansion $e^{H_+(\pp|\bal;\bbe)} = \sum_{i=0}^{\infty} \frac{H_+(\pp|\bal;\bbe)^i}{i!}$ as we can do one fewer $J_k^{(\bal;\bbe)}$ action.
Next we consider $k = 0$, and since $e^{H_+(\pp|\bal;\bbe)}$ has charge $0$, we have
\begin{align*}
\sum_{m\in\ZZ} {}_{(m)} \bra{\varnothing} e^{H_+(\pp|\bal;\bbe)} J_0^{(\bal;\bbe)} \ket{\eta} \cdot \svar^m & = \sum_{m\in\ZZ} {}_{(m)} \bra{\varnothing} e^{H_+(\pp|\bal;\bbe)} \ket{\eta} \cdot m \svar^m
\\ & = \svar\frac{\partial}{\partial \svar} \sum_{m\in\ZZ} {}_{(m)} \bra{\varnothing} e^{H_+(\pp|\bal;\bbe)} \ket{\eta} \cdot \svar^m
\end{align*}
by Theorem~\ref{thm:heisenberg_relations} and the BCH formula and applying $\Sigma_{(\bal;\bbe)}$ to ${}_m \bra{\lambda}$ results in a homogeneous vector of charge $m-1$ by~\eqref{eq:deform_shift_vacuum_bra}. 
Note that $e^{H_+(\pp|\bal;\bbe)} J_{-k}^{(\bal;\bbe)} = (p_k + J_{-k}^{(\bal;\bbe)}) e^{H_+(\pp|\bal;\bbe)}$ from Theorem~\ref{thm:heisenberg_relations} and the BCH formula (see also, \text{e.g.},~\cite[Eq.~(2.4)]{AlexandrovZabrodin}).
Also, note that
\[
{}_{(m)} \bra{\varnothing} J_{-k}^{(\bal;\bbe)} = \bra{\varnothing} J_{-k}^{(\bal;\bbe)} \Sigma_{(\bal;\bbe)}^m = 0
\]
by Corollary~\ref{cor:deformed_current_vacuum}, and hence
\[
{}_{(m)} \bra{\varnothing} e^{H_+(\pp|\bal;\bbe)} J_{-k}^{(\bal;\bbe)} \ket{\eta} = {}_m \bra{\varnothing} (p_k + J_{-k}^{(\bal;\bbe)}) e^{H_+(\pp|\bal;\bbe)} \ket{\eta}
= p_k \cdot {}_{(m)} \bra{\varnothing} e^{H_+(\pp|\bal;\bbe)} \ket{\eta}.
\]
This completes the proof of the image of the deformed current operators under $\Phi^{(\bal;\bbe)}$.

To finish the proof of Theorem~\ref{thm:deformed_boson_fermion}, we have $\Sigma_{(\bal;\bbe)} \mapsto s$ by computing
\begin{align*}
\sum_{m \in \ZZ} {}_{(m)} \bra{\varnothing} e^{H_+(\pp|\bal;\bbe)} \Sigma_{(\bal;\bbe)} \ket{\eta} \cdot \svar^m
& = \sum_{m \in \ZZ} {}_{(m-1)} \bra{\varnothing} e^{H_+(\pp|\bal;\bbe)} \ket{\eta} \cdot \svar^m
\\ & = \sum_{m \in \ZZ} {}_{(m)} \bra{\varnothing} e^{H_+(\pp|\bal;\bbe)} \ket{\eta} \cdot \svar^{m+1}
\end{align*}
by Proposition~\ref{prop:deformed_current_shift_commute}.
Likewise,~\eqref{eq:deform_shift_vacuum_ket} and $e^{H_+(\pp|\bal;\bbe)} \ket{\varnothing}_{(m)} = \ket{\varnothing}_{(m)}$ implies $\ket{\varnothing}_{(m)} \mapsto \svar^m$.
Hence, the identities~\eqref{eq:fermion_vertex_ops} imply $\psi(z|\bal;\bbe) \mapsto X(z|\bal;\bbe)$ and $\psi^*(w|\bal;\bbe) \mapsto X^*(w|\bal;\bbe)$ under $\Phi^{(\bal;\bbe)}$.
\end{proof}

\begin{remark}
\label{rem:alt_proof_bfc_kac}
We give an alternative way to constructing the boson vertex operators without using Theorem~\ref{thm:fermion_vertex_op} by following Kac~\cite[Ch.~14]{KacInfinite}.
For this, note $\Phi^{(\bal;\bbe)}$ is a canonical map identifying $\fermionfock$ and $\bosonfock$ as representations of $\mcH$, where each irreducible $\mcH$ representation is distinguished by its charge with highest weight vector $\Phi^{(\bal;\bbe)}(\ket{\varnothing}_{(m)}) = \svar^m$.
We remark that the irreducibility of $\fermionfock_m$ follows from the $\bal = \bbe = 0$ case~\cite[Prop.~4.9]{KacInfinite}.
Hence, exactly analogous to~\cite[Thm.~14.10(a)]{KacInfinite} using Proposition~\ref{prop:current_comm} and~\cite[Lemma~14.5]{KacInfinite}, we have that $\Phi^{(\bal;\bbe)}$ maps $\psi(z|\bal;\bbe)$ and $\psi^*(z|\bal;\bbe)$ to the vertex operators
\begin{align*}
X(z|\bal;\bbe) & = C_Q(z) \svar e^{\overline{H}_-(z|\bal;\bbe)} e^{-\overline{H}_+(z^{-1}|\bal;\bbe)},
\\
X^*(w|\bal;\bbe) & = \svar^{-1} C^*_Q(w) e^{-\overline{H}_-(w|\bal;\bbe)} e^{\overline{H}_+(w^{-1}|\bal;\bbe)},
\end{align*}
respectively.
To compute $C_Q(z)$, we bring the vertex operator description to back to operators on $\fermionfock$ and compute the coefficient of $\ket{\varnothing}_m$ in the expansion by
\begin{align*}
{}_m \bra{\varnothing} \psi(z|\bal;\bbe) \ket{\varnothing}_{m-1} & = \frac{z(z|\bbe)^{m-1}}{(z;\bal)^m}
\\ & = C_m(z) z^{1-m} \frac{(z|\bbe)^{m-1}}{(z;\bal)^m} \cdot {}_m \bra{\varnothing} \Sigma_{(\bal;\bbe)} \ket{\varnothing}_{m-1} = C_m(z) z^{-m} \frac{z (z|\bbe)^{m-1}}{(z;\bal)^m}
\end{align*}
by Corollary~\ref{cor:deformed_current_vacuum}, Equation~\eqref{eq:half_vertex_vacuum_actions}, and Corollary~\ref{cor:deformed_shift_vacuum}.
Hence, $C_Q(z) = z^Q$, and the computation for $C^*_Q(w)$ is similar.
\end{remark}

\subsection{Double factorial Schur functions}

We want to look at the image $\Phi^{(\bal;\bbe)}(\ket{\lambda}_m)$ for any partition $\lambda$ and $m \in \ZZ$.
We begin by computing the following generating series.

\begin{lemma}
\label{lemma:fermion_expectation}
We have
\begin{align*}
{}_{\ell} \bra{\varnothing} e^{H_+(\pp|\bal;\bbe)} & \psi(z_1|\bal;\bbe) \cdots \psi(z_n|\bal;\bbe) \ket{\varnothing}_{m-n}
\\ & = \delta_{m\ell} \prod_{i=1}^n z_i e^{\xi(\pp;z_i)} \frac{(z_i|\bbe)^{m-n}}{(z_i;\bal)^m} \prod_{i < j} (z_i - z_j) (1 - \alpha_{1+m-i}\beta_{1+m-j}) e^{\Lambda_{m-n}(\pp|\bbe)}.
\end{align*}
\end{lemma}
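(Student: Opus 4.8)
The plan is to strip off the Hamiltonian, reduce to a bare vacuum expectation, recognize it as a finite determinant, and evaluate that determinant.

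First, apply Theorem~\ref{thm:deformed_adjoint} $n$ times to move $e^{H_+(\pp|\bal;\bbe)}$ to the right past $\psi(z_1|\bal;\bbe)\cdots\psi(z_n|\bal;\bbe)$, producing the factor $\prod_{i=1}^{n} e^{\xi(\pp;z_i)}$; the displaced $e^{H_+(\pp|\bal;\bbe)}$ then acts on $\ket{\varnothing}_{m-n}$ by the scalar $e^{\Lambda_{m-n}(\pp|\bbe)}$ via Corollary~\ref{cor:deformed_current_vacuum} (Equation~\eqref{eq:exp_pos_vacuum}). Since each $\psi(z_i|\bal;\bbe)$ raises the charge by $1$, the vector $\psi(z_1|\bal;\bbe)\cdots\psi(z_n|\bal;\bbe)\ket{\varnothing}_{m-n}$ lies in $\fermionfock_m$, which forces the $\delta_{m\ell}$. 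Hence the lemma is equivalent to the ``bare'' identity that ${}_m\bra{\varnothing}\psi(z_1|\bal;\bbe)\cdots\psi(z_n|\bal;\bbe)\ket{\varnothing}_{m-n}$ equals $\prod_{i=1}^{n} z_i\,(z_i|\bbe)^{m-n}/(z_i;\bal)^m \cdot \prod_{i<j}(z_i-z_j)(1-\alpha_{1+m-i}\beta_{1+m-j})$.

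For the bare identity I would expand each deformed field in modes. In $\sum_{k_1,\dots,k_n}\bigl(\prod_a z_a(z_a|\bbe)^{k_a-1}/(z_a;\bal)^{k_a}\bigr)\,{}_m\bra{\varnothing}\psi_{k_1}\cdots\psi_{k_n}\ket{\varnothing}_{m-n}$ the only surviving terms are those with $\{k_1,\dots,k_n\}=\{m-n+1,\dots,m\}$, since $\psi_{k_1}\cdots\psi_{k_n}\ket{\varnothing}_{m-n}=v_{k_1}\wedge\cdots\wedge v_{k_n}\wedge\ket{\varnothing}_{m-n}$ must reorder to $\pm\ket{\varnothing}_m$; the surviving coefficient is the sign of the permutation sorting $(k_1,\dots,k_n)$ into $(m,m-1,\dots,m-n+1)$, because $\ket{\varnothing}_m=v_m\wedge v_{m-1}\wedge\cdots\wedge v_{m-n+1}\wedge\ket{\varnothing}_{m-n}$. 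This is a finite sum, and collecting terms (the Wick-type step alluded to in the introduction) gives
\[
{}_m\bra{\varnothing}\psi(z_1|\bal;\bbe)\cdots\psi(z_n|\bal;\bbe)\ket{\varnothing}_{m-n}=\det\!\left[z_a\frac{(z_a|\bbe)^{m-b}}{(z_a;\bal)^{m-b+1}}\right]_{a,b=1}^{n}.
\]

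It remains to evaluate this determinant. Using the shift identities~\eqref{eq:basic_spowers_rels}, factor $z_a(z_a|\bbe)^{m-n}/(z_a;\bal)^m$ out of row $a$; the residual matrix has entries
\[
N_{ab}=\prod_{j=m-n+1}^{m-b}(z_a-\beta_j)\cdot\prod_{j=m-b+2}^{m}(1-z_a\alpha_j),
\]
a polynomial of degree $n-1$ in $z_a$. Since $[N_{ab}]$ is alternating in $z_1,\dots,z_n$ while each row has degree $\le n-1$ in its own variable, $\det[N_{ab}]=C\cdot\prod_{i<j}(z_i-z_j)$ with $C\in\CC[\bal,\bbe]$ a constant in the $z$'s. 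To pin down $C$: if $\alpha_{1+m-i}\beta_{1+m-j}=1$ for a pair $i<j$, then every $N_{ab}$ in row $a$ acquires the common factor $(z_a-\beta_{1+m-j})$ — from the $\beta$-block when $b\le j-1$, and from $1-z_a\alpha_{1+m-i}=-\beta_{1+m-j}^{-1}(z_a-\beta_{1+m-j})$ when $b\ge j$ — so $\prod_a(z_a-\beta_{1+m-j})$ divides $\det[N_{ab}]$; being coprime to the Vandermonde, this forces $C$ to vanish, i.e.\ $(1-\alpha_{1+m-i}\beta_{1+m-j})\mid C$. Running over all $i<j$ and matching degrees — $C$ has degree $n-i$ in $\alpha_{1+m-i}$ and $j-1$ in $\beta_{1+m-j}$, exactly as does $\prod_{i<j}(1-\alpha_{1+m-i}\beta_{1+m-j})$ — shows $C$ is a scalar multiple of that product; the $\bal=\bbe=0$ specialization, where $N_{ab}=z_a^{\,n-b}$ and $\det[z_a^{\,n-b}]=\prod_{i<j}(z_i-z_j)$, makes the scalar $1$. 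The main obstacle is precisely this last step: assembling the divisibility-and-degree bookkeeping that identifies $C$ for general non-constant parameter families. The diagonal specialization $\alpha_j\equiv\alpha$, $\beta_j\equiv\beta$, under which $\det[N_{ab}]$ collapses to $(1-\alpha\beta)^{\binom n2}\prod_{i<j}(z_i-z_j)$ via the Möbius substitution $z\mapsto(z-\beta)/(1-\alpha z)$, is a convenient sanity check but does not by itself determine $C$.
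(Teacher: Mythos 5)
Your proposal is correct, and it follows the paper's skeleton up to the last step: you strip $e^{H_+(\pp|\bal;\bbe)}$ using Theorem~\ref{thm:deformed_adjoint} and Corollary~\ref{cor:deformed_current_vacuum}, get $\delta_{m\ell}$ from charge considerations, and reduce to the bare expectation ${}_m\bra{\varnothing}\psi(z_1|\bal;\bbe)\cdots\psi(z_n|\bal;\bbe)\ket{\varnothing}_{m-n}$, exactly as in the paper. Where you diverge is in evaluating that expectation. The paper recognizes the resulting signed sum as the shifted power Vandermonde determinant and cites its known evaluation (\cite[Prop.~3.15]{NaprienkoFFS}, also Molev and Krattenthaler, and \cite[Eq.~(2.22)]{MiyauraMukaihiraGeneralized}), and it only treats $m=0$ directly, obtaining general $m$ by the shift relations~\eqref{eq:shifting_powers}. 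You instead prove the determinant identity from scratch and for general $m$ at once: factoring $z_a(z_a|\bbe)^{m-n}/(z_a;\bal)^m$ out of each row, observing the residual matrix $[f_b(z_a)]$ has polynomial entries of degree $\leqslant n-1$, so the determinant is $C\prod_{i<j}(z_i-z_j)$ with $C$ independent of the $z$'s, and then pinning down $C$ by the specialization $\alpha_{1+m-i}\beta_{1+m-j}=1$ (which makes $(z_a-\beta_{1+m-j})$ a common row factor coprime to the Vandermonde, forcing $(1-\alpha_{1+m-i}\beta_{1+m-j})\mid C$), by per-variable degree counting, and by the $\bal=\bbe=0$ normalization. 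I checked this bookkeeping and it closes: $\alpha_{1+m-i}$ occurs only in columns $b\geqslant i+1$ and $\beta_{1+m-j}$ only in columns $b\leqslant j-1$, each to degree one, so $\deg_{\alpha_{1+m-i}}C\leqslant n-i$ and $\deg_{\beta_{1+m-j}}C\leqslant j-1$, matching the degrees of $\prod_{i<j}(1-\alpha_{1+m-i}\beta_{1+m-j})$; since these are the only parameters occurring, the quotient is a scalar, equal to $1$ at $\bal=\bbe=0$. (Your degree statements should be read as upper bounds, and the union of the two column ranges covers all columns precisely because $i<j$, but these are routine.) So the step you flag as the ``main obstacle'' is in fact carried through by your sketch; what your route buys is a self-contained proof of the Vandermonde-type evaluation that the paper imports from the literature, at the cost of a longer argument than the paper's citation-plus-shift reduction.
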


\begin{proof}
Recall from Theorem~\ref{thm:heisenberg_relations} that $[J_0^{(\bal;\bbe)}, J_k^{(\bal;\bbe)}] = 0$ for all $k \neq 0$, so $e^{H_+(\pp|\bal;\bbe)}$ preserves the charge.
Hence, we must have $m = \ell$ as otherwise we are taking the pairing of two elements of different charges, which is $0$.

Next, we prove the result when $m = 0$.
We compute
\begin{align*}
\bra{\varnothing} e^{H_+(\pp|\bal;\bbe)} & \psi(z_1|\bal;\bbe) \cdots \psi(z_n|\bal;\bbe) \ket{\varnothing}_{-n}
\\ & = \prod_{i=1}^n e^{\xi(\pp;z_i)} e^{\Lambda_{-n}(\pp|\bbe)} \bra{\varnothing} \psi(z_1|\bal;\bbe) \cdots \psi(z_n|\bal;\bbe) \ket{\varnothing}_{-n}
\\ & = e^{\Lambda_{-n}(\pp|\bbe)} \prod_{i=1}^n e^{\xi(\pp;z_i)} \sum_{\omega \in S_n} (-1)^{\omega} \frac{z_1 (z_1|\bbe)^{-\omega_1} \cdots z_n (z_n|\bbe)^{-\omega_n}}{(z_1;\bal)^{1-\omega_1} \cdots (z_n; \bal)^{1-\omega_n}}
\\ & = e^{\Lambda_{-n}(\pp|\bbe)} \prod_{i=1}^n z_i e^{\xi(\pp;z_i)} (z_i|\bbe)^{-n} \prod_{i < j} (z_i - z_j) (1 - \alpha_{1-i} \beta_{1-j}),
\end{align*}
where the last identity comes from realizing that the sum is the shifted power Vandermonde determinant from~\cite[Prop.~3.15]{NaprienkoFFS} with taking $z_i \mapsto x_i^{-1}$ (see also~\cite[Sec.~2.1]{Molev09} and~\cite[Prop.~1]{Krattenthaler99}), which is also~\cite[Eq.~(2.22)]{MiyauraMukaihiraGeneralized} (stated with no proof).
Now to show the claim for general $m$, by using~\eqref{eq:shifting_powers}, we can see that
\begin{align*}
{}_m \bra{\varnothing}\psi(z_1|\bal) \cdots \psi(z_n|\bal) \ket{\varnothing}_{m-n} & = \prod_{i=1}^n \frac{(z_i|\bbe)^m}{(z_i;\bal)^m} \sigma_{\bal}^m \sigma_{\bbe}^m \bigl( \bra{\varnothing}\psi(z_1|\bal) \cdots \psi(z_n|\bal) \ket{\varnothing}_{-n} \bigr).
\end{align*}
Hence the claim follows from the $m = 0$ case.
\end{proof}


\begin{proof}[Alternative proof]
We could show the claim for $m \neq 0$ from the $m = 0$ case by repeated applications of the shift operator, where repeated applications of Proposition~\ref{prop:deformed_fermion_classical_shift} yields
\begin{align*}
\Sigma^{-m} \psi(z_i|\bal;\bbe) & = \frac{(z|\bbe)}{(z;\bal)} \Sigma^{1-m} \psi(z|\sigma_{\bal}\bal;\sigma_{\bbe}\bbe) \Sigma^{-1}
\\ & = \frac{(z|\bbe)}{(z;\bal)}\frac{(z|\sigma_{\bbe}\bbe)}{(z;\sigma_{\bal}\bal)} \Sigma^{2-m} \psi(z|\sigma_{\bal}^2\bal;\sigma_{\bbe}^2\bbe) \Sigma^{-2}
\\ & = \cdots = \prod_{j=0}^{m-1} \frac{(z|\sigma_{\bbe}^j\bbe)}{(z;\sigma_{\bal}^j\bal)} \psi(z|\sigma_{\bal}^m\bal;\sigma_{\bbe}^m\bbe) \Sigma^{-m}.
\end{align*}
Next, note that $\prod_{j=0}^{m-1} (z_i;\sigma_{\bal}^j \bal) = (z_i;\bal)^m$ and $\prod_{j=0}^{m-1} (z_i|\sigma_{\bbe}^j \bbe) = (z_i|\bbe)^m$.
To finish the proof, we again use for the final step the application of $\sigma_{\bal}^m \sigma_{\bbe}^m$ to the $m = 0$ case.
\end{proof}

Next, we expand the product formula from Lemma~\ref{lemma:fermion_expectation} in terms of the appropriate shifted powers in order to get the image of $\ket{\lambda}_m$.
We will focus on the case $m = 0$ to recover known formulas, but the other cases can be computed similarly.

\begin{definition}[Double factorial Schur functions]
\label{def:dfSchur}
For any $\lambda \in \ZZ^n$, we define the \defn{double factorial Schur functions} by
\[
s_{\lambda}(\pp\dv\bal;\bbe) := \bra{\varnothing} e^{H_+(\pp|\bal;\bbe)} \ket{\lambda} = \Phi^{(\bal;\bbe)}(\ket{\lambda}).
\]
Here $\lambda$ is not necessarily a partition, but we extend the notation $\ket{\lambda}_m$ to elements of $\ZZ^n$ also using~\eqref{eq:ket-from-vacuum}.
However, this is sometimes zero; \textit{e.g.}, when $\lambda_i = \lambda_{i+1} + 1$.

For any $\lambda,\mu \in \ZZ^n$, we define the \defn{skew double factorial Schur function} by
\[
s_{\lambda/\mu}(\pp\dv\bal;\bbe) := \bra{\mu} e^{H_+(\pp|\bal;\bbe)} \ket{\lambda}.
\]
\end{definition}

By using the definition of $\ket{\lambda}$ in terms of the creation operators from $\ket{\varnothing}_{-n}$, the corresponding series over all $\lambda$ becomes
\begin{equation}
\label{eq:MMgen_recovery}
\begin{aligned}
\sum_{\lambda \in \ZZ^n} s_{\lambda}(\pp\dv\bal;\bbe) & \frac{z_1 (z_1|\bbe)^{\lambda_1-1} z_2 (z_2|\bbe)^{\lambda_2-2} \cdots z_n (z_n|\bbe)^{\lambda_n-n}}{(z_1;\bal)^{\lambda_1} (z_2;\bal)^{\lambda_2-1} \cdots (z_n;\bal)^{\lambda_n-n+1}}
\\ & =  \prod_{i=1}^n z_i e^{\xi(\pp;z_i)} (z_i|\bbe)^{-n} \prod_{i < j} (z_i - z_j) (1 - \alpha_{1-i}\beta_{1-j}) e^{\Lambda_{-n}(\pp|\bbe)}
\end{aligned}
\end{equation}
by Lemma~\ref{lemma:fermion_expectation}.

\begin{corollary}
\label{cor:MM_schur}
The double factorial Schur functions $s_{\lambda}(\pp\dv\bal;\bbe)$ are the so-called generalized factorial Schur functions from Miyaura--Mukaihira~\cite{MiyauraMukaihiraGeneralized}.
\end{corollary}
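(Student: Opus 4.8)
The plan is to match the generating-function identity~\eqref{eq:MMgen_recovery} with the generating function that Miyaura--Mukaihira~\cite{MiyauraMukaihiraGeneralized} use to \emph{define} their generalized factorial Schur functions, after accounting for the reindexing of parameters mentioned in the introduction. First I would recall that in~\cite{MiyauraMukaihiraGeneralized} the generalized factorial Schur function indexed by a weakly decreasing integer vector $\lambda$ of length at most $n$ is defined as the coefficient of the product of shifted powers $\prod_{i} z_i (z_i|\bbe)^{\lambda_i-i}/(z_i;\bal)^{\lambda_i-i+1}$ (in their normalization) in the expansion of a product of the shape $\prod_{i} z_i e^{\xi(\pp;z_i)} (z_i|\bbe)^{-n} \cdot \prod_{i<j}(z_i-z_j)(1-\alpha_{1-i}\beta_{1-j}) \cdot e^{\Lambda_{-n}(\pp|\bbe)}$; this is~\cite[Eq.~(2.22)]{MiyauraMukaihiraGeneralized} up to their parameter conventions, and the shifted-power Vandermonde on the right is exactly the one quoted in the proof of Lemma~\ref{lemma:fermion_expectation}. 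The right-hand side of~\eqref{eq:MMgen_recovery} is literally this product.

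Next I would verify that the family of shifted powers in which we expand, namely $\{\, z\,(z|\bbe)^{k-1}/(z;\bal)^k \mid k\in\ZZ \,\}$, is linearly independent, so that equating the two generating series forces their coefficients to agree term by term. This is precisely the content of Proposition~\ref{prop:orthonormality}: the pairing it provides makes these shifted powers orthogonal, hence a basis of the relevant space of functions. Consequently, for each $\lambda \in \ZZ^n$, the coefficient of $\prod_{i} z_i (z_i|\bbe)^{\lambda_i-i}/(z_i;\bal)^{\lambda_i-i+1}$ on the left of~\eqref{eq:MMgen_recovery}, which by Definition~\ref{def:dfSchur} together with~\eqref{eq:ket-from-vacuum} is exactly $s_{\lambda}(\pp\dv\bal;\bbe)$, must equal the corresponding coefficient of the right-hand product, which is by construction the Miyaura--Mukaihira function. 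The antisymmetry of the right-hand side under simultaneous permutation of the $z_i$ and the parameter indices matches the alternating behaviour of $\ket{\lambda}_{-n}$ under reordering the creation operators (and in particular its vanishing when $\lambda_i = \lambda_{i+1}+1$), so no discrepancy arises from non-partition $\lambda$.

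The main obstacle, and really the only nontrivial point, is bookkeeping: one must pin down the exact indexing conventions of~\cite{MiyauraMukaihiraGeneralized} for $\bal$ and $\bbe$ (they work with one-sided sequences and their shifted powers are based at a different origin) and carry out the translation so that their generating function becomes~\eqref{eq:MMgen_recovery} on the nose. The extra prefactors $z_i$, $(z_i|\bbe)^{-n}$, and $e^{\Lambda_{-n}(\pp|\bbe)}$ — the last of which encodes the action of $e^{H_+(\pp|\bal;\bbe)}$ on $\ket{\varnothing}_{-n}$ via Corollary~\ref{cor:deformed_current_vacuum} and Lemma~\ref{lemma:exp_lambda} — all have to be tracked through this change of indexing. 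Once the dictionary is fixed, the identification is immediate from~\eqref{eq:MMgen_recovery} and the uniqueness of shifted-power expansions guaranteed by Proposition~\ref{prop:orthonormality}.
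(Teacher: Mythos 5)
Your proposal follows essentially the same route as the paper: the paper's proof also starts from the generating series~\eqref{eq:MMgen_recovery}, multiplies both sides by $\prod_{j=1}^n z_j^{-1}\,(z_j;\bal)^{1-j}/(z_j|\bbe)^{-j}$ and uses~\eqref{eq:shifting_powers} to match the defining generating function of Miyaura--Mukaihira (their Eq.~(2.19), with the dictionary $a_i=\beta_i$, $b_i=\alpha_i$), which is exactly the normalization and reindexing step you describe. Your extra appeal to Proposition~\ref{prop:orthonormality} to justify unique coefficient extraction is a harmless supplement the paper leaves implicit, so the argument is correct.
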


\begin{proof}
After multiplying both sides of~\eqref{eq:MMgen_recovery} by
$
\prod_{j=1}^n z_i^{-1} \frac{(z_j;\bal)^{1-j}}{(z_j|\bbe)^{-j}} 
$
and using~\eqref{eq:shifting_powers}, we obtain~\cite[Eq.~(2.19)]{MiyauraMukaihiraGeneralized} with the parameters in~\cite{MiyauraMukaihiraGeneralized} as $a_i = \beta_i$ and $b_i = \alpha_i$.
\end{proof}

We also note some basic properties of the double factorial Schur functions.

\begin{proposition}[Basis]
\label{prop:basis}
The set of double factorial Schur functions $\{s_{\lambda}(\pp\dv\bal;\bbe) \mid \lambda \in \mcP\}$ is a basis for $\CC\FPS{\pp}$, which is the completion of $\bosonfock_0 = \CC[\pp]$ by degree.
\end{proposition}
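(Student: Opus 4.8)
The plan is to reduce the claim to an explicit triangularity of the $s_{\lambda}(\pp\dv\bal;\bbe)$ against the ordinary Schur functions, together with the deformed boson--fermion correspondence. By Definition~\ref{def:dfSchur} we have $s_{\lambda}(\pp\dv\bal;\bbe)=\Phi^{(\bal;\bbe)}(\ket{\lambda})$, and $\{\ket{\lambda}\}_{\lambda\in\mcP}$ is the standard basis of $\fermionfock_{0}$; since $\Phi^{(\bal;\bbe)}$ is an isomorphism preserving the charge grading (Theorem~\ref{thm:deformed_boson_fermion}), linear independence of the $s_{\lambda}(\pp\dv\bal;\bbe)$ is automatic, so the real content is that their closed span is all of $\CC\FPS{\pp}$, for which it is cleanest to compare them directly with a known basis.

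First I would record the classical specialization: setting $\alpha_{j}=\beta_{j}=0$ gives $\psi(z|0;0)=\psi(z)$ and $H_{\pm}(\pp|0;0)=H_{\pm}(\pp)$, so $s_{\lambda}(\pp\dv 0;0)=\bra{\varnothing}e^{H_{+}(\pp)}\ket{\lambda}=s_{\lambda}(\pp)$ is the ordinary Schur function, which is homogeneous of degree $\abs{\lambda}$, and $\{s_{\lambda}(\pp)\}_{\lambda\in\mcP}$ is a homogeneous basis of $\CC[\pp]$, hence a topological basis of $\CC\FPS{\pp}$. The key step is then the triangularity: grade the parameters by $\deg\alpha_{i}=\deg\beta_{i}=1$ alongside $\deg p_{k}=k$, and show
\[
s_{\lambda}(\pp\dv\bal;\bbe)=s_{\lambda}(\pp)+R_{\lambda},\qquad R_{\lambda}\in(\bal,\bbe),
\]
every monomial of $R_{\lambda}$ having combined degree at least $\abs{\lambda}$. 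This I would extract from Proposition~\ref{prop:explicit_current}: from~\eqref{eq:adef} one has $A_{ij}^{k}\vert_{\bal=\bbe=0}=\delta_{j-i,k}$, so $J_{k}^{(\bal;\bbe)}\vert_{\bal=\bbe=0}=J_{k}$ and the $\bal=\bbe=0$ part of $\bra{\varnothing}e^{H_{+}(\pp|\bal;\bbe)}\ket{\lambda}$ is exactly $s_{\lambda}(\pp)$; moreover each $A_{ij}^{k}$ has $(\bal,\bbe)$-degree at least $\abs{j-i-k}$, so any term of $\bra{\varnothing}e^{H_{+}(\pp|\bal;\bbe)}\ket{\lambda}$ arising from $p_{k_{1}}\cdots p_{k_{r}}$ times a composition of one-particle moves $j_{a}\to i_{a}$ with $\sum_{a}(j_{a}-i_{a})=\abs{\lambda}$ has combined degree at least $\sum_{a}\bigl(\abs{k_{a}}+\abs{j_{a}-i_{a}-k_{a}}\bigr)\ge\sum_{a}\abs{j_{a}-i_{a}}\ge\abs{\lambda}$ by the triangle inequality. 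In particular each $s_{\lambda}(\pp\dv\bal;\bbe)$ lies in the combined-degree completion (and, after specializing $\bal,\bbe$, in $\CC\FPS{\pp}$).

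To finish I would invoke the standard stability of a basis under perturbation by the filtration: if $\{b_{\lambda}\}$ is a topological basis of a complete filtered vector space and $b'_{\lambda}\equiv b_{\lambda}$ modulo the next step of the filtration, then $\{b'_{\lambda}\}$ is again a topological basis, the inverse transition matrix being built by successive approximation (equivalently, $M=I+N$ with $N$ raising the filtration, so $M^{-1}=\sum_{n\ge 0}(-N)^{n}$ converges). Applying this with $b_{\lambda}=s_{\lambda}(\pp)$, $b'_{\lambda}=s_{\lambda}(\pp\dv\bal;\bbe)$ and the $(\bal,\bbe)$-adic (equivalently combined-degree) filtration yields the proposition. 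The main obstacle is precisely the middle step: because the deformation is inhomogeneous, $s_{\lambda}(\pp\dv\bal;\bbe)$ is genuinely an infinite series in the $p_{k}$, so one must phrase the reduction modulo $(\bal,\bbe)$ and the ``corrections raise the combined degree'' statement inside the correct completion, and check that the successive-approximation argument (and its specialization of $\bal,\bbe$ to scalars) is valid there; everything else is routine.
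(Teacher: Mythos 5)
Your opening observation is the paper's entire proof: since $s_{\lambda}(\pp\dv\bal;\bbe)=\Phi^{(\bal;\bbe)}(\ket{\lambda})$ and $\{\ket{\lambda}\}_{\lambda\in\mcP}$ is a basis of $\fermionfock_0$, the statement is immediate from Theorem~\ref{thm:deformed_boson_fermion}; the paper does not separate "independence" from "spanning" and does not compare with the ordinary Schur basis at all. So the extra machinery you build is not needed, and unfortunately the place where you rely on it is exactly where it fails. Your triangularity and successive-approximation argument lives in the combined-degree, i.e.\ $(\bal,\bbe)$-adic, filtration, which only makes sense when $\bal,\bbe$ are formal variables. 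In the paper $\bal,\bbe$ are fixed complex numbers and the topology on $\CC\FPS{\pp}$ is the $\pp$-degree topology. After specialization the transition matrix $M=I+N$ against $\{s_{\mu}(\pp)\}$ is \emph{not} filtration-raising for the degree filtration: the correction $R_{\lambda}$ contains ordinary Schur functions of strictly \emph{lower} degree as well as higher. Concretely, by~\eqref{eq:tau-expansion} and Lemma~\ref{lem:ribbon}, the coefficient of $p_{1}=s_{(1)}(\pp)$ in $s_{(2)}(\pp\dv\bal;\bbe)$ is $A_{02}^{1}=-(1-\alpha_{2}\beta_{2})\alpha_{1}\neq 0$ for generic parameters (your own bound only says its $(\bal,\bbe)$-degree is at least $|j-i-k|=1$, which is consistent with a nonzero scalar after specialization). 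Hence $N$ has entries moving both up and down in $\pp$-degree, the Neumann series $\sum_{n}(-N)^{n}$ becomes an entrywise infinite sum of complex numbers with no topology available to make it converge, and the "basis stable under filtered perturbation" lemma has nothing to act on. The step you deferred as "check that the specialization of $\bal,\bbe$ to scalars is valid" is therefore not a routine check; it is the gap, and as written the argument does not prove the proposition in the form stated (complex parameters, degree completion).

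What is salvageable: your computation that each $A_{ij}^{k}$ has $(\bal,\bbe)$-degree at least $\lvert j-i-k\rvert$, and hence that every correction monomial has combined degree at least $\abs{\lambda}$, is correct, and it would yield a topological basis statement over $\CC\FPS{\bal,\bbe}$ (parameters formal, $(\bal,\bbe)$-adic convergence handled gradewise as you indicate). But that is a different, weaker assertion than Proposition~\ref{prop:basis}. For the proposition as stated, the intended argument is simply that $\Phi^{(\bal;\bbe)}$ carries the basis $\{\ket{\lambda}\}$ of $\fermionfock_0$ to $\{s_{\lambda}(\pp\dv\bal;\bbe)\}$ — the same fact you already invoked for linear independence settles spanning as well, with no comparison to the undeformed Schur functions required.
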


\begin{proof}
This is immediate from the fact that the $\ket{\lambda}$ form a basis of $\fermionfock_0$ and Theorem~\ref{thm:deformed_boson_fermion}.
\end{proof}

\begin{proposition}[Containment property]
\label{prop:containment}
If $\mu \not\subseteq \lambda$, then $s_{\lambda/\mu}(\pp\dv\bal;\bbe) = 0$.
\end{proposition}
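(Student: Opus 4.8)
The plan is to exploit the fact that, for $k>0$, the deformed current operator $J_k^{(\bal;\bbe)}$ only moves particles ``downward.'' Recall from Proposition~\ref{prop:explicit_current} that $J_k^{(\bal;\bbe)} = \sum_{i,j} A_{ij}^k \normord{\psi_i\psi_j^*}$, and from the remark following~\eqref{eq:adef} that for $k>0$ the coefficient $A_{ij}^k$ vanishes unless $i\leqslant j$. Since $H_+(\pp|\bal;\bbe) = \sum_{k\geqslant 1}\frac{p_k}{k} J_k^{(\bal;\bbe)}$, the operator $e^{H_+(\pp|\bal;\bbe)}$ is a formal linear combination, with coefficients in $\CC\FPS{\pp}$, of finite products of operators of the form $\normord{\psi_i\psi_j^*}$ with $i\leqslant j$. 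So it suffices to understand how such a single operator acts on a semi-infinite monomial.

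First I would record the elementary combinatorial lemma that each such operator is ``order decreasing'' on semi-infinite monomials. Writing $\ket{\nu}_0 = v_{s_1}\wedge v_{s_2}\wedge\cdots$ with occupied sites $S_\nu = \{s_1 > s_2 > \cdots\}$, one checks that $\normord{\psi_i\psi_j^*}\ket{\nu}_0$ is zero unless $j\in S_\nu$ and either $i=j$ or $i\notin S_\nu$, in which case the result is $\pm\ket{\nu'}_0$ with $S_{\nu'} = (S_\nu\setminus\{j\})\cup\{i\}$. A threshold count then gives $\#\{s\in S_{\nu'}: s\geqslant t\} \leqslant \#\{s\in S_\nu: s\geqslant t\}$ for every $t$, since the only change is that the occupied site $j$ has been replaced by the smaller (or equal) site $i$; hence the sorted occupied-site sequence of $\nu'$ is pointwise $\leqslant$ that of $\nu$. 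Because the occupied sites of $\ket{\lambda}_0$ are $\lambda_1 > \lambda_2-1 > \lambda_3-2 > \cdots$, pointwise domination of occupied-site sequences is precisely the containment order $\nu'\subseteq\nu$ on partitions. Iterating over the factors in any monomial appearing in $e^{H_+(\pp|\bal;\bbe)}$, we conclude $e^{H_+(\pp|\bal;\bbe)}\ket{\lambda} = \sum_{\nu\subseteq\lambda} c_\nu(\pp)\,\ket{\nu}$ for some $c_\nu(\pp)\in\CC\FPS{\pp}$.

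Pairing with $\bra{\mu}$ and using the orthonormality ${}_0\braket{\mu}{\nu}_0 = \delta_{\mu\nu}$ then gives $s_{\lambda/\mu}(\pp\dv\bal;\bbe) = c_\mu(\pp)$, which is $0$ whenever $\mu\not\subseteq\lambda$. Finally, for $\lambda,\mu\in\ZZ^n$ that are not partitions, either $\ket{\lambda}$ or $\ket{\mu}$ is already zero (and the claim is trivial) or both equal $\pm$ a partition vector after reordering, reducing to the case just treated. The only place requiring care — and the main (mild) obstacle — is the bookkeeping of the semi-infinite wedge and the signs in the degenerate cases ($i=j$, the target site already occupied, the source site unoccupied), together with the identification of pointwise domination of occupied-site sequences with partition containment; but these are classical and the $(\bal;\bbe)$-deformation plays no role once we have observed that $A_{ij}^k=0$ for $i>j$ and $k>0$.
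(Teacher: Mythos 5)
Your proposal is correct and is essentially the paper's own argument: the paper proves this in one line by observing that $A_{ij}^k = 0$ whenever $i > j$ (for $k>0$), so $e^{H_+(\pp|\bal;\bbe)}\ket{\lambda}$ expands only over $\ket{\nu}$ with $\nu \subseteq \lambda$. Your write-up simply supplies in detail the particle-motion bookkeeping (threshold counts, identification of pointwise domination of occupied sites with partition containment) that the paper leaves implicit.
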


\begin{proof}
Follows from $A_{ij}^k = 0$ whenever $i > j$.
\end{proof}

\begin{proposition}[Branching rule]
\label{prop:branching}
We have
\[
s_{\lambda/\mu}(\pp+\pp'\dv\bal;\bbe) = \sum_{\mu \subseteq \nu \subseteq \lambda} s_{\lambda/\nu}(\pp\dv\bal;\bbe) s_{\nu/\mu}(\pp'\dv\bal;\bbe).
\]
\end{proposition}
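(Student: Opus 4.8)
The plan is to read the branching rule straight off the operator definition $s_{\lambda/\mu}(\pp\dv\bal;\bbe) = \bra{\mu} e^{H_+(\pp|\bal;\bbe)}\ket{\lambda}$, using two elementary inputs: $H_+$ is linear in its powersum argument, and the positive deformed current operators mutually commute. Since $H_+(\pp|\bal;\bbe) = \sum_{k\geq 1}\frac{p_k}{k}J_k^{(\bal;\bbe)}$, linearity gives $H_+(\pp+\pp'|\bal;\bbe) = H_+(\pp|\bal;\bbe) + H_+(\pp'|\bal;\bbe)$, while Theorem~\ref{thm:heisenberg_relations} gives $[J_k^{(\bal;\bbe)}, J_\ell^{(\bal;\bbe)}] = k\delta_{k,-\ell} = 0$ for $k,\ell\geq 1$, so the two summands commute and therefore
\[
e^{H_+(\pp+\pp'|\bal;\bbe)} = e^{H_+(\pp'|\bal;\bbe)}\, e^{H_+(\pp|\bal;\bbe)}
\]
as operators on $\fermionfock$ (with coefficients in $\CC\FPS{\pp,\pp'}$).

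Next I would expand $e^{H_+(\pp|\bal;\bbe)}\ket{\lambda}$ in the basis $\{\ket{\nu}\}_{\nu\in\mcP}$ of $\fermionfock_0$. Because $[J_0^{(\bal;\bbe)}, J_k^{(\bal;\bbe)}] = 0$ for $k\geq 1$ (Theorem~\ref{thm:heisenberg_relations}), the operator $e^{H_+(\pp|\bal;\bbe)}$ preserves charge, so $e^{H_+(\pp|\bal;\bbe)}\ket{\lambda} = \sum_{\nu\in\mcP}\bigl(\bra{\nu}e^{H_+(\pp|\bal;\bbe)}\ket{\lambda}\bigr)\ket{\nu} = \sum_{\nu\in\mcP} s_{\lambda/\nu}(\pp\dv\bal;\bbe)\,\ket{\nu}$; by the containment property (Proposition~\ref{prop:containment}) only the terms with $\nu\subseteq\lambda$ survive, so this is a finite sum and no convergence question arises. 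Pairing against $\bra{\mu}e^{H_+(\pp'|\bal;\bbe)}$ and applying the definition again term by term then yields
\[
s_{\lambda/\mu}(\pp+\pp'\dv\bal;\bbe) = \sum_{\nu\in\mcP} s_{\lambda/\nu}(\pp\dv\bal;\bbe)\, s_{\nu/\mu}(\pp'\dv\bal;\bbe),
\]
and a second application of Proposition~\ref{prop:containment} (now $s_{\nu/\mu} = 0$ unless $\mu\subseteq\nu$) cuts the index set down to $\mu\subseteq\nu\subseteq\lambda$, which is exactly the asserted identity.

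There is no real obstacle here; the only two points that merit a word of care are that $e^{A+B}=e^{A}e^{B}$ is being invoked for operators, which is legitimate precisely because $A=H_+(\pp|\bal;\bbe)$ and $B=H_+(\pp'|\bal;\bbe)$ commute, and that inserting the resolution of the identity $\sum_{\nu\in\mcP}\ket{\nu}\bra{\nu}$ between the two exponentials is valid. The latter is justified because, after Proposition~\ref{prop:containment}, the intermediate sum over $\nu$ is finite, so interchanging it with the pairing and with the action of $e^{H_+(\pp'|\bal;\bbe)}$ is unproblematic; alternatively one can work throughout inside the formal completion $\widehat{\fermionfock}_0$, where such insertions are built into the notion of an operator as in Remark~\ref{rem:operator_well_defined}. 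The same argument handles arbitrary charge $m$ verbatim if desired.
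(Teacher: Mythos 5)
Your proof is correct and follows the same route as the paper: split $e^{H_+(\pp+\pp'|\bal;\bbe)} = e^{H_+(\pp|\bal;\bbe)}e^{H_+(\pp'|\bal;\bbe)}$ using commutativity of the positive modes, insert the basis of $\fermionfock_0$, and invoke the containment property to restrict the sum to $\mu \subseteq \nu \subseteq \lambda$. The paper's proof is exactly this factorization (stated as "standard"), so you have merely supplied the routine details it leaves implicit.
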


\begin{proof}
This is standard using $e^{H_+(\pp+\pp'|\bal;\bbe)} = e^{H_+(\pp|\bal;\bbe)+H_+(\pp'|\bal;\bbe)} = e^{H_+(\pp|\bal;\bbe)} e^{H_+(\pp'|\bal;\bbe)}$.
\end{proof}

We also compute dual versions of Lemma~\ref{lemma:fermion_expectation} and~\eqref{eq:MMgen_recovery} that will be suitable for describing the double factorial Schur functions in terms of conjugate shapes.

\begin{lemma}
\label{lemma:dual_fermion_expectation}
We have
\begin{align*}
{}_{\ell} \bra{\varnothing} e^{H_+(\pp|\bal;\bbe)} & \psi^*(w_1|\bal;\bbe) \cdots \psi^*(w_n|\bal;\bbe) \ket{\varnothing}_{m+n}
\\ & \hspace{-30pt} = \delta_{m\ell} \prod_{i=1}^n e^{-\xi(\pp;w_i)} \frac{(w_i;\bal)^m}{(w_i|\bbe)^{m+n}} (1 - \alpha_{i+m} \beta_{i+m}) \prod_{i < j} (w_j - w_i) (1 - \alpha_{i+m}\beta_{j+m}) e^{\Lambda_{m+n}(\pp|\bbe)}.
\end{align*}
\end{lemma}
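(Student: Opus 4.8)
The plan is to run the argument of Lemma~\ref{lemma:fermion_expectation} with $\psi^*$ in place of $\psi$. First, since each Fourier mode $\psi_j^{*(\bal;\bbe)}$ lowers the charge by one, the vector $\psi^*(w_1|\bal;\bbe)\cdots\psi^*(w_n|\bal;\bbe)\ket{\varnothing}_{m+n}$ lies in charge $m$; as $e^{H_+(\pp|\bal;\bbe)}$ preserves charge by Theorem~\ref{thm:heisenberg_relations} and ${}_{\ell}\bra{\varnothing}$ has charge $\ell$, the pairing vanishes unless $\ell=m$, which accounts for the factor $\delta_{m\ell}$. Assuming $\ell=m$, I would move $e^{H_+(\pp|\bal;\bbe)}$ to the right past every $\psi^*(w_i|\bal;\bbe)$ using Theorem~\ref{thm:deformed_adjoint}, picking up $\prod_{i=1}^n e^{-\xi(\pp;w_i)}$, and then absorb it into the vacuum via $e^{H_+(\pp|\bal;\bbe)}\ket{\varnothing}_{m+n} = e^{\Lambda_{m+n}(\pp|\bbe)}\ket{\varnothing}_{m+n}$ from Corollary~\ref{cor:deformed_current_vacuum}. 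This reduces the lemma to the ``bare'' vacuum expectation
\[
{}_m\bra{\varnothing}\psi^*(w_1|\bal;\bbe)\cdots\psi^*(w_n|\bal;\bbe)\ket{\varnothing}_{m+n} = \prod_{i=1}^n \frac{(w_i;\bal)^m}{(w_i|\bbe)^{m+n}}(1-\alpha_{i+m}\beta_{i+m})\prod_{i<j}(w_j-w_i)(1-\alpha_{i+m}\beta_{j+m}).
\]

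To establish this bare identity I would expand each field into its Fourier modes, $\psi^*(w_i|\bal;\bbe) = \sum_{j\in\ZZ}(1-\alpha_j\beta_j)\frac{(w_i;\bal)^{j-1}}{(w_i|\bbe)^j}\psi_j^*$. The only terms surviving the pairing with $\ket{\varnothing}_{m+n}$ are those with $\{j_1,\dots,j_n\}=\{m+1,\dots,m+n\}$, each contributing $\pm1$ with the sign read off from the explicit action of the $\psi_j^*$ on semi-infinite monomials recalled in Section~\ref{sec:boson_fermion}. Writing $j_i=m+\sigma(i)$ for $\sigma\in S_n$, the prefactor $\prod_i(1-\alpha_{m+\sigma(i)}\beta_{m+\sigma(i)}) = \prod_{i=1}^n(1-\alpha_{i+m}\beta_{i+m})$ is independent of $\sigma$ and factors out — this produces the diagonal factors $(1-\alpha_{i+m}\beta_{i+m})$. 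After pulling out $\prod_i\frac{(w_i;\bal)^m}{(w_i|\bbe)^{m+n}}$ and rewriting the remaining ratios via~\eqref{eq:shifting_powers}, the signed sum over $S_n$ becomes exactly the shifted-power Vandermonde determinant for the parameters $\sigma^m\bal,\sigma^m\bbe$, whose value $\prod_{i<j}(w_j-w_i)(1-\alpha_{i+m}\beta_{j+m})$ is supplied by~\cite[Prop.~3.15]{NaprienkoFFS} (see also~\cite[Sec.~2.1]{Molev09}, \cite[Prop.~1]{Krattenthaler99}, and~\cite[Eq.~(2.22)]{MiyauraMukaihiraGeneralized}), this time without the substitution $z_i\mapsto x_i^{-1}$ needed in Lemma~\ref{lemma:fermion_expectation}. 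As an alternative for passing from $m=0$ to general $m$ (paralleling the alternative proof of Lemma~\ref{lemma:fermion_expectation}), one can instead conjugate by the classical shift operator: iterating Proposition~\ref{prop:deformed_fermion_classical_shift} gives $\Sigma^{-m}\psi^*(w_i|\bal;\bbe) = \prod_{k=0}^{m-1}\frac{(w_i;\sigma_{\bal}^k\bal)}{(w_i|\sigma_{\bbe}^k\bbe)}\,\psi^*(w_i|\sigma_{\bal}^m\bal;\sigma_{\bbe}^m\bbe)\,\Sigma^{-m}$, and $\prod_{k=0}^{m-1}(w_i;\sigma_{\bal}^k\bal)=(w_i;\bal)^m$, $\prod_{k=0}^{m-1}(w_i|\sigma_{\bbe}^k\bbe)=(w_i|\bbe)^m$ by~\eqref{eq:shifting_powers}, so using ${}_m\bra{\varnothing}={}_0\bra{\varnothing}\Sigma^{-m}$ and $\Sigma^{-m}\ket{\varnothing}_{m+n}=\ket{\varnothing}_n$ reduces everything to the $m=0$ case with the shifted parameters.

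A second, self-contained route bypasses the Vandermonde altogether: apply the anti-involution $*$ to Lemma~\ref{lemma:fermion_expectation}, using ${}_m\bra{\mu}X\ket{\lambda}_{\ell}={}_{\ell}\bra{\lambda}X^*\ket{\mu}_m$ together with~\eqref{eq:duality_deformed_fields} and Corollary~\ref{cor:adjoint_current}; this rewrites $\bra{\varnothing}\psi^*(w_1|\bal;\bbe)\cdots\psi^*(w_n|\bal;\bbe)\ket{\varnothing}_{m+n}$ in terms of $\bra{\varnothing}\psi(w_n^{-1}|\bbe;\bal)\cdots\psi(w_1^{-1}|\bbe;\bal)\ket{\varnothing}_m$, up to the scalar by which the rescaling morphism $\eta$ distorts the relevant (dual) vacuums, and that scalar is what delivers the $(1-\alpha_{i+m}\beta_{i+m})$ prefactors. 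In either approach the one genuinely delicate point is the sign bookkeeping: the order reversal under $*$ — or, in the direct approach, the ordering convention in $\bra{\varnothing}\psi_{j_1}^*\cdots\psi_{j_n}^*\ket{\varnothing}_{m+n}$ — contributes $(-1)^{\binom{n}{2}}$-type signs that must combine with the orientation of the shifted-power Vandermonde so that $\prod_{i<j}(w_j-w_i)$ (and not $\prod_{i<j}(w_i-w_j)$) appears, with the parameter indices coming out shifted by $m$; this is the step I expect to require care, while everything else is a routine adaptation of the proof of Lemma~\ref{lemma:fermion_expectation}.
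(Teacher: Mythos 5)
Your proposal is correct and follows essentially the same route as the paper: the $\delta_{m\ell}$ from charge conservation, pushing $e^{H_+(\pp|\bal;\bbe)}$ through the fields via Theorem~\ref{thm:deformed_adjoint} and absorbing it into the vacuum by Corollary~\ref{cor:deformed_current_vacuum}, expanding the bare expectation into Fourier modes, evaluating the resulting signed sum as the shifted-power Vandermonde of~\cite[Prop.~3.15]{NaprienkoFFS} (equivalently~\cite[Eq.~(2.22)]{MiyauraMukaihiraGeneralized}), and deducing general $m$ from the $m=0$ case exactly as in Lemma~\ref{lemma:fermion_expectation}. The sign point you flag is genuine and worth the care you give it: the paper's own displayed $\ell=0$ computation ends with $\prod_{i<j}(w_i-w_j)(1-\alpha_i\beta_j)$ while the statement has $\prod_{i<j}(w_j-w_i)(1-\alpha_{i+m}\beta_{j+m})$, a discrepancy of $(-1)^{\binom{n}{2}}$ that must be resolved by the ordering conventions, so pinning down that bookkeeping (or your alternative route through the anti-involution and $\eta$) is exactly where the remaining work lies.
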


\begin{proof}
The proof is analogous to the proof of Lemma~\ref{lemma:fermion_expectation}.
Indeed, we show the $\ell = 0$ case by computing
\begin{align*}
\bra{\varnothing} e^{H_+(\pp|\bal;\bbe)} & \psi^*(w_1|\bal;\bbe) \cdots \psi^*(w_n|\bal;\bbe) \ket{\varnothing}_n
\\ & = \prod_{i=1}^n e^{-\xi(\pp;w_i)} e^{\Lambda_n(\pp|\bbe)} \bra{\varnothing} \psi^*(w_1|\bal;\bbe) \cdots \psi^*(w_n|\bal;\bbe) \ket{\varnothing}_n
\\ & = e^{\Lambda_n(\pp|\bbe)} \prod_{i=1}^n e^{-\xi(\pp;w_i)} \sum_{\omega \in S_n} (-1)^{\omega} \prod_{i=1}^n (1 - \alpha_{\omega_i} \beta_{\omega_i}) \frac{(w_1;\bal)^{\omega_1-1} \cdots (w_n;\bal)^{\omega_n-1}}{(w_1|\bbe)^{\omega_1} \cdots (w_n|\bbe)^{\omega_n}}
\\ & = e^{\Lambda_n(\pp|\bbe)} \prod_{i=1}^n \frac{e^{-\xi(\pp;w_i)}}{(w_i|\bbe)^n} (1 - \alpha_i \beta_i) \prod_{i < j} (w_i - w_j)(1-\alpha_i\beta_j),
\end{align*}
using the same arguments as Lemma~\ref{lemma:fermion_expectation} except now the determinant is~\cite[Eq.~(2.22)]{MiyauraMukaihiraGeneralized} or is precisely~\cite[Prop.~3.15]{NaprienkoFFS}.
The general case then follows analogously.
\end{proof}

The particle-hole duality of a partition $\lambda$ means that for the $01$ sequence, its conjugate $\lambda'$ is given by taking the reverse sequence of $\lambda$ and replacing $0 \leftrightarrow 1$.
Thus, applying the particle-hole duality of $\lambda$, we obtain the expression
\begin{equation}
\label{eq:conjugate_fermion_vacuum}
(-1)^{\abs{\lambda}} s_{\lambda'}(\pp\dv\bal;\bbe) = \bra{\varnothing} e^{H_+(\pp|\bal;\bbe)} \psi_{1-\lambda_1}^* \psi_{2-\lambda_2}^* \cdots \psi_{\ell-\lambda_{\ell}}^* \ket{\varnothing}_{\ell}
\end{equation}

\begin{example}
\label{ex:partition_particle_hole}
Consider the partition $\lambda = (6,5,2,2,1,1,1,1)$.
Then $\lambda' = (8,4,2,2,2,1)$ and
\begin{gather*}
\begin{aligned}
\ket{\lambda} & = \psi_6 \psi_4 \psi_0 \psi_{-1} \psi_{-3} \psi_{-4} \psi_{-5} \psi_{-6} \ket{\varnothing}_{-8} = -\psi_{-7}^*  \psi_{-2}^* \psi_1^* \psi_2^* \psi_3^* \psi_5^* \ket{\varnothing}_6
\\ & = v_6 \wedge v_4 \wedge v_0 \wedge v_{-1} \wedge v_{-3} \wedge v_{-4} \wedge v_{-5} \wedge v_{-6} \wedge v_{-8} \wedge v_{-9} \wedge \cdots
\end{aligned}
\\
\begin{tikzpicture}[scale=.8]
  \foreach \i in {-9,-8,-6,-5,-4,-3,-1,0,4,6} {
    \draw[fill=black] (-\i,0) circle (.25);
  }
  \foreach \i in {-7,-2,1,2,3,5} {
    \draw[fill=black!20] (-\i,0) circle (.25);
  }
  \foreach \i in {7,8} {
    \draw[fill=white] (-\i,0) circle (.25);
  }
  \foreach \i in {-9,...,8}
    \node at (-\i,-0.6) {\tiny $\i$};
  \node at (10,0) {$\cdots$};
  \node at (-9,0) {$\cdots$};
\end{tikzpicture}
\end{gather*}
with the particles being the solid black circles and the light gray nodes corresponding to the holes created by the $\psi_i^*$ operators.
\end{example}

Therefore, Lemma~\ref{lemma:dual_fermion_expectation} with Equation~\eqref{eq:conjugate_fermion_vacuum} yields the generating series
\begin{equation}
\label{eq:dual_fermion_schur_expansion}
\begin{aligned}
\sum_{\lambda \in \ZZ^n} (-1)^{\abs{\lambda}} s_{\lambda'}(\pp\dv\bal;\bbe) & \prod_{i=1}^n (1-\alpha_{i-\lambda_i} \beta_{i-\lambda_i}) \frac{(w_i;\bal)^{i-\lambda_i-1}}{(w_i|\bbe)^{i-\lambda_i}}
\\ & = \prod_{i=1}^n \frac{e^{-\xi(\pp;w_i)}}{(w_i|\bbe)^n} (1 - \alpha_i \beta_i) \prod_{i < j} (w_i - w_j) (1 - \alpha_i \beta_j) e^{\Lambda_n(\pp|\bbe)}.
\end{aligned}
\end{equation}


One additional way to realize $\ket{\lambda}$ is using the Frobenius coordinates of a partition $\lambda = (a_1, \dotsc, a_m | b_1, \dotsc, b_m)$ (see, \textit{e.g.},~\cite{Andrews84} with the original source~\cite{Frobenius00}).
A well-known interpretation of $\ket{\lambda}$ using Frobenius coordinates means we can rewrite
\begin{equation}
\label{eq:frobenius_fermion_vacuum}
(-1)^{m+b_1+\cdots+b_m} s_{\lambda}(\pp\dv\bal;\bbe) = \bra{\varnothing} e^{H_+(\pp|\bal;\bbe)} \psi_{-b_1}^* \cdots \psi_{-b_m}^* \psi_{a_m+1} \cdots \psi_{a_1+1} \ket{\varnothing}.
\end{equation}

\begin{example}
Continuing from Example~\ref{ex:partition_particle_hole} with $\lambda = (6,5,2,2,1,1,1,1)$, the Frobenius coordinates of $\lambda$ are $(5, 3 \mid 7, 2)$ and $\ket{\lambda} = -\psi_{-7}^* \psi_{-2}^* \psi_4 \psi_6 \ket{\varnothing}$.
\end{example}

Now we want to incorporate~\eqref{eq:frobenius_fermion_vacuum} into a generating series formula like ~\eqref{eq:MMgen_recovery} and~\eqref{eq:dual_fermion_schur_expansion}, but it will more naturally involve a determinant from Wick's theorem.
Let us first consider the case $m \leqslant 1$ by doing a computation analogous to Lemma~\ref{lemma:fermion_expectation}:
\begin{equation}
\label{eq:hook_naive}
\frac{z}{z-w} + \sum_{a, b=0}^{\infty} (-1)^b s_{(a|b)}(\pp\dv\bal;\bbe) (1 - \alpha_{-b} \beta_{-b}) \frac{z (z|\bbe)^{a} (w;\bal)^{-b-1}}{(z;\bal)^{a+1} (w|\bbe)^{-b}}
=  \frac{z}{z-w} e^{\xi(\pp;z)} e^{-\xi(\pp;w)}.
\end{equation}
Note that we have used Proposition~\ref{prop:vacuum_deformed_fields} in obtaining the factors $\frac{z}{z-w}$ on both sides.
Therefore, we can rewrite~\eqref{eq:hook_naive} in the following
\begin{equation}
\label{eq:hook_generating}
1 + (z - w) \sum_{a, b=0}^{\infty} (-1)^b s_{(a|b)}(\pp\dv\bal;\bbe) (1 - \alpha_{-b} \beta_{-b}) \frac{(z|\bbe)^{a} (w;\bal)^{-b-1}}{(z;\bal)^{a+1} (w|\bbe)^{-b}}
=  e^{\xi(\pp;z)} e^{-\xi(\pp;w)}.
\end{equation}

For general $m$, then we will end up with a determinant by Wick's theorem, and we leave the precise generating series formula as an exercise for interested reader.
However, if we apply the multiple contour integrals to extract the $s_{\lambda}(\pp\dv\bal;\bbe)$ factor (Proposition~\ref{prop:orthonormality}), we obtain the following.

\begin{theorem}[Double factorial Giambelli formula]
\label{thm:giambelli}
Write $\lambda = (a_1, \dotsc, a_m|b_1, \dotsc, b_m)$ in Frobenius coordinates.
We have
\[
s_{\lambda}(\pp\dv\bal;\bbe) = \det \bigl[ s_{(a_i|b_j)}(\pp\dv\bal;\bbe) \bigr]_{i,j=1}^m.
\]
\end{theorem}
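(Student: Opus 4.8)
The plan is to mimic the classical fermionic derivation of the Giambelli identity: realize $s_\lambda$ through the Frobenius realization~\eqref{eq:frobenius_fermion_vacuum}, expand the resulting vacuum expectation by Wick's theorem into a determinant of two-point functions, identify each two-point function with the $m=1$ hook generating series~\eqref{eq:hook_generating}, and then pass from the generating series to the functions themselves by extracting coefficients via the orthonormality of shifted powers (Proposition~\ref{prop:orthonormality}). All of this is a formal manipulation using tools already established; no analytic hypothesis beyond Proposition~\ref{prop:vacuum_deformed_fields} enters.

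Concretely, I would consider the $2m$-variable series
\[
{}_0\bra{\varnothing} e^{H_+(\pp|\bal;\bbe)}\, \psi^*(w_1|\bal;\bbe)\cdots\psi^*(w_m|\bal;\bbe)\,\psi(z_m|\bal;\bbe)\cdots\psi(z_1|\bal;\bbe)\,\ket{\varnothing},
\]
whose expansion in the shifted-power bases attached to the $\psi^*$'s and the $\psi$'s packages all matrix coefficients $\bra{\varnothing} e^{H_+(\pp|\bal;\bbe)}\psi_{-b_1}^*\cdots\psi_{-b_m}^*\psi_{a_m+1}\cdots\psi_{a_1+1}\ket{\varnothing}$, hence by~\eqref{eq:frobenius_fermion_vacuum} all $s_\lambda(\pp\dv\bal;\bbe)$ for $\lambda$ with a diagonal of length $m$. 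First I would move $e^{H_+(\pp|\bal;\bbe)}$ to the right past all $2m$ fields using Theorem~\ref{thm:deformed_adjoint}, producing the scalar factor $\prod_i e^{\xi(\pp;z_i)}\prod_j e^{-\xi(\pp;w_j)}$ and using $e^{H_+(\pp|\bal;\bbe)}\ket{\varnothing}=\ket{\varnothing}$ (the charge-$0$ case of Corollary~\ref{cor:deformed_current_vacuum}). The remaining factor is a pure free-fermion correlator, to which Wick's theorem applies since the deformed fields obey the same canonical anticommutation relations~\eqref{eq:deformed_CAR} and the same vacuum expectations (Proposition~\ref{prop:vacuum_deformed_fields}) as the classical ones; as $\bra{\varnothing}\psi^*\psi^*\ket{\varnothing}=\bra{\varnothing}\psi\psi\ket{\varnothing}=0$, only the mixed contractions survive and the Pfaffian collapses to $\det\bigl[\bra{\varnothing}\psi^*(w_j|\bal;\bbe)\psi(z_i|\bal;\bbe)\ket{\varnothing}\bigr]_{i,j}$ up to a controlled sign. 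Redistributing the scalar factors into rows and columns, the $(i,j)$ entry equals (up to sign) $\bra{\varnothing} e^{H_+(\pp|\bal;\bbe)}\psi(z_i|\bal;\bbe)\psi^*(w_j|\bal;\bbe)\ket{\varnothing}$, which by~\eqref{eq:hook_naive}--\eqref{eq:hook_generating} is $\tfrac{z_i}{z_i-w_j}$ plus the generating series of $\{s_{(a|b)}(\pp\dv\bal;\bbe)\}_{a,b\ge0}$. Since Frobenius coordinates satisfy $a_i,b_i\ge 0$, the ``disconnected'' term $\tfrac{z_i}{z_i-w_j}$ contributes only to negative modes and is invisible when one extracts the coefficient of $\prod_i z_i\tfrac{(z_i|\bbe)^{a_i}}{(z_i;\bal)^{a_i+1}}\prod_j (1-\alpha_{-b_j}\beta_{-b_j})\tfrac{(w_j;\bal)^{-b_j-1}}{(w_j|\bbe)^{-b_j}}$ via Proposition~\ref{prop:orthonormality}. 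Doing this extraction on both sides — the left yielding $(-1)^{m+b_1+\cdots+b_m}s_\lambda(\pp\dv\bal;\bbe)$ by~\eqref{eq:frobenius_fermion_vacuum}, the right yielding, by multilinearity of the determinant, a determinant with entries $(-1)^{b_j}s_{(a_i|b_j)}(\pp\dv\bal;\bbe)$ — gives $s_\lambda(\pp\dv\bal;\bbe)=\det[s_{(a_i|b_j)}(\pp\dv\bal;\bbe)]_{i,j=1}^m$ after all signs cancel.

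The main obstacle is purely the sign bookkeeping: one must reconcile the sign $(-1)^{m+b_1+\cdots+b_m}$ from~\eqref{eq:frobenius_fermion_vacuum}, the sign $(-1)^{b}$ on each hook in~\eqref{eq:hook_generating}, the signs from reversing the order of the $\psi(z_i|\bal;\bbe)$ and from the $\psi^*$-before-$\psi$ ordering in the Pfaffian-to-determinant reduction (with the extra $(-1)$ per entry coming from $\tfrac{z_i}{w_j-z_i}=-\tfrac{z_i}{z_i-w_j}$), and the transposition relating $\det[s_{(a_j|b_i)}]$ to $\det[s_{(a_i|b_j)}]$. A useful check is that the factor $(-1)^{m+\sum b_i}$ occurs twice and cancels, and that once the $m=1$ instance is matched against~\eqref{eq:hook_generating} the general sign is forced; beyond this, the argument is formal and introduces no new combinatorics.
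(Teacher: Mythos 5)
Your proposal is correct and follows essentially the same route as the paper: realize $s_\lambda$ via the Frobenius expression~\eqref{eq:frobenius_fermion_vacuum}, apply Wick's theorem to reduce the $2m$-point correlator to a determinant of two-point functions, identify each entry with the hook generating series~\eqref{eq:hook_naive}--\eqref{eq:hook_generating}, and extract coefficients by multilinearity of the determinant together with the contour integrals of Proposition~\ref{prop:orthonormality}. The only (harmless) cosmetic difference is that you commute $e^{H_+(\pp|\bal;\bbe)}$ past the fields via Theorem~\ref{thm:deformed_adjoint} before contracting, whereas the paper invokes Wick's theorem directly for the group-like insertion; your observation that the $\tfrac{z_i}{z_i-w_j}$ term only populates modes with $a<0$ and so drops out of the extraction is exactly the point that makes the argument close.
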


\begin{proof}
This follows from writing~\eqref{eq:frobenius_fermion_vacuum} as a multiple contour integral, applying Wick's theorem, using the multilinearlity of the determinant with the contour integrals, and finally~\eqref{eq:hook_generating}.
\end{proof}

Next, we will construct the (dual) Jacobi--Trudi identity for general skew shapes.
We define functions
\[
h_k(\pp\dv\bal;\bbe) := s_k(\pp\dv\bal;\bbe),
\qquad\qquad
e_k(\pp\dv\bal;\bbe) := s_{1^k}(\pp\dv\bal;\bbe),
\]
as the single row and column, respectively, versions of $s_{\lambda}(\pp\dv\bal;\bbe)$.
We refer to these as the \defn{homogeneous double factorial symmetric functions} and \defn{elementary double factorial symmetric functions}, respectively, as they are the analogs of the homogeneous and elementary symmetric functions.
Moreover, they have generating series
\begin{subequations}
\label{eq:eh_gen_series}
\begin{align}
\label{eq:h_gen_series}
e^{\xi(\pp;z)} e^{-\xi(\pp;\beta_0)} & = \sum_{k=0}^{\infty} h_k(\pp\dv\bal;\bbe) \frac{(z|\sigma_{\bbe}^{-1} \bbe)^k}{(z;\bal)^k},
\\
\label{eq:e_gen_series}
e^{-\xi(\pp;w)} e^{\xi(\pp;\beta_1)} & = \sum_{k=0}^{\infty} (-1)^k e_k(\pp\dv\bal;\bbe) \frac{1 - \alpha_{1-k}\beta_{1-k}}{1 - \alpha_1 \beta_1} \frac{(w;\bal)^{-k}}{(w|\sigma_{\bbe}^{-1}\bbe)^{-k}},
\end{align}
\end{subequations}
where~\eqref{eq:h_gen_series} (resp.~\eqref{eq:e_gen_series}) series is~\eqref{eq:MMgen_recovery} (resp.~\eqref{eq:dual_fermion_schur_expansion}) with $n = 1$ after multiplying both sides by $(z - \beta_0)$ (resp.\ $\frac{w - \beta_1}{1-\alpha_1\beta_1}$).
We can also obtain~\eqref{eq:h_gen_series} (resp.~\eqref{eq:e_gen_series}) from~\eqref{eq:hook_generating} by substituting $w = \beta_0$ (resp.\ $z = \beta_1$).
We also remark that~\eqref{eq:h_gen_series} is~\cite[Eq.~(2.18)]{MiyauraMukaihiraGeneralized}, and~\eqref{eq:eh_gen_series} yields~\cite[Eq.~(17),~(18)]{MiyauraMukaihiraFactorial} under specialization of $p_k = p_k(\xx/{-\iota_{\bal}\bal})$ by~\eqref{eq:exp_xi_specialization}.

\begin{theorem}[Double factorial Jacobi--Trudi identities]
\label{thm:jacobi_trudi}
We have
\begin{align*}
s_{\lambda/\mu}(\pp\dv\bal;\bbe) & = e^{\Lambda_{-n}(\pp|\bbe)} \det \Bigl[ e^{\xi(\pp;\beta_{\mu_j-j+1})} h_{\lambda_i-\mu_j-i+j}(\pp\dv\sigma_{\bal}^{\mu_j-j+1}\bal;\sigma_{\bbe}^{\mu_j-j+1}\bbe) \Bigr]_{i,j=1}^n
\\ & = e^{\Lambda_n(\pp|\bbe)} \det \Bigl[ e^{-\xi(\pp;\beta_{j-\mu'_j})} e_{\lambda'_i-\mu_j'-i+j}(\pp\dv\sigma_{\bal}^{j-\mu'_j-1}\bal;\sigma_{\bbe}^{j-\mu'_j-1}\bbe) \Bigr]_{i,j=1}^n.
\end{align*}
\end{theorem}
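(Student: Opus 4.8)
The plan is to imitate the classical vertex-operator proof of the (skew) Jacobi--Trudi identity, combining the fermionic realizations of $\ket{\lambda}$ and $\bra{\mu}$ with Wick's theorem and the single-row generating series~\eqref{eq:eh_gen_series}. I will prove the first identity in full; the second follows by the identical argument once $\ket{\lambda}$ is realized through its particle--hole dual~\eqref{eq:conjugate_fermion_vacuum} (so that the one-box matrix entries become $e_k$'s, matched against~\eqref{eq:e_gen_series}) with $\bra{\mu}$ realized dually, the extra signs being exactly those of particle--hole duality.

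First I would set up the fermionic picture. For $\lambda,\mu\in\ZZ^n$ write, via~\eqref{eq:ket-from-vacuum} and~\eqref{eq:urvacuum}, $\ket{\lambda}_0 = \psi_{\lambda_1}\psi_{\lambda_2-1}\cdots\psi_{\lambda_n-n+1}\ket{\varnothing}_{-n}$ and ${}_0\bra{\mu} = {}_{-n}\bra{\varnothing}\,\psi^*_{\mu_n-n+1}\cdots\psi^*_{\mu_1}$, and promote the Clifford generators to the deformed fields, so that $s_{\lambda/\mu}(\pp\dv\bal;\bbe)$ is extracted as the coefficient of $\prod_j z_j\frac{(z_j|\bbe)^{\lambda_j-j}}{(z_j;\bal)^{\lambda_j-j+1}}\prod_i (1-\alpha_{\mu_i-i+1}\beta_{\mu_i-i+1})\frac{(w_i;\bal)^{\mu_i-i}}{(w_i|\bbe)^{\mu_i-i+1}}$ in ${}_{-n}\bra{\varnothing}\psi^*(w_n|\bal;\bbe)\cdots\psi^*(w_1|\bal;\bbe)\,e^{H_+(\pp|\bal;\bbe)}\,\psi(z_1|\bal;\bbe)\cdots\psi(z_n|\bal;\bbe)\ket{\varnothing}_{-n}$; by Proposition~\ref{prop:orthonormality} this extraction is a multiple contour integral. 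Next, move $e^{H_+(\pp|\bal;\bbe)}$ to the right past each $\psi(z_j|\bal;\bbe)$ using Theorem~\ref{thm:deformed_adjoint}, picking up $\prod_j e^{\xi(\pp;z_j)}$, and evaluate $e^{H_+(\pp|\bal;\bbe)}\ket{\varnothing}_{-n}=e^{\Lambda_{-n}(\pp|\bbe)}\ket{\varnothing}_{-n}$ by Corollary~\ref{cor:deformed_current_vacuum}. Since the deformed fields satisfy the same canonical anticommutation relations as the undeformed ones (\eqref{eq:deformed_CAR}, hence generate a copy of $\mcC$), Wick's theorem applies to the remaining purely fermionic expectation, giving $\det\bigl[{}_{-n}\bra{\varnothing}\psi^*(w_i|\bal;\bbe)\psi(z_j|\bal;\bbe)\ket{\varnothing}_{-n}\bigr]_{i,j=1}^n$; and by the computation in Proposition~\ref{prop:vacuum_deformed_fields} carried out with all parameters shifted by $\sigma^{-n}$ (to account for the Dirac sea of $\ket{\varnothing}_{-n}$), each two-point function equals $\frac{(z_j|\bbe)^{-n}(w_i;\bal)^{-n}}{(z_j;\bal)^{-n}(w_i|\bbe)^{-n}}\cdot\frac{z_j}{w_i-z_j}$.

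It then remains to reduce to the one-box case. I would absorb $e^{\xi(\pp;z_j)}$ into the $j$-th column and, by multilinearity of the determinant, perform the coefficient extraction entry by entry. The task is to recognize the extracted coefficient of the shifted-power monomials in $e^{\xi(\pp;z_j)}\,\frac{(z_j|\bbe)^{-n}(w_i;\bal)^{-n}}{(z_j;\bal)^{-n}(w_i|\bbe)^{-n}}\,\frac{z_j}{w_i-z_j}$ as $e^{\xi(\pp;\beta_{\mu_i-i+1})}\,h_{\lambda_j-\mu_i-i+j}\bigl(\pp\dv\sigma_{\bal}^{\mu_i-i+1}\bal;\sigma_{\bbe}^{\mu_i-i+1}\bbe\bigr)$, which is done by comparing with the single-row series~\eqref{eq:h_gen_series} (equivalently,~\eqref{eq:hook_generating} at $w=\beta_0$, with parameters shifted): the factor $z_j/(w_i-z_j)$, expanded geometrically, moves the $\psi^*$-mode evaluation to $w_i=\beta_{\mu_i-i+1}$ and produces the $e^{\xi(\pp;\beta_{\mu_i-i+1})}$ prefactor together with the index $\lambda_j-\mu_i-i+j$ on $h$. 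Pulling out the common $e^{\Lambda_{-n}(\pp|\bbe)}$ and transposing the determinant to match the indexing in the statement then finishes the proof, and, as noted, replacing the $\psi$-realization of $\ket{\lambda}$ by the $\psi^*$-realization~\eqref{eq:conjugate_fermion_vacuum} and using~\eqref{eq:e_gen_series} in place of~\eqref{eq:h_gen_series} gives the second (dual) identity.

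The hard part will be this last step: correctly combining the $\sigma^{-n}$ shift coming from the charge-$(-n)$ vacuum with the shift induced by extracting the $(\mu_i-i+1)$-th deformed $\psi^*$-mode so that the two collapse to exactly $\sigma^{\mu_i-i+1}$, and tracking how the geometric expansion of $z_j/(w_i-z_j)$ against the shifted powers yields precisely the evaluation $\beta_{\mu_i-i+1}$ inside $e^{\xi(\pp;\cdot)}$ and the correct index on $h$. The sign and ordering bookkeeping (from reordering fermions in Wick's theorem, and from particle--hole duality in the $e$-version) is routine but must also be carried through carefully.
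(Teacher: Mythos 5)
Your proposal is correct in substance and follows the same basic strategy as the paper: realize $\bra{\mu}$ and $\ket{\lambda}$ fermionically over $\ket{\varnothing}_{-n}$, commute $e^{H_+(\pp|\bal;\bbe)}$ through using Theorem~\ref{thm:deformed_adjoint} and Corollary~\ref{cor:deformed_current_vacuum}, reduce to a determinant by Wick's theorem, and identify entries via the single-row series~\eqref{eq:h_gen_series} (and~\eqref{eq:e_gen_series} with~\eqref{eq:conjugate_fermion_vacuum} for the dual version). The difference is in packaging: the paper never introduces the field $\psi^*(w|\bal;\bbe)$ at all --- it keeps the modes $\psi^*_{\mu_j-j+1}$, computes the one-particle elements ${}_{-n}\bra{\varnothing}\psi_j^* e^{H_+(\pp|\bal;\bbe)}\psi(z|\bal;\bbe)\ket{\varnothing}_{-n} = e^{\xi(\pp;z)}e^{\Lambda_{-n}(\pp|\bbe)}\frac{z(z|\bbe)^{j-1}}{(z;\bal)^j}$ directly, matches them against the $\sigma^j$-shifted form of~\eqref{eq:h_gen_series}, and then applies the group-like Wick theorem for $e^{H_+}$ --- whereas you package both families into fields, apply the bare Wick theorem after pushing $e^{H_+}$ onto the vacuum, and must then extract coefficients in both $w_i$ and $z_j$ from the kernel $\frac{(z|\bbe)^{-n}(w;\bal)^{-n}}{(z;\bal)^{-n}(w|\bbe)^{-n}}\frac{z}{w-z}$. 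Your flagged ``hard part'' dissolves if, instead of expanding $z/(w-z)$ geometrically, you expand the two-point function back into $\psi^*$-modes (equivalently, use the orthogonality of Proposition~\ref{prop:orthonormality} for the $w_i$-extraction): the $w_i$-integral then returns the single monomial $\frac{z_j(z_j|\bbe)^{q-1}}{(z_j;\bal)^q}$ with $q=\mu_i-i+1$, and the $z_j$-extraction against the $\sigma^q$-shifted~\eqref{eq:h_gen_series} gives $e^{\xi(\pp;\beta_q)}h_{p-q}$ with $p=\lambda_j-j+1$, which is exactly the paper's matrix entry; this also avoids relying on the closed form of the charge-$(-n)$ two-point function and its analytic hypothesis, which the paper's mode-level route never needs. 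One small slip: the index you record before transposing should be $p-q=\lambda_j-\mu_i-j+i$, not $\lambda_j-\mu_i-i+j$; after transposing the determinant this becomes the stated $\lambda_i-\mu_j-i+j$.
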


We refer to the determinant using $h_k(\pp\dv\bal;\bbe)$ (resp.\ $e_k(\pp\dv\bal;\bbe)$) as the \defn{Jacobi--Trudi identity} (resp.\ \defn{dual Jacobi--Trudi identity}, also known as the \defn{N\"agelsbach--Kostka formula}).

\begin{proof}
%
We first compute
\begin{align*}
e^{-\Lambda_{-n}(\pp|\bbe)} {}_{-n} \bra{\varnothing} \psi_j^* e^{H_+(\pp|\bal;\bbe)} & \psi(z|\bal;\bbe) \ket{\varnothing}_{-n}
\\ & = e^{\xi(\pp;z)} e^{\Lambda_{-n}(\pp|\bbe)} \cdot {}_{-n} \bra{\varnothing} \psi_j^* \psi(z|\bal;\bbe) \ket{\varnothing}_{-n}
\\ & = e^{\xi(\pp;\beta_m)} \sum_{k=0}^{\infty} h_k(\pp\dv\sigma_{\bal}^m\bal;\sigma_{\bbe}^m\bbe) \frac{(z|\sigma_{\bbe}^{m-1} \bbe)^k}{(z;\sigma_{\bal}^m\bal)^k} \frac{z(z|\bbe)^{j-1}}{(z;\bal)^j},
\end{align*}
where we have applied the map $\sigma_{\bal}^m \sigma_{\bbe}^m$ to both sides of the definition of $h_k(\pp\dv\bal;\bbe)$ and ${}_{-n} \bra{\varnothing} e^{H_+(\pp|\bal;\bbe)} \ket{\varnothing}_{-n} = e^{\Lambda_{-n}(\pp|\bbe)}$.
If we set $m = j$, then $(z;\sigma_{\bal}^j\bal)^k (z;\bal)^j = (z;\bal)^{k+j}$ by~\eqref{eq:shifting_powers}.
Thus, we can easily take the coefficient of $\frac{z(z|\bbe)^{i-1}}{(z;\bal)^i}$ of both sides to obtain
\begin{equation}
\label{eq:JT_matrix_entry}
{}_{-n} \bra{\varnothing} \psi_j^* e^{H_+(\pp|\bal;\bbe)} \psi_i \ket{\varnothing}_{-n} = e^{\xi(\pp;\beta_j)} e^{\Lambda_{-n}(\pp|\bbe)} h_{i-j}(\pp\dv\sigma_{\bal}^j\bal;\sigma_{\bbe}^j\bbe),
\end{equation}
Next, Wick's theorem (see, \textit{e.g.},~\cite[Eq.~(2.47)]{AlexandrovZabrodin}; note that $e^{H_+(z|\bal;\bbe)}$ is group-like in the context of~\cite{AlexandrovZabrodin}) and Equation~\eqref{eq:JT_matrix_entry} imply
\begin{align*}
e^{-\Lambda_{-n}(\pp|\bbe)} \cdot \bra{\mu} e^{H_+(\pp|\bal;\bbe)} \ket{\lambda} & = \det [ e^{-\Lambda_{-n}(\pp|\bbe)} \cdot {}_{-n} \bra{\varnothing} \psi_{\mu_j-j+1}^* e^{H_+(\pp|\bal;\bbe)} \psi_{\lambda_i-i+1} \ket{\varnothing}_{-n} ]_{i,j=1}^n
\\ & = \det [ e^{\xi(\pp;\beta_{\mu_j-j+1})} h_{\lambda_i-\mu_j-i+j}(\pp\dv\sigma_{\bal}^{\mu_j-j+1}\bal;\sigma_{\bbe}^{\mu_j-j+1}\bbe) ]_{i,j=1}^n,
\end{align*}
which shows the first equality.

The second equality (the dual Jacobi--Trudi formula) is proven analogously using~\eqref{eq:conjugate_fermion_vacuum}.
\end{proof}

Note that this recovers~\cite[Eq.~(2.17)]{MiyauraMukaihiraGeneralized} (equivalently~\cite[Eq.~(19)]{MiyauraMukaihiraFactorial}) when $\mu = \varnothing$ by factoring out the $e^{\xi(\pp;\beta_{1-j})}$ from each column and noting $\Lambda_{-n}(\pp|\bbe) = -\sum_{j=1}^n \xi(\pp;\beta_{1-j})$.
However, our skew double factorial Schur functions are different than the so-called skew factorial-type Schur functions in~\cite[Eq.~(24), (25)]{MiyauraMukaihiraFactorial}.\footnote{It is not clear to the authors that~\cite[Eq.~(24)]{MiyauraMukaihiraFactorial} is consistent with~\cite[Eq.~(25)]{MiyauraMukaihiraFactorial}; the latter is claimed without a proof.}

Next, we define an algebra involution of $\bosonfock$ by $\omega p_k = (-1)^{k+1} p_k$.

\begin{corollary}
\label{cor:involution}
Write $\lambda = (a_1, \dotsc, a_m|b_1, \dotsc, b_m)$ in Frobenius coordinates.
Then we have
\begin{equation}
\label{eq:omega_dfSchur}
\omega s_{\lambda}(\pp\dv\bal;\bbe) = \prod_{i=1}^m \frac{1 - \alpha_{1+a_i}\beta_{1+a_i}}{1 - \alpha_{-b_i}\beta_{-b_i}} s_{\lambda'}(\pp\dv{-\iota_{\bal}\bal};{-\iota_{\bbe}\bbe}).
\end{equation}
\end{corollary}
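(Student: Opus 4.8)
The plan is to deduce this from the Giambelli formula (Theorem~\ref{thm:giambelli}) by reducing to the case of hooks, and then to handle hooks by comparing the two generating series~\eqref{eq:h_gen_series} and~\eqref{eq:e_gen_series}. First I would record how $\omega$ interacts with the defining data: since $\omega p_k = (-1)^{k+1} p_k$, we have $\omega\,\xi(\pp;z) = \sum_{k\ge1}\frac{(-1)^{k+1}}{k}p_k z^k = -\xi(\pp;-z) \cdot(-1)$... more precisely $\omega\,e^{\xi(\pp;z)} = e^{\xi(\pp\,;\,z)}\big|_{p_k\mapsto(-1)^{k+1}p_k}$, and using $-\log(1+t)=\sum\frac{(-1)^k}{k}t^k$ one finds that applying $\omega$ swaps $h$-type and $e$-type generating functions after the substitutions $\bal\mapsto-\iota_{\bal}\bal$ and $\bbe\mapsto-\iota_{\bbe}\bbe$. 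Concretely, I would show that applying $\omega$ to~\eqref{eq:h_gen_series} and matching against~\eqref{eq:e_gen_series} (with the parameter replacements, and tracking the reindexing $\sigma_{\bbe}^{-1}$ versus $\iota$) gives the single-hook identity
\[
\omega\, h_k(\pp\dv\bal;\bbe) = \frac{1-\alpha_{1+?}\beta_{1+?}}{1-\alpha_{?}\beta_{?}}\, e_k(\pp\dv{-\iota_{\bal}\bal};{-\iota_{\bbe}\bbe}),
\]
which is the $b=0$ (or $a=0$) case of~\eqref{eq:omega_dfSchur}. The bookkeeping of which $\alpha,\beta$ indices appear in the scalar prefactor is the fussy part here; the shifted-power relations~\eqref{eq:basic_spowers_rels}, especially $\frac1{(z;\bal)^k}=(z;\iota\bal)^{-k}$, are exactly what converts the $h$-series denominators into the $e$-series denominators under $\bal\mapsto-\iota_{\bal}\bal$.

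Next I would treat a general hook $(a|b)$. From~\eqref{eq:hook_generating}, the two-variable generating function for $s_{(a|b)}(\pp\dv\bal;\bbe)$ is $e^{\xi(\pp;z)}e^{-\xi(\pp;w)}$ up to the explicit shifted-power kernel; applying $\omega$ turns this into $e^{\xi(\pp;z)}|_{\omega}\,e^{-\xi(\pp;w)}|_{\omega}$, and by the same manipulation as above this matches the generating function for $s_{(b|a)}(\pp\dv{-\iota_{\bal}\bal};{-\iota_{\bbe}\bbe})$, because particle--hole duality sends the hook $(a|b)$ to $(b|a)$. Extracting coefficients via the orthonormality contour integrals (Proposition~\ref{prop:orthonormality}) then yields
\[
\omega\, s_{(a|b)}(\pp\dv\bal;\bbe) = \frac{1-\alpha_{1+a}\beta_{1+a}}{1-\alpha_{-b}\beta_{-b}}\, s_{(b|a)}(\pp\dv{-\iota_{\bal}\bal};{-\iota_{\bbe}\bbe}),
\]
where the scalar is read off from comparing the kernels on the two sides. (One should double-check that $(b|a)$ is a legitimate Frobenius pair, i.e.\ that $s_{(b|a)}$ on the right is $s_{\lambda'}$ restricted to a single hook; this is immediate since the conjugate of the hook with arm $a$, leg $b$ is the hook with arm $b$, leg $a$.)

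Finally, to pass from hooks to arbitrary $\lambda=(a_1,\dots,a_m\,|\,b_1,\dots,b_m)$, I would apply $\omega$ to the Giambelli determinant $s_{\lambda}(\pp\dv\bal;\bbe)=\det[s_{(a_i|b_j)}(\pp\dv\bal;\bbe)]_{i,j=1}^m$. Since $\omega$ is an algebra homomorphism it commutes with taking determinants, so
\[
\omega\, s_{\lambda}(\pp\dv\bal;\bbe)
= \det\!\Bigl[\tfrac{1-\alpha_{1+a_i}\beta_{1+a_i}}{1-\alpha_{-b_j}\beta_{-b_j}}\, s_{(b_j|a_i)}(\pp\dv{-\iota_{\bal}\bal};{-\iota_{\bbe}\bbe})\Bigr]_{i,j=1}^m.
\]
The scalar $\tfrac{1-\alpha_{1+a_i}\beta_{1+a_i}}{1-\alpha_{-b_j}\beta_{-b_j}}$ factors out of row $i$ (the $a_i$-dependent numerator) and column $j$ (the $b_j$-dependent denominator), producing the prefactor $\prod_{i=1}^m\frac{1-\alpha_{1+a_i}\beta_{1+a_i}}{1-\alpha_{-b_i}\beta_{-b_i}}$; the remaining determinant is $\det[s_{(b_j|a_i)}(\pp\dv{-\iota_{\bal}\bal};{-\iota_{\bbe}\bbe})]_{i,j}$, which is the transpose of the Giambelli determinant for the partition with Frobenius coordinates $(b_1,\dots,b_m\,|\,a_1,\dots,a_m)=\lambda'$, hence equals $s_{\lambda'}(\pp\dv{-\iota_{\bal}\bal};{-\iota_{\bbe}\bbe})$. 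This gives~\eqref{eq:omega_dfSchur}. The main obstacle throughout is purely notational: keeping the index shifts in the shifted powers consistent when conjugating by $\omega$ and substituting $\bal\mapsto-\iota_{\bal}\bal$, $\bbe\mapsto-\iota_{\bbe}\bbe$, so that the single-hook scalar comes out exactly as $\tfrac{1-\alpha_{1+a}\beta_{1+a}}{1-\alpha_{-b}\beta_{-b}}$ — once that is pinned down, the determinant argument is routine multilinearity.
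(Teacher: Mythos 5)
Your proposal is correct in outline, but it takes a genuinely different route from the paper. The paper proves the corollary in one shot: it applies $\omega$ to the $n$-variable generating series~\eqref{eq:MMgen_recovery}, using $\omega\,\xi(\pp;z)=-\xi(\pp;-z)$ and $\Lambda_{-n}(\omega\pp|\bbe)=\Lambda_n(\pp|{-\iota_{\bbe}\bbe})$, substitutes $z_i\mapsto -w_i$, and matches the result against the conjugate-shape expansion~\eqref{eq:dual_fermion_schur_expansion} with parameters $-\iota_{\bal}\bal,-\iota_{\bbe}\bbe$, which yields~\eqref{eq:omega_dfSchur} for all $\lambda$ simultaneously. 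You instead establish only the hook case from the two-variable series~\eqref{eq:hook_generating} and then bootstrap via the Giambelli determinant, factoring the scalar $\tfrac{1-\alpha_{1+a_i}\beta_{1+a_i}}{1-\alpha_{-b_j}\beta_{-b_j}}$ out of rows and columns by multilinearity and using $\det M=\det M^{T}$ to recognize $s_{\lambda'}$; this is logically sound (Theorem~\ref{thm:giambelli} is available before the corollary, so there is no circularity), and in fact it mirrors the technique the paper itself uses later for the skew version, Corollary~\ref{cor:involution_skew}, there with Jacobi--Trudi in place of Giambelli. The hook-case bookkeeping you defer does work out: after $\omega$ and the swap-and-negate substitution $z\mapsto -\tilde w$, $w\mapsto -\tilde z$, the relations $(-\tilde w|\bbe)^a=1/(\tilde w|{-\iota_{\bbe}\bbe})^{-a}$ and their companions convert the kernel of~\eqref{eq:hook_generating} exactly into the $(b|a)$-kernel with parameters $-\iota_{\bal}\bal,-\iota_{\bbe}\bbe$, leaving the ratio $\tfrac{1-\alpha_{1+a}\beta_{1+a}}{1-\alpha_{-b}\beta_{-b}}$ as claimed. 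One small correction: your opening computation should read $\omega\,\xi(\pp;z)=-\xi(\pp;-z)$ (no extra factor of $-1$); the conclusion you draw from it, namely that $\omega$ swaps the $h$-type and $e$-type generating functions after $\bal\mapsto-\iota_{\bal}\bal$, $\bbe\mapsto-\iota_{\bbe}\bbe$, is the correct one and is exactly~\eqref{eq:omega_eh}. The trade-off between the two approaches: the paper's is shorter and avoids Giambelli entirely, while yours isolates the scalar prefactor as a purely row/column phenomenon in a determinant, at the cost of the two-variable kernel comparison.
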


\begin{proof}
We note that
\begin{equation}
\label{eq;omega_xi}
\omega \xi(\pp; z) = -\xi(\pp; -z),
\qquad\qquad
\Lambda_{-n}(\omega \pp|\bbe) = -\Lambda_{-n}(\pp|{-\bbe}) = \Lambda_n(\pp|{-\iota_{\bbe}\bbe}).
\end{equation}
Therefore, applying $\omega$ to both sides of~\eqref{eq:MMgen_recovery} and $z_i \mapsto -w_i$ yields
\begin{equation}
\label{eq:omega_MMgen}
\begin{aligned}
\sum_{\lambda \in \ZZ^n} (-1)^{\abs{\lambda}} \bigl(\omega s_{\lambda}(\pp\dv\bal;\bbe)\bigr) & \frac{(w_1;{-\iota_{\bal}\bal})^{-\lambda_1} \cdots (w_n;{-\iota_{\bal}\bal})^{n-1-\lambda_n}}{(w_1|{-\iota_{\bbe}\bbe})^{1-\lambda_1} \cdots (w_n|{-\iota_{\bbe}\bbe})^{n-\lambda_n}}
\\ & =  \prod_{i=1}^n \frac{e^{-\xi(\pp;w_i)}}{(w_i|{-\iota_{\bbe}\bbe})^n} \prod_{i < j} (w_i - w_j) (1 - \alpha_{1-i}\beta_{1-j}) e^{\Lambda_n(\pp|{-\iota_{\bbe}\bbe})}.
\end{aligned}
\end{equation}
Comparing~\eqref{eq:omega_MMgen} with~\eqref{eq:dual_fermion_schur_expansion} after (applying $(-\iota_{\bal}) (-\iota_{\bbe})$ to both sides), we obtain the reuslt.
%
\end{proof}

A special case of Corollary~\ref{cor:involution} is
\begin{equation}
\label{eq:omega_eh}
\omega h_k(\pp\dv\bal;\bbe) = \frac{1-\alpha_k\beta_k}{1-\alpha_0\beta_0} e_k(\pp\dv{-\iota_{\bal}\bal};{-\iota_{\bbe}\bbe}),
\end{equation}
which we can also see from the generating series~\eqref{eq:eh_gen_series}.
We also note that the factor in~\eqref{eq:omega_dfSchur} is the recipricol of the one that appears in the right hand side of~\cite[Eq.~(22)]{MiyauraMukaihiraFactorial}.
Using this and the Jacobi--Trudi formula, we can give the action $\omega$ for the skew case.

\begin{corollary}
\label{cor:involution_skew}
We have
\begin{equation}
\label{eq:omega_skew_dfSchur}
\omega s_{\lambda/\mu}(\pp\dv\bal;\bbe) = \prod_{i=1}^m \frac{1-\alpha_{\lambda_i-i+1} \beta_{\lambda_i-i+1}}{1-\alpha_{\mu_i-i+1}\beta_{\mu_i-i+1}} s_{\lambda'/\mu'}(\pp\dv{-\iota_{\bal}\bal};{-\iota_{\bbe}\bbe}).
\end{equation}
\end{corollary}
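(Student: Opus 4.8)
The plan is to apply the ring involution $\omega$ directly to the (first, $h$-based) Jacobi--Trudi expansion of Theorem~\ref{thm:jacobi_trudi} and to recognize the result as the dual ($e$-based) Jacobi--Trudi expansion for $s_{\lambda'/\mu'}$ with parameter sequences $-\iota_{\bal}\bal$ and $-\iota_{\bbe}\bbe$. Since $\omega$ is an algebra homomorphism of $\bosonfock$ acting only on the $\pp$ variables, it commutes with substitution of parameter sequences and distributes entrywise through the determinant. Thus I only need three ingredients: (i) $\omega e^{\Lambda_{-n}(\pp|\bbe)} = e^{\Lambda_{n}(\pp|{-\iota_{\bbe}\bbe})}$ and (ii) $\omega e^{\xi(\pp;\beta_c)} = e^{-\xi(\pp;-\beta_c)}$, both of which are \eqref{eq;omega_xi}; and (iii) the image of the homogeneous functions with shifted parameters, obtained by applying \eqref{eq:omega_eh} to $\sigma_{\bal}^c\bal$ and $\sigma_{\bbe}^c\bbe$ (legitimate since \eqref{eq:omega_eh} is an identity valid for arbitrary parameter sequences):
\[
\omega h_k(\pp\dv\sigma_{\bal}^c\bal;\sigma_{\bbe}^c\bbe) = \frac{1-\alpha_{k+c}\beta_{k+c}}{1-\alpha_c\beta_c}\, e_k(\pp\dv{-\iota_{\bal}\sigma_{\bal}^c\bal};{-\iota_{\bbe}\sigma_{\bbe}^c\bbe}).
\]

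Applying this to the $(i,j)$ entry $e^{\xi(\pp;\beta_{\mu_j-j+1})}\,h_{\lambda_i-\mu_j-i+j}(\pp\dv\sigma_{\bal}^{\mu_j-j+1}\bal;\sigma_{\bbe}^{\mu_j-j+1}\bbe)$, we take $c=\mu_j-j+1$ and $k=\lambda_i-\mu_j-i+j$, so $k+c=\lambda_i-i+1$; the entry becomes $e^{-\xi(\pp;-\beta_{\mu_j-j+1})}$ times $\frac{1-\alpha_{\lambda_i-i+1}\beta_{\lambda_i-i+1}}{1-\alpha_{\mu_j-j+1}\beta_{\mu_j-j+1}}$ times $e_{\lambda_i-\mu_j-i+j}(\pp\dv{-\iota_{\bal}\sigma_{\bal}^{\mu_j-j+1}\bal};{-\iota_{\bbe}\sigma_{\bbe}^{\mu_j-j+1}\bbe})$. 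The scalar factor splits as a row factor $1-\alpha_{\lambda_i-i+1}\beta_{\lambda_i-i+1}$ times a column factor $(1-\alpha_{\mu_j-j+1}\beta_{\mu_j-j+1})^{-1}$, so multilinearity of the determinant pulls $\prod_i (1-\alpha_{\lambda_i-i+1}\beta_{\lambda_i-i+1})$ and $\prod_j (1-\alpha_{\mu_j-j+1}\beta_{\mu_j-j+1})^{-1}$ out front. It then remains to identify the surviving determinant with the dual Jacobi--Trudi determinant for $s_{\lambda'/\mu'}$ evaluated at $\widetilde\bal := -\iota_{\bal}\bal$ and $\widetilde\bbe := -\iota_{\bbe}\bbe$. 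For this one checks the two elementary parameter identities: using $\iota_{\bal}\sigma_{\bal}^{c}=\sigma_{\bal}^{-c}\iota_{\bal}$ we get $-\iota_{\bal}\sigma_{\bal}^{\mu_j-j+1}\bal = \sigma_{\widetilde\bal}^{\,j-\mu_j-1}\widetilde\bal$ (both sequences have $i$-th entry $-\alpha_{1-i+\mu_j-j+1}$), and likewise for $\bbe$; and $-\beta_{\mu_j-j+1} = \widetilde\beta_{j-\mu_j}$, which is exactly the argument of the exponential appearing in the dual formula. Together with $(\lambda')'=\lambda$, $(\mu')'=\mu$, and ingredient (i) for the prefactor $e^{\Lambda_n(\pp|\widetilde\bbe)}$, this is precisely the $e$-based Jacobi--Trudi formula of Theorem~\ref{thm:jacobi_trudi} for $s_{\lambda'/\mu'}(\pp\dv\widetilde\bal;\widetilde\bbe)$. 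Since each product factor equals $1$ whenever $\lambda_i=\mu_i$, the product over all $i$ may be written as the finite product $\prod_{i=1}^m$, yielding \eqref{eq:omega_skew_dfSchur}.

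The only point requiring a word of care is that the $h$-based and $e$-based Jacobi--Trudi expansions must be taken with a common matrix size $n$: the former is natural for $n\geq\ell(\lambda)$ and the latter for $n\geq\lambda_1$. Enlarging $n$ changes each determinant only by factors $e^{\pm\xi(\pp;\beta_{1-n})}$ coming from an extra trailing row with $\lambda_i=\mu_i=0$, and these are absorbed into the compensating change $\Lambda_{-n}(\pp|\bbe)\mapsto\Lambda_{-(n-1)}(\pp|\bbe)$ (and dually for $\Lambda_n$), exactly as in the proof of Theorem~\ref{thm:jacobi_trudi}; so running both expansions at a single sufficiently large $n$ is harmless. (The case $\mu\not\subseteq\lambda$ is trivial, both sides being $0$ by Proposition~\ref{prop:containment}.) Beyond this, the argument is pure index bookkeeping, and the main — quite mild — obstacle is keeping the shifts of the parameter sequences straight in ingredient (iii) and in the two parameter identities above.
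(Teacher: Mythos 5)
Your proposal is correct and follows essentially the same route as the paper's own proof: apply $\omega$ entrywise to the $h$-based Jacobi--Trudi determinant, use \eqref{eq;omega_xi} together with the shifted-parameter form of \eqref{eq:omega_eh}, extract the factors $\frac{1-\alpha_{\lambda_i-i+1}\beta_{\lambda_i-i+1}}{1-\alpha_{\mu_j-j+1}\beta_{\mu_j-j+1}}$ by multilinearity (row/column dependence), and recognize the remaining determinant as the dual Jacobi--Trudi formula for $s_{\lambda'/\mu'}$ at $-\iota_{\bal}\bal,-\iota_{\bbe}\bbe$ via the identities $\iota\sigma^{c}=\sigma^{-c}\iota$ and $-\beta_{\mu_j-j+1}=(-\iota_{\bbe}\bbe)_{j-\mu_j}$. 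Your extra remark about matching the matrix size $n$ in the two expansions is a harmless refinement of the same argument.
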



\begin{proof}
By Theorem~\ref{thm:jacobi_trudi} and Corollary~\ref{cor:involution}, we have
\begin{align*}
\omega s_{\lambda/\mu}(\pp\dv\bal;\bbe) & = e^{-\Lambda_{-n}(\pp|{-\bbe})} \det \Bigl[ e^{-\xi(\pp;-\beta_{\mu_j-j+1})} \omega h_{\lambda_i-\mu_j-i+j}(\pp\dv\sigma_{\bal}^{\mu_j-j+1}\bal;\sigma_{\bbe}^{\mu_j-j+1}\bbe) \Bigr]_{i,j=1}^n
\\ & = e^{\Lambda_n(\pp|{-\iota_{\bbe}\bbe})} \det \biggl[ e^{-\xi(\pp;-\iota_{\bbe} \beta_{j-\mu_j})} \frac{1-\alpha_{\lambda_i-i+1} \beta_{\lambda_i-i+1}}{1-\alpha_{\mu_j-j+1}\beta_{\mu_j-j+1}}
\\ & \hspace{100pt} \times e_{\lambda_i-\mu_j-i+j}(\pp\dv {-\sigma_{\bal}^{j-\mu_j-1} \iota_{\bal} \bal}; {-\sigma_{\bbe}^{j-\mu_j-1} \iota_{\bbe} \bbe}) \biggr]_{i,j=1}^n
\\ & = \prod_{i=1}^m \frac{1-\alpha_{\lambda_i-i+1} \beta_{\lambda_i-i+1}}{1-\alpha_{\mu_i-i+1}\beta_{\mu_i-i+1}} s_{\lambda'}(\pp\dv{-\iota_{\bal}\bal};{-\iota_{\bbe}\bbe}),
\end{align*}
where we used~\eqref{eq;omega_xi} and the multilinearity of the determinant for the final equality since the numerator (resp.\ denominator) of each factor only depends on $i$ (resp.~$j$).
\end{proof}

\begin{example}
We verify that Corollary~\ref{cor:involution} and Corollary~\ref{cor:involution_skew} agree for $\lambda = (4,1,1) = (3|2)$ and $\mu = \varnothing$, where we have the factors
\[
\frac{1 - \alpha_4\beta_4}{1 - \alpha_{-2}\beta_{-2}} = \frac{1 - \alpha_4\beta_4}{1 - \alpha_0 \beta_0} \frac{1 - \alpha_0\beta_0}{1 - \alpha_{-1} \beta_{-1}} \frac{1 - \alpha_{-1}\beta_{-1}}{1 - \alpha_{-2} \beta_{-2}}.
\]
\end{example}

Corollary~\ref{cor:involution_skew} also provides a new proof of~\cite[Eq.~(9.6)]{Macdonald92}
in the case of the double factorial Schur functions.
Pulling Corollary~\ref{cor:involution_skew} back across the boson-fermion correspondence yields the following.

\begin{corollary}
$
\displaystyle
\bra{\mu} e^{H_+(\omega \pp|\bal;\bbe)} \ket{\lambda} = \prod_{i=1}^m \frac{1-\alpha_{\lambda_i-i+1} \beta_{\lambda_i-i+1}}{1-\alpha_{\mu_i-i+1}\beta_{\mu_i-i+1}} s_{\lambda'/\mu'}(\pp\dv{-\iota_{\bal}\bal};{-\iota_{\bbe}\bbe}).
$
\end{corollary}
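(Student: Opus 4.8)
The plan is to recognize the left-hand side as the algebra involution $\omega$ applied to the skew double factorial Schur function, and then simply invoke Corollary~\ref{cor:involution_skew}. First I would recall that, by Definition~\ref{def:dfSchur}, $s_{\lambda/\mu}(\pp\dv\bal;\bbe) = \bra{\mu} e^{H_+(\pp|\bal;\bbe)} \ket{\lambda}$, regarded as an element of the completion $\CC\FPS{\pp}$ via Theorem~\ref{thm:deformed_boson_fermion}. The key observation is that $H_+(\pp|\bal;\bbe) = \sum_{k\geqslant 1} \frac{p_k}{k} J_k^{(\bal;\bbe)}$ is $\CC$-linear in the variables $\pp$, with the operators $J_k^{(\bal;\bbe)}$ independent of $\pp$; hence $\bra{\mu} e^{H_+(\pp|\bal;\bbe)} \ket{\lambda}$ is a (formal) power series in the $p_k$ with scalar coefficients, and performing the substitution $p_k \mapsto (-1)^{k+1} p_k$ in this series is the same as both (i) forming $\bra{\mu} e^{H_+(\omega\pp|\bal;\bbe)} \ket{\lambda}$ and (ii) applying the algebra involution $\omega$ to $s_{\lambda/\mu}(\pp\dv\bal;\bbe)$. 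Therefore $\bra{\mu} e^{H_+(\omega\pp|\bal;\bbe)} \ket{\lambda} = \omega\, s_{\lambda/\mu}(\pp\dv\bal;\bbe)$.

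The remaining step is purely a citation. Corollary~\ref{cor:involution_skew} evaluates $\omega\, s_{\lambda/\mu}(\pp\dv\bal;\bbe)$ as $\prod_{i=1}^m \frac{1-\alpha_{\lambda_i-i+1}\beta_{\lambda_i-i+1}}{1-\alpha_{\mu_i-i+1}\beta_{\mu_i-i+1}}\, s_{\lambda'/\mu'}(\pp\dv{-\iota_{\bal}\bal};{-\iota_{\bbe}\bbe})$, which is exactly the right-hand side of the claimed identity; combining the two displays finishes the proof.

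There is essentially no obstacle here: the only point requiring a sentence of care is that the substitution $p_k \mapsto (-1)^{k+1} p_k$ commutes with taking the matrix coefficient $\bra{\mu}\,\cdot\,\ket{\lambda}$, which holds because that coefficient is $\CC$-linear and the exponential depends on $\pp$ only through the scalar prefactors $p_k/k$. One could avoid even this remark by expanding $e^{H_+(\omega\pp|\bal;\bbe)}$ as a series in $H_+(\omega\pp|\bal;\bbe)$ and matching it term by term with the result of applying $\omega$ to the analogous expansion of $e^{H_+(\pp|\bal;\bbe)}$, using $\omega H_+(\pp|\bal;\bbe) = H_+(\omega\pp|\bal;\bbe)$.
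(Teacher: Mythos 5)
Your proposal is correct and matches the paper's argument: the paper proves this corollary in one line by ``pulling Corollary~\ref{cor:involution_skew} back across the boson-fermion correspondence,'' which is exactly your observation that $\bra{\mu} e^{H_+(\omega\pp|\bal;\bbe)}\ket{\lambda} = \omega\, s_{\lambda/\mu}(\pp\dv\bal;\bbe)$ followed by a citation of Corollary~\ref{cor:involution_skew}. Your extra remark justifying why the substitution $p_k \mapsto (-1)^{k+1}p_k$ commutes with taking the matrix coefficient is a harmless (and correct) elaboration of the same step.
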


\begin{remark}
We could define a (twisted) involution $\omega_{-\iota}$ of $\bosonfock$ by $\omega_{-\iota} p_k = (-1)^{k-1} p_k$ with also $\omega_{-\iota} \alpha_i = -\alpha_{1-i} = -\iota \bal$ and $\omega_{-\iota} \beta_i = -\beta_{1-i} = -\iota \bbe$.
Then we would not have the additional twisting of the parameters, \textit{e.g.}, $\omega_{-\iota} h_k(\pp\dv\bal;\bbe) = \frac{1-\alpha_{1-k}\beta_{1-k}}{1-\alpha_0\beta_0} e_k(\pp\dv\bal;\bbe)$.
However, for this to be an (Ore) algebra morphism, we would require the $\bal$ and $\bbe$ to be variables and to work over, say, the fraction field $\CC(\bal;\bbe)$.
\end{remark}

We now will give an analog of the Murnaghan--Nakayama rule.
We start with defining some additional notation.
For any partition $\lambda$, let $\pp_{\lambda} = p_{\lambda_1} p_{\lambda_2} \cdots$, where we consider $p_0 = 1$, and $J_{\lambda}^{(\bal;\bbe)} = J_{\lambda_1}^{(\bal;\bbe)} J_{\lambda_2}^{(\bal;\bbe)} \cdots$.
Note that the factors of $J_{\lambda}^{(\bal;\bbe)}$ mutually commute by Theorem~\ref{thm:heisenberg_relations}.
We also need some additional definitions regarding the combinatorics of partitions.
Let $m_i^{(\lambda)}$ denote the number of parts of size $i$ in $\lambda$.
Given a box $\bbb$ in row $R$ and column $C$ of the Young diagram of a skew partition, the \defn{content} of $b$ is $c(\bbb) := C - R$.
The content is constant on diagonals of the Young diagram.
A skew partition $\lambda/\mu$, is called a \defn{ribbon} if it is connected and contains at most one box on each diagonal.
Let $\lambda / \mu$ be a ribbon, and the contents of the boxes of $\lambda/\mu$ form a sequence of consecutive integers $i,i+1,\dotsc,j-1$ for some integers $i$ and $j$, and we will call the sequence $[i,j)$ the \defn{content interval} of $\lambda/\mu$.
The \defn{height} of a ribbon $\lambda/\mu$, denoted $\htf(\lambda/\mu)$, is the number of distinct rows it occupies.

By directly expanding the product, we have
\begin{equation}
e^{H_+(z|\bal;\bbe)} 
 = \sum_{m=0}^{\infty} \frac{1}{m!} \sum_{k_1,\dotsc,k_m=1}^{\infty} \frac{p_{k_1}}{k_1} \cdots \frac{p_{k_m}}{k_m} J_{k_1}^{(\bal;\bbe)} \cdots J_{k_m}^{(\bal;\bbe)}
  = \sum_{\nu\in\mcP} z_{\nu}^{-1} \pp_{\nu} J_{\nu}^{(\bal;\bbe)},
\label{exponential-expansion}
\end{equation}
where $z_{\nu} := \prod_{i \geqslant 1} i^{m_i^{(\nu)}} m_i^{(\nu)}!$.
Thus,
\begin{equation}
s_{\lambda/\mu}(\pp\dv\bal;\bbe) = \sum_{\nu \in \mcP} z_{\nu}^{-1} \pp_{\nu} \bra{\mu} J_{\nu}^{(\bal;\bbe)} \ket{\lambda}. \label{eq:tau-expansion}
\end{equation}

\begin{lemma}[{\cite[Lemma~1]{Lam06}}] \label{Lam-lemma}
Let $k>0$ and $\nu$ be a partition. Then we have
\[
J_{-k}^{(\bal;\bbe)} J_{\nu}^{(\bal;\bbe)} = -k m_k^{(\nu)} J_{\mu}^{(\bal;\bbe)} + J_{\nu}^{(\bal;\bbe)} J_{-k}^{(\bal;\bbe)},
\] 
where $\mu$ be $\nu$ with one part of size $k$ removed.
\end{lemma}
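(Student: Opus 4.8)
The plan is to deduce the identity directly from the Heisenberg commutation relations of Theorem~\ref{thm:heisenberg_relations}, namely $[J_a^{(\bal;\bbe)}, J_b^{(\bal;\bbe)}] = a\,\delta_{a,-b}\cdot\one$, together with the fact (also from Theorem~\ref{thm:heisenberg_relations}) that the positive-index operators $J_{\nu_1}^{(\bal;\bbe)}, J_{\nu_2}^{(\bal;\bbe)}, \dotsc$ mutually commute. Since the asserted formula is just the statement that $[J_{-k}^{(\bal;\bbe)}, J_\nu^{(\bal;\bbe)}] = -k\,m_k^{(\nu)}\,J_\mu^{(\bal;\bbe)}$, it suffices to evaluate this commutator. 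This is the verbatim analog of~\cite[Lemma~1]{Lam06}, whose proof goes through unchanged in our deformed setting precisely because the deformed current operators satisfy the same relations as the classical ones.

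First I would write $\nu = (\nu_1 \geqslant \dotsb \geqslant \nu_\ell)$ and $J_\nu^{(\bal;\bbe)} = J_{\nu_1}^{(\bal;\bbe)} \dotsm J_{\nu_\ell}^{(\bal;\bbe)}$, and apply the Leibniz rule $[A,BC] = [A,B]C + B[A,C]$ repeatedly to obtain
\[
[J_{-k}^{(\bal;\bbe)}, J_\nu^{(\bal;\bbe)}] = \sum_{i=1}^{\ell} J_{\nu_1}^{(\bal;\bbe)} \dotsm J_{\nu_{i-1}}^{(\bal;\bbe)}\, [J_{-k}^{(\bal;\bbe)}, J_{\nu_i}^{(\bal;\bbe)}]\, J_{\nu_{i+1}}^{(\bal;\bbe)} \dotsm J_{\nu_\ell}^{(\bal;\bbe)}.
\]
Then the Heisenberg relation gives $[J_{-k}^{(\bal;\bbe)}, J_{\nu_i}^{(\bal;\bbe)}] = -k\,\delta_{k,\nu_i}\cdot\one$, a central element; hence the $i$-th summand vanishes unless $\nu_i = k$, in which case it equals $-k$ times the product of the remaining $\ell-1$ current operators.

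Next I would note that removing a single part of size $k$ from $\nu$ always produces the same partition $\mu$, and---since all positive-index currents commute---the product of the surviving currents equals $J_\mu^{(\bal;\bbe)}$ regardless of which part of size $k$ is deleted. As $\nu$ has exactly $m_k^{(\nu)}$ parts equal to $k$, the sum collapses to $-k\,m_k^{(\nu)}\,J_\mu^{(\bal;\bbe)}$. Expanding $[J_{-k}^{(\bal;\bbe)}, J_\nu^{(\bal;\bbe)}] = J_{-k}^{(\bal;\bbe)} J_\nu^{(\bal;\bbe)} - J_\nu^{(\bal;\bbe)} J_{-k}^{(\bal;\bbe)}$ and rearranging then yields the claim.

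There is no real obstacle here: the argument is purely formal, using only the derivation property of the commutator and the Heisenberg relations, so the only bookkeeping to keep straight is that $[J_{-k}^{(\bal;\bbe)}, J_{\nu_i}^{(\bal;\bbe)}]$ is central (so it pulls out of the product) and that commutativity of the positive-index currents makes the surviving product well defined and equal to $J_\mu^{(\bal;\bbe)}$.
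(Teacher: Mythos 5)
Your proof is correct and is exactly the argument the paper relies on: the lemma is quoted from Lam's Lemma~1, whose proof is precisely this Leibniz-rule computation with the Heisenberg relation $[J_{-k}^{(\bal;\bbe)}, J_{\nu_i}^{(\bal;\bbe)}] = -k\,\delta_{k,\nu_i}$, and it carries over verbatim because Theorem~\ref{thm:heisenberg_relations} shows the deformed currents satisfy the same relations. No gaps; the bookkeeping about centrality of the commutator and commutativity of the positive-index factors is handled correctly.
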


The next lemma follows from the combinatorics of the fermionic Fock space, along with Proposition~\ref{prop:explicit_current} and~\eqref{eq:adef}.

\begin{lemma} \label{lem:ribbon}
Let $\lambda,\mu$ be distinct partitions.
If $\mu \not\leqslant \lambda$ or if $\lambda/\mu$ is not a ribbon, then $\bra{\mu} J_k^{(\bal;\bbe)} \ket{\lambda} = \bra{\lambda} J_{-k}^{(\bal;\bbe)} \ket{\mu} = 0$.
If $\lambda/\mu$ is a ribbon with content interval $[i,j)$, then
\[
\bra{\mu} J_k^{(\bal;\bbe)} \ket{\lambda} = (-1)^{\htf(\lambda/\mu)-1} A_{ij}^k \hspace{40pt} \text{and} \hspace{40pt} \bra{\lambda} J_{-k}^{(\bal;\bbe)} \ket{\mu} = (-1)^{\htf(\lambda/\mu)-1} A_{ji}^{-k}.
\]
\end{lemma}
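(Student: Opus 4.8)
The plan is to expand $J_{\pm k}^{(\bal;\bbe)}$ by means of Proposition~\ref{prop:explicit_current}, which gives $J_k^{(\bal;\bbe)} = \sum_{i,j} A_{ij}^k \normord{\psi_i\psi_j^*}$ and hence $\bra{\mu}J_k^{(\bal;\bbe)}\ket{\lambda} = \sum_{i,j} A_{ij}^k\,\bra{\mu}\normord{\psi_i\psi_j^*}\ket{\lambda}$, so that everything reduces to the matrix coefficients $\bra{\mu}\normord{\psi_i\psi_j^*}\ket{\lambda}$ for partitions $\lambda\neq\mu$ (say of charge $0$). First I would record the standard fact — an immediate consequence of the explicit action of $\psi_i,\psi_j^*$ on semi-infinite monomials together with the normal-ordering rule — that $\normord{\psi_i\psi_j^*}\ket{\lambda}$ is nonzero precisely when $j$ is an occupied site and $i$ an unoccupied site of the Maya diagram $S_\lambda = \{\lambda_r - r + 1 : r\geqslant 1\}$, in which case it equals $\pm\ket{\mu}$ where $S_\mu = (S_\lambda\setminus\{j\})\cup\{i\}$, with sign $(-1)^t$ for $t = \#\{p \in S_\lambda : \min(i,j) < p < \max(i,j)\}$. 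One checks the cases $i>0$ and $i\leqslant 0$ separately; this is exactly the fermionic bookkeeping behind the classical Murnaghan--Nakayama rule. In particular, for fixed $\lambda\neq\mu$ at most one pair $(i,j)$ with $i\neq j$ contributes, namely $\{i\} = S_\mu\setminus S_\lambda$ and $\{j\} = S_\lambda\setminus S_\mu$, and such a pair exists if and only if $|S_\lambda\,\triangle\,S_\mu| = 2$.

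Next I would invoke the classical abacus/border-strip dictionary to pass to partition language: $\lambda$ and $\mu$ have $|S_\lambda\,\triangle\,S_\mu| = 2$ with the differing bead moved to the left (i.e.\ $i<j$) if and only if $\mu\subseteq\lambda$ and $\lambda/\mu$ is a ribbon, in which case (i) the content interval of $\lambda/\mu$ equals $[i,j)$, since $j$ is one more than the largest content occurring and $i$ is the smallest, and (ii) $t = \htf(\lambda/\mu) - 1$. Combining these, when $\lambda/\mu$ is a ribbon with content interval $[i,j)$ only the single term $(i,j)$ survives in the sum from Proposition~\ref{prop:explicit_current}, and $\bra{\mu}J_k^{(\bal;\bbe)}\ket{\lambda} = (-1)^{\htf(\lambda/\mu)-1}A_{ij}^k$. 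For the vanishing statement: if $\mu\not\leqslant\lambda$ or $\lambda/\mu$ is not a ribbon, then either $|S_\lambda\,\triangle\,S_\mu| > 2$, in which case $\bra{\mu}\normord{\psi_i\psi_j^*}\ket{\lambda} = 0$ for every $i,j$, or $|S_\lambda\,\triangle\,S_\mu| = 2$ with the bead moving to the right, so the unique surviving pair has $i > j$ and then $A_{ij}^k = 0$ since~\eqref{eq:aposdef} vanishes for $i>j$ (as $k>0$); either way $\bra{\mu}J_k^{(\bal;\bbe)}\ket{\lambda} = 0$.

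The claim for $J_{-k}^{(\bal;\bbe)}$ is handled in the same way, expanding $\bra{\lambda}J_{-k}^{(\bal;\bbe)}\ket{\mu} = \sum_{i',j'}A_{i'j'}^{-k}\bra{\lambda}\normord{\psi_{i'}\psi_{j'}^*}\ket{\mu}$; now the unique surviving pair (when $|S_\lambda\,\triangle\,S_\mu|=2$) is $\{i'\} = S_\lambda\setminus S_\mu$, $\{j'\} = S_\mu\setminus S_\lambda$, which for a ribbon $\lambda/\mu$ with content interval $[i,j)$ is $(i',j') = (j,i)$; the sign is again $(-1)^{\htf(\lambda/\mu)-1}$ because $\lambda$ and $\mu$ have the same occupied sites strictly between $i$ and $j$, yielding $\bra{\lambda}J_{-k}^{(\bal;\bbe)}\ket{\mu} = (-1)^{\htf(\lambda/\mu)-1}A_{ji}^{-k}$, and the non-ribbon / non-containment cases vanish by the same two-case argument (using now that~\eqref{eq:anegdef} vanishes whenever its second index exceeds its first). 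I expect the only real obstacle to be keeping all sign conventions consistent — the $(-1)^{k-1}$ in the action of $\psi_j^*$, the transpositions needed to reorder the semi-infinite wedge after applying $\psi_i$, and the sign flips hidden in the normal ordering — but no idea beyond the classical Murnaghan--Nakayama computation is required, which is why the paper can assert that the lemma follows from the combinatorics of the fermionic Fock space.
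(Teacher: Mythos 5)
Your proposal is correct and is exactly the argument the paper has in mind: the paper leaves the lemma as a consequence of ``the combinatorics of the fermionic Fock space'' together with Proposition~\ref{prop:explicit_current} and~\eqref{eq:adef}, and you have simply written out that route in full (the Maya-diagram bead move $j\to i$ corresponding to a ribbon with content interval $[i,j)$, the sign $(-1)^{\htf-1}$ from the occupied sites strictly between $i$ and $j$, and the vanishing of $A^k_{ij}$ for $i>j$ and of $A^{-k}_{ij}$ for $j>i$ handling the non-containment case). The details, including the treatment of the $J_{-k}^{(\bal;\bbe)}$ coefficient via the reverse bead move giving $A^{-k}_{ji}$, check out against Example~\ref{ex:jkexample}.
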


\begin{remark}
\label{rem:finite_sums_current_beta_zero}
When $\bbe = 0$, then $\bra{\mu} J_k^{(\bal;0)} \ket{\lambda}\ne 0$ precisely when $\lambda/\mu$ is a ribbon of size at least $k$ and $\bra{\lambda} J_{-k}^{(\bal;0)} \ket{\mu} \ne 0$ precisely when $\lambda/\mu$ is a ribbon of size at most $k$.
In particular the action of $J_{-k}^{(\bal;0)}$ on $\ket{\lambda}$ also results in a finite sum.
A similar statement holds if instead $\bal = 0$, where we swap the direction of the inequalities.
\end{remark}

In particular, let us consider the case $\lambda = \mu$.
For $k > 0 $, to compute $\bra{\lambda} J_{-k}^{(\bal;\bbe)} \ket{\lambda}$, we need to consider the effect of the normal ordering.
Indeed, we have
\begin{equation} \label{eq:jkii}
\bra{\lambda} J_{-k}^{(\bal;\bbe)} \ket{\lambda} = \sum_{i \in \ZZ} A_{ii}^k \bra{\lambda} \normord{\psi_i \psi_i^*} \ket{\lambda} =  \sum_{i\in\ZZ} \beta_i^k \bra{\lambda} (\psi_i \psi_i^* - \delta_{i\leqslant 0}) \ket{\lambda} = p_k(\bbe_{S(\lambda)}/(- \bbe_{T(\lambda)}),
\end{equation}
where we use the supersymmetric powersum functions
\begin{gather*}
p_k(\bbe_A / (-\bbe_B)) := \sum_{i\in A} \beta_i^k - \sum_{i\in B} \beta_i^k
\qquad\qquad \text{for any } A,B \subseteq \ZZ,
\\
S(\lambda) = \{i>0 \mid \text{$v_i$ appears in $\ket{\lambda}$}\},
\hspace{20pt}
T(\lambda) = \{i \leqslant 0 \mid \text{$v_i$ does not appear in $\ket{\lambda}$}\}.
\end{gather*}
In terms of $\lambda$ itself, we have
\[
S(\lambda) = \{\lambda_i-i+1 \mid \lambda_i\geqslant i\}
\hspace{10pt} \text{and} \hspace{10pt}
T(\lambda) = \{i-\lambda'_i \mid \lambda_i' \geqslant i\},
\]
where $\lambda'$ is the conjugate partition of $\lambda$ and $\lambda'_i$ is its $i$th part.
Similarly, we have
\begin{equation} \label{eq:jnegkii}
\bra{\lambda} J_{-k}^{(\bal;\bbe)} \ket{\lambda} = p_k(\bal_{S(\lambda)} / (-\bal_{T(\lambda)})),
\end{equation}

\begin{example} \label{ex:jkexample}
Let $\lambda = (3,2,1)$, and $k > 0$.
Then we compute $J_k^{(\bal;\bbe)} \ket{\lambda}$ by having the following motions of the particles
\ytableausetup{boxsize=.7em}%
\begin{align*}
\begin{tikzpicture}[scale=.5,baseline=-7]
  \node[anchor=east] at (-5.5,0) {$A_{23}^k$};
  \draw[fill=black!30] (-3,0) circle (.3);
  \draw[->, >=latex, line width=0.15mm] (-3,0.5) to [out=45,in=135] (-2,0.5);
  \foreach \i in {-3,-1,1,2}
    \draw[fill=black] (-\i,0) circle (.3);
  \foreach \i in {-2,0,4}
    \draw[fill=white] (-\i,0) circle (.3);
  \foreach \i in {-3,...,4}
    \node at (-\i,-0.7) {\tiny $\i$};
  \node at (-5,0) {$\cdots$};
  \node at (4.1,0) {$\cdots$};
\end{tikzpicture}
&
\ydiagram{3,2,1}*[*(red!50)]{2+1},
&
\begin{tikzpicture}[scale=.5,baseline=-7]
  \node[anchor=east] at (-5.5,0) {$-A_{03}^k$};
  \draw[fill=black!30] (-3,0) circle (.3);
  \draw[->, >=latex, line width=0.15mm] (-3,0.5) to [out=35,in=145] (-0,0.5);
  \foreach \i in {-3,-1,1,0}
    \draw[fill=black] (-\i,0) circle (.3);
  \foreach \i in {-2,2,4}
    \draw[fill=white] (-\i,0) circle (.3);
  \foreach \i in {-3,...,4}
    \node at (-\i,-0.7) {\tiny $\i$};
  \node at (-5,0) {$\cdots$};
  \node at (4.1,0) {$\cdots$};
\end{tikzpicture}
&
\ydiagram{3,2,1}*[*(red!50)]{1+2,1+1},
\allowdisplaybreaks\\
\begin{tikzpicture}[scale=.5,baseline=-7]
  \node[anchor=east] at (-5.5,0) {$A_{-2,3}^k$};
  \draw[fill=black!30] (-3,0) circle (.3);
  \draw[->, >=latex, line width=0.15mm] (-3,0.5) to [out=25,in=155] (2,0.5);
  \foreach \i in {-3,-1,-2,1}
    \draw[fill=black] (-\i,0) circle (.3);
  \foreach \i in {0,2,4}
    \draw[fill=white] (-\i,0) circle (.3);
  \foreach \i in {-3,...,4}
    \node at (-\i,-0.7) {\tiny $\i$};
  \node at (-5,0) {$\cdots$};
  \node at (4.1,0) {$\cdots$};
\end{tikzpicture}
&
\ydiagram{3,2,1}*[*(red!50)]{1+2,2,1},
&
\begin{tikzpicture}[scale=.5,baseline=-7]
  \node[anchor=east] at (-5.5,0) {$A_{01}^k$};
  \draw[fill=black!30] (-1,0) circle (.3);
  \draw[->, >=latex, line width=0.15mm] (-1,0.5) to [out=45,in=135] (-0,0.5);
  \foreach \i in {-3,-1,3,0}
    \draw[fill=black] (-\i,0) circle (.3);
  \foreach \i in {-2,2,4}
    \draw[fill=white] (-\i,0) circle (.3);
  \foreach \i in {-3,...,4}
    \node at (-\i,-0.7) {\tiny $\i$};
  \node at (-5,0) {$\cdots$};
  \node at (4.1,0) {$\cdots$};
\end{tikzpicture}
&
\ydiagram{3,2,1}*[*(red!50)]{0,1+1},
\allowdisplaybreaks\\
\begin{tikzpicture}[scale=.5,baseline=-7]
  \node[anchor=east] at (-5.5,0) {$-A_{-2,1}^k$};
  \draw[fill=black!30] (-1,0) circle (.3);
  \draw[->, >=latex, line width=0.15mm] (-1,0.5) to [out=35,in=145] (2,0.5);
  \foreach \i in {-3,-1,3,-2}
    \draw[fill=black] (-\i,0) circle (.3);
  \foreach \i in {0,2,4}
    \draw[fill=white] (-\i,0) circle (.3);
  \foreach \i in {-3,...,4}
    \node at (-\i,-0.7) {\tiny $\i$};
  \node at (-5,0) {$\cdots$};
  \node at (4.1,0) {$\cdots$};
\end{tikzpicture}
&
\ydiagram{3,2,1}*[*(red!50)]{0,2,1},
&
\begin{tikzpicture}[scale=.5,baseline=-7]
  \node[anchor=east] at (-5.5,0) {$A_{-2,-1}^k$};
  \draw[fill=black!30] (1,0) circle (.3);
  \draw[->, >=latex, line width=0.15mm] (1,0.5) to [out=45,in=135] (2,0.5);
  \foreach \i in {-3,1,3,0}
    \draw[fill=black] (-\i,0) circle (.3);
  \foreach \i in {-2,2,4}
    \draw[fill=white] (-\i,0) circle (.3);
  \foreach \i in {-3,...,4}
    \node at (-\i,-0.7) {\tiny $\i$};
  \node at (-5,0) {$\cdots$};
  \node at (4.1,0) {$\cdots$};
\end{tikzpicture}
&
\ydiagram{3,2,1}*[*(red!50)]{0,0,1},
\end{align*}
where the black (resp.\ gray, white) nodes correspond to particles (resp.\ original position of the moved particle, no particle), and the $A_{ii}^k$ contributing $(\beta_3^k + \beta_1^k - \beta_0^k - \beta_{-2}^k) \ket{\lambda}$.
Hence
\begin{align*}
J_k^{(\bal;\bbe)} \ket{3,2,1} & = (\beta_3^k + \beta_1^k - \beta_0^k - \beta_{-2}^k) \ket{3,2,1} + A_{-2,-1}^k \ket{3,2} + A_{01}^k \ket{3,1,1} + A_{23}^k \ket{2,2,1}
\\ & \hspace{20pt} - A_{03}^k \ket{1,1,1} - A_{-2,1}^k \ket{3} + A_{-2,3}^k \ket{1}.
\end{align*}

Next, we consider $J_{-k}^{(\bal;\bbe)} \ket{\lambda}$, where the resulting sum is (for generic $\bbe$) an infinite sum.
First, the coefficient of $\ket{\lambda}$ will be $\alpha_3^k + \alpha_1^k - \alpha_0^k - \alpha_{-2}^k$.
The motion of the first three particles yield
\begin{gather*}
\sum_{i=0}^{\infty} A_{4+i,3}^{-k} \ket{4+i,2,1},
\qquad\qquad
A_{21}^{-k} \ket{3,3,1} - \sum_{i=0}^{\infty} A_{4+i,1}^{-k} \ket{4+i,4,1},
\\
A_{0,-1}^{-k} \ket{3,2,2} - A_{2,-1}^{-k} \ket{3,3,3} + \sum_{i=0}^{\infty} A_{4+i,-1}^{-k} \ket{4+i,4,3},
\end{gather*}
respectively.
Finally, the motion of the particle starting at position $-j$, for $j \geqslant 3$ contributes
\begin{align*}
& (-1)^{j-3} A_{-2,-j}^{-k} \ket{3,2,1,1,1^{j-3}} - (-1)^{j-3} A_{0,-j}^{-k} \ket{3,2,2,2,1^{j-3}} + (-1)^{j-3} A_{2,-j}^{-k} \ket{3,3,3,2,1^{j-3}}
\\ & \hspace{50pt} - \sum_{i=0}^{\infty} (-1)^{j-3} A_{4+i,-j}^{-k} \ket{4+i,4,3,2,1^{j-3}}.
\end{align*}
Note that while this is an infinite sum, each $\ket{\lambda}$ appears at most once, and hence the coefficient of $\ket{\lambda}$ is well-defined.
\end{example}

\begin{theorem}[Double factorial Murnaghan--Nakayama rule] \label{thm:factorialmn}
Let $k>0$.
\begin{subequations}
\begin{equation}
p_k s_\lambda(\pp \dv \bal;\bbe) = p_k\bigl(\bal_{S(\lambda)}/(-\bal_{T(\lambda)})\bigr) s_{\lambda}(\pp \dv \bal;\bbe) + \sum_{\nu \in \mcP} (-1)^{\htf(\nu/\lambda)-1} A_{j,i}^{-k} s_\nu(\pp \dv \bal;\bbe),
\end{equation}
where the sum is over all $\nu$ such that $\nu/\lambda$ is a nonzero ribbon and $[i,j)$ is the content interval of $\nu/\lambda$.
\begin{equation}
k\frac{\partial}{\partial p_k} s_{\lambda}(\pp\dv\bal;\bbe) = p_k\bigl(\bbe_{S(\lambda)}/(-\bbe_{T(\lambda)})\bigr) s_{\lambda}(\pp\dv\bal;\bbe) + \frac{1}{k} \sum_{\mu \in \mcP} (-1)^{\htf(\lambda/\mu)-1} A_{ij}^k  s_\mu(\pp \dv \bal;\bbe),
\end{equation}
where the sum is over all $\mu$ such that $\lambda/\mu$ is a nonzero ribbon and $[i,j)$ is the content interval of $\lambda/\mu$.
\end{subequations}
\end{theorem}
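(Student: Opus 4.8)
The plan is to transport both identities across the deformed boson--fermion correspondence $\Phi^{(\bal;\bbe)}$ of Theorem~\ref{thm:deformed_boson_fermion}. Since $\Phi^{(\bal;\bbe)}(\ket{\lambda}) = s_{\lambda}(\pp\dv\bal;\bbe)$ and $\Phi^{(\bal;\bbe)}$ intertwines $J_{-k}^{(\bal;\bbe)}$ with multiplication by $p_k$ and $J_{k}^{(\bal;\bbe)}$ with $k\tfrac{\partial}{\partial p_k}$, we have
\[
p_k\, s_{\lambda}(\pp\dv\bal;\bbe) = \Phi^{(\bal;\bbe)}\bigl(J_{-k}^{(\bal;\bbe)}\ket{\lambda}\bigr)
\qquad\text{and}\qquad
k\tfrac{\partial}{\partial p_k}\, s_{\lambda}(\pp\dv\bal;\bbe) = \Phi^{(\bal;\bbe)}\bigl(J_{k}^{(\bal;\bbe)}\ket{\lambda}\bigr).
\]
Thus it suffices to expand $J_{\mp k}^{(\bal;\bbe)}\ket{\lambda}$ in the natural basis $\{\ket{\nu}\}_{\nu\in\mcP}$ and then apply $\Phi^{(\bal;\bbe)}$ term by term; this is legitimate even when the expansion is an infinite sum in $\widehat{\fermionfock}$, since each $\ket{\nu}$ occurs with a coefficient extracted by a single finite computation and $\Phi^{(\bal;\bbe)}$ extends to the completion, the identity then holding coefficientwise in $\CC\FPS{\pp}$.

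For the expansion itself I would invoke Proposition~\ref{prop:explicit_current} to write $J_{\mp k}^{(\bal;\bbe)} = \sum_{i,j} A_{ij}^{\mp k}\,\normord{\psi_i\psi_j^*}$ and split off the diagonal $i=j$ part. The diagonal part acts on $\ket{\lambda}$ as a scalar: accounting for the normal ordering as in~\eqref{eq:jkii}--\eqref{eq:jnegkii}, it contributes $p_k\bigl(\bal_{S(\lambda)}/(-\bal_{T(\lambda)})\bigr)\ket{\lambda}$ for $J_{-k}^{(\bal;\bbe)}$ and $p_k\bigl(\bbe_{S(\lambda)}/(-\bbe_{T(\lambda)})\bigr)\ket{\lambda}$ for $J_{k}^{(\bal;\bbe)}$. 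Each off-diagonal term $\normord{\psi_i\psi_j^*}=\psi_i\psi_j^*$ moves a single particle, and Lemma~\ref{lem:ribbon} pins down exactly which basis vectors appear and with what coefficient: $J_{-k}^{(\bal;\bbe)}\ket{\lambda}$ picks up $(-1)^{\htf(\nu/\lambda)-1}A_{ji}^{-k}\ket{\nu}$ for each $\nu$ with $\nu/\lambda$ a nonzero ribbon of content interval $[i,j)$, while $J_{k}^{(\bal;\bbe)}\ket{\lambda}$ picks up $(-1)^{\htf(\lambda/\mu)-1}A_{ij}^{k}\ket{\mu}$ for each $\mu$ with $\lambda/\mu$ a nonzero ribbon of content interval $[i,j)$ (the sum over $\mu$ is finite since $\mu\subseteq\lambda$, whereas the sum over $\nu$ is generically infinite, cf.\ Example~\ref{ex:jkexample}). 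Assembling the diagonal and off-diagonal pieces and applying $\Phi^{(\bal;\bbe)}$ then yields the two stated identities.

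As a cross-check, and the route suggested by the machinery assembled just before the statement, the first identity can be obtained more concretely from~\eqref{exponential-expansion} and~\eqref{eq:tau-expansion}: multiplying the expansion of $s_{\lambda}(\pp\dv\bal;\bbe)$ by $p_k$ and using Lam's Lemma~\ref{Lam-lemma} to push $J_{-k}^{(\bal;\bbe)}$ through $e^{H_+(\pp|\bal;\bbe)}$ gives $p_k\, s_{\lambda}(\pp\dv\bal;\bbe) = \bra{\varnothing} e^{H_+(\pp|\bal;\bbe)} J_{-k}^{(\bal;\bbe)}\ket{\lambda}$ once one uses ${}_{0}\bra{\varnothing}J_{-k}^{(\bal;\bbe)}=0$ from Corollary~\ref{cor:deformed_current_vacuum}, and one then expands $J_{-k}^{(\bal;\bbe)}\ket{\lambda}$ by Lemma~\ref{lem:ribbon} as above; the second identity follows the same way with $\partial_{p_k}$ in place of $p_k$.

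The genuinely substantive input here is packaged into Lemma~\ref{lem:ribbon} together with the normal-ordering evaluations~\eqref{eq:jkii}--\eqref{eq:jnegkii}; granting these, what remains is bookkeeping. Accordingly, the only points that require care are organizational: correctly matching the content interval $[i,j)$ of each ribbon (and the order of the two subscripts) to the coefficients $A_{ij}^{\pm k}$, keeping track of the height sign $(-1)^{\htf(\cdot)-1}$, and, for the $p_k$ identity, noting that although infinitely many $\nu$ contribute, each contributes exactly once, so the right-hand side is a well-defined element of $\CC\FPS{\pp}$.
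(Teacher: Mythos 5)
Your proposal is correct and follows essentially the same route as the paper: transport the statement through the deformed boson--fermion correspondence (Theorem~\ref{thm:deformed_boson_fermion}) and evaluate the coefficients of $J_{\mp k}^{(\bal;\bbe)}\ket{\lambda}$ via Lemma~\ref{lem:ribbon} together with the diagonal normal-ordering computations~\eqref{eq:jkii}--\eqref{eq:jnegkii}. The paper phrases the basis expansion by inserting the resolution of identity $\sum_{\nu}\ket{\nu}\bra{\nu}$ and computing matrix coefficients $\bra{\nu}J_{\mp k}^{(\bal;\bbe)}\ket{\lambda}$, which is the same computation as your ket-basis expansion.
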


\begin{proof}
We apply Theorem~\ref{thm:deformed_boson_fermion} and Lemma~\ref{lem:ribbon}:
\begin{align*}
p_k s_\lambda(\pp\dv\bal;\bbe) & = \bra{\varnothing} e^{H_+(z|\bal;\bbe)} J_{-k}^{(\bal;\bbe)} \ket{\lambda} = \sum_{\nu \in \mcP} \bra{\varnothing} e^{H_+(z|\bal;\bbe)} \ket{\nu} \bra{\nu} J_{-k}^{(\bal;\bbe)} \ket{\lambda}
\\ & = p_k(\bal_{S(\lambda)}/(-\bal_{T(\lambda)})) s_\lambda(\pp\dv\bal;\bbe) + \sum_{\nu \in \mcP} (-1)^{\htf(\nu/\lambda)-1} A_{j,i}^{-k} s_\nu(\pp\dv\bal;\bbe).
\allowdisplaybreaks \\
k\frac{\partial}{\partial p_k} s_\lambda(\pp\dv\bal;\bbe) & = \bra{\varnothing} e^{H_+(z|\bal;\bbe)} J_k^{(\bal;\bbe)} \ket{\lambda} = \sum_{\mu \in \mcP} \bra{\varnothing} e^{H_+(z|\bal;\bbe)} \ket{\mu} \bra{\mu} J_k^{(\bal;\bbe)} \ket{\lambda}
\\ & = p_k(\bbe_{S(\lambda)}/(-\bbe_{T(\lambda)})) s_{\lambda}(\pp\dv\bal;\bbe) + \sum_{\nu \in \mcP} (-1)^{\htf(\lambda/\mu)-1} A_{ij}^k s_{\mu}(\pp\dv\bal;\bbe),
\end{align*}
as desired.
\end{proof}

\begin{example}
Continuing Example~\ref{ex:jkexample}, applying Theorem~\ref{thm:factorialmn} yields
\begin{align*}
p_3 s_{(3,2,1)} &= (\alpha_3^k + \alpha_1^k - \alpha_0^k - \alpha_{-2}^k) s_{(3,2,1)} - A_{2,-1}^{-k} s_{(3,3,3)}  + A_{21}^{-k} s_{(3,3,1)} + A_{0,-1}^{-k} s_{(3,2,2)}
\\ & \hspace{20pt} + \sum_{i=0}^{\infty} A_{4+i,-1}^{-k} s_{(4+i,4,3)} - A_{4+i,1}^{-k} s_{(4+i,4,1)} + A_{4+i,3}^{-k} s_{(4+i,2,1)}
\\ & \hspace{20pt} + \sum_{j=3}^{\infty} (-1)^{j-3} \bigl( A_{2,-j}^{-k} s_{(3,3,3,2,1^{j-3})} - A_{0,-j}^{-k} s_{(3,2,2,2,1^{j-3})} + A_{-2,-j}^{-k} s_{(3,2,1,1,1^{j-3})} \bigr)
\\ & \hspace{50pt} - \sum_{i=0}^{\infty} (-1)^{j-3} A_{4+i,-j}^{-k} s_{(4+i,4,3,2,1^{j-3})},
\\
k \frac{\partial s_{(3,2,1)}}{\partial p_k} &= (\beta_3^k + \beta_1^k - \beta_0^k - \beta_{-2}^k) s_{(3,2,1)} + A_{-2,-1}^k s_{(3,2)} + A_{01}^k s_{(3,1,1)} + A_{23}^k s_{(2,2,1)}
\\ & \hspace{20pt} - A_{03}^k s_{(1,1,1)} - A_{-2,1}^k s_{(3)} + A_{-2,3}^k s_{(1)}.
\end{align*}
where we have written $s_{\lambda} = s_{\lambda}(\pp\dv\bal;\bbe)$ for simplicity.
\end{example}

\begin{example}
\label{ex:pk_expansion}
Taking Theorem~\ref{thm:factorialmn} with $\lambda = \varnothing$, we obtain
\begin{align*}
p_k & =  \sum_{\lambda \in \mcP} (-1)^{\ell(\lambda)-1} A_{\lambda_1,1-\ell(\lambda)}^k s_{\lambda}(\pp\dv\bal;\bbe)
\\ & = \sum_{a=1}^{\infty} \sum_{i=1}^a (-1)^{a-i} A_{i,a-i}^k s_{i1^{a-i}}(\pp\dv\bal;\bbe),
\end{align*}
where the sum in the first line is over all nonempty hook shapes $\lambda$.
Note that the shapes that we sum over for each $p_k$ are all the same, but the coefficients we obtain depend on $k$ (and are such that the set of which are linearly independent).
On the other hand, as per Remark~\ref{rem:finite_sums_current_beta_zero}, these sums become finite when we set $\bbe = 0$:
\begin{align*}
p_1 & = A_{1,0}^{-1} s_{(1)} = h_0(\alpha_0, \alpha_1) s_{(1)} = s_{(1)}, 
\\
p_2 & = A_{1,0}^{-2} s_{(1)} - A_{1,-1}^{-2} s_{(1,1)} + A_{2,0}^{-2} s_{(2)}
\\ & = h_1(\alpha_0, \alpha_1)  s_{(1)} - h_0(\alpha_{-1}, \alpha_0, \alpha_1) s_{(1,1)} + h_0(\alpha_0, \alpha_1, \alpha_2) s_{(2)}
\\ & = (\alpha_0 + \alpha_1) s_{(1)} - s_{(1,1)} + s_{(2)}.
\end{align*}
\end{example}

Note that Theorem~\ref{thm:factorialmn} reduces to the classical Murnaghan--Nakayama rule when $\bal = \bbe = 0$.
This can be seen from the general definitions, but this also follows by noticing that when $\bbe = 0$ the coefficient $A_{ij}^{\pm k}$ is monic and homogeneous of degree $j - i \mp k \geqslant 0$ (as an element of $\CC[\bal]$).

\subsection{Cauchy identity and duality}

We also have a skew (dual) Cauchy identity by following the classical proof using the boson-fermion correspondence.
However, in order to state it, we first need an analog of Definition~\ref{def:dfSchur}.

\begin{definition}[Dual double factorial Schur functions]
For any $\lambda,\mu \in \ZZ^n$, we define the \defn{skew dual double factorial Schur function} by
\[
\widehat{s}_{\lambda/\mu}(\pp\dv\bal;\bbe) := \bra{\lambda} e^{H_-(\pp|\bal;\bbe)} \ket{\nu}.
\]
The \defn{dual double factorial Schur functions} are when $\mu = \varnothing = (0, \dotsc, 0)$.
\end{definition}

We note that the dual double factorial Schur functions also have the analogous branching rules, containment property, and are a basis for $\CC\FPS{\pp}$.

\begin{theorem}[Skew Cauchy identities]
\label{thm:skew_cauchy}
Let $\mu, \nu \in \mcP$.
We have
\begin{subequations}
\begin{align}
\sum_{\lambda\in\mcP} s_{\lambda/\mu}(\pp\dv\bal;\bbe) \widehat{s}_{\lambda/\nu}(\pp'\dv\bal;\bbe) & = e^{\xi(\pp; \pp')} \sum_{\lambda \in \mcP} \widehat{s}_{\mu/\lambda}(\pp'\dv\bal;\bbe) s_{\nu/\lambda}(\pp\dv\bal;\bbe), \label{eq:cauchy}
\\
\sum_{\lambda \in \mcP} s_{\lambda/\mu}(\pp\dv\bal;\bbe) \widehat{s}_{\lambda/\nu}(-\pp\dv\bal;\bbe) & = e^{-\xi(\pp; \pp')} \sum_{\lambda \in \mcP} \widehat{s}_{\mu/\lambda}(-\pp'\dv\bal;\bbe) s_{\nu/\lambda}(\pp\dv\bal;\bbe). \label{eq:dual_cauchy}
\end{align}
\end{subequations}
\end{theorem}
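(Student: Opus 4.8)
The plan is to evaluate the mixed matrix coefficient $\bra{\mu} e^{H_+(\pp|\bal;\bbe)} e^{H_-(\pp'|\bal;\bbe)} \ket{\nu}$ in two different ways and compare, exactly paralleling the classical vertex-operator proof of the Cauchy identity. Throughout, $\mu,\nu\in\mcP$, all three partitions live in charge $0$, and the deformed Hamiltonians preserve charge by Theorem~\ref{thm:heisenberg_relations}, so everything takes place inside $\fermionfock_0$ and its completion.

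For the first evaluation I would insert the resolution of the identity $\sum_{\lambda\in\mcP}\ket{\lambda}\,{}_0\bra{\lambda}$ on $\fermionfock_0$ between the two exponentials. Writing $e^{H_-(\pp'|\bal;\bbe)}\ket{\nu}=\sum_{\lambda\in\mcP}\widehat{s}_{\lambda/\nu}(\pp'\dv\bal;\bbe)\ket{\lambda}$ (an element of $\widehat{\fermionfock}_0$, by the definition of the skew dual double factorial Schur functions), applying $e^{H_+(\pp|\bal;\bbe)}$, and pairing with $\bra{\mu}$ via Definition~\ref{def:dfSchur} gives
\[
\bra{\mu} e^{H_+(\pp|\bal;\bbe)} e^{H_-(\pp'|\bal;\bbe)} \ket{\nu} = \sum_{\lambda\in\mcP} s_{\lambda/\mu}(\pp\dv\bal;\bbe)\,\widehat{s}_{\lambda/\nu}(\pp'\dv\bal;\bbe),
\]
the left-hand side of~\eqref{eq:cauchy}. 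For the second evaluation I would first move $e^{H_+(\pp|\bal;\bbe)}$ past $e^{H_-(\pp'|\bal;\bbe)}$ using the deformed half-vertex commutator~\eqref{eq:deformed_half_vertex_commutator}, which contributes the scalar $e^{\xi(\pp;\pp')}$, and then insert the resolution of the identity once more; since $e^{H_+(\pp|\bal;\bbe)}\ket{\nu}=\sum_{\lambda\subseteq\nu}s_{\nu/\lambda}(\pp\dv\bal;\bbe)\ket{\lambda}$ is a \emph{finite} sum (by the containment property, Proposition~\ref{prop:containment}), this yields $e^{\xi(\pp;\pp')}\sum_{\lambda\in\mcP}\widehat{s}_{\mu/\lambda}(\pp'\dv\bal;\bbe)\,s_{\nu/\lambda}(\pp\dv\bal;\bbe)$, the right-hand side of~\eqref{eq:cauchy}. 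Equating proves~\eqref{eq:cauchy}. The dual identity~\eqref{eq:dual_cauchy} then follows by the substitution $\pp'\mapsto-\pp'$: one has $e^{H_-(-\pp'|\bal;\bbe)}=e^{-H_-(\pp'|\bal;\bbe)}$, so $\bra{\lambda}e^{-H_-(\pp'|\bal;\bbe)}\ket{\nu}=\widehat{s}_{\lambda/\nu}(-\pp'\dv\bal;\bbe)$, while $\xi(\pp;-\pp')=-\xi(\pp;\pp')$, so~\eqref{eq:dual_cauchy} is literally~\eqref{eq:cauchy} after this substitution (correcting the evident typographical slip in the statement, where $-\pp$ should read $-\pp'$).

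The main obstacle is the only non-formal point: justifying that the infinite sum over $\lambda$ on the left-hand side converges and that the resolution of the identity may legitimately be inserted there (note that $e^{H_-(\pp'|\bal;\bbe)}$ maps $\fermionfock_0$ into its completion $\widehat{\fermionfock}_0$, and $e^{H_+(\pp|\bal;\bbe)}$ must then be applied to an infinite linear combination of basis vectors). I would handle this with a grading argument: by the containment property $s_{\lambda/\mu}$ and $\widehat{s}_{\lambda/\nu}$ vanish unless $\mu,\nu\subseteq\lambda$, and using the explicit current action (Proposition~\ref{prop:explicit_current}, Lemma~\ref{lem:ribbon}, and the degree bounds on $A_{ij}^k$ from~\eqref{eq:adef}) one checks that $s_{\lambda/\mu}(\pp\dv\bal;\bbe)$ has minimal total degree $|\lambda|-|\mu|$ when $p_k$ is assigned degree $k$ and each $\alpha_i,\beta_i$ degree $1$ (and similarly for $\widehat{s}$). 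Hence each summand on the left has total degree at least $2|\lambda|-|\mu|-|\nu|$, so for every fixed total degree only finitely many $\lambda$ contribute, and the left-hand side is a well-defined element of the graded completion of $\CC[\pp,\pp',\bal,\bbe]$ (equivalently, one may regard $\bal,\bbe$ as formal variables); with this, the operator manipulations above are termwise valid. The right-hand side is already a finite sum over $\lambda\subseteq\mu$ and $\lambda\subseteq\nu$, and $e^{\xi(\pp;\pp')}$ is a genuine formal power series, so no further care is needed there. As the authors indicate, this is the standard pattern of Fock space manipulations, so once convergence is in place the proof is purely mechanical.
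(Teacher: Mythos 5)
Your proposal is correct and follows essentially the same route as the paper: insert the resolution of the identity into $\bra{\mu} e^{H_+(\pp|\bal;\bbe)} e^{H_-(\pp'|\bal;\bbe)} \ket{\nu}$, then swap the half vertex operators via~\eqref{eq:deformed_half_vertex_commutator} to produce the $e^{\xi(\pp;\pp')}$ factor, with the dual identity obtained by replacing $e^{H_-(\pp'|\bal;\bbe)}$ with $e^{-H_-(\pp'|\bal;\bbe)}$. Your added grading/convergence remarks and the observation about the typographical slip ($-\pp$ versus $-\pp'$) are fine supplements but do not change the argument.
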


\begin{proof}
As stated, following the classical proof for Schur functions using the boson-fermion correspondence yields the skew Cauchy identity~\eqref{eq:cauchy}:
\begin{align*}
\sum_{\lambda \in \mcP} s_{\lambda/\mu}(\pp\dv\bal;\bbe) \widehat{s}_{\lambda/\nu}(\pp'\dv\bal;\bbe) & = \sum_{\lambda \in \mcP} \bra{\mu} e^{H_+(\pp|\bal;\bbe)} \ket{\lambda} \cdot \bra{\lambda} e^{H_-(\pp'|\bal;\bbe)} \ket{\nu}
\\ & = \bra{\mu} e^{H_+(\pp|\bal;\bbe)} e^{H_-(\pp'|\bal;\bbe)} \ket{\nu}
\\ & = e^{\xi(\pp;\pp')} \bra{\mu} e^{H_-(\pp'|\bal;\bbe)} e^{H_+(\pp|\bal;\bbe)} \ket{\nu}
\\ & = e^{\xi(\pp;\pp')} \sum_{\lambda \in \mcP} \bra{\mu} e^{H_-(\pp'|\bal;\bbe)} \ket{\lambda} \cdot \bra{\lambda} e^{H_+(\pp|\bal;\bbe)} \ket{\nu}
\\ & = e^{\xi(\pp;\pp')} \sum_{\lambda \in \mcP} \widehat{s}_{\mu/\lambda}(\pp'\dv\bal;\bbe) s_{\nu/\lambda}(\pp\dv\bal;\bbe)
\end{align*}
by using our vacuum expectation definitions and their dual versions.

For the skew dual Cauchy identity~\eqref{eq:dual_cauchy}, we replace $e^{H_-(\pp'|\bal;\bbe)}$ with $e^{-H_-(\pp'|\bal;\bbe)}$.
\end{proof}

By following~\cite[Thm.~4.8]{IMS24} and~\cite[Sec.~7]{Yel19} (which is based off~\cite{Warnaar13}), we can obtain skew-Pieri-type formulas for the double factorial Schur functions.
Here, we give the generating functions that lead to such a rule, but to produce the skew Pieri rules explicitly, we need explicit formulas under the specialization $p_k = p_k(z/(-w)) = z^k - (-w)^k$.
As such defer this until Section~\ref{sec:lattice_models}; in particular, see Corollary~\ref{cor:skew_pieri}.

\begin{theorem}[Skew-Pieri generating formulas]
We have
\begin{subequations}
\label{eq:skew_pieri_vertex}
\begin{align}
\sum_{\lambda,\eta} \widehat{s}_{\nu/\eta}(-\pp\dv\bal;\bbe) s_{\lambda/\eta}(\pp'\dv\bal;\bbe) \widehat{s}_{\lambda/\mu}(\pp\dv\bal;\bbe)  & = e^{\xi(\pp;\pp')} s_{\mu/\nu}(\pp'\dv\bal;\bbe),
\\
\sum_{\lambda,\eta} \widehat{s}_{\nu/\eta}(\pp\dv\bal;\bbe) s_{\lambda/\eta}(\pp'\dv\bal;\bbe) \widehat{s}_{\lambda/\mu}(-\pp\dv\bal;\bbe) & = e^{-\xi(\pp;\pp')} s_{\mu/\nu}(\pp'\dv\bal;\bbe),
\end{align}
\end{subequations}
as well as dual versions replacing $s_{\lambda/\mu} \leftrightarrow \widehat{s}_{\lambda/\mu}$.
\end{theorem}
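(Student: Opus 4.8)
The plan is to mimic the operator proof of the skew Cauchy identities (Theorem~\ref{thm:skew_cauchy}): expand each factor via its defining vacuum expectation, collapse the internal sums with resolutions of the identity on $\fermionfock_0$, and then move the half vertex operators past one another using~\eqref{eq:deformed_half_vertex_commutator}.

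Concretely, for the first identity I would write $\widehat{s}_{\nu/\eta}(-\pp\dv\bal;\bbe) = \bra{\nu} e^{-H_-(\pp|\bal;\bbe)} \ket{\eta}$, $s_{\lambda/\eta}(\pp'\dv\bal;\bbe) = \bra{\eta} e^{H_+(\pp'|\bal;\bbe)} \ket{\lambda}$, and $\widehat{s}_{\lambda/\mu}(\pp\dv\bal;\bbe) = \bra{\lambda} e^{H_-(\pp|\bal;\bbe)} \ket{\mu}$, using that $H_{\pm}(-\pp|\bal;\bbe) = -H_{\pm}(\pp|\bal;\bbe)$ by linearity in $\pp$. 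These are scalars, so after reordering them and summing over $\eta$ and then $\lambda$ — both running over all partitions, with $\sum_{\lambda\in\mcP}\ket{\lambda}\bra{\lambda}$ acting as the identity on $\fermionfock_0$ exactly as in Theorem~\ref{thm:skew_cauchy} — the left-hand side collapses to $\bra{\nu} e^{-H_-(\pp|\bal;\bbe)} e^{H_+(\pp'|\bal;\bbe)} e^{H_-(\pp|\bal;\bbe)} \ket{\mu}$. Applying~\eqref{eq:deformed_half_vertex_commutator} to pull $e^{H_+(\pp'|\bal;\bbe)}$ through $e^{H_-(\pp|\bal;\bbe)}$ produces the scalar $e^{\xi(\pp';\pp)} = e^{\xi(\pp;\pp')}$ (symmetry of $\xi$ from~\eqref{eq:xi_function_def}) and leaves $e^{-H_-(\pp|\bal;\bbe)} e^{H_-(\pp|\bal;\bbe)} = 1$, so what remains is $e^{\xi(\pp;\pp')}\bra{\nu} e^{H_+(\pp'|\bal;\bbe)} \ket{\mu} = e^{\xi(\pp;\pp')} s_{\mu/\nu}(\pp'\dv\bal;\bbe)$, the desired right-hand side. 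The second identity is identical except that $e^{H_+(\pp'|\bal;\bbe)}$ is pulled through $e^{-H_-(\pp|\bal;\bbe)}$, contributing $e^{-\xi(\pp;\pp')}$ instead. For the dual versions (interchanging $s$ and $\widehat{s}$) I would run the same argument with $H_+$ and $H_-$ swapped: the left-hand side collapses to $\bra{\mu} e^{H_+(\pp|\bal;\bbe)} e^{\pm H_-(\pp'|\bal;\bbe)} e^{\mp H_+(\pp|\bal;\bbe)} \ket{\nu}$, and~\eqref{eq:deformed_half_vertex_commutator} moves $e^{\pm H_-(\pp'|\bal;\bbe)}$ past $e^{\mp H_+(\pp|\bal;\bbe)}$, producing $e^{\pm\xi(\pp;\pp')}$ and the leftover $e^{H_+(\pp|\bal;\bbe)} e^{-H_+(\pp|\bal;\bbe)} = 1$, which gives $e^{\pm\xi(\pp;\pp')}\widehat{s}_{\mu/\nu}(\pp'\dv\bal;\bbe)$.

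Once the deformed boson-fermion machinery of Section~\ref{sec:deformed_correspondence} is in place this is essentially bookkeeping; the only delicate point is the same as in Theorem~\ref{thm:skew_cauchy}, namely that the sums over $\eta$ and $\lambda$ must be read as identities of formal power series in $\pp$ and $\pp'$, so that inserting the resolution of the identity on $\fermionfock_0$ and reordering the double sum is legitimate — for fixed $\mu,\nu$ the containment property (Proposition~\ref{prop:containment}) ensures that only finitely many terms contribute to each coefficient. I expect the main place to slip is the sign bookkeeping, tracking the $\pm$ in $\xi$ coming from~\eqref{eq:deformed_half_vertex_commutator} through the four variants, rather than anything structural; in particular no analytic hypothesis is needed, since the argument uses only~\eqref{eq:deformed_half_vertex_commutator}.
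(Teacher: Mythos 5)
Your argument is exactly the paper's proof: insert the resolution of the identity $\sum_{\lambda\in\mcP}\ket{\lambda}\cdot\bra{\lambda}$ as in Theorem~\ref{thm:skew_cauchy} and move $e^{H_+(\pp'|\bal;\bbe)}$ past $e^{\pm H_-(\pp|\bal;\bbe)}$ via~\eqref{eq:deformed_half_vertex_commutator}, and your treatment of the two displayed identities is correct. The only blemish is the compact $\pm$ shorthand for the dual versions, where the negation must sit on an outer conjugating factor --- the lower-sign case collapses to $\bra{\mu} e^{-H_+(\pp|\bal;\bbe)} e^{H_-(\pp'|\bal;\bbe)} e^{H_+(\pp|\bal;\bbe)} \ket{\nu}$, not on the middle $e^{H_-(\pp'|\bal;\bbe)}$ --- but this is a notational slip rather than a gap.
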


\begin{proof}
Both formulas are proven similarly using the identities
\[
e^{\mp H_-(\pp|\bal;\bbe)} e^{H_+(\pp'|\bal;\bbe)} e^{\pm H_-(\pp|\bal;\bbe)} = e^{\pm \xi(\pp;\pp')} e^{H_+(\pp'|\bal;\bbe)}
\]
from~\eqref{eq:deformed_half_vertex_commutator} and inserting the identity operator $\sum_{\lambda \in \mcP} \ket{\lambda} \cdot \bra{\lambda}$ as in the proof of Theorem~\ref{thm:skew_cauchy}. (See also the proof of~\cite[Thm.~4.8]{IMS24}.)
\end{proof}

In order to understand how our Cauchy identity relates to the Cauchy-type formula of~\cite[Eq.~(22)]{MiyauraMukaihiraFactorial}, we need to describe the dual double factorial Schur functions in terms of the usual double factorial Schur functions.
We first note that we have the generating series
\begin{equation}
\label{eq:dual_fermion_dual_schur}
\begin{aligned}
\sum_{\lambda \in \ZZ^n} \widehat{s}_{\lambda}(\pp\dv\bal;\bbe) & \prod_{i=1}^n (1-\alpha_{\lambda_i-i+1} \beta_{\lambda_i-i+1}) \frac{(w_i;\bal)^{\lambda_i-i}}{(w_i|\bbe)^{\lambda_i-i+1}}
\\ & \hspace{-20pt} = \prod_{i=1}^n e^{\xi(\pp;w_i^{-1})} (w_i;\bal)^{-n} (1 - \alpha_{1-i} \beta_{1-i}) \prod_{i < j} (w_j - w_i) (1 - \alpha_{1-i} \beta_{1-j}) e^{\Lambda_{-n}(\pp|\bal)}
\end{aligned}
\end{equation}
computed analogously to Lemma~\ref{lemma:fermion_expectation} and~\eqref{eq:MMgen_recovery} by considering the dual version of the matrix coefficient ${}_{-n} \bra{\varnothing} \psi^*(w_n|\bal;\bbe) \cdots \psi^*(w_1|\bal;\bbe) e^{H_-(\pp|\bal;\bbe)} \ket{\varnothing}$.
Similarly, we have the analogs of~\eqref{eq:dual_fermion_schur_expansion} and~\eqref{eq:hook_generating}:
\begin{gather}
\label{eq:fermion_dual_schur}
\sum_{\lambda \in \ZZ^n} (-1)^{\abs{\lambda}} \widehat{s}_{\lambda'}(\pp\dv\bal;\bbe) z_i \frac{(z_i|\bbe)^{i-\lambda_i-1}}{(z_i;\bal)^{i-\lambda_i}}
 = \prod_{i=1}^n \frac{e^{-\xi(\pp;z_i^{-1})}}{(z_i;\bal)^n} z_i \prod_{i < j} (z_j - z_i) (1 - \alpha_i \beta_j) e^{\Lambda_n(\pp|\bal)},
\\
\label{eq:dual_hook_generating}
1 + (w - z) \sum_{a, b=0}^{\infty} (-1)^b \widehat{s}_{(a|b)}(\pp\dv\bal;\bbe) (1 - \alpha_{1+a} \beta_{1+a}) \frac{(w;\bal)^{a} (z|\bbe)^{-b-1}}{(w|\bbe)^{a+1} (z;\bal)^{-b}}
 =  e^{\xi(\pp;w^{-1})} e^{-\xi(\pp;z^{-1})},
\end{gather}
respectively, which are constructed from ${}_n \bra{\varnothing} \psi(z_n|\bal;\bbe) \cdots \psi(z_1|\bal;\bbe) e^{H_-(\pp|\bal;\bbe)} \ket{\varnothing}$ and from $\bra{\varnothing} \psi^*(w|\bal;\bbe) \psi(z|\bal;\bbe) e^{H_-(\pp|\bal;\bbe)} \ket{\varnothing}$, respectively.
This leads to analogous generating functions for the corresponding analogs of the homogeneous and elementary symmetric functions, denoted $\widehat{h}_k(\pp\dv\bal;\bbe) := \widehat{s}_k(\pp\dv\bal;\bbe)$ and $\widehat{e}_k(\pp\dv\bal;\bbe) := \widehat{s}_{1^k}(\pp\dv\bal;\bbe)$ respectively.

These generating series are similar to~\eqref{eq:eh_gen_series} but differ slightly in the factors of the form $\frac{1-\alpha_i \beta_i}{1-\alpha_j \beta_j}$; we leave the precise statement for the interested reader.
More specifically, these generating series and~\eqref{eq:omega_eh} imply that
\begin{subequations}
\begin{align}
\widehat{h}_k(\pp\dv\bal;\bbe) & = \frac{1 - \alpha_k \beta_k}{1 - \alpha_0 \beta_0} h_k(\pp\dv\bbe;\bal) = \omega e_k(\pp\dv{-\iota_{\bbe}\bbe};{-\iota_{\bal}\bal}),
\\
\widehat{e}_k(\pp\dv\bal;\bbe) & =  \frac{1 - \alpha_{1-k} \beta_{1-k}}{1 - \alpha_1 \beta_1} e_k(\pp\dv\bbe;\bal) = \omega h_k(\pp\dv{-\iota_{\bbe}\bbe};{-\iota_{\bal}\bal});
\end{align}
\end{subequations}
alternatively, compare~\eqref{eq:dual_hook_generating} with applying $\omega$ to~\eqref{eq:hook_generating} and swapping $\bal \leftrightarrow \bbe$.
We also have Jacobi--Trudi formulas for $\widehat{s}_{\lambda/\mu}(\pp\dv\bal;\bbe)$ analogous to Theorem~\ref{thm:jacobi_trudi} by Wick's theorem.
By comparing the result with Theorem~\ref{thm:jacobi_trudi} and applying Corollary~\ref{cor:involution_skew}, we obtain the following duality relationship.

\begin{theorem}
\label{thm:dual_schur_identity}
We have
\[
\widehat{s}_{\lambda/\mu}(\pp\dv\bal;\bbe) = \omega s_{\lambda'/\mu'}(\pp\dv{-\iota_{\bbe}\bbe};{-\iota_{\bal}\bal})
= \prod_{i=1}^m \frac{1 - \alpha_{i-\lambda'_i} \beta_{i-\lambda'_i}}{1 - \alpha_{i-\mu'_i} \beta_{i-\mu'_i}} s_{\lambda/\mu}(\pp\dv{\bbe},{\bal}).
\]
\end{theorem}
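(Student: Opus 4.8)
The plan is to establish the two equalities separately. The second one,
\[
\omega s_{\lambda'/\mu'}(\pp\dv{-\iota_{\bbe}\bbe};{-\iota_{\bal}\bal}) = \prod_{i=1}^m \frac{1 - \alpha_{i-\lambda'_i}\beta_{i-\lambda'_i}}{1 - \alpha_{i-\mu'_i}\beta_{i-\mu'_i}}\, s_{\lambda/\mu}(\pp\dv\bbe;\bal),
\]
is a direct application of Corollary~\ref{cor:involution_skew}: apply that corollary with the parameter sequences $\bal,\bbe$ replaced by $-\iota_{\bbe}\bbe$ and $-\iota_{\bal}\bal$, and with the pair $(\lambda',\mu')$ in place of $(\lambda,\mu)$. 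Since $(\lambda')' = \lambda$ and $(\mu')' = \mu$, and one checks straight from the definition of $\iota$ that $-\iota_{-\iota_{\bbe}\bbe}(-\iota_{\bbe}\bbe) = \bbe$, $-\iota_{-\iota_{\bal}\bal}(-\iota_{\bal}\bal) = \bal$, and $(-\iota_{\bbe}\bbe)_{\lambda'_i - i + 1}(-\iota_{\bal}\bal)_{\lambda'_i - i + 1} = \alpha_{i - \lambda'_i}\beta_{i - \lambda'_i}$ (and likewise with $\mu'$), the output of Corollary~\ref{cor:involution_skew} is precisely the displayed identity.

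For the first equality, $\widehat{s}_{\lambda/\mu}(\pp\dv\bal;\bbe) = \omega s_{\lambda'/\mu'}(\pp\dv{-\iota_{\bbe}\bbe};{-\iota_{\bal}\bal})$, I would first derive a Jacobi--Trudi expansion for $\widehat{s}_{\lambda/\mu}$ mirroring the proof of Theorem~\ref{thm:jacobi_trudi}. Writing $\widehat{s}_{\lambda/\mu}(\pp\dv\bal;\bbe) = \bra{\lambda} e^{H_-(\pp|\bal;\bbe)} \ket{\mu}$ and realizing $\ket{\mu}$ and $\bra{\lambda}$ from $\ket{\varnothing}_{-n}$ and ${}_{-n}\bra{\varnothing}$ via creation and annihilation operators as in~\eqref{eq:ket-from-vacuum}, Wick's theorem (valid since $e^{H_-(\pp|\bal;\bbe)}$ is group-like, exactly as in Theorem~\ref{thm:jacobi_trudi}) reduces the computation to the single matrix coefficient ${}_{-n}\bra{\varnothing}\psi_i\, e^{H_-(\pp|\bal;\bbe)}\,\psi_j^*\ket{\varnothing}_{-n}$. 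I would evaluate this in closed form from the commutation relations~\eqref{eq:H_field_comm}, the identity ${}_{-n}\bra{\varnothing} e^{H_-(\pp|\bal;\bbe)} = e^{\Lambda_{-n}(\pp|\bal)}\,{}_{-n}\bra{\varnothing}$ of Corollary~\ref{cor:deformed_current_vacuum}, and the one-row specialization of the generating series~\eqref{eq:dual_hook_generating}; the result is $e^{\Lambda_{-n}(\pp|\bal)}$ times a translate of $\widehat{h}_{i-j}$ in suitably shifted parameters together with the appropriate $(1-\alpha_\bullet\beta_\bullet)$ normalization, giving a determinant formula for $\widehat{s}_{\lambda/\mu}(\pp\dv\bal;\bbe)$ built from the $\widehat{h}_k$.

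Next I would substitute the single-row duality $\widehat{h}_k(\pp\dv\bal;\bbe) = \omega e_k(\pp\dv{-\iota_{\bbe}\bbe};{-\iota_{\bal}\bal})$ recorded in the excerpt (in the shifted parameters appearing in each column) and pull $\omega$ out of the determinant; since $\omega$ acts only on $\pp$ and~\eqref{eq;omega_xi} governs its interaction with the $e^{\xi}$ and $e^{\Lambda}$ prefactors, what emerges is $\omega$ applied to exactly the dual (N\"agelsbach--Kostka) Jacobi--Trudi determinant of Theorem~\ref{thm:jacobi_trudi} for $s_{\lambda'/\mu'}(\pp\dv{-\iota_{\bbe}\bbe};{-\iota_{\bal}\bal})$ — the passage from the $h$-indexing to the $e$-indexing is precisely the conjugation of shapes $\lambda/\mu \mapsto \lambda'/\mu'$, and the parameter shifts on the two sides match. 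This yields $\widehat{s}_{\lambda/\mu}(\pp\dv\bal;\bbe) = \omega s_{\lambda'/\mu'}(\pp\dv{-\iota_{\bbe}\bbe};{-\iota_{\bal}\bal})$, and together with the second equality above the theorem is proved.

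The main obstacle is the bookkeeping in the third paragraph: obtaining ${}_{-n}\bra{\varnothing}\psi_i e^{H_-(\pp|\bal;\bbe)}\psi_j^*\ket{\varnothing}_{-n}$ with the correct parameter shifts $\sigma_{\bal}^{\bullet},\sigma_{\bbe}^{\bullet}$ and the correct $(1-\alpha_\bullet\beta_\bullet)$ prefactor, and then checking that after substituting the $\widehat h_k$ duality and conjugating shapes these prefactors and shifts line up column-by-column and row-by-row so that the determinant becomes exactly the dual Jacobi--Trudi determinant of Theorem~\ref{thm:jacobi_trudi} with $\bal,\bbe$ negated, reversed, and swapped; the $\omega$-extraction and the invocation of Corollary~\ref{cor:involution_skew} are then purely formal.
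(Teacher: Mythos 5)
Your proposal is correct and follows essentially the same route as the paper: a Jacobi--Trudi expansion for $\widehat{s}_{\lambda/\mu}$ via Wick's theorem, substitution of the single-row duality $\widehat{h}_k(\pp\dv\bal;\bbe) = \omega e_k(\pp\dv{-\iota_{\bbe}\bbe};{-\iota_{\bal}\bal})$ to match the dual (N\"agelsbach--Kostka) determinant of Theorem~\ref{thm:jacobi_trudi} for the conjugate shape, and then Corollary~\ref{cor:involution_skew} for the second equality. The only blemish is notational: in the Wick reduction for $\bra{\lambda} e^{H_-(\pp|\bal;\bbe)} \ket{\mu}$ built from $\ket{\varnothing}_{-n}$, the entries are ${}_{-n}\bra{\varnothing}\psi^*_{\lambda_i-i+1} e^{H_-(\pp|\bal;\bbe)} \psi_{\mu_j-j+1}\ket{\varnothing}_{-n}$ (stars on the bra side, not the ket side as you wrote), which does not affect the argument.
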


Thus, the dual double factorial Schur functions $\widehat{s}_{\lambda/\mu}(\pp\dv\bal;\bbe)$ as essentially the double factorial Schur functions up to a simple involution of the parameters $\bal, \bbe$ and a simple overall factor involving $\bal, \bbe$.
In the straight shape case $\mu=\varnothing$, this can be seen by comparing the generating formulas~\eqref{eq:MMgen_recovery} with~\eqref{eq:fermion_dual_schur}.
We remark that by using~\eqref{eq:duality_deformed_fields} (and hence Corollary~\ref{cor:adjoint_current}), we could alternatively show Theorem~\ref{thm:dual_schur_identity}.

If we consider the specialization $p_k = p_k(\xx / \bbe)$ and $p_k' = p_k(\yy / \bal)$, then~\eqref{eq:cauchy} with Theorem~\ref{thm:dual_schur_identity} and~\eqref{eq:half_vertex_vacuum_actions} recovers the projective limit (see, \textit{e.g.},~\cite{MolevFactorialSupersymmetric,Molev09} for a precise description) of~\cite[Eq.~(22)]{MiyauraMukaihiraFactorial}.
Likewise, we could restrict to a finite number of variables.
We can also obtain (the projective limit of)~\cite[Eq.~(23)]{MiyauraMukaihiraFactorial} from~\eqref{eq:dual_cauchy} and $-p_k(\yy/\bal) = p_k((-\bal) / (-\yy)) = \omega p_k((-\yy) / (-\bal))$.
See also~\cite[Thm.~5.7]{NaprienkoFFS}.

\subsection{Bosonic construction}

Our next goal is to describe $s_{\lambda}(\pp\dv\bal;\bbe)$ purely using the bosonic Fock space.
We note that all results in this subsection will hold analogously for $\widehat{s}_{\lambda}(\pp\dv\bal;\bbe)$ by using the dual space (or the adjoint fermion fields/vertex operators/etc.).
To achieve this goal, we expand our bosonic vertex operators in terms of shifted modes:
\[
X(z|\bal;\bbe) = \sum_{i \in \ZZ} X_i \cdot \frac{z_i(z_i|\bbe)^{i-1}}{(z_i;\bal)^i},
\qquad\qquad
X^*(w|\bal;\bbe) = \sum_{j \in \ZZ} X_i^* \cdot (1 - \alpha_j \beta_j) \frac{(w_j;\bal)^{j-1}}{(w_j|\bbe)^j}.
\]
Next, we need to compute
\[
\Phi^{(\bal;\bbe)}(\ket{\varnothing}_{\ell}) = \sum_{m \in \ZZ} {}_{(m)} \bra{\varnothing} e^{H_+(\pp|\bal;\bbe)} \ket{\varnothing}_{\ell} \cdot \svar^m = {}_{(\ell)} \braket{\varnothing}{\varnothing}_{\ell} \cdot e^{\Lambda_{\ell}(\pp|\bbe)} \svar^{\ell}.
\]
Hence, we are left to evaluate ${}_{(\ell)}\braket{\varnothing}{\varnothing}_{\ell}$.

\begin{proposition}
\label{prop:shifted_vacuum_pairing}
We have
\[
{}_{(\ell)}\braket{\varnothing}{\varnothing}_{\ell} = \prod_{1 \leqslant i < j \leqslant -\ell} (1 - \alpha_{1-i}\beta_{1-j}) \prod_{1 \leqslant i < j \leqslant \ell} (1 - \alpha_i \beta_j).
\]
\end{proposition}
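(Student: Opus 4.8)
The plan is to compute $\Phi^{(\bal;\bbe)}(\ket{\varnothing}_\ell)$ directly and then read off ${}_{(\ell)}\braket{\varnothing}{\varnothing}_\ell$ from the identity $\Phi^{(\bal;\bbe)}(\ket{\varnothing}_\ell) = {}_{(\ell)}\braket{\varnothing}{\varnothing}_\ell\, e^{\Lambda_\ell(\pp|\bbe)}\svar^\ell$ established just above the statement. Since $e^{\Lambda_\ell(\pp|\bbe)}$ has constant term $1$ as a series in $\pp$, it suffices to evaluate $\Phi^{(\bal;\bbe)}(\ket{\varnothing}_\ell)\big|_{\pp=0} = {}_{(\ell)}\braket{\varnothing}{\varnothing}_\ell\,\svar^\ell$. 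By the $\bal\leftrightarrow\bbe$ / creation--annihilation symmetry it is enough to treat $\ell\geq 0$; the case $\ell\leq 0$ is handled identically with $\psi$, $\psi_i$, $X$ replaced by $\psi^{*}$, $\psi^{*}_i$, $X^{*}$ and $\ket{\varnothing}_\ell$ written via annihilation operators as in \eqref{eq:ket-from-vacuum}.

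For $\ell\geq 0$ we use $\ket{\varnothing}_\ell = \psi_\ell\psi_{\ell-1}\cdots\psi_1\ket{\varnothing}$ from \eqref{eq:ket-from-vacuum} and the orthogonality of Proposition~\ref{prop:orthonormality}, which recovers each ordinary mode from the deformed field,
\[
\psi_i = (1-\alpha_i\beta_i)\oint_{\eta}\frac{(z;\bal)^{i-1}}{(z|\bbe)^{i}}\,\psi(z|\bal;\bbe)\,\frac{dz}{2\pi\ii\, z}.
\]
Applying $\Phi^{(\bal;\bbe)}$ and invoking Theorem~\ref{thm:deformed_boson_fermion} ($\psi(z|\bal;\bbe)\mapsto X(z)$ and $\Phi^{(\bal;\bbe)}(\ket{\varnothing})=1$) together with the standard bosonic vertex operator product
\[
X(z_\ell)\cdots X(z_1)\cdot 1 \;=\; \svar^{\ell}\,\prod_{i=1}^{\ell} z_i\, e^{\xi(\pp;z_i)}\;\prod_{1\leq i<j\leq \ell}(z_j-z_i),
\]
one obtains, after cancelling $\prod_i z_i$ and setting $\pp=0$,
\[
{}_{(\ell)}\braket{\varnothing}{\varnothing}_\ell \;=\; \Biggl(\prod_{i=1}^{\ell}(1-\alpha_i\beta_i)\Biggr)\oint_{\eta}\!\!\cdots\!\!\oint_{\eta}\; \prod_{i=1}^{\ell}\frac{(z_i;\bal)^{i-1}}{(z_i|\bbe)^{i}}\;\prod_{1\leq i<j\leq \ell}(z_j-z_i)\;\prod_{i=1}^{\ell}\frac{dz_i}{2\pi\ii}.
\]

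To finish, I would antisymmetrize the Vandermonde: writing $\prod_{i<j}(z_j-z_i) = \det\bigl[z_j^{\,k-1}\bigr]_{k,j=1}^{\ell}$ and integrating each variable separately turns the multiple integral into $\det[M_{ij}]_{i,j=1}^{\ell}$, where $M_{ij} = \oint_{\eta}\frac{(z;\bal)^{i-1}}{(z|\bbe)^{i}}\,z^{\,j-1}\,\frac{dz}{2\pi\ii}$. This determinant is evaluated by one determinant-preserving column operation: replace the monomial $z^{j-1}$ in column $j$ by the shifted power $(z|\bbe)^{j-1}$, a unitriangular change since $(z|\bbe)^{j-1}=z^{j-1}+(\text{lower order})$. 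For $j>i$ the integrand $\tfrac{(z;\bal)^{i-1}(z|\bbe)^{j-1}}{(z|\bbe)^{i}}$ is a \emph{polynomial} in $z$ and hence has vanishing contour integral, so the transformed matrix is lower triangular; its $i$th diagonal entry is $\oint_{\eta}\frac{(z;\bal)^{i-1}}{z-\beta_i}\frac{dz}{2\pi\ii}=(z;\bal)^{i-1}\big|_{z=\beta_i}=\prod_{k=1}^{i-1}(1-\alpha_k\beta_i)$, since only the pole $z=\beta_i$ lies inside $\eta$. Multiplying the diagonal entries and reinstating the prefactor $\prod_i(1-\alpha_i\beta_i)$ produces the claimed product (and, for $\ell\leq 0$, the analogous product in the parameters $\alpha_{1-i}$, $\beta_{1-j}$).

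The only genuinely delicate point is the evaluation of $\det[M_{ij}]$; everything else is bookkeeping. A correspondence-free alternative would be to compute $\Sigma_{(\bal;\bbe)}^{-\ell}\ket{\varnothing}_\ell$ by induction on $\ell$ using Proposition~\ref{prop:deformed_shift_identities} and \eqref{eq:deform_shift_vacuums}, but this seems to require tracking substantially more intermediate terms, so the vertex operator computation above is the cleaner route.
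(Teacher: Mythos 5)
Your route—pushing $\ket{\varnothing}_{\ell}$ through $\Phi^{(\bal;\bbe)}$, recovering each mode $\psi_i$ from the deformed field by the contour formula of Proposition~\ref{prop:orthonormality}, and evaluating the resulting Vandermonde-type multiple integral as a determinant—is genuinely different from the paper's proof, which never inverts the field expansion: it computes $\bra{\varnothing} e^{H_+(\pp|\bal;\bbe)} \psi(z_1|\bal;\bbe)\cdots\psi(z_{\ell}|\bal;\bbe)\ket{\varnothing}_{-\ell}$ and $\bra{\varnothing} e^{H_+(\pp|\bal;\bbe)} \psi^*(w_1|\bal;\bbe)\cdots\psi^*(w_{\ell}|\bal;\bbe)\ket{\varnothing}_{\ell}$ twice, once directly (Lemmas~\ref{lemma:fermion_expectation} and~\ref{lemma:dual_fermion_expectation}) and once via the fermionic vertex operators of Theorem~\ref{thm:fermion_vertex_op}, where the shift operators accumulate into ${}_{(\pm\ell)}\braket{\varnothing}{\varnothing}_{\pm\ell}$, and reads off the constant by comparison. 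The problem is that your computation does not land on the stated formula. Your determinant evaluation is correct, but it already gives the \emph{entire} right-hand side: the diagonal entries multiply to $\prod_{1\leqslant i<j\leqslant\ell}(1-\alpha_i\beta_j)$. Reinstating the prefactor $\prod_{i=1}^{\ell}(1-\alpha_i\beta_i)$ from the mode inversion therefore yields $\prod_{i=1}^{\ell}(1-\alpha_i\beta_i)\prod_{i<j}(1-\alpha_i\beta_j)$, not the claimed product; already at $\ell=1$ your method outputs $1-\alpha_1\beta_1$, whereas the proposition asserts the value $1$ (an empty product). So the final sentence of your argument is not mere bookkeeping: the method, as set up, contradicts the statement it is supposed to prove.

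The underlying issue is the step you treat as routine: writing $\Phi^{(\bal;\bbe)}(\psi_{\ell}\cdots\psi_1\ket{\varnothing})$ as $\prod_i(1-\alpha_i\beta_i)$ times contour integrals of $X(z_{\ell})\cdots X(z_1)\cdot 1$ against the kernels $\tfrac{(z_i;\bal)^{i-1}}{(z_i|\bbe)^{i}}$ tacitly interchanges the infinite shifted-mode expansion of the vertex operators with the contour integration. On the fermionic side a fixed matrix element involves only finitely many modes, but after applying the correspondence and setting $\pp=0$ you are effectively expanding the plain function $z$ in the system $\bigl\{z(z|\bbe)^{i-1}/(z;\bal)^i\bigr\}_{i\geqslant 1}$; Proposition~\ref{prop:orthonormality} gives only pairwise orthogonality and does not license termwise integration of such a (non-formally-convergent) expansion, and this is precisely where the extra $\prod_i(1-\alpha_i\beta_i)$ is generated. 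The paper avoids the inversion altogether: for positive charge it builds the state from $\psi^*$ operators, so the diagonal factors $(1-\alpha_i\beta_i)$ appear both in the direct Wick-type computation (Lemma~\ref{lemma:dual_fermion_expectation}) and in the vertex-operator computation and cancel in the comparison, leaving exactly the off-diagonal product. (A secondary point: your reduction of $\ell\leqslant 0$ to $\ell\geqslant 0$ "by $\bal\leftrightarrow\bbe$ symmetry" is not literal either—the adjoint of the deformed fields involves the rescaling morphism $\eta$ of~\eqref{eq:duality_deformed_fields}—but this is minor compared with the uncancelled factor above.)
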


\begin{proof}
We prove the claim by redoing the computations in Lemma~\ref{lemma:fermion_expectation} and Lemma~\ref{lemma:dual_fermion_expectation} but using the fermion vertex operator realization from Theorem~\ref{thm:fermion_vertex_op}.
Let $\ell > 0$.
Using Theorem~\ref{thm:fermion_vertex_op}, Proposition~\ref{prop:deformed_current_shift_commute}, Corollary~\ref{cor:deformed_current_vacuum}, and~\eqref{eq:deformed_half_vertex_commutator}, we compute
\begin{align*}
\bra{\varnothing} e^{H_+(\pp|\bal;\bbe)} \psi(z_1|\bal;\bbe) & \cdots \psi(z_{\ell}|\bal;\bbe) \ket{\varnothing}_{-\ell}
\\ & = \prod_{i=1}^{\ell} z_i e^{\xi(\pp;z_i)}(z_i|\bbe)^{-\ell} \prod_{i < j} (z_i - z_j) e^{\Lambda_{-\ell}(\pp|\bbe)} {}_{(-\ell)} \braket{\varnothing}{\varnothing}_{-\ell},
\\
\bra{\varnothing} e^{H_+(\pp|\bal;\bbe)} \psi^*(w_1|\bal;\bbe) & \cdots \psi^*(w_{\ell}|\bal;\bbe) \ket{\varnothing}_{\ell}
\\ & = \prod_{i=1}^{\ell} \frac{e^{-\xi(\pp;w_i)}}{(w_i|\bbe)^{\ell}} (1 - \alpha_i \beta_i) \prod_{i < j} (w_i - w_j) e^{\Lambda_{\ell}(\pp|\bbe)} {}_{(\ell)} \braket{\varnothing}{\varnothing}_{\ell}.
\end{align*}
The claim then follows by comparing the above with the formulas from Lemma~\ref{lemma:fermion_expectation} and Lemma~\ref{lemma:dual_fermion_expectation}, respectively.
\end{proof}

Note that at most one of the products in Proposition~\ref{prop:shifted_vacuum_pairing} is not $1$.
Summarizing, we have the following.

\begin{corollary}
We have
\begin{equation}
\label{eq;shifted_vacuum_image}
\Phi^{(\bal;\bbe)}(\ket{\varnothing}_{\ell}) = \prod_{1 \leqslant i < j \leqslant -\ell} (1 - \alpha_{1-i}\beta_{1-j}) \prod_{1 \leqslant i < j \leqslant \ell} (1 - \alpha_i \beta_j) e^{\Lambda_{\ell}(\pp|\bbe)} \svar^{\ell}.
\end{equation}
\end{corollary}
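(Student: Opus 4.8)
The plan is to derive the Corollary immediately from the charge-grading of $\Phi^{(\bal;\bbe)}$ together with the pairing evaluation in Proposition~\ref{prop:shifted_vacuum_pairing}. First I would unwind the definition of $\Phi^{(\bal;\bbe)}$ at the shifted vacuum,
\[
\Phi^{(\bal;\bbe)}(\ket{\varnothing}_{\ell}) = \sum_{m\in\ZZ} {}_{(m)}\bra{\varnothing}\, e^{H_+(\pp|\bal;\bbe)}\, \ket{\varnothing}_{\ell}\cdot \svar^{m},
\qquad
{}_{(m)}\bra{\varnothing} := \bra{\varnothing}\Sigma_{(\bal;\bbe)}^{-m}.
\]
By Theorem~\ref{thm:heisenberg_relations} we have $[J_0^{(\bal;\bbe)}, J_k^{(\bal;\bbe)}]=0$ for all $k$, so $e^{H_+(\pp|\bal;\bbe)}$ preserves the charge grading of $\fermionfock$, while $\Sigma_{(\bal;\bbe)}^{-m}$ shifts charge by $-m$; hence ${}_{(m)}\bra{\varnothing}\, e^{H_+(\pp|\bal;\bbe)}\, \ket{\varnothing}_{\ell}$ pairs a covector of charge $m$ with a vector of charge $\ell$ and therefore vanishes unless $m=\ell$. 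This collapses the sum to the single term $\svar^{\ell}\,{}_{(\ell)}\bra{\varnothing}\, e^{H_+(\pp|\bal;\bbe)}\, \ket{\varnothing}_{\ell}$.

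Next I would apply~\eqref{eq:exp_pos_vacuum} of Corollary~\ref{cor:deformed_current_vacuum}, namely $e^{H_+(\pp|\bal;\bbe)}\ket{\varnothing}_{\ell} = e^{\Lambda_{\ell}(\pp|\bbe)}\ket{\varnothing}_{\ell}$, which rewrites the surviving term as $e^{\Lambda_{\ell}(\pp|\bbe)}\,{}_{(\ell)}\braket{\varnothing}{\varnothing}_{\ell}\,\svar^{\ell}$. Substituting the explicit value of ${}_{(\ell)}\braket{\varnothing}{\varnothing}_{\ell}$ from Proposition~\ref{prop:shifted_vacuum_pairing} then yields exactly~\eqref{eq;shifted_vacuum_image}. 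The only small bookkeeping point is to recall, as noted just before the Corollary, that at most one of the two products over pairs $(i,j)$ in Proposition~\ref{prop:shifted_vacuum_pairing} differs from $1$, depending on the sign of $\ell$, so no case analysis is actually needed in the final formula.

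In short, the Corollary carries essentially no independent difficulty: the real content has been front-loaded into Proposition~\ref{prop:shifted_vacuum_pairing}, whose proof evaluates $\bra{\varnothing}e^{H_+(\pp|\bal;\bbe)}\psi(z_1|\bal;\bbe)\cdots\psi(z_{\ell}|\bal;\bbe)\ket{\varnothing}_{-\ell}$ (and its $\psi^{*}$ counterpart) in two ways — once by the fermion-field computation of Lemma~\ref{lemma:fermion_expectation} and Lemma~\ref{lemma:dual_fermion_expectation}, and once by inserting the vertex-operator form of the fermion fields from Theorem~\ref{thm:fermion_vertex_op} together with Proposition~\ref{prop:deformed_current_shift_commute} and~\eqref{eq:deformed_half_vertex_commutator} — so that cancelling the common $\prod z_i e^{\xi(\pp;z_i)}$, $\prod(1-\alpha_i\beta_i)$, shifted Vandermonde, and $e^{\Lambda}$ factors isolates the product of $(1-\alpha_i\beta_j)$ terms. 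Given that Proposition, the Corollary is a one-line assembly.
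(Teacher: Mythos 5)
Your proposal is correct and follows exactly the paper's route: the paper obtains the corollary by the same one-line assembly, collapsing the sum to the charge-$\ell$ term, pulling out $e^{\Lambda_{\ell}(\pp|\bbe)}$ via Corollary~\ref{cor:deformed_current_vacuum}, and substituting the pairing value from Proposition~\ref{prop:shifted_vacuum_pairing}. Nothing essential differs, and your observation that all the real work lives in that Proposition matches how the paper presents the corollary as a summary.
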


Therefore, from Theorem~\ref{thm:deformed_boson_fermion} and~\eqref{eq;shifted_vacuum_image}, we can write 
\begin{subequations}
\label{eq:bosonic_schur_expansion}
\begin{align}
s_{\lambda}(\pp\dv\bal;\bbe) & = X_{\lambda_1} X_{\lambda_2-1} \cdots X_{\lambda_{\ell}-\ell+1} \cdot \prod_{1 \leqslant i < j \leqslant \ell} (1 - \alpha_{1-i} \beta_{1-j}) e^{\Lambda_{-\ell}(\pp|\bbe)} \svar^{-\ell},
\\ s_{\lambda'}(\pp\dv\bal;\bbe) & =  (-1)^{\abs{\lambda}} X_{1-\lambda_1}^* X_{2-\lambda_2}^* \cdots X_{\ell-\lambda_{\ell}}^* \cdot \prod_{1 \leqslant i < j \leqslant \ell} (1 - \alpha_i \beta_j) e^{\Lambda_{\ell}(\pp|\bbe)} \svar^{\ell},
	\\\label{eq:bosonic_schur_expansion_c} s_{\lambda}(\pp\dv\bal;\bbe) & = (-1)^{m+b_1+\cdots+b_m} X_{-b_1}^* X_{-b_2}^* \cdots X_{-b_m}^* X_{a_m+1} \cdots X_{a_2+1} X_{a_1+1} \cdot 1,
\end{align}
\end{subequations}
for any $\ell \geqslant \ell(\lambda)$, from the definition,~\eqref{eq:conjugate_fermion_vacuum}, and~\eqref{eq:frobenius_fermion_vacuum}, respectively.
Here in (\ref{eq:bosonic_schur_expansion_c}) we are using Frobenius notation.

We can also translate~\eqref{eq:MMgen_recovery} and~\eqref{eq:dual_fermion_schur_expansion} to the bosonic side by computing
\begin{subequations}
\label{eq:boson_vertex_op_vacuum}
\begin{align}
\label{eq:boson_gen_func}
X(z_1|\bal;\bbe) \cdots X(z_n|\bal;\bbe) \cdot e^{\Lambda_{\ell}(\pp|\bbe)} \svar^{\ell} & = \prod_{i < j} (z_i - z_j) \prod_{i=1}^n z_i (z_i|\bbe)^{\ell} e^{\xi(\pp;z_i)} e^{\Lambda_{\ell}(\pp|\bbe)} \svar^{\ell+n},
\\
X^*(w_1|\bal;\bbe) \cdots X^*(w_n|\bal;\bbe) \cdot e^{\Lambda_{\ell}(\pp|\bbe)} \svar^{\ell} & = \prod_{i < j} (w_i - w_j) \prod_{i=\ell-n+1}^{\ell} (1 - \alpha_i \beta_i)
\\ & \hspace{20pt} \times \prod_{i=1}^{\ell} \frac{e^{-\xi(\pp;w_i)}}{(w_i|\bbe)^{\ell}} e^{\Lambda_{\ell}(\pp|\bbe)} \svar^{\ell-n}, \nonumber
\end{align}
\end{subequations}
from applying the BCH formula with~\eqref{eq:xi_function_def}, noting that $\partial_{p_j} \svar^m = 0$ (\textit{cf}.~the bosonic normal ordering discussed in Section~\ref{sec:fermionic_remarks}), and using~\eqref{eq:half_vertex_vacuum_actions}.
Then combining~\eqref{eq:bosonic_schur_expansion} and~\eqref{eq:boson_vertex_op_vacuum}, we obtain~\eqref{eq:MMgen_recovery} and~\eqref{eq:dual_fermion_schur_expansion}.
Note that this is a different vertex operator construction than~\cite[Eq.~(42)]{MiyauraMukaihiraFactorial}, but can be considered a vertex operator description of~\cite[Eq.~(2.20)]{MiyauraMukaihiraGeneralized}.

Let us consider what happens with the term for $\lambda$, obtained using the bosonic representation (equivalently~\eqref{eq:MMgen_recovery}) by taking a multiple contour integral with Proposition~\ref{prop:orthonormality}, after applying~\eqref{eq:boson_gen_func}:
\begin{align*}
s_{\lambda}(\pp\dv\bal;\bbe)
& = \oint X(z_1|\bal;\bbe) \cdots X(z_{\ell}|\bal;\bbe) \cdot \prod_{1 \leqslant i < j \leqslant \ell} (1 - \alpha_{1-i} \beta_{1-j}) e^{\Lambda_{-\ell}(\pp|\bbe)} \svar^{-\ell}
\\ & \hspace{30pt} \times \prod_{i=1}^{\ell} (1 - \alpha_{\lambda_i-i+1} \beta_{\lambda_i-i+1}) \frac{(z_i;\bal)^{\lambda_i-i}}{(z_i|\bbe)^{\lambda_i-i+1}} \frac{dz_i}{2\pi\ii z_i}
\\ & = \oint \prod_{i < j} (z_i - z_j) \prod_{i=1}^{\ell} z_i (z_i|\bbe)^{-\ell} e^{\xi(\pp;z_i)} e^{\Lambda_{-\ell}(\pp|\bbe)} \prod_{1 \leqslant i < j \leqslant \ell} (1 - \alpha_{1-i} \beta_{1-j})
\\ & \hspace{30pt} \times \prod_{i=1}^{\ell} (1 - \alpha_{\lambda_i-i+1} \beta_{\lambda_i-i+1}) \frac{(z_i;\bal)^{\lambda_i-i}}{(z_i|\bbe)^{\lambda_i-i+1}} \frac{dz_i}{2\pi\ii z_i},
\end{align*}
Therefore we have an integral representation of the double factorial Schur function.
We note that this could also be computed from the definition (Definition~\ref{def:dfSchur}) using the fermion fields and extended to the skew case.
There is also an analogous formula for the dual Schur functions.
Next, let us use the generating series~\eqref{eq:h_gen_series} to rewrite the integral as
\begin{align}
s_{\lambda}(\pp\dv\bal;\bbe)
& = \sum_{k_1,\dotsc,k_{\ell}=0}^{\infty} \oint \prod_{i < j} (z_i - z_j) (1 - \alpha_{1-i} \beta_{1-j}) \prod_{i=1}^{\ell} (1 - \alpha_{\lambda_i-i+1} \beta_{\lambda_i-i+1}) (z_i|\bbe)^{-\ell}  \nonumber
\\ & \hspace{70pt} \times \prod_{i=1}^{\ell} h_{k_i}(\pp\dv\sigma_{\bal}^{s_i}\bal;\sigma_{\bbe}^{1-i}\bbe) \frac{(z_i|\sigma_{\bbe}^{-i}\bbe)^{k_i}}{(z_i;\sigma_{\bal}^{s_i}\bal)^{k_i}} \frac{(z_i;\bal)^{\lambda_i-i}}{(z_i|\bbe)^{\lambda_i-i+1}} \frac{dz_i}{2\pi\ii} \nonumber
\allowdisplaybreaks\\ & = \sum_{k_1,\dotsc,k_{\ell}=0}^{\infty} \oint \prod_{i < j} \left(1 - \frac{z_j}{z_i} \right) (1 - \alpha_{1-i} \beta_{1-j}) \prod_{i=1}^{\ell} \frac{(1 - \alpha_{\lambda_i-i+1} \beta_{\lambda_i-i+1}) (z_i;\bal)^{1-i}}{(z_i^{-1};\sigma_{\bbe}^{-\ell}\bbe)^{\ell-i}} \nonumber
\\ & \hspace{70pt} \times \prod_{i=1}^{\ell} h_{k_i}(\pp\dv\sigma_{\bal}^{s_i}\bal;\sigma_{\bbe}^{1-i}\bbe) \frac{(z_i|\sigma_{\bbe}^{-i}\bbe)^{k_i}}{(z_i;\sigma_{\bal}^{s_i}\bal)^{k_i}} \frac{(z_i;\sigma_{\bal}^{1-i} \bal)^{\lambda_i-1}}{(z_i|\sigma_{\bbe}^{-i} \bbe)^{\lambda_i+1}} \frac{dz_i}{2\pi\ii} \nonumber
\allowdisplaybreaks\\ & = \sum_{k_1,\dotsc,k_{\ell}=0}^{\infty} \oint \prod_{i < j} \left(1 - \frac{(1 - \alpha_{1-i} z_i)(z_j - \beta_{1-j})}{(z_i - \beta_{1-j})(1 - \alpha_{1-i} z_j)} \right) \prod_{i=1}^{\ell} (1 - \alpha_{\lambda_i-i+1} \beta_{\lambda_i-i+1}) \nonumber
\\ & \hspace{70pt} \times \prod_{i=1}^{\ell} h_{k_i}(\pp\dv\sigma_{\bal}^{s_i}\bal;\sigma_{\bbe}^{1-i}\bbe) \frac{(z_i|\sigma_{\bbe}^{-i}\bbe)^{k_i}}{(z_i;\sigma_{\bal}^{s_i}\bal)^{k_i}} \frac{(z_i;\sigma_{\bal}^{1-i} \bal)^{\lambda_i-1}}{(z_i|\sigma_{\bbe}^{-i} \bbe)^{\lambda_i+1}} \frac{dz_i}{2\pi\ii}, \label{eq:raising_operator_integral}
\end{align}
where we have used~\eqref{eq:basic_spowers_rels} and Corollary~\ref{cor:finitegeometric} (with $n = 1$).
Recall that the left hand side of~\eqref{eq:h_gen_series} does \emph{not} depend on $\bal$, so we are free to chose the values $s_i$ as desired (within any sum over all $k_i \in \ZZ_{\geqslant0}$).
We can then expand the product over $i < j$ and evaluate the corresponding contour integrals.
We remark that the sums are in fact finite by a similar argument in the Proposition~\ref{prop:orthonormality} by examining the zeros and poles.

Now we consider another alternating sum formula by using the following formalism.
To do so, we introduce the following operator.

\begin{definition}[Raising operator]
Define the shorthands $h_{k,s} := h_k(\pp\dv\sigma_{\bal}^s\bal;\sigma_{\bbe}^s\bbe)$ and $h_{\lambda,\eta} := h_{\lambda_1,-\eta_1} \dotsm h_{\lambda_{\ell},-\eta_{\ell}}$.
Let $\lambda, \eta \in \ZZ^{\ell}$.
The \defn{raising operator} $R_{ij}$ is defined by
\[
R_{ij} h_{\lambda,\eta} = h_{\overline{\lambda},\overline{\eta}},
\text{ where } \overline{\lambda} = (\lambda_1, \dotsc, \lambda_{i-1}, \lambda_i+1, \lambda_{i+1}, \dotsc, \lambda_{j-1},\lambda_j-1,\lambda_{j+1}, \dotsc, \lambda_{\ell}),
\]
and similarly for $\eta$.
\end{definition}

From~\cite[Sec.~2]{Fun12} (noting a minor typo in the Jacobi--Trudi formula, \textit{cf}.~\cite[9th Variation]{Macdonald92}), we have
\begin{equation}
\label{eq:raising_operator_defn}
s_{\lambda}(\pp\dv\bal;\bbe) = \prod_{1 \leqslant i < j \leqslant \ell} (1 - R_{ij}) h_{\lambda,\delta},
\end{equation}
where $\delta = (0, 1, 2, \dotsc, {n-1})$
On the other hand, one method to prove this formula when $\bal = \bbe = 0$ comes from taking the contour integral formula~\eqref{eq:raising_operator_integral}; see, \textit{e.g.},~\cite[Eq.~(1.15)]{Baker96}.
Therefore, it is natural to conjecture we obtain~\eqref{eq:raising_operator_defn} after expanding the product in~\eqref{eq:raising_operator_integral} and evaluating the contour integrals.
However, that turns out to not be the case (for any choices of $s_i$) as an explicit computation when $\ell(\lambda) \geqslant 3$ can show.
For example, see~\cite[Sec.~6]{BHS0}.

\begin{problem}
The raising operator action corresponds to a natural appearance of factors of the form $\frac{1 - \alpha_j z_i}{z_i - \beta_j}$ (or their inverse) in the contour integral formulas.
From the general framework~\eqref{eq:raising_operator_defn}, these sum to the same product in~\eqref{eq:raising_operator_integral}.
It would be good to have an explicit algebraic proof of this identity.
\end{problem}

\subsection{KP hierarchy and 2D Toda lattice}
\label{sec:KP_Toda}

In this subsection, our goal is to prove that the double factorial Schur functions are solutions to the KP hierarchy and that we can construct deformed versions of tau function solutions for the 2D Toda lattice (which includes the modified KP heirarchy as a special case).
Our approach follows standard references (\text{e.g.},~\cite{AlexandrovZabrodin,JM83,KacInfinite}).
As such, we consider the action of the operator
\[
\Omega = \sum_{k\in\ZZ} \psi_k \otimes \psi_k^*
\]
with any group-like element $G$ of $\text{End}(\fermionfock)$
that satisfies the basic bilinearity condition
\begin{equation}
\label{eq:BBC}
\sum_{k \in \ZZ} \psi_k G \otimes \psi_k^* G = \sum_{k \in \ZZ} G \psi_k \otimes G \psi_k^*,
\end{equation}
or equivalently $G \otimes G$ commutes with $\sum_{k \in \ZZ} \psi_k \otimes \psi_k^*$.

The key observation is that Proposition~\ref{prop:orthonormality} implies that (formally) we have
\[
\oint \psi(z|\bal;\bbe) \otimes \psi^*(z|\bal;\bbe) \frac{dz}{2\pi\ii z} = \res_{z=0} z^{-1} \psi(z|\bal;\bbe) \otimes \psi^*(z|\bal;\bbe) = \sum_{k\in\ZZ} \psi_k \otimes \psi_k^*.
\]
Recall the deformed bra ${}_{(m)} \bra{\varnothing} = \bra{\varnothing} \Sigma_{(\bal;\bbe)}^{-m}$ utilized in the deformed boson-fermion correspondence (Theorem~\ref{thm:deformed_boson_fermion}).
Being fully rigorous with the contour integrals, for all $n \geqslant n'$, we compute
\begin{align*}
\oint_{\gamma} {}_{(n+1)}\bra{\varnothing} & e^{H_+(\bt|\bal;\bbe)} \psi(z|\bal;\bbe) G \ket{\varnothing}_{n} \cdot {}_{(n'-1)}\bra{\varnothing} e^{H_+(\bt'|\bal;\bbe)} \psi^*(z|\bal;\bbe) G \ket{\varnothing}_{n'} \frac{dz}{2\pi\ii}
\\ & = \sum_{k\in\ZZ}  {}_{(n+1)}\bra{\varnothing} e^{H_+(\bt|\bal;\bbe)} \psi_k G \ket{\varnothing}_{n} \cdot {}_{(n'-1)}\bra{\varnothing} e^{H_+(\bt'|\bal;\bbe)} \psi^*_k G \ket{\varnothing}_{n'}
\\ & = \sum_{k\in\ZZ}  {}_{(n+1)}\bra{\varnothing} e^{H_+(\bt|\bal;\bbe)} G \psi_k \ket{\varnothing}_{n} \cdot {}_{(n'-1)}\bra{\varnothing} e^{H_+(\bt'|\bal;\bbe)} G \psi^*_k\ket{\varnothing}_{n'} = 0
\end{align*}
by applying~\eqref{eq:BBC} and noting that either $\psi_k \ket{\varnothing}_n = 0$ or $\psi_k^* \ket{\varnothing}_{n'} = 0$ for all $k$.
Note that the contour $\gamma$ must satisfy the conditions of Proposition~\ref{prop:orthonormality}.
We then rewrite the deformed fermion fields in the contour integral using the vertex operator description (Theorem~\ref{thm:fermion_vertex_op}) and apply the BCH formula to obtain for all $n \geqslant n'$
\begin{equation}
\label{eq:bilinear_condition_field}
0 = \oint_{\eta} z^{n-n'} e^{\xi(\bt-\bt';z)} {}_{(n)}\bra{\varnothing} e^{H_+(\bt-[z^{-1}]|\bal;\bbe)} G \ket{\varnothing}_{n} \cdot {}_{(n')}\bra{\varnothing} e^{H_+(\bt'+[z^{-1}]|\bal;\bbe)} G \ket{\varnothing}_{n'} \frac{dz}{2\pi\ii},
\end{equation}
where $\bt \pm [z^{-1}] := (t_1 \pm z^{-1}, t_2 \pm z^{-2}/2, t_3 \pm z^{-3}/3, \cdots)$

Defining a (deformed) \defn{tau function} as
\[
\tau_n(\bt) := {}_{(n)}\bra{\varnothing} e^{H_+(\bt|\bal;\bbe)} G \ket{\varnothing}_n,
\]
then~\eqref{eq:bilinear_condition_field} is the blinear identity (in integral form) that encodes all of the PDE's for the modified KP hierarchy:
\begin{equation}
\label{eq:KP_bilinear_integral}
0 = \oint_{\eta} z^{n-n'} e^{\xi(\bt-\bt';z)} \tau_n(\bt-[z^{-1}]) \tau_{n'}(\bt'+[z^{-1}]) \frac{dz}{2\pi\ii}.
\end{equation}
If we then take a change of variables $t_i \mapsto t_i + x_i$ and $t'_i \mapsto t_i - x_i$ in~\eqref{eq:KP_bilinear_integral} and expanding as a Taylor series around $\xx = 0$ (if we did not change variables, this is around $\bt' - \bt = 0$), we obtain the modified KP hierarchy PDEs (in Hirota form) as the generating sereis
\[
\sum_{k=0}^{\infty} h_k(-2\xx) h_{k+n-n'+1}(\widetilde{\DD}) \exp\left(\sum_{i=1}^{\infty} x_i D_i\right) \tau_n(\bt) \cdot \tau_{n'}(\bt),
\]
where $\widetilde{\DD} = (D_1, 2D_2, 3D_3, \cdots)$ and $D_i$ is the $i$-th Hirota derivative.
For more details of these computations, see, \textit{e.g.},~\cite[Sec.~3.2]{AlexandrovZabrodin}.
The KP heirarchy is the special case when $n = n'$.

\begin{corollary}
\label{cor:KP_tau_solution}
The double factorial Schur functions $s_{\lambda}(\bt\dv\bal;\bbe)$ are tau function solutions to the KP hierarchy.
\end{corollary}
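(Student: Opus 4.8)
The plan is to realize $s_\lambda(\bt\dv\bal;\bbe)$ as the tau function $\tau_0(\bt)$ attached to a group-like $G$ as in the preceding subsection, and then quote the bilinear identity~\eqref{eq:KP_bilinear_integral} at $n=n'=0$. First I would record that $\Sigma_{(\bal;\bbe)}^0 = \mathrm{id}$, so ${}_{(0)}\bra{\varnothing} = \bra{\varnothing}$, and hence by Definition~\ref{def:dfSchur},
\[
\tau_0(\bt) := {}_{(0)}\bra{\varnothing}\, e^{H_+(\bt|\bal;\bbe)}\, G \ket{\varnothing}_0 \;=\; \bra{\varnothing}\, e^{H_+(\bt|\bal;\bbe)} \ket{\lambda} \;=\; s_\lambda(\bt\dv\bal;\bbe)
\]
whenever $G$ satisfies $G\ket{\varnothing}_0 = \ket{\lambda}$. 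So the task reduces to producing a group-like operator $G$ with this property, i.e.\ one satisfying the basic bilinearity condition~\eqref{eq:BBC}.

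Such a $G$ exists by the classical theory of the fermionic Fock space. By~\eqref{eq:ket-from-vacuum}, $\ket{\lambda}$ is a pure semi-infinite wedge $v_{i_0}\wedge v_{i_{-1}}\wedge\cdots$ with $i_{-k+1} = \lambda_k - k + 1$, which agrees with the vacuum wedge $v_0\wedge v_{-1}\wedge\cdots$ in all but finitely many factors; hence $\ket{\lambda}$ lies in the $GL_\infty$-orbit of $\ket{\varnothing}$ (see, e.g.,~\cite{JM83},~\cite[Ch.~14]{KacInfinite},~\cite{MJD00}), and because only finitely many slots are altered no analytic completion is needed and $G$ may be taken to be an honest element of $\mathrm{End}(\fermionfock)$. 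Equivalently — and this is the only consequence of~\eqref{eq:BBC} actually used in the derivation of~\eqref{eq:bilinear_condition_field} at $n=n'=0$ — one checks directly that $\Omega\,(\ket{\lambda}\otimes\ket{\lambda}) = \sum_{k\in\ZZ}\psi_k\ket{\lambda}\otimes\psi_k^{*}\ket{\lambda} = 0$: for each $k\in\ZZ$ the site $k$ is either occupied or unoccupied in $\ket{\lambda}$, so at least one of $\psi_k\ket{\lambda}$, $\psi_k^{*}\ket{\lambda}$ vanishes and every summand is zero.

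With $G$ as above and $n = n' = 0$, the computation leading to~\eqref{eq:KP_bilinear_integral} applies verbatim: the key observation already noted is that $\oint \psi(z|\bal;\bbe)\otimes\psi^{*}(z|\bal;\bbe)\,\tfrac{dz}{2\pi\ii z} = \sum_{k\in\ZZ}\psi_k\otimes\psi_k^{*}$ is independent of $\bal$ and $\bbe$, so the rigorous contour-integral manipulations (justified by Proposition~\ref{prop:orthonormality}) together with the vertex operator description from Theorem~\ref{thm:fermion_vertex_op} and the BCH formula yield the KP bilinear identity for $\tau_0(\bt) = s_\lambda(\bt\dv\bal;\bbe)$. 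Expanding as in the text then gives the KP hierarchy PDEs in Hirota form, proving the claim. I expect the only genuine point requiring care is the existence of the group-like $G$ (equivalently, the vanishing $\Omega(\ket{\lambda}\otimes\ket{\lambda})=0$), and that is immediate from the particle–hole description above; everything else is a direct specialization of the subsection's construction to $n=n'=0$.
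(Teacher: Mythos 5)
Your argument is correct, and it reaches the conclusion by a slightly different route than the paper. The paper also plugs a group-like $G$ into the tau-function construction, but it chooses the rank-one element $G = \Sigma_{(\bal;\bbe)}^n \ket{\lambda}\cdot{}_{n}\bra{\varnothing}$ (group-like as a product of group-like elements, citing Alexandrov--Zabrodin), which yields $\tau_n(\bt)=s_\lambda(\bt\dv\bal;\bbe)$ simultaneously for \emph{every} charge $n$ — a fact the paper then uses in the following remark about embedding KP tau functions into the modified KP hierarchy. You instead fix $n=n'=0$ and take any Clifford-group element with $G\ket{\varnothing}_0=\ket{\lambda}$, and — more to the point — you observe that group-likeness can be bypassed altogether, since the only place \eqref{eq:BBC} enters the derivation of \eqref{eq:bilinear_condition_field} is in showing that $\sum_k \psi_k G\ket{\varnothing}_0\otimes\psi_k^*G\ket{\varnothing}_0$ vanishes, and with $G\ket{\varnothing}_0=\ket{\lambda}$ this is $\Omega(\ket{\lambda}\otimes\ket{\lambda})$, which vanishes termwise because each site is either occupied or unoccupied in the basis vector $\ket{\lambda}$ (in the paper's conventions $\psi_k\ket{\lambda}=0$ at occupied sites and $\psi_k^*\ket{\lambda}=0$ at unoccupied ones). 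That shortcut is clean and avoids having to justify group-likeness of a ket-bra; its only cost is that it directly produces the tau function at a single level $n=n'=0$, which suffices for the KP hierarchy as stated but does not by itself give the family $\tau_n=s_\lambda$ for all $n$ that the paper's choice of $G$ provides. The remaining steps (vertex-operator rewriting via Theorem~\ref{thm:fermion_vertex_op}, BCH, and the formal contour manipulations justified by Proposition~\ref{prop:orthonormality}) are indeed independent of the choice of $G$ and apply verbatim, as you say.
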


\begin{proof}
We take the group-like element $G = \Sigma_{(\bal;\bbe)}^n \ket{\lambda} \cdot {}_{n}\bra{\varnothing}$ (which is a product of group-like elements; see, \textit{e.g.},~\cite[Prop.~2.5, Sec.~2.8]{AlexandrovZabrodin}), which yields
\[
\tau_n(\bt) = {}_{(n)} \bra{\varnothing} e^{H_+(\bt|\bal;\bbe)} \Sigma_{(\bal;\bbe)}^n \ket{\lambda} \cdot {}_{n}\braket{\varnothing}{\varnothing}_{n}
= \bra{\varnothing} e^{H_+(\bt|\bal;\bbe)} \ket{\lambda} \cdot 1 = s_{\lambda}(\bt\dv\bal;\bbe)
\]
as desired.
\end{proof}

Note that for the modified KP hierarchy with $n' < n$, the group-like element $G$ above yields $0$, but this realizes an embedding of the KP tau functions into the modified KP hierarchy tau functions.
Furthermore, as in~\cite[Sec.~3.3]{AlexandrovZabrodin}, we can compute the expansion of any solution into the double factorial Schur functions by
\[
\tau_n(\bt) = \sum_{\lambda \in \mcP} {}_{(n)}\bra{\varnothing} e^{H_+(\bt|\bal;\bbe)} \ket{\lambda}_{(n)} \cdot {}_{(n)}\bra{\lambda} G \ket{\varnothing}_n = \sum_{\lambda \in \mcP} {}_{(n)}\bra{\lambda} G \ket{\varnothing}_n \cdot s_{\lambda}(\bt\dv\bal;\bbe),
\]
where $\ket{\lambda}_{(n)} := \Sigma_{(\bal;\bbe)}^n \ket{\lambda}$ and ${}_{(n)} \bra{\lambda} := \bra{\lambda} \Sigma_{(\bal;\bbe)}^{-n}$ are used to express the identity operator.

Analogous to the construction above, we can also give a (deformed) tau function solution to the 2D Toda lattice:
\[
\tau_n(\bt_+, \bt_-):= {}_{(n)} \bra{\varnothing} e^{H_+(\bt_+|\bal;\bbe)} G e^{-H_-(\bt_-|\bal;\bbe)} \ket{\varnothing}_{(n)}.
\]
The computations are analogous to those in, \textit{e.g.},~\cite[Sec.~3.2.2]{AlexandrovZabrodin}.
Similarly, we can realize the double factorial Schur functions as 2D Toda lattice solutions by taking $G = \ket{\lambda}_{(n)} \cdot {}_{(n)} \bra{\mu}$ to give
\[
\tau_{n'}(\bt_+, \bt_-) = \delta_{nn'} s_{\lambda}(\bt_+\dv\bal;\bbe) s_{\mu}(\bt_-\dv\bal;\bbe).
\]

\section{Lattice model row transfer matrices}
\label{sec:lattice_models}

In this section, we make an explicit connection between our (deformed) free fermion construction and row transfer matrices of solvable lattice models.
We will briefly describe the relevant aspects of a solvable lattice model, and refer the reader to, \textit{e.g.},~\cite{Baxter89,BBF11,hkice,ZinnJustinTiling} for more precise details.
Then we will prove our main result: the row transfer matrices of the lattice models described in~\cite{NaprienkoFFS} corresponds to acting by $e^{H_{\pm}(\pp|\bal;\bbe)}$ for $\pp$ specialized to powersum supersymmetric functions.
We show this by showing the motion of a single particle and the group-like structure of our deformed current operators (when considered as matrices).

A \defn{row transfer matrix} is a square grid of one horizontal line and $\ZZ$-indexed vertical lines such that each crossing is given a \defn{Boltzmann weight} $b(x/y|\bal; \bbe)$ according to its local configuration of adjacent edges that are assigned
$\{
\begin{tikzpicture}[baseline=-4]\draw[fill=white] (0,0) circle (.15);\end{tikzpicture},
\begin{tikzpicture}[baseline=-4]\draw[fill=black] (0,0) circle (.15);\end{tikzpicture}
\}$.
We call an assignment a \defn{state} if all of the local configurations have nonzero Boltzmann weight, and the Boltzmann weight of a state is the product of all of the local Boltzmann weights with the $j$-th vertical line having parameters $x, y, \alpha_j, \beta_j$.
The \defn{partititon function} of a row transfer matrix is the sum of the Boltzmann weights of all possible states, possibly with fixed boundaries.
The row transfer matrices are then combined to produce a \defn{lattice model}.

We will use two different row transfer matrices $\RTM_{\pm}$ in this section.
Fix parameters $\xx_1 = (x)$ and $\yy_1 = (y)$.
The Boltzmann weights of $\RTM_+(x/y|\bal;\bbe) $ (resp.~$\RTM_-(x/y|\bal;\bbe)$) are given as normalized versions of the weights in Table~\ref{table:model_weights} (resp.\ Table~\ref{table:neg_model_weights}), where the normalization will depend on the column.
When we fix the top and bottom boundary conditions of $\RTM_{\pm}(x/y|\bal;\bbe)$, we can associate these with basis elements $\ket{\lambda}_m$ and ${}_{\ell}\bra{\mu}$, respectively, by having a $+$ (resp.~$-$) at $i$ if $\psi_i \ket{\lambda}_m \neq 0$ (resp.~${}_{\ell} \bra{\mu} \psi_i \neq 0$).
This is equivalent to having a $-$ at every site there is a particle in $\ZZ$.
We write the partition function as the matrix coefficient ${}_{\ell} \bra{\mu} \RTM_{\pm}(x/y|\bal;\bbe) \ket{\lambda}_m$.
It is easy to see that we must have $\ell = m$, and there is a unique state with finitely many $-$ horizontal edges.
Hence the resulting partition function is well-defined as an element in $\CC[\bal; \bbe](x,y)$, hence we can consider $\RTM_{\pm}(x/y|\bal;\bbe)$ as an operator on $\fermionfock_m$.

\begin{table}
\[\begin{array}{|c|c|c|c|c|c|}
\hline
\tt{a}_1 & \tt{a}_2 &
\tt{b}_1 & \tt{b}_2 &
\tt{c}_1 & \tt{c}_2 \\
\hline
\begin{tikzpicture}[scale=.72] 
\draw (0,1) to (2,1); \draw (1,0) to (1,2);
\draw[fill=white] (0,1) circle (.35); \draw[fill=white] (2,1) circle (.35);
\draw[fill=white] (1,0) circle (.35); \draw[fill=white] (1,2) circle (.35);
\path[fill=white] (1,1) circle (.3); \node[scale=.72] at (1,1) {$i,j$};
\end{tikzpicture}
&
\begin{tikzpicture}[scale=.72] 
\draw (0,1) to (2,1); \draw (1,0) to (1,2);
\draw[fill=black] (0,1) circle (.35); \draw[fill=black] (2,1) circle (.35);
\draw[fill=black] (1,0) circle (.35); \draw[fill=black] (1,2) circle (.35);
\path[fill=white] (1,1) circle (.3); \node[scale=.72] at (1,1) {$i,j$};
\end{tikzpicture}
&
\begin{tikzpicture}[scale=.72] 
\draw (0,1) to (2,1); \draw (1,0) to (1,2);
\draw[fill=white] (0,1) circle (.35); \draw[fill=white] (2,1) circle (.35);
\draw[fill=black] (1,0) circle (.35); \draw[fill=black] (1,2) circle (.35);
\path[fill=white] (1,1) circle (.3); \node[scale=.72] at (1,1) {$i,j$};
\end{tikzpicture}
&
\begin{tikzpicture}[scale=.72] 
\draw (0,1) to (2,1); \draw (1,0) to (1,2);
\draw[fill=black] (0,1) circle (.35); \draw[fill=black] (2,1) circle (.35);
\draw[fill=white] (1,0) circle (.35); \draw[fill=white] (1,2) circle (.35);
\path[fill=white] (1,1) circle (.3); \node[scale=.72] at (1,1) {$i,j$};
\end{tikzpicture}
&
\begin{tikzpicture}[scale=.72] 
\draw (0,1) to (2,1); \draw (1,0) to (1,2);
\draw[fill=black] (0,1) circle (.35); \draw[fill=white] (2,1) circle (.35);
\draw[fill=black] (1,0) circle (.35); \draw[fill=white] (1,2) circle (.35);
\path[fill=white] (1,1) circle (.3); \node[scale=.72] at (1,1) {$i,j$};
\end{tikzpicture}
&
\begin{tikzpicture}[scale=.72] 
\draw (0,1) to (2,1); \draw (1,0) to (1,2);
\draw[fill=white] (0,1) circle (.35); \draw[fill=black] (2,1) circle (.35);
\draw[fill=white] (1,0) circle (.35); \draw[fill=black] (1,2) circle (.35);
\path[fill=white] (1,1) circle (.3); \node[scale=.72] at (1,1) {$i,j$};
\end{tikzpicture}
\\\hline
1-\beta x & y+\alpha & 1 + \beta y & x-\alpha & x+y & 1-\alpha\beta \\
\hline
\end{array}\]
\caption{The unscaled Boltzmann weights $\widetilde{b}_+(x/y|\alpha,\beta)$ of the lattice model.}
\label{table:model_weights}
\end{table}

\begin{table}
\[\begin{array}{|c|c|c|c|c|c|}
\hline
\tt{a}_1 & \tt{a}_2 &
\tt{b}_1 & \tt{b}_2 &
\tt{d}_1 & \tt{d}_2 \\
\hline
\begin{tikzpicture}[scale=.72] 
\draw (0,1) to (2,1); \draw (1,0) to (1,2);
\draw[fill=white] (0,1) circle (.35); \draw[fill=white] (2,1) circle (.35);
\draw[fill=white] (1,0) circle (.35); \draw[fill=white] (1,2) circle (.35);
\path[fill=white] (1,1) circle (.3); \node[scale=.72] at (1,1) {$i,j$};
\end{tikzpicture}
&
\begin{tikzpicture}[scale=.72] 
\draw (0,1) to (2,1); \draw (1,0) to (1,2);
\draw[fill=black] (0,1) circle (.35); \draw[fill=black] (2,1) circle (.35);
\draw[fill=black] (1,0) circle (.35); \draw[fill=black] (1,2) circle (.35);
\path[fill=white] (1,1) circle (.3); \node[scale=.72] at (1,1) {$i,j$};
\end{tikzpicture}
&
\begin{tikzpicture}[scale=.72] 
\draw (0,1) to (2,1); \draw (1,0) to (1,2);
\draw[fill=white] (0,1) circle (.35); \draw[fill=white] (2,1) circle (.35);
\draw[fill=black] (1,0) circle (.35); \draw[fill=black] (1,2) circle (.35);
\path[fill=white] (1,1) circle (.3); \node[scale=.72] at (1,1) {$i,j$};
\end{tikzpicture}
&
\begin{tikzpicture}[scale=.72] 
\draw (0,1) to (2,1); \draw (1,0) to (1,2);
\draw[fill=black] (0,1) circle (.35); \draw[fill=black] (2,1) circle (.35);
\draw[fill=white] (1,0) circle (.35); \draw[fill=white] (1,2) circle (.35);
\path[fill=white] (1,1) circle (.3); \node[scale=.72] at (1,1) {$i,j$};
\end{tikzpicture}
&
\begin{tikzpicture}[scale=.72] 
\draw (0,1) to (2,1); \draw (1,0) to (1,2);
\draw[fill=black] (0,1) circle (.35); \draw[fill=white] (2,1) circle (.35);
\draw[fill=white] (1,0) circle (.35); \draw[fill=black] (1,2) circle (.35);
\path[fill=white] (1,1) circle (.3); \node[scale=.72] at (1,1) {$i,j$};
\end{tikzpicture}
&
\begin{tikzpicture}[scale=.72] 
\draw (0,1) to (2,1); \draw (1,0) to (1,2);
\draw[fill=white] (0,1) circle (.35); \draw[fill=black] (2,1) circle (.35);
\draw[fill=black] (1,0) circle (.35); \draw[fill=white] (1,2) circle (.35);
\path[fill=white] (1,1) circle (.3); \node[scale=.72] at (1,1) {$i,j$};
\end{tikzpicture}
\\\hline
1-\alpha x & y+\beta & 1 + \alpha y & x-\beta & 1-\alpha\beta & x+y \\
\hline
\end{array}\]
\caption{The unscaled Boltzmann weights $\widetilde{b}_-(x/y|\alpha,\beta)$ of the dual lattice model.}
\label{table:neg_model_weights}
\end{table}

Let $\widetilde{b}_{\pm}(x/y|\alpha, \beta)$ be given in Table \ref{table:model_weights} and Table~\ref{table:neg_model_weights}.
Due to the normal ordering, our weights are a scalar multiple of these, depending on the column $j$ and the level $m$.

\begin{definition}
Fix $m \in \ZZ$.
The Boltzmann weights in column $j$ are given by
\begin{gather*}
b_j(x/y | \bal; \bbe) = b^{(m)}_j(x/y | \alpha_j, \beta_j)  = \omega_j ^{(m)}(x/y|\beta_j) \widetilde{b}_+(x/y|\alpha_j, \beta_j),
\\
\widehat{b}_j(x/y|\bal; \bbe) = \widehat{b}^{(m)}_j(x/y| \alpha_j, \beta_j) = \omega_j ^{(m)}(x/y|\alpha_j) \widetilde{b}_-(x/y| \alpha_j, \beta_j),
\\
\text{ where }
\omega_j^{(m)}(x/y|\beta) = \begin{cases} \frac{1}{1-\beta x}, & \text{if } j > m, \\ \frac{1}{1+\beta y}, & \text{if } j \leqslant m.
\end{cases}
\end{gather*}
\end{definition}

\begin{example}
\label{ex:row_transfer_matrix}
Suppose that $m = \ell = 0$, $\lambda = (5)$ and $\mu = (2)$.
Then the unique state of the row transfer matrix coefficient $\bra{\mu} \RTM_+(x/y|\bal;\bbe) \ket{\lambda}$ looks like this:
\[\begin{tikzpicture}[scale=.72]
\draw[-] (0,1)--(16,1);
\foreach \i in {1,3,5,...,15} {
  \draw[-] (\i,0)--(\i,2);
  \draw[fill=white] (\i,0) circle (.35);
  \draw[fill=white] (\i,2) circle (.35);
  \draw[fill=white] (\i+1,1) circle (.35);
}
\draw[fill=white] (0,1) circle (.35);
\node at (-1,3) {column};
\foreach \i in {-1,0,1,...,6}
  \node at (13-2*\i,3) {$\i$};

\draw[fill=white] (1,2) circle (.35);
\draw[fill=black] (3,2) circle (.35);
\draw[fill=white] (5,2) circle (.35);
\draw[fill=white] (7,2) circle (.35);
\draw[fill=white] (9,2) circle (.35);
\draw[fill=white] (11,2) circle (.35);
\draw[fill=white] (13,2) circle (.35);
\draw[fill=black] (15,2) circle (.35);
\draw[fill=white] (1,0) circle (.35);
\draw[fill=white] (3,0) circle (.35);
\draw[fill=white] (5,0) circle (.35);
\draw[fill=white] (7,0) circle (.35);
\draw[fill=black] (9,0) circle (.35);
\draw[fill=white] (11,0) circle (.35);
\draw[fill=white] (13,0) circle (.35);
\draw[fill=black] (15,0) circle (.35);
\draw[fill=white] (0,1) circle (.35);
\draw[fill=white] (2,1) circle (.35);
\draw[fill=black] (4,1) circle (.35);
\draw[fill=black] (6,1) circle (.35);
\draw[fill=black] (8,1) circle (.35);
\draw[fill=white] (10,1) circle (.35);
\draw[fill=white] (12,1) circle (.35);
\draw[fill=white] (14,1) circle (.35);
\draw[fill=white] (16,1) circle (.35);

\node at (-1,1) {$\cdots$};
\node at (17,1) {$\cdots$};
\end{tikzpicture}\]
where every column $j > 6$ is a $\tt{a}_1$ vertex (resp.\ $j < -1$ is a $\tt{b}_1$ vertex) with (rescaled) Boltzmann weight $1$.
Thus 
\[
\bra{\mu} \RTM_+(x/y|\bal;\bbe) \ket{\lambda} = \frac{1-\alpha_5\beta_5}{1-\beta_5 x} \frac{x-\alpha_4}{1-\beta_4 x}\frac{x-\alpha_3}{1-\beta_3 x} \frac{x+y}{1-\beta_2 x} \frac{1-\beta_0 x}{1+\beta_0 y}.
\]
\end{example}

Recall the definitions
\[
\Delta_m(k|\bal) = \sum_{0 < j \leqslant m} \alpha_j^k - \sum_{m < j \leqslant 0} \alpha_j^k, \qquad\qquad \Lambda_m(\pp|\bal) = \sum_{k=1}^{\infty} \frac{p_k}{k} \Delta_m(k|\bal),
\]
and so taking the specialization $p_k\mapsto p_k(x/y) = x^k - (-y)^k$ results in
\[
\Lambda_m(x/y|\bal) := \sum_{k=1}^{\infty} \sum_{i=1}^n \frac{x^k - (-y)^k}{k} \Delta_m(k|\bal).
\]

\begin{lemma}
\begin{equation}
{}_m\bra{\varnothing}\RTM_+(x/y|\bal;\bbe)\ket{\varnothing}_m = e^{\Lambda_m(x/y|\bbe)},
\qquad\qquad
{}_m\bra{\varnothing}\RTM_-(x/y|\bal;\bbe)\ket{\varnothing}_m = e^{\Lambda_m(x/y|\bal)}.
\end{equation}
\end{lemma}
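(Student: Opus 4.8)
The plan is to evaluate the partition function directly from the lattice model. First I would unwind the boundary data: both the bra ${}_m\bra{\varnothing}$ and the ket $\ket{\varnothing}_m$ correspond to the spin configuration in which every site $j\leqslant m$ carries a particle and every site $j>m$ is empty, so in the row of $\RTM_+(x/y|\bal;\bbe)$ every vertical edge — top and bottom alike — is $\begin{tikzpicture}[baseline=-3]\draw[fill=black] (0,0) circle (.11);\end{tikzpicture}$ in the columns $j\leqslant m$ and $\begin{tikzpicture}[baseline=-3]\draw[fill=white] (0,0) circle (.11);\end{tikzpicture}$ in the columns $j>m$. Since the top and bottom vertical edges agree in every column, no crossing can be of type $\tt{c}_1$ or $\tt{c}_2$ (these change the vertical occupation), so the horizontal occupation is constant across the row; the unique admissible state with only finitely many occupied horizontal edges is therefore the ``frozen'' one, with all horizontal edges empty, in which each column $j>m$ is an $\tt{a}_1$ vertex and each column $j\leqslant m$ is a $\tt{b}_1$ vertex. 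Consequently ${}_m\bra{\varnothing}\RTM_+(x/y|\bal;\bbe)\ket{\varnothing}_m$ is a single product of normalized Boltzmann weights $\omega_j^{(m)}(x/y|\beta_j)\,\widetilde b_+(\cdot)$, and all but finitely many of these factors equal $1$, so the product makes sense.

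Next I would evaluate that product column by column. Feeding in the unscaled weights from Table~\ref{table:model_weights} together with the normalization, the empty columns and the occupied columns with index $\leqslant 0$ each contribute a factor $1$, while the occupied columns $j$ with $0<j\leqslant m$ contribute $\tfrac{1+\beta_j y}{1-\beta_j x}$ (and when $m\leqslant 0$ the surviving columns are $m<j\leqslant 0$, contributing the reciprocal factor $\tfrac{1-\beta_j x}{1+\beta_j y}$). Hence the partition function is $\prod_{0<j\leqslant m}\tfrac{1+\beta_j y}{1-\beta_j x}$ for $m>0$ and $\prod_{m<j\leqslant0}\tfrac{1-\beta_j x}{1+\beta_j y}$ for $m\leqslant 0$, which is exactly $e^{\Lambda_m(x/y|\bbe)}$ by Lemma~\ref{lemma:exp_lambda} applied with $p_k=x^k-(-y)^k$ and the parameters $\bbe$. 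The claim for $\RTM_-$ is proven in the same way, reading the weights from Table~\ref{table:neg_model_weights} instead: the frozen-state analysis is identical (now $\tt{d}_1,\tt{d}_2$ are the excluded vertices), and the surviving column factors become $\tfrac{1+\alpha_j y}{1-\alpha_j x}$ (resp.\ their reciprocals), so the product equals $e^{\Lambda_m(x/y|\bal)}$, again by Lemma~\ref{lemma:exp_lambda} but with $\bal$ in place of $\bbe$.

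The only genuinely delicate step is the first: one must check that the vacuum-to-vacuum boundary admits a \emph{unique} state, so that the partition function is a single monomial in the Boltzmann weights, and then one must keep careful track of which normalization $\omega_j^{(m)}$ is attached to which column, since it is precisely the interaction between the $\tt{b}_1$ weight $1+\beta_j y$ carried by an occupied column and the normalizing factor of that column that produces the nontrivial contributions. Once that bookkeeping is in place, the remaining identification with $e^{\Lambda_m(x/y|\bbe)}$ is immediate from Lemma~\ref{lemma:exp_lambda}.
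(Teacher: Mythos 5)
Your proof is correct and takes essentially the same route as the paper's: the vacuum boundary forces the unique frozen state (no vertices of the two types with differing top/bottom edges, hence all horizontal edges empty), the normalized column weights multiply to $\prod_{0<j\leqslant m}\frac{1+\beta_j y}{1-\beta_j x}$ for $m>0$ (resp.\ $\prod_{m<j\leqslant 0}\frac{1-\beta_j x}{1+\beta_j y}$ for $m\leqslant 0$, and the analogous products in $\bal$ for $\RTM_-$), and Lemma~\ref{lemma:exp_lambda} identifies these with $e^{\Lambda_m(x/y|\bbe)}$ and $e^{\Lambda_m(x/y|\bal)}$. The paper's argument is just a terser version of this computation, so your added detail on the uniqueness of the frozen state is the only difference.
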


\begin{proof}
These partition functions are simply the products of the weights of the vertices in columns between $0$ and $m$.
If $m\geqslant 0$,
\[
{}_m\bra{\varnothing}\RTM_+(x/y|\bal;\bbe)\ket{\varnothing}_m = \prod_{0<j\le m} \frac{1+\beta_j y}{1-\beta_j x}.\] If $m<0$, \[{}_m\bra{\varnothing}\RTM_+(x/y|\bal;\bbe)\ket{\varnothing}_m = \prod_{m<j\leqslant 0} \frac{1-\beta_j x}{1+\beta_j y},
\]
and similarly for $\RTM_-(x/y|\bal;\bbe)$, so the result follows from Lemma~\ref{lemma:exp_lambda}.
\end{proof}



\begin{proposition}
\label{prop:teffect}
If $q\ge p\ge m$ then
\begin{subequations}
\begin{align}\label{eq:pqtform}
\frac{{}_m\bra{p-m} \RTM_+(x/y|\bal;\bbe)\ket{q-m}_m}{e^{\Lambda_{m-1}(x/y|\bbe)}} & = \begin{cases}
\displaystyle \frac{(1-\alpha_q\beta_q)(x+y)}{(1 - \beta_p x)(1 - \beta_q x)} \prod_{p<j<q} \frac{x-\alpha_j}{1-\beta_j x}, & \text{if } q>p, \\
\dfrac{1 + \beta_q y}{1 - \beta_q x}, & \text{if } q=p,
\end{cases}
\\
\label{eq:pqtform_minus}
\frac{{}_m\bra{p-m} \RTM_-(x/y|\bal;\bbe) \ket{q-m}_m}{e^{\Lambda_{m-1}(x/y|\bal)}} & = \begin{cases}
\displaystyle \frac{(1-\alpha_q\beta_q)(x+y)}{(1 - \alpha_p x)(1 - \alpha_q x)} \prod_{q<j<p} \frac{x-\beta_j}{1-\alpha_j x}, & \text{if } p>q, \\
\dfrac{1 + \alpha_q y}{1 - \alpha_q x}, & \text{if } p=q.
\end{cases}
\end{align}
\end{subequations}
\end{proposition}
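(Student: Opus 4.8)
The plan is to use the fact that each of these matrix coefficients is the partition function of a row transfer matrix whose two boundary rows differ from the vacuum rows by a single displaced particle. Such a model has a unique admissible state, so the partition function is a single product of Boltzmann weights, read off column by column.

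First I would pin down the state. Under the dictionary of Section~\ref{sec:lattice_models}, the ket $\ket{q-m}_m = v_q\wedge v_{m-1}\wedge v_{m-2}\wedge\cdots$ corresponds to the boundary row whose decorated edges sit exactly at the sites $\{q\}\cup(-\infty,m-1]$, and ${}_m\bra{p-m}$ to the row with decorated edges at $\{p\}\cup(-\infty,m-1]$; since $q\ge p\ge m$, in each row at most one site of $[m,\infty)$ is decorated. All six admissible local configurations conserve the number of decorated edges, so the decoration on the horizontal edge between consecutive columns is forced recursively by the two boundary rows together with the requirement that it be undecorated for $|j|\gg 0$: exactly the horizontal edges strictly between columns $p$ and $q$ are decorated, and all vertical edges are then determined. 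Thus there is a unique state; deleting the displaced particle from $\ket{q-m}_m$ leaves $\ket{\varnothing}_{m-1}$, so away from the columns $p\le j\le q$ this state is the vacuum state at level $m-1$.

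I would then take the product of the rescaled weights $b^{(m)}_j = \omega^{(m)}_j\widetilde{b}_+$ of Table~\ref{table:model_weights}. The columns outside $[p,q]$ reproduce the vacuum-to-vacuum computation of the lemma preceding this proposition, now relative to the level-$(m-1)$ vacuum, and so contribute the prefactor $e^{\Lambda_{m-1}(x/y|\bbe)}$. For the columns $p\le j\le q$: if $p=q$, the unique such column is a $\tt{b}_1$ vertex lying in the region $j>m-1$, rescaling to $\frac{1+\beta_q y}{1-\beta_q x}$; if $p<q$, column $q$ is a $\tt{c}_2$ vertex (the particle's worldline turns there) contributing $\frac{1-\alpha_q\beta_q}{1-\beta_q x}$, each column $j$ with $p<j<q$ is a $\tt{b}_2$ vertex contributing $\frac{x-\alpha_j}{1-\beta_j x}$, and column $p$ is a $\tt{c}_1$ vertex contributing $\frac{x+y}{1-\beta_p x}$. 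Multiplying gives~\eqref{eq:pqtform}. The identity~\eqref{eq:pqtform_minus} follows by the same argument from Table~\ref{table:neg_model_weights}, with $\bal$ and $\bbe$ interchanged and the inequality between $p$ and $q$ reversed, reflecting that $\RTM_-$ plays the role of $e^{H_-}$ while $\RTM_+$ plays the role of $e^{H_+}$.

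The delicate point is the accounting of the normalizations $\omega^{(m)}_j$ across the two regimes $j>m$ and $j\le m$: one must check that, along $p\le j\le q$, these denominators together with the $e^{\Lambda_{m-1}}$ from the vacuum columns assemble exactly into the denominator $(1-\beta_p x)(1-\beta_q x)\prod_{p<j<q}(1-\beta_j x)$ of~\eqref{eq:pqtform}; the boundary case $p=m$ (where column $p$ itself lies in the $j\le m$ regime) needs a small separate check, using $e^{\Lambda_m}/e^{\Lambda_{m-1}} = \frac{1+\beta_m y}{1-\beta_m x}$ to rewrite the offending factor. One also has to keep the left--right orientation of the columns consistent with Tables~\ref{table:model_weights}--\ref{table:neg_model_weights} so that the two $\tt{c}$-type vertices at columns $p$ and $q$ receive the weights $x+y$ and $1-\alpha\beta$ in the stated order --- which is where the hypothesis $q\ge p$ enters. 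Everything else is a direct column-by-column reading of the weight tables.
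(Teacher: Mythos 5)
Your proposal is correct and is essentially the paper's own argument: the paper's proof simply observes that the matrix coefficient is the unique single-particle state describing the motion of a particle from column $q$ to column $p$ (as in Example~\ref{ex:row_transfer_matrix}), which is exactly your column-by-column reading of the $\tt{c}_2$, $\tt{b}_2$, $\tt{c}_1$ vertices along the worldline, with the remaining columns giving the level-$(m-1)$ vacuum factor $e^{\Lambda_{m-1}}$. Your extra care with the uniqueness of the state and the bookkeeping of the normalizations $\omega_j$ (including the boundary case $p=m$) just makes explicit what the paper leaves to the reader.
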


\begin{proof}
The general case is similar to Example~\ref{ex:row_transfer_matrix} as the six vertex model can be considered as the motion of a particle from $q \to p$.
\end{proof}

Note that we can rewrite the $q > p$ case of~\eqref{eq:pqtform} as $(1 - \alpha_q \beta_q)(x + y) \frac{(x|\sigma_{\bal}^p \bal)^{q-p-1}}{(x|\sigma_{\bbe}^{p-1} \bbe)^{q-p+1}}$, which is almost the integrand in Proposition~\ref{prop:orthonormality}.
We can also rewrite~\eqref{eq:pqtform_minus} into a similar form.

As we will show below, to compute arbitrary matrix coefficients of
$e^{H_+(\bal; \bbe)}$, it is enough to compute them in the case of one
particle. 

\begin{proposition}
\label{prop:heffect}
If $q\geqslant p\geqslant m$, then
\begin{subequations}
\begin{align}
\label{eq:pqhform_plus}
\frac{{}_m \bra{p-m} e^{H_+(x/y|\bal;\bbe)}\ket{q-m}_m}{e^{\Lambda_{m-1}(x/y|\bbe)}} & = \begin{cases}
\displaystyle \frac{(1-\alpha_q\beta_q)(x+y)}{(1 - \beta_p x)(1 - \beta_q x)} \prod_{p<j<q} \frac{x-\alpha_j}{1-\beta_j x}, & \text{if } q>p,
  \\
  \dfrac{1+\beta_p y}{1-\beta_p x}, & \text{if } q=p,
\end{cases}
\\
\label{eq:pqhform_minus}
\frac{{}_m\bra{p-m} e^{H_-(x/y|\bal;\bbe)} \ket{q-m}_m}{e^{\Lambda_{m-1}(x/y|\bal)}} & = \begin{cases}
\displaystyle \frac{(1-\alpha_q\beta_q)(x+y)}{(1 - \alpha_p x)(1 - \alpha_q x)} \prod_{q<j<p} \frac{x-\beta_j}{1-\alpha_j x}, & \text{if } p>q, \\
\dfrac{1 + \alpha_q y}{1 - \alpha_q x}, & \text{if } p=q.
\end{cases}
\end{align}
\end{subequations}
\end{proposition}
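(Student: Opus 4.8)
The plan is to compute the one‑particle matrix coefficient ${}_m\bra{p-m} e^{H_+(x/y|\bal;\bbe)}\ket{q-m}_m$ directly from the vertex operator description, and to obtain~\eqref{eq:pqhform_minus} from the mirror computation with $\bal$ and $\bbe$ interchanged. First I would record the expansion $\psi(z|\bal;\bbe)\ket{\varnothing}_{m-1}=\sum_{k\geqslant 0} z\frac{(z|\bbe)^{k+m-1}}{(z;\bal)^{k+m}}\ket{k}_m$, which is immediate since $\psi_i\ket{\varnothing}_{m-1}=\ket{i-m}_m$ for $i\geqslant m$ and vanishes otherwise, so that in particular ${}_m\bra{p-m}\psi(z|\bal;\bbe)\ket{\varnothing}_{m-1}=z\frac{(z|\bbe)^{p-1}}{(z;\bal)^p}$ (using $p\geqslant m$). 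Applying ${}_m\bra{p-m} e^{H_+(\pp|\bal;\bbe)}$ to both sides, moving $e^{H_+(\pp|\bal;\bbe)}$ rightward past $\psi(z|\bal;\bbe)$ with Theorem~\ref{thm:deformed_adjoint} (which contributes $e^{\xi(\pp;z)}$) and then past the vacuum with Corollary~\ref{cor:deformed_current_vacuum} (which contributes $e^{\Lambda_{m-1}(\pp|\bbe)}$), produces the generating identity
\[
\sum_{k\geqslant 0} z\frac{(z|\bbe)^{k+m-1}}{(z;\bal)^{k+m}}\,{}_m\bra{p-m} e^{H_+(\pp|\bal;\bbe)}\ket{k}_m
= e^{\xi(\pp;z)}\,e^{\Lambda_{m-1}(\pp|\bbe)}\,z\frac{(z|\bbe)^{p-1}}{(z;\bal)^p}.
\]

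Next I would extract the $k=q-m$ term by pairing against $(1-\alpha_q\beta_q)\frac{(z;\bal)^{q-1}}{(z|\bbe)^q}$ under $\oint_{\eta}(\,\cdot\,)\frac{dz}{2\pi\ii z}$, using the orthonormality in Proposition~\ref{prop:orthonormality}; the left side collapses to ${}_m\bra{p-m} e^{H_+(\pp|\bal;\bbe)}\ket{q-m}_m$ and the right side to $e^{\Lambda_{m-1}(\pp|\bbe)}(1-\alpha_q\beta_q)\oint_{\eta} e^{\xi(\pp;z)}\frac{(z|\bbe)^{p-1}(z;\bal)^{q-1}}{(z;\bal)^p(z|\bbe)^q}\frac{dz}{2\pi\ii}$. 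After the specialization $\pp=p_k(x/y)$, where $e^{\xi(\pp;z)}=\frac{1+yz}{1-xz}$ by~\eqref{eq:exp_xi_specialization} and $\Lambda_{m-1}(\pp|\bbe)=\Lambda_{m-1}(x/y|\bbe)$, the identities~\eqref{eq:shifting_powers} collapse the integrand to $\frac{1+yz}{1-xz}\cdot\frac{(z;\sigma^p\bal)^{q-p-1}}{(z|\sigma^{p-1}\bbe)^{q-p+1}}$ when $q>p$ (and to $\frac{1+yz}{(1-xz)(1-z\alpha_p)(z-\beta_p)}$ when $q=p$). Its poles inside $\eta$ are exactly $z=\beta_p,\dots,\beta_q$; a degree count shows the residue at $z=\infty$ vanishes, so (treating the contour integral formally as in Remark~\ref{rem:formal_contour_integral}, or for $x$ small and then by rationality in $x,y,\bal,\bbe$) the integral equals minus the residue at $z=1/x$, a single evaluation that yields $(x+y)\prod_{p<j<q}(x-\alpha_j)\big/\prod_{p\leqslant j\leqslant q}(1-x\beta_j)$ for $q>p$ and $\frac{1+y\beta_p}{(1-x\beta_p)(1-\alpha_p\beta_p)}$ for $q=p$. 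Multiplying by $(1-\alpha_q\beta_q)$ and dividing by $e^{\Lambda_{m-1}(x/y|\bbe)}$ gives~\eqref{eq:pqhform_plus}, with the factor $(1-\alpha_p\beta_p)$ cancelling in the diagonal case.

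Finally,~\eqref{eq:pqhform_minus} follows by running the same argument with $\psi^*(w|\bal;\bbe)$ in place of $\psi(z|\bal;\bbe)$ and the dual vacuum action ${}_m\bra{\varnothing} e^{H_-(\pp|\bal;\bbe)}=e^{\Lambda_m(\pp|\bal)}\,{}_m\bra{\varnothing}$ from Corollary~\ref{cor:deformed_current_vacuum} together with the $H_-$ part of Theorem~\ref{thm:deformed_adjoint}; this interchanges the roles of $\bal$ and $\bbe$ throughout (alternatively, one may transport~\eqref{eq:pqhform_plus} using the adjoint relations~\eqref{eq:duality_deformed_fields} and Corollary~\ref{cor:adjoint_current}). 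The step requiring genuine care is the residue evaluation: one must justify reducing to the residue at $z=1/x$ — i.e.\ the vanishing of the residue at infinity, equivalently of the polynomial part of the integrand — and must separate out the diagonal case $q=p$, where $(z;\sigma^p\bal)^{q-p-1}$ has to be read as the factor $(1-z\alpha_p)^{-1}$ rather than as an empty product of linear terms.
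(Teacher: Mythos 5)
Your argument is correct in substance, but it takes a genuinely different route from the paper's proof. You form the generating function ${}_m\bra{p-m}\,e^{H_+(\pp|\bal;\bbe)}\psi(z|\bal;\bbe)\ket{\varnothing}_{m-1}$, push $e^{H_+}$ through with Theorem~\ref{thm:deformed_adjoint} and Corollary~\ref{cor:deformed_current_vacuum}, and then extract the single coefficient with the orthogonality integral of Proposition~\ref{prop:orthonormality} followed by a residue evaluation; this coefficient-extraction step is the same formal manipulation the paper itself performs in the proof of Theorem~\ref{thm:jacobi_trudi}, so the interchange of the sum over $k$ with the contour integral is at the level of rigor the paper accepts (it rests on the linear independence of the shifted powers). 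The paper instead restricts to the finite-dimensional invariant subspace $\fermionfock_m(q)$, writes each $J_k^{(\bal;\bbe)}$ there as $\widetilde{J}_k+\Delta_{m-1}(k|\bbe)I$, invokes the semigroup law $\widetilde{J}_k\widetilde{J}_\ell=\widetilde{J}_{k+\ell}$ from Proposition~\ref{prop:aijkmatrix}, and exponentiates to $(I-x\widetilde{J}_1)^{-1}(I+y\widetilde{J}_1)$, reading off the $(p,q)$ entry by summing the generating series for $A^k_{p,q}$. Your route bypasses Proposition~\ref{prop:aijkmatrix} entirely, at the cost of leaning on the vertex-operator machinery and contour formalism, whereas the paper's route is pure finite-dimensional linear algebra once the $A$-matrix identity is available; both rest ultimately on the same deformed commutation relations. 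Two small corrections to your write-up: in the diagonal case $q=p$ the integrand $\frac{1+yz}{(1-xz)(1-\alpha_p z)(z-\beta_p)}$ has two poles outside the contour, at $z=1/x$ and $z=1/\alpha_p$, so \emph{minus the residue at $1/x$ alone} is not the value of the integral — the quickest fix is to take the single interior residue at $z=\beta_p$, which gives exactly the $\frac{1+y\beta_p}{(1-x\beta_p)(1-\alpha_p\beta_p)}$ you state, so your final answer is unaffected; and for~\eqref{eq:pqhform_minus} the mirror computation with ${}_{m-1}\bra{\varnothing}\psi^*(w|\bal;\bbe)$ does go through, but the adjoint step produces $e^{\xi(\pp;w^{-1})}$ (so after specialization the relevant pole sits at $w=x$, inside the contour) together with the dual-vacuum eigenvalue $e^{\Lambda_{m-1}(\pp|\bal)}$, rather than a literal interchange of $\bal$ and $\bbe$ throughout.
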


\begin{proof}
We only give the proof~\eqref{eq:pqhform_plus} as the proof of~\eqref{eq:pqhform_minus} is analogous.

  Let $\fermionfock_m(q)$ be the finite-dimensional subspace of $\fermionfock_m$ spanned by vectors of the form $\ket{j-m}_m, j\leqslant q$.
  It is clearly invariant under the commutative algebra $\mathfrak{L} = \langle J_k^{(\bal;\bbe)} \mid k>0 \rangle$.
    
  Now
  \[
  J_k^{(\bal;\bbe)} \ket{j-m}_m = J_k^{(\bal;\bbe)} (v_j \wedge \ket{\varnothing}_{m-1})
  = J_k^{(\bal;\bbe)}(v_j) \wedge \ket{\varnothing}_{m-1} + v_j \wedge( J_k^{(\bal;\bbe)} \ket{\varnothing}_{m-1}),
  \]
  and by Corollary \ref{cor:deformed_current_vacuum}, we have
  \[
    v_j \wedge(J_k^{(\bal;\bbe)} \ket{\varnothing}_{m-1}) = \Delta_{m-1}(k|\bbe) \cdot \ket{j-m}_m,
  \]
  Thus with respect to the basis of $\fermionfock_m(q)$, the endomorphism induced by $J_k^{(\bal;\bbe)}$ is
  \[
  \widetilde{J}_k + \Delta_{m-1}(k|\bbe) I_{\fermionfock_m(q)},
  \qquad
    \text{ where }
  \quad
    \widetilde{J}_k = \begin{pmatrix}
       A_{p, p}^k & A^k_{p, p+1} & \cdots & A_{p q}^k\\
       & A_{p+1, p+1}^k & \cdots & A_{p+1, q}^k\\
       &  & \ddots & \vdots\\
       &  &  & A_{q, q}^k
     \end{pmatrix}.
  \]
  Proposition~\ref{prop:aijkmatrix} implies $\widetilde{J}_k \widetilde{J}_{\ell} = \widetilde{J}_{k+\ell}$.
  Thus $H_+(x/y|\bal;\bbe)$ induces on $\fermionfock_m(q)$ the endomorphism
  \[
    \sum_{k=1}^{\infty} \frac{1}{k} (x^k - (-y)^k) \widetilde{J}_1^k
    + \sum_{k=1}^{\infty} \frac{1}{k} (x^k - (-y)^k) \Delta_{m-1}(k|\bbe) I_{\fermionfock_m(q)},
  \] 
  and the second term is simply $\Lambda_{m-1}(x/y|\bbe) I_{\fermionfock_m(q)}$.
  Exponentiating we get
  \[
  {}_m\bra{p-m} e^{H_+ (\bal; \bbe)} \ket{q-m}_m = e^{\Lambda_{m-1}(x/y|\bbe)} \cdot {}_m\bra{p-m} (I - x
     \widetilde{J}_1)^{- 1} (I + y \widetilde{J}_1) \ket{q-m}_m.
  \]
  Now $\bra{p-m} (I - x \widetilde{J}_1)^{- 1} (I + y \widetilde{J}_1) \ket{q-m}_m$ is the upper rightmost entry in the matrix $(I - x \widetilde{J}_1)^{- 1} (I + y \widetilde{J}_1)$, and when $p=q$ this is simply $\frac{1 - \beta_p y}{1 - \beta_p x}$. 
  
For the case $p<q$, we expand out the matrix product:
\[
  (I - x \widetilde{J}_1)^{- 1} (I + y \widetilde{J}_1)
  = (I + y \widetilde{J}_1)\sum_{k=0}^\infty (-1)^k x^k \widetilde{J}_1^k
  = I + (x+y) \sum_{k=1}^\infty x^{k-1} \widetilde{J}_1^k
  = I + (x+y) \sum_{k=1}^\infty x^{k-1} \widetilde{J}_k,
\]
so
\begin{align*}
  \frac{{}_m \bra{p-m} e^{H_+(x/y|\bal;\bbe)}\ket{q-m}_m}{e^{\Lambda_{m-1}(x/y|\bbe)}}
  &= (x+y) \sum_{k=1}^\infty x^{k-1} A^k_{p,q}
  \\&= (x+y)(1 - \alpha_q\beta_q) \sum_{k=1}^\infty x^{k-1} \sum_{r-s = q-p-k} e_r(-\bal_{(p,q)}) h_s(\bbe_{[p,q]})
  \\&= (x+y)(1 - \alpha_q\beta_q) x^{q-p-1} \left(\sum_{r\ge 0} e_r(-\bal_{(p,q)}) x^{-r}\right) \left(\sum_{s\geqslant 0} h_s(\bbe_{[p,q]}) x^s\right)
  \\&= (x+y)(1 - \alpha_q\beta_q) \prod_{p<i<q} (x-\alpha_i) \prod_{p\leqslant j\leqslant q} (1-\beta_jq)^{-1},
\end{align*}
recalling the notation~\eqref{eq:eh_defn}, as desired.
\end{proof}

Let $E_{ij}$ be the elementary endomorphism of $V$ that maps $v_j$ to $v_i$ and annihilates the other basis elements.
From Proposition~\ref{prop:explicit_current}, if we replace $\normord{\psi_i \psi_j^*} \mapsto E_{ij}$
(essentially we are forgetting about the normal ordering), then we can realize the deformed current operators $J_k^{(\bal;\bbe)}$ as linear functions from $\widehat{V} \to \widehat{V}$, where $\widehat{V} := \prod_{i\in\ZZ} \CC v_i$; alternatively as (bi-infinite) matrices.
Under this identification, our goal is to show that the deformed current operators commute as elements in $\End(\widehat{V})$ (\textit{i.e.}, as matrices).

\begin{proposition}
\label{prop:multiplying_linear_currents}
As linear functions in $\End(\widehat{V})$, for all $k, \ell \in \ZZ$, we have
\[
J_k^{(\bal;\bbe)} J_{\ell}^{(\bal;\bbe)} = J_{k+\ell}^{(\bal;\bbe)}.
\]
Moreover, $[J_k^{(\bal;\bbe)}, J_{\ell}^{(\bal;\bbe)}] = 0$ and the inverse of $J_k^{(\bal;\bbe)}$ is $J_{-k}^{(\bal;\bbe)}$.
\end{proposition}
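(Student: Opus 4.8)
The plan is to work entirely inside $\End(\widehat{V})$ via the description $J_k^{(\bal;\bbe)} = \sum_{i,j} A_{ij}^k E_{ij}$ furnished by Proposition~\ref{prop:explicit_current} (with $\normord{\psi_i\psi_j^*}$ replaced by $E_{ij}$). Since $E_{ip}E_{p'j} = \delta_{pp'}E_{ij}$, the claimed identity $J_k^{(\bal;\bbe)}J_\ell^{(\bal;\bbe)} = J_{k+\ell}^{(\bal;\bbe)}$ is equivalent to the family of scalar identities $\sum_{p\in\ZZ} A_{ip}^k A_{pj}^\ell = A_{ij}^{k+\ell}$ for all $i,j,k,\ell\in\ZZ$. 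Once this is established, commutativity follows from $J_{k+\ell}^{(\bal;\bbe)} = J_{\ell+k}^{(\bal;\bbe)}$, and the final assertion follows because $A^0_{ij} = \delta_{ij}$ (the $k=0$ case of~\eqref{eq:a_int}, equivalently Proposition~\ref{prop:orthonormality}) gives $J_0^{(\bal;\bbe)} = \mathrm{id}_{\widehat{V}}$, so that $J_k^{(\bal;\bbe)} J_{-k}^{(\bal;\bbe)} = J_{-k}^{(\bal;\bbe)} J_k^{(\bal;\bbe)} = \mathrm{id}_{\widehat{V}}$.

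For the scalar identities I would use the contour-integral formula~\eqref{eq:a_int}, which in the compact notation $f_i(z) := z\,(z|\bbe)^{i-1}/(z;\bal)^i$ and $g_j(z) := (1-\alpha_j\beta_j)(z;\bal)^{j-1}/(z|\bbe)^j$ (so that $\psi(z|\bal;\bbe) = \sum_i f_i(z)\psi_i$ and $\psi^*(w|\bal;\bbe) = \sum_j g_j(w)\psi_j^*$) reads $A_{ij}^k = \oint_\gamma z^{-k} f_i(z) g_j(z)\,\frac{dz}{2\pi\ii z}$. Writing $A_{ip}^k$ with integration variable $z$ and $A_{pj}^\ell$ with integration variable $w$ and summing over $p$, the sum lands on the kernel $\sum_p g_p(z) f_p(w)$ inside a double contour integral. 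The essential point is that this kernel is the formal delta distribution: substituting the mode expansions of the deformed fields into the anticommutation relation~\eqref{eq:deformed_delta_comm} and using $[\psi_i,\psi_j^*]_+ = \delta_{ij}$ gives $\sum_p f_p(z)g_p(w) = \delta(w/z)$, hence $\sum_p g_p(z) f_p(w) = \delta(z/w) = \delta(w/z)$. Applying the collapsing identity $\oint_w h(w)\,\delta(w/z)\,\frac{dw}{2\pi\ii w} = h(z)$ to perform the $w$-integral then leaves exactly $\oint_z z^{-(k+\ell)} f_i(z) g_j(z)\,\frac{dz}{2\pi\ii z} = A_{ij}^{k+\ell}$, as desired.

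The part requiring care is the formal status of these manipulations: for generic $\bal,\bbe$ the sum $\sum_p A_{ip}^k A_{pj}^\ell$ is genuinely infinite, so everything should be read in $\CC\FPS{\bal,\bbe}\FPS{z^{\pm1},w^{\pm1}}$ exactly as in Remark~\ref{rem:formal_contour_integral}; in particular the kernel must be split according to the sign of $p$, the two halves converging in the complementary regions $\abs{w/z}<1$ and $\abs{z/w}<1$ and assembling into $\delta(w/z)$ without overlap. I expect this bookkeeping to be the main, though essentially routine, obstacle. An alternative, distribution-free route worth at least mentioning is to insert the explicit expansions~\eqref{eq:adef} into $\sum_p A_{ip}^k A_{pj}^\ell$ and recognize the answer as a convolution of the generating series $\prod(1-\alpha_m z^{-1})$ and $\prod(1-\beta_n z)^{-1}$, which multiply as expected; this yields the same identity with no appeal to formal distributions. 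I would present the delta-distribution argument as the proof and relegate the direct computation to a remark.
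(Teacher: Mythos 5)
Your proposal is correct and takes essentially the same route as the paper: the statement is reduced by matrix multiplication to the convolution identity $\sum_{p} A_{ip}^{k} A_{pj}^{\ell} = A_{ij}^{k+\ell}$ (Proposition~\ref{prop:aijkmatrix}), each factor is written via the contour formula~\eqref{eq:a_int}, the summed kernel is recognized as the formal $\delta$ distribution, and the double integral is collapsed by the residue property of $\delta$. The only (cosmetic) difference is that you identify the kernel by invoking the already-established anticommutation relation~\eqref{eq:deformed_delta_comm}, whereas the paper re-derives it directly via the telescoping geometric-series argument of Lemma~\ref{lem:finitegeometric}, split according to the sign of the summation index, under the explicit analytic hypotheses stated in Proposition~\ref{prop:aijkmatrix}.
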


\begin{remark}
\label{rem:mcaveat}
By Proposition~\ref{prop:multiplying_linear_currents} we could say that $J_k^{(\bal;\bbe)} \cdot J_{\ell}^{(\bal;\bbe)} = J_{k+\ell}^{(\bal;\bbe)}$ {\textit{as matrices}}.
However this is misleading in regards to our deformed boson-fermion correspondence.
We have used the natural (associative) algebra structure of the linear operators in $\End(\CC^{\infty})$ in order to define the bracket operation and hence a Lie algebra structure.
Yet it is not true that a representation of $\End(\CC^{\infty})$ as a Lie algebra respects the multiplication of matrices (\textit{i.e.}, that it is an algebra morphism).
In particular, as endomorphisms of the fermionic Fock space $\fermionfock$, the elements $J_k^{(\bal;\bbe)} \circ J_{\ell}^{(\bal;\bbe)}$ and $J_{k + \ell}^{(\bal;\bbe)}$ are not equal.
Indeed, the defining property of being a Lie algebra representation only requires the action of $[J_k^{(\bal;\bbe)}, J_{\ell}^{(\bal;\bbe)}] = J_{k+\ell}^{(\bal;\bbe)} - J_{\ell+k}^{(\bal;\bbe)} = 0$ and $J_k^{(\bal;\bbe)} \circ J_{\ell}^{(\bal;\bbe)} - J_{\ell}^{(\bal;\bbe)} \circ J_k^{(\bal;\bbe)} = 0$ to be equal.
\end{remark}

To prove Proposition~\ref{prop:multiplying_linear_currents}, by matrix multiplication it is equivalent to show the following property of the $A_{ij}^k$ coefficients.
In the proof, we will need another variation of the formal $\delta$ function defined by
\[
\delta(z - w) := z^{-1} \sum_{k \in \ZZ} \left( \frac{z}{w} \right)^n,
\]
We note that $\delta(z - w) = \delta(w - z) = z^{-1} \delta(w/z)$ (see, \textit{e.g.},~\cite[Ch.~2]{KacVertex}) and it sometimes written as $\delta(w,z)$.

\begin{proposition}
\label{prop:aijkmatrix}
Assume that
\[
\frac{(z;\bal)^n (\zeta|\bbe)^n}{(z|\bbe)^n (\zeta;\bal)^n} \xrightarrow{n\to\infty} 0,
\qquad
\text{and} \qquad \frac{(z|\iota_{\bbe}\bbe)^n (\zeta;\iota_{\bal}\bal)^n}{(z;\iota_{\bal}\bal)^n (\zeta|\iota_{\bbe}\bbe)^n} \xrightarrow{n\to\infty} 0.
\]
Then for all $k, \ell, p, q \in \ZZ$, we have
\begin{equation}
\label{eq:aijkmatrix}
\sum_{j \in \ZZ} A_{p, j}^k A_{j, q}^{\ell} = A_{p, q}^{k + \ell}.
\end{equation}
\end{proposition}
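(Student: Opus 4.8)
The natural strategy is to encode the identity~\eqref{eq:aijkmatrix} as a contour-integral manipulation using the unified formula~\eqref{eq:a_int}. Writing $A_{p,j}^k$ and $A_{j,q}^\ell$ each as a contour integral (in variables $z$ and $\zeta$ respectively), the left-hand side of~\eqref{eq:aijkmatrix} becomes a double contour integral with a sum over $j\in\ZZ$ of a product of shifted powers of the form $\frac{(z|\sigma^p\bal)^{j-p-1}}{(z;\sigma^{p-1}\bbe)^{j-p+1}}$ and $\frac{(\zeta|\sigma^j\bal)^{q-j-1}}{(\zeta;\sigma^{j-1}\bbe)^{q-j+1}}$, together with the factors $(1-\alpha_j\beta_j)$ that appear naturally in~\eqref{eq:a_int}. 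The plan is to recognize the $j$-sum as (a shifted version of) the geometric-progression identity from Lemma~\ref{lem:finitegeometric} or Corollary~\ref{cor:finitegeometric}: indeed, the telescoping in Lemma~\ref{lem:finitegeometric} is exactly the kind of sum $\sum_k (1-\alpha_{k+1}\beta_{k+1})\frac{(z|\bal)^k}{(z;\bbe)^{k+1}}\frac{(w|\bbe)^k}{(w;\bal)^{k+1}}$ that governs orthogonality of the shifted powers. The analytic hypotheses stated in the proposition are precisely what is needed to take $n\to\infty$ in that finite geometric sum (to pass from the partial sums of Lemma~\ref{lem:finitegeometric} to an infinite sum) and to kill the boundary terms.

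Concretely, I would proceed as follows. First, split into cases according to the signs of $k,\ell$ (or, better, reduce to the ``generic'' case and handle the diagonal terms $A_{ii}^{\pm k}=\beta_i^{\pm k}$ or $\alpha_i^{\pm k}$ separately, since~\eqref{eq:a_int} already unifies all of them). Second, substitute the integral formula~\eqref{eq:a_int} for both factors, interchange the (convergent, by the analytic assumption) sum over $j$ with the double integral, and compute the inner sum $\sum_{j\in\ZZ}(1-\alpha_j\beta_j)\frac{(z|\sigma^p\bal)^{j-p-1}}{(z;\sigma^{p-1}\bbe)^{j-p+1}}\frac{(\zeta|\sigma^j\bal)^{q-j-1}}{(\zeta;\sigma^{j-1}\bbe)^{q-j+1}}$. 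Using~\eqref{eq:shifting_powers} to normalize the shifts to a common base, this inner sum should telescope by (a reindexed, bi-infinite version of) Corollary~\ref{cor:finitegeometric}, collapsing to something proportional to $\delta(z-\zeta)$ (or $z^{-1}\delta(\zeta/z)$), times the correct shifted-power prefactor matching $\frac{(z|\sigma^p\bal)^{q-p-1}}{(z;\sigma^{p-1}\bbe)^{q-p+1}}$. Third, use the sifting property $f(z)\delta(z-\zeta)=f(\zeta)\delta(z-\zeta)$ to collapse the double integral to a single integral, recognize the result as $A_{p,q}^{k+\ell}$ via~\eqref{eq:a_int}, and conclude.

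The main obstacle I anticipate is making the bi-infinite sum over $j\in\ZZ$ rigorous: the partial-sum telescoping identities (Lemma~\ref{lem:finitegeometric}, Corollary~\ref{cor:finitegeometric}) are stated for $\sum_{k=0}^n$, so I need to run the telescoping out to $+\infty$ \emph{and} down to $-\infty$, and both boundary terms must vanish. This is exactly where the two convergence hypotheses $\frac{(z;\bal)^n(\zeta|\bbe)^n}{(z|\bbe)^n(\zeta;\bal)^n}\to 0$ and $\frac{(z|\iota_{\bbe}\bbe)^n(\zeta;\iota_{\bal}\bal)^n}{(z;\iota_{\bal}\bal)^n(\zeta|\iota_{\bbe}\bbe)^n}\to 0$ come in — one controlling the $j\to+\infty$ tail, the other (after applying~\eqref{eq:shifted_inversion} to rewrite negative shifted powers via $\iota$) the $j\to-\infty$ tail. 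Alternatively, and perhaps more cleanly, one can run the whole computation formally in $\CC\FPS{\bal,\bbe}\FPS{z^{\pm1},\zeta^{\pm1}}$ as in Remark~\ref{rem:formal_contour_integral}, where each of the two tails is handled by a formal-distribution argument (the appropriate tail lies in $\CC\FPS{\bal,\bbe}\FPS{z^{-1}\zeta}$ or $\CC\FPS{\bal,\bbe}\FPS{z\zeta^{-1}}$), and the analytic hypotheses are then exactly the conditions under which the formal answer agrees with the genuine functional answer. I would present the formal-distribution version as the main proof and remark that the analytic hypotheses legitimize the functional reading, mirroring how Proposition~\ref{prop:vacuum_deformed_fields} and~\eqref{eq:classical_delta_comm} are treated earlier.
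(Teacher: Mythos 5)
Your proposal is correct and follows essentially the same route as the paper: substitute the unified contour-integral formula~\eqref{eq:a_int} for both factors, normalize the shifts via~\eqref{eq:shifting_powers}, recognize the bi-infinite $j$-sum as two telescoping tails (in the style of Lemma~\ref{lem:finitegeometric}) whose boundary terms are killed by the two stated hypotheses, obtain the formal distribution $\delta(\zeta-z)$, and collapse the double integral by the sifting property to recover $A_{p,q}^{k+\ell}$. The paper carries this out analytically rather than via the purely formal-distribution reading you offer as an alternative, but the substance is identical.
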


\begin{proof}
By using~\eqref{eq:a_int}, we have
\begin{align*}
\sum_j A_{p, j}^k A_{j, q}^{\ell} & = \sum_{j \in \ZZ} \oint_{\gamma} z^{1-k} (1-\alpha_j\beta_j) \frac{(z|\sigma^p\bal)^{j-p-1}}{(z;\sigma^{p-1}\bbe)^{j-p+1}} \frac{dz}{2\pi\ii z} \oint_{\eta} \zeta^{1-\ell} (1-\alpha_q\beta_q) \frac{(\zeta|\sigma^j\bal)^{q-j-1}}{(\zeta;\sigma^{j-1}\bbe)^{q-j+1}} \frac{d\zeta}{2\pi\ii \zeta}
\\ & = \oint_{\gamma} \oint_{\eta} z^{1-k} \zeta^{1-\ell} (1-\alpha_q\beta_q) \frac{(\zeta|\bal)^{q-1} (z; \bbe)^{p-1}}{(\zeta; \bbe)^q (z|\bal)^p} \sum_{j \in \ZZ} (1-\alpha_j\beta_j) \frac{(z|\bal)^{j-1} (\zeta; \bbe)^{j-1} }{ (z; \bbe)^j (\zeta|\bal)^j} \frac{d\zeta}{2\pi\ii \zeta} \frac{dz}{2\pi\ii z}
\end{align*} 
As in the proof of Lemma~\ref{lem:finitegeometric} and Proposition~\ref{prop:vacuum_deformed_fields},
\[
(z-\zeta) (1-\alpha_j\beta_j)\frac{(z;\bal)^{j-1} (\zeta|\bbe)^{j-1}}{(z|\bbe)^j (\zeta;\bal)^j} = \frac{(z;\bal)^{j-1} (\zeta|\bbe)^{j-1}}{(z | \bbe)^{j-1} (\zeta;\bal)^{j-1}} - \frac{(z;\bal)^j (\zeta|\bbe)^j}{(z | \bbe)^j (\zeta;\bal)^j},
\]
so
\[
(z-\zeta)\sum_{0<j\le n}(1-\alpha_j\beta_j)\frac{(z;\bal)^{j-1} (\zeta|\bbe)^{j-1}}{(z|\bbe)^j (\zeta;\bal)^j} = 1 - \frac{(z;\bal)^n (\zeta|\bbe)^n}{(z | \bbe)^n (\zeta;\bal)^n}.
\]
Rearranging and taking the limit, we have
\begin{align*}
\sum_{0<j}(1-\alpha_j\beta_j)\frac{(z;\bal)^{j-1} (\zeta|\bbe)^{j-1}}{(z|\bbe)^j (\zeta;\bal)^j} & = \frac{z^{-1}}{1-\zeta/z}\lim_{n\to\infty}\left(1 - \frac{(z;\bal)^n (\zeta|\bbe)^n}{(z | \bbe)^n (\zeta;\bal)^n)}\right)
\\ & = \frac{z^{-1}}{1-\zeta/z} = z^{-1} + \zeta/z^2 + \cdots,
\end{align*}
using the relevant assumption.
Similarly,
\[(\zeta-z)\sum_{-n<j\le 0}(1-\alpha_j\beta_j)\frac{(z;\bal)^{j-1} (\zeta|\bbe)^{j-1}}{(z|\bbe)^j (\zeta;\bal)^j} = 1 - \frac{(z | \iota_{\bbe}\bbe)^n (\zeta;\iota_{\bal}\bal)^n}{(z;\iota_{\bal}\bal)^n (\zeta|\iota_{\bbe}\bbe)^n},
\]
and using the relevant assumption,
\[
\sum_{j\le 0}(1-\alpha_j\beta_j)\frac{(z;\bal)^{j-1} (\zeta|\bbe)^{j-1}}{(z|\bbe)^j (\zeta;\bal)^j} = \frac{\zeta^{-1}}{1-z/\zeta} = \zeta^{-1} + z/\zeta^2 + \cdots.
\]
Summing these, we get
\[
\sum_{j\in\ZZ}(1-\alpha_j\beta_j)\frac{(z;\bal)^{j-1} (\zeta|\bbe)^{j-1}}{(z|\bbe)^j (\zeta;\bal)^j} 
= \delta(\zeta - z).
\]
Plugging this back in,
\begin{align*}
\sum_j A_{p, j}^k A_{j, q}^{\ell} &= \oint_{\gamma} \oint_{\eta} z^{-k} \zeta^{1-\ell} (1-\alpha_q\beta_q) \frac{(\zeta|\bal)^{q-1} (z; \bbe)^{p-1}}{(\zeta; \bbe)^q (z|\bal)^p} \delta(\zeta - z) \frac{d\zeta}{2\pi\ii} \frac{dz}{2\pi\ii z}
\\&= \oint_{\gamma} z^{-k} z^{1-\ell} (1-\alpha_q\beta_q) \frac{(z|\bal)^{q-1} (z; \bbe)^{p-1}}{(z; \bbe)^q (z|\bal)^p} \frac{dz}{2\pi\ii z}
\\&= \oint_{\gamma} z^{1-k-\ell} (1-\alpha_q\beta_q) \frac{(z|\sigma^p\bal)^{q-p-1}}{(z; \sigma^{p-1}\bbe)^{q-p+1}} \frac{dz}{2\pi\ii z}
\\&= A_{p,q}^{k+\ell},
\end{align*}
where we have used the standard fact of formal residues of the $\delta$ distribution $\operatorname{Res}_{z=0} \delta(\zeta - z) f(\zeta) = f(z)$ (see, \textit{e.g.},~\cite[Prop.~3.8]{BN06}).
\end{proof}

As a consequence of Proposition~\ref{prop:aijkmatrix}, we have the following symmetric function identities.

\begin{corollary}
Let $p,q,k,\ell\in\ZZ$, and let $d = q-p-k-\ell$.

\begin{subequations}
If $k,\ell > 0$,
\begin{align}
\sum_{r-s+t-u = d} (1 - \alpha_j\beta_j) e_r(-\bal_{(p,j)}) h_s(\bbe_{[p,j]}) e_t(-\bal_{(j,q)}) h_u(\bbe_{[j,q]}) = \sum_{r-s = d} e_r(-\bal_{(p,q)}) h_s(\bbe_{[p,q]});
\end{align} 
if $k,\ell < 0$,
\begin{align}
\sum_{r-s+t-u = d} (1 - \alpha_j\beta_j) e_r(-\bbe_{(j,p)}) h_s(\bal_{[j,p]}) e_t(-\bbe_{(q,j)}) h_u(\bal_{[q,j]}) = \sum_{r-s = d} e_r(-\bbe_{(q,p)}) h_s(\bal_{[q,p]});
\end{align}
if $k>0>\ell$,
\begin{align}
\sum_{r-s+t-u = d} (1 - \alpha_j\beta_j)  & e_r(-\bal_{(p,j)}) h_s(\bbe_{[p,j]}) e_t(-\bbe_{(q,j)})  h_u(\bal_{[q,j]})
\\ &= \begin{cases} \notag
  \sum_{r-s = d} e_r(-\bal_{(p,q)}) h_s(\bbe_{[p,q]}) & \text{if } k+\ell > 0, \\
  \sum_{r-s = d} e_r(-\bbe_{(q,p)}) h_s(\bal_{[q,p]}) & \text{if }  k+\ell < 0, \\ 
  \frac{\delta_{pq}}{1-\alpha_q\beta_q} & \text{if } k+\ell = 0;
\end{cases}
\end{align}
\end{subequations}
where on the left side of every equation $j = r-s+p+k = q-\ell-t+u$.
\end{corollary}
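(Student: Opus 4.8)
The plan is to read the corollary off Proposition~\ref{prop:aijkmatrix}, whose content is the matrix identity $\sum_{j\in\ZZ}A_{p,j}^k A_{j,q}^\ell = A_{p,q}^{k+\ell}$, by inserting the explicit formulas~\eqref{eq:adef} for the coefficients. The first step is cosmetic: every off-diagonal $A_{j,q}^\ell$ (with $j\ne q$) and also $A_{p,q}^{k+\ell}$ when $k+\ell\ne0$ carries the factor $1-\alpha_q\beta_q$, so dividing the identity through by it clears the common factor; when $k+\ell=0$ one uses instead $A_{p,q}^0=\delta_{pq}$, and the same division produces the term $\frac{\delta_{pq}}{1-\alpha_q\beta_q}$ visible in the mixed case.

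Next I would expand $A_{p,j}^k$ and $A_{j,q}^\ell$ via~\eqref{eq:aposdef} or~\eqref{eq:anegdef} according to the signs of $k$ and $\ell$; each is an inner sum over a pair of nonnegative indices of fixed difference. Multiplying the two inner sums, the four indices $(r,s,t,u)$ become jointly constrained by $r-s+t-u=q-p-k-\ell=d$, and either of the two original difference conditions solves for the summation variable, $j=r-s+p+k=q-\ell-t+u$; this is the side condition in the statement. The right-hand side is the single inner sum expanding $A_{p,q}^{k+\ell}$, whose shape — whether it is built from $e_r(-\bal)$, $h_s(\bbe)$ or from $e_r(-\bbe)$, $h_s(\bal)$, and which direction the index intervals run — is forced by the sign of $k+\ell$. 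Going through the three regimes $k,\ell>0$, $k,\ell<0$, and $k>0>\ell$ — the last with the further split according to the sign of $k+\ell$ — and performing the bookkeeping relabellings $r\leftrightarrow s$, $t\leftrightarrow u$ that order $e$ and $h$ as printed in each line, reproduces the displayed formulas.

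The step I expect to demand the most care is the two endpoint values $j=p$ and $j=q$ of the summation index, where one must use the diagonal values $A_{p,p}^k=\beta_p^k$ and $A_{q,q}^\ell=\beta_q^\ell$ (resp.\ $\alpha_p^k$, $\alpha_q^\ell$) from~\eqref{eq:aposdef_diag}/\eqref{eq:anegdef_diag}: the off-diagonal expression obtained by formally setting $j=p$ with the empty parameter blocks $\bal_{(p,p)}$, $\bbe_{[p,p]}$ overcounts by the stray factor $1-\alpha_p\beta_p$ (and likewise at $j=q$), so these finitely many boundary contributions have to be extracted and checked by hand. Having done so, one confirms that adding them back to the generic terms is consistent with the telescoping computation already performed in the proof of Proposition~\ref{prop:aijkmatrix} — the one that assembles the formal distribution $\delta(\zeta-z)$ and, on taking the residue at $z=0$, returns $A_{p,q}^{k+\ell}$ — so that the claimed right-hand sides emerge; in the $k+\ell=0$ subcase the $\delta_{pq}$ is precisely this residue contribution at the origin. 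Everything that remains is routine manipulation of elementary and complete homogeneous symmetric functions in the shifted parameter blocks.
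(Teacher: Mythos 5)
Your route is the paper's implicit one: the paper offers no argument for this corollary beyond the phrase ``as a consequence of Proposition~\ref{prop:aijkmatrix},'' i.e.\ one substitutes the explicit formulas~\eqref{eq:adef} into $\sum_{j\in\ZZ}A^k_{p,j}A^{\ell}_{j,q}=A^{k+\ell}_{p,q}$ and clears the common factor $1-\alpha_q\beta_q$, and your generic bookkeeping (the constraint $r-s+t-u=d$, the recovery $j=r-s+p+k=q-\ell-t+u$, the case split on the signs of $k$, $\ell$, $k+\ell$) is exactly that expansion.

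The gap is in the one step you yourself single out and then wave through: the endpoints $j=p$ and $j=q$. You assert that after replacing the generic expressions by the diagonal values one ``confirms consistency'' and the claimed right-hand sides emerge. Carried out honestly, the check shows the opposite for the formula as printed: since $A^k_{p,p}=\beta_p^k$ and $A^{\ell}_{q,q}=\beta_q^{\ell}$ carry no $1-\alpha\beta$ prefactor, the $j=p$ and $j=q$ terms of $\sum_j A^k_{p,j}A^{\ell}_{j,q}/(1-\alpha_q\beta_q)$ enter \emph{without} the factor $1-\alpha_j\beta_j$, whereas the displayed left-hand side applies that factor uniformly in $j$. Concretely, for $k=\ell=1$ and $q=p+2$ the displayed left side evaluates to $1-\alpha_{p+1}(\beta_p+\beta_{p+1}+\beta_q)+\alpha_p\alpha_{p+1}\beta_p^2+\alpha_{p+1}\alpha_q\beta_q^2$, while the right side is $1-\alpha_{p+1}(\beta_p+\beta_{p+1}+\beta_q)$. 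What the expansion of Proposition~\ref{prop:aijkmatrix} actually yields is the identity in which the prefactor $1-\alpha_j\beta_j$ appears only for interior values of $j$, the endpoint summands being taken from~\eqref{eq:aposdef_diag} and~\eqref{eq:anegdef_diag}; your proof must either state and use this endpoint convention explicitly (and note that this is how the displayed identity has to be read), or acknowledge the discrepancy --- the claim that the boundary terms ``add back consistently'' is precisely the assertion that fails, so as written the only nontrivial step is not actually established. Two smaller points: in the mixed case $k>0>\ell$ the $j$-sum runs over all $j\geqslant\max(p,q)$ and is infinite, so you are implicitly inheriting the convergence hypotheses of Proposition~\ref{prop:aijkmatrix} and should say so; and the term $\delta_{pq}/(1-\alpha_q\beta_q)$ in the $k+\ell=0$ subcase is simply $A^0_{p,q}=\delta_{pq}$ divided by the cleared factor, not a separate residue contribution at the origin.
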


\begin{theorem}
\label{thm:texph}
As operators on $\fermionfock$, we have
\[
\RTM_{\pm}(x/y|\bal;\bbe) = e^{H_{\pm}(x/y|\bal;\bbe)}.
\]
\end{theorem}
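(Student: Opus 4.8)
The plan is to prove the identity by comparing all matrix coefficients, which by Remark~\ref{rem:operator_well_defined} suffices. Both $\RTM_{\pm}(x/y|\bal;\bbe)$ and $e^{H_{\pm}(x/y|\bal;\bbe)}$ preserve the charge — for the latter because $[J_0^{(\bal;\bbe)},J_{\pm k}^{(\bal;\bbe)}]=0$ by Theorem~\ref{thm:heisenberg_relations} — so we fix $m$ and aim to show
\[
{}_m\bra{\mu}\RTM_{\pm}(x/y|\bal;\bbe)\ket{\lambda}_m = {}_m\bra{\mu}e^{H_{\pm}(x/y|\bal;\bbe)}\ket{\lambda}_m
\]
for all $\lambda,\mu\in\mcP$. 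Writing $\ket{\lambda}_m = v_{j_1}\wedge\cdots\wedge v_{j_n}\wedge\ket{\varnothing}_{m-n}$ via~\eqref{eq:ket-from-vacuum} (and ${}_m\bra{\mu}$ dually), where $n$ counts the particles that distinguish $\ket{\lambda}_m$ from the vacuum string, only finitely many particles ever move, so every determinant appearing below is effectively finite (the same mechanism used in the discussion of $\Sigma_{(\bal;\bbe)}$). The strategy is to reduce both sides to the \emph{same} $n\times n$ determinant of single-particle data, which is exactly the content of Propositions~\ref{prop:teffect} and~\ref{prop:heffect}.

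On the lattice side, the entries of Table~\ref{table:model_weights} satisfy the free fermion condition $a_1a_2+b_1b_2=c_1c_2$, since $(1-\beta x)(y+\alpha)+(1+\beta y)(x-\alpha)=(x+y)(1-\alpha\beta)$, and similarly for Table~\ref{table:neg_model_weights}. Hence the free fermion Lindstr\"om--Gessel--Viennot lemma \cite[Proposition~A.2]{NaprienkoFFS} applies and expresses ${}_m\bra{\mu}\RTM_+(x/y|\bal;\bbe)\ket{\lambda}_m$ as a background factor — the partition function of the configuration in which no particle moves, which by the Lemma preceding Proposition~\ref{prop:teffect} is $e^{\Lambda_{m-n}(x/y|\bbe)}$ — times the $n\times n$ determinant whose $(a,b)$ entry is the normalized single-particle partition function transporting particle $b$ to particle $a$. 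Those single-particle partition functions are precisely the quantities of Proposition~\ref{prop:teffect}, once one divides out the common $e^{\Lambda_{m-1}(x/y|\bbe)}$ and reduces arbitrary particle positions to the normal form $q\geqslant p\geqslant m$.

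On the exponential side, $H_{\pm}(x/y|\bal;\bbe)=\sum_{k\geqslant 1}\frac{x^k-(-y)^k}{k}J^{(\bal;\bbe)}_{\pm k}$ is, up to an additive scalar, quadratic in the fermions because $J^{(\bal;\bbe)}_{\pm k}=\sum_{i,j}A_{ij}^{\pm k}\normord{\psi_i\psi_j^*}$ by Proposition~\ref{prop:explicit_current}; thus $e^{H_{\pm}(x/y|\bal;\bbe)}$ is group-like, and Wick's theorem — used exactly as in the proof of Theorem~\ref{thm:jacobi_trudi} — writes ${}_m\bra{\mu}e^{H_+(x/y|\bal;\bbe)}\ket{\lambda}_m$ as the same background factor $e^{\Lambda_{m-n}(x/y|\bbe)}$ times the $n\times n$ determinant of single-particle matrix coefficients, which are precisely those computed in Proposition~\ref{prop:heffect}. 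Equivalently, one combines Proposition~\ref{prop:multiplying_linear_currents} — which already gives that on a finite truncation $e^{H_+(x/y|\bal;\bbe)}$ acts as $e^{\Lambda_{m-1}(x/y|\bbe)}$ times the single-particle operator $(I-x\widetilde{J}_1)^{-1}(I+y\widetilde{J}_1)$, as in the proof of Proposition~\ref{prop:heffect} — with the exterior-power functoriality of this action on the multi-particle states $v_{j_1}\wedge\cdots\wedge v_{j_n}\wedge\ket{\varnothing}_{m-n}$. Since Propositions~\ref{prop:heffect} and~\ref{prop:teffect} give the same single-particle data, the two $n\times n$ determinants agree, the two background factors agree, and therefore $\RTM_{\pm}(x/y|\bal;\bbe)=e^{H_{\pm}(x/y|\bal;\bbe)}$. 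The $\RTM_-$ case is identical, using Table~\ref{table:neg_model_weights} and the second formulas in Propositions~\ref{prop:teffect} and~\ref{prop:heffect}, which amounts to interchanging the roles of $\bal$ and $\bbe$ in the single-particle weights.

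The main obstacle I expect is the bookkeeping that identifies the two determinantal reductions as literally the same object: one must track how the column-dependent normalizations $\omega_j^{(m)}$ reorganize under the free fermion LGV lemma so that the extracted background is exactly $e^{\Lambda_{m-n}(x/y|\bbe)}$ (and not, say, $n$ factors of $e^{\Lambda_{m-1}(x/y|\bbe)}$), matching the vacuum-eigenvalue factor produced by Wick's theorem on the fermionic side, and one must check that the hypotheses of \cite[Proposition~A.2]{NaprienkoFFS} are met for this bi-infinite model with finitely supported boundary data. The sign conventions in~\eqref{eq:ket-from-vacuum} and in the LGV determinant, together with the translation of arbitrary particle positions to the $q\geqslant p\geqslant m$ normal form, are routine but must be handled carefully.
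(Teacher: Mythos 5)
Your proposal is correct and follows essentially the same route as the paper: reduce to matrix coefficients, use Wick's theorem for the group-like $e^{H_{\pm}(x/y|\bal;\bbe)}$ on the fermionic side and the free fermionic LGV lemma of~\cite[Prop.~A.2]{NaprienkoFFS} on the lattice side, and then match the resulting determinants entrywise by comparing the single-particle computations of Propositions~\ref{prop:teffect} and~\ref{prop:heffect}. The only cosmetic difference is that you factor out a vacuum background and work with normalized entries (and verify the free fermion weight condition explicitly), whereas the paper's determinants have the unnormalized one-particle matrix coefficients as entries, so the normalization bookkeeping you flag is absorbed automatically.
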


\begin{proof}
We show the claim by proving
\begin{equation}
\bra{\mu} e^{H_{\pm}(x/y|\bal;\bbe)} \ket{\lambda}_m = \bra{\mu} \RTM_{\pm}(x/y|\bal;\bbe) \ket{\lambda}_m
\end{equation}
for all partitions $\lambda$ and $\mu$ as the matrix coefficients uniquely determine the operator (see, \textit{e.g.},~\cite[Prop.~2.2]{AlexandrovZabrodin}).
Given $\lambda$ and $\mu$, we choose an integer $k$ such that $\lambda_{k+1}=\mu_{k+1}=0$.
Recall that
\begin{align*}
\ket{\lambda}_m & = \psi_{\lambda_1+m} \psi_{\lambda_2+m-1} \dotsm \psi_{\lambda_k+m-k+1} \ket{\varnothing}_{m-k}
\\ & = v_{\lambda_1+m}\wedge v_{\lambda_2+m-1} \wedge\cdots.
\end{align*}

By~\cite[Prop.~2.4, (2.47)]{AlexandrovZabrodin}, $e^{H_{\pm}(x/y|\bal;\bbe)}$ satisfies Wick's theorem: 
\[
\bra{\mu} e^{H_{\pm}(x/y|\bal;\bbe)} \ket{\lambda}_{m-k} = \det_{i,j\leqslant k} \bra{\mu_i+k-i+1} e^{H_{\pm}(x/y|\bal;\bbe)} \ket{\lambda_j+k-j+1}_{m-k}.
\] 
On the other hand, by the free fermionic LGV Lemma~\cite[Prop.~A.2]{NaprienkoFFS}, we have
\begin{equation}
\label{eq:ttwopoint}
\bra{\mu} \RTM_{\pm}(x/y|\bal;\bbe) \ket{\lambda}_{m-k} = \det_{i,j\leqslant k} \bra{\mu_i+k-i+1} \RTM_{\pm}(x/y|\bal;\bbe) \ket{\lambda_j+k-j+1}_{m-k}.
\end{equation}
See also Molev~\cite[Thm.~3.1]{MolevFactorialSupersymmetric}, where the LGV Lemma is used to prove a result equivalent to~\eqref{eq:ttwopoint} in the case where $\bbe = 0$.
It is thus sufficient to prove that if $q\geqslant p\geqslant m$, then
\[
\bra{q-m} \RTM_{\pm}(x/y|\bal;\bbe) \ket{p-m}_m= \bra{q-m} e^{H_{\pm}(x/y|\bal;\bbe)} \ket{p-m}_m,
\]
which follows by comparing Propositions~\ref{prop:teffect} and~\ref{prop:heffect}.
\end{proof}

Now we consider the case of multiple variables.
Fix some positive integer $n$.
We define
\begin{equation}
	\label{eq:multirow_transfer_defn}
	\RTM_{\pm}(\xx_n/\yy_n | \bal; \bbe) := \RTM_{\pm}(x_n/y_n | \bal; \bbe) \RTM_{\pm}(x_{n-1}/y_{n-1} | \bal; \bbe) \cdots \RTM_{\pm}(x_1/y_1 | \bal; \bbe).
\end{equation}
Let $f_{\lambda/\mu}(\xx/\yy\dv\bal;\bbe)$ and $\widehat{f}_{\lambda/\mu}(\xx/\yy\dv\bal;\bbe)$ be the free fermionic Schur functions and their duals, respectively, from~\cite{NaprienkoFFS}.

\begin{corollary}
	\label{cor:free_fermion_schur_recovery}
	We have 
	\begin{align*}
		\bra{\mu} \RTM_+(\xx_n/\yy_n | \bal; \bbe) \ket{\lambda} & = \widehat{f}_{\lambda/\mu}(\xx_n/\yy_n\dv\bbe;\bal) = s_{\lambda/\mu}(\xx_n/\yy_n\dv\bal;\bbe),
		\\
		\bra{\lambda} \RTM_-(\xx_n/\yy_n | \bal; \bbe) \ket{\mu} & = f_{\lambda/\mu}(\xx_n/\yy_n\dv\bbe;\bal) = \widehat{s}_{\lambda/\mu}(\xx_n/\yy_n\dv\bal;\bbe),
	\end{align*} 
\end{corollary}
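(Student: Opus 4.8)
The plan is to bootstrap from Theorem~\ref{thm:texph}, which handles a single row, to the multi-row product by exploiting commutativity of the half vertex operators. First I would observe that, since the deformed positive current operators mutually commute (Theorem~\ref{thm:heisenberg_relations}), the operators $H_+(x_i/y_i|\bal;\bbe)$ commute among themselves, and likewise for the $H_-(x_i/y_i|\bal;\bbe)$; hence
\[
e^{H_{\pm}(x_n/y_n|\bal;\bbe)} \cdots e^{H_{\pm}(x_1/y_1|\bal;\bbe)} = e^{H_{\pm}(\xx_n/\yy_n|\bal;\bbe)},
\]
where on the right we use the specialization $p_k \mapsto p_k(\xx_n/\yy_n) = \sum_{i=1}^n \bigl(x_i^k - (-y_i)^k\bigr)$ together with the additivity $p_k(x_1/y_1) + \cdots + p_k(x_n/y_n) = p_k(\xx_n/\yy_n)$. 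Combining this with the definition~\eqref{eq:multirow_transfer_defn} of $\RTM_{\pm}(\xx_n/\yy_n|\bal;\bbe)$ and applying Theorem~\ref{thm:texph} to each factor yields $\RTM_{\pm}(\xx_n/\yy_n|\bal;\bbe) = e^{H_{\pm}(\xx_n/\yy_n|\bal;\bbe)}$ as operators on $\fermionfock$.

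Next I would simply take matrix coefficients. For the $+$ case, Definition~\ref{def:dfSchur} gives
\[
\bra{\mu} \RTM_+(\xx_n/\yy_n|\bal;\bbe) \ket{\lambda} = \bra{\mu} e^{H_+(\xx_n/\yy_n|\bal;\bbe)} \ket{\lambda} = s_{\lambda/\mu}(\xx_n/\yy_n\dv\bal;\bbe),
\]
and for the $-$ case the definition of the skew dual double factorial Schur function yields
\[
\bra{\lambda} \RTM_-(\xx_n/\yy_n|\bal;\bbe) \ket{\mu} = \bra{\lambda} e^{H_-(\xx_n/\yy_n|\bal;\bbe)} \ket{\mu} = \widehat{s}_{\lambda/\mu}(\xx_n/\yy_n\dv\bal;\bbe).
\]
This establishes the second equality on each line. (Since each single-row partition function is well-defined and all but finitely many columns contribute rescaled weight $1$, there are no convergence issues in the product.)

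For the first equality on each line I would identify the partition function $\bra{\mu}\RTM_+(\xx_n/\yy_n|\bal;\bbe)\ket{\lambda}$ with Naprienko's skew free fermionic Schur function directly from the definitions in~\cite{NaprienkoFFS}: by construction both are the sum over states of the stacked six vertex model whose $i$-th row carries the Boltzmann weights of Table~\ref{table:model_weights} with spectral data $(x_i,y_i)$ and column data $(\alpha_j,\beta_j)$, subject to fixed top boundary $\lambda$ and bottom boundary $\mu$, and similarly for $\RTM_-$ using Table~\ref{table:neg_model_weights}. The one point requiring care is that the column-dependent rescaling $\omega_j^{(m)}$ introduced here must be matched against the normalization used in~\cite{NaprienkoFFS}, and that our labelling of the two parameter families is opposite to Naprienko's, which is precisely the source of the swap $\bal \leftrightarrow \bbe$ in $\widehat{f}_{\lambda/\mu}(\xx_n/\yy_n\dv\bbe;\bal)$ and $f_{\lambda/\mu}(\xx_n/\yy_n\dv\bbe;\bal)$. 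The main obstacle is therefore not conceptual but bookkeeping: aligning the weight normalizations, the roles of $\bal$ and $\bbe$, and the orientation of the boundary conditions between the present setup and~\cite{NaprienkoFFS}. Once that dictionary is fixed, the corollary is immediate from Theorem~\ref{thm:texph} and the commutativity of the deformed Hamiltonians.
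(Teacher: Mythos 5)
Your proposal is correct and follows essentially the same route as the paper: the paper likewise uses the additivity $e^{H_{\pm}(\pp+\pp'|\bal;\bbe)} = e^{H_{\pm}(\pp|\bal;\bbe)}e^{H_{\pm}(\pp'|\bal;\bbe)}$ (as in the branching rule) together with Theorem~\ref{thm:texph} applied factor-by-factor to~\eqref{eq:multirow_transfer_defn}, and then identifies the resulting partition functions with Naprienko's $f$ and $\widehat{f}$ by comparing the lattice models, weight normalizations, and the $\bal \leftrightarrow \bbe$ labelling. Your extra remarks on commutativity of the Hamiltonians and on the normalization bookkeeping just make explicit what the paper leaves implicit.
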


\begin{proof}
As in the branching rule (Proposition~\ref{prop:branching}), we have
\[
e^{H_{\pm}(\pp+\pp'|\bal;\bbe)} = e^{H_{\pm}(\pp|\bal;\bbe) + H_{\pm}(\pp'|\bal;\bbe)} = e^{H_{\pm}(\pp|\bal;\bbe)} e^{H_{\pm}(\pp'|\bal;\bbe)},
\]
and hence the claim follows from Theorem~\ref{thm:texph} and~\eqref{eq:multirow_transfer_defn}, along with comparing our lattice model with that of~\cite{NaprienkoFFS}.
\end{proof}

As a consequence of Corollary~\ref{cor:MM_schur} and Corollary~\ref{cor:free_fermion_schur_recovery}, we see that the dual free fermionic Schur functions from~\cite{NaprienkoFFS} are the generalized Schur functions from~\cite{MiyauraMukaihiraGeneralized,MiyauraMukaihiraFactorial} under the specialization $p_k = p_k(\xx/\yy)$.
Moreover, Corollary~\ref{cor:free_fermion_schur_recovery} means we have provided alternative proofs for many of the results in~\cite{NaprienkoFFS}:
\begin{itemize}
\item Theorem~\ref{thm:dual_schur_identity} recovers~\cite[Thm.~5.14]{NaprienkoFFS} since $p_k(\yy / \xx) = (-1)^{k-1} p_k(\xx/\yy) = \omega p_k(\xx/\yy)$ and Corollary~\ref{cor:involution}.
Additionally, the duality in Theorem~\ref{thm:dual_schur_identity} corresponds to the duality between the lattice models described in~\cite[Prop.~4.10]{NaprienkoFFS}.
\item Equations~\eqref{eq:hook_generating} and~\eqref{eq:dual_hook_generating} are~\cite[Cor.~5.18]{NaprienkoFFS} with mapping $w \mapsto -w$ and $z \mapsto -z$, respectively, together with using~\eqref{eq:exp_xi_specialization} and
\[
\frac{(-w;\bal)^{-b-1}}{(-w|\bbe)^{-b}} = \frac{(-w|\iota_{\bbe}\bbe)^b}{(-w;\iota_{\bal}\bal)^{b+1}} = (-1)^b \frac{(w|{-\iota_{\bbe}\bbe})^b}{(w;{-\iota_{\bal}\bal})^{b+1}}.
\]
\item Theorem~\ref{thm:jacobi_trudi} and Theorem~\ref{thm:giambelli} is~\cite[Thm.~5.9]{NaprienkoFFS}.
\item Theorem~\ref{thm:skew_cauchy} is~\cite[Prop.~5.15]{NaprienkoFFS}.
\item Proposition~\ref{prop:branching} is~\cite[Lemma~5.5]{NaprienkoFFS}.
\item Equation~\eqref{eq:eh_gen_series} with the equivalent specializations of~\eqref{eq:dual_fermion_dual_schur} and~\eqref{eq:fermion_dual_schur} yield~\cite[Cor.~5.20]{NaprienkoFFS} (noting minor misprints in the formulas~\cite[Eq.~(5.10), (5.11)]{NaprienkoFFS}).
\end{itemize}

\begin{remark}
	By Wick's theorem, the values $\bra{i-m} e^H \ket{j-m}$ uniquely determine all other matrix coefficients.
	Then we could pose the following problem: Given a sequence of values $h_{i,j}$, find the Hamiltonian $H$ such that $\bra{i-m} e^H \ket{j-m}$, where the Hamiltonian has the form $H = \sum_{i,j}t_{i,j}E_{i,j}$ for some coefficients $t_{i,j}$.
	In the case when $h_{i,j}$ are the complete factorial supersymmetric functions, we obtain~\eqref{eq:jkexpansion} as the unique solution. 
\end{remark}

From the lattice model description (Proposition~\ref{prop:teffect}), we have an explicit description for the specialization $p_k = p_k(\xx_1 / \yy_1)$.
Recall from the Murnaghan--Nakayama rule (Theorem~\ref{thm:factorialmn}) that a ribbon has an associated content interval.

\begin{corollary}[{\cite[Prop.~4.8, Ex.~5.2]{NaprienkoFFS}}]
\label{cor:single_variable}
If $\lambda / \mu$ contains a $2 \times 2$ block, then
\[
s_{\lambda/\mu}(\xx_1/\yy_1 \dv \bal; \bbe) = \widehat{s}_{\lambda/\mu}(\xx_1/\yy_1 \dv \bal; \bbe) = 0;
\]
that is, if $\lambda / \mu$ is not a disjoint union of ribbons.
Otherwise
\begin{subequations} \label{eq:schur_ribbon_formulas}
\begin{align}
s_{\lambda/\mu}(\xx_1/\yy_1\dv\bal;\bbe) & = \prod_{k=1}^{\ell} \frac{1 - \beta_{k-\lambda'_k} x}{1 + \beta_{1-k} y} \frac{1 + \beta_{\lambda_k-k+1} y}{1 - \beta_k x} \prod_R \wt_{(\bal;\bbe)}(R),
\\
\widehat{s}_{\lambda/\mu}(\xx_1/\yy_1\dv\bal;\bbe) & = \prod_{k=1}^{\ell} \frac{1 - \alpha_{k-\lambda'_k} x}{1 + \alpha_{1-k} y} \frac{1 + \alpha_{\lambda_k-k+1} y}{1 - \alpha_k x} \prod_R \widehat{\wt}_{(\bal;\bbe)}(R),
\end{align}
\end{subequations}
where we take the products over all ribbons $R$ in $\lambda / \mu$ and
\begin{align*}
\wt_{(\bal;\bbe)}(R) & = \frac{(1 - \alpha_j \beta_j) (x + y)}{(1 - \beta_i x)(1 + \beta_j y)} \prod_{\bbb \in R} \begin{cases} \frac{x - \alpha_{c(\bbb)}}{1 - \beta_{c(\bbb)} x} & \text{if $\bbb$ has a left neighbor}, \\ \frac{y + \alpha_{c(\bbb)}}{1 + \beta_{c(\bbb)} y} & \text{if $\bbb$ has a bottom neighbor}, \end{cases}
\\
\widehat{\wt}_{(\bal;\bbe)}(R) & = \frac{(1 - \alpha_i \beta_i) (x + y)}{(1 - \alpha_i x)(1 + \alpha_j y)} \prod_{\bbb \in R} \begin{cases} \frac{x - \beta_{c(\bbb)}}{1 - \alpha_{c(\bbb)} x} & \text{if $\bbb$ has a left neighbor}, \\ \frac{y + \beta_{c(\bbb)}}{1 + \alpha_{c(\bbb)} y} & \text{if $\bbb$ has a bottom neighbor}, \end{cases}
\end{align*}
where $[i, j)$ is the content interval of $R$.
\end{corollary}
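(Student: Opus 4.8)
The plan is to compute the $n=1$ case directly on the lattice-model side, using the crucial feature that a single row transfer matrix has, for a fixed pair of boundaries, at most one admissible state. (One may also simply quote \cite[Prop.~4.8, Ex.~5.2]{NaprienkoFFS}: by Corollary~\ref{cor:free_fermion_schur_recovery} the functions $s_{\lambda/\mu}(\xx_1/\yy_1\dv\bal;\bbe)$ and $\widehat{s}_{\lambda/\mu}(\xx_1/\yy_1\dv\bal;\bbe)$ are partition functions of single-row instances of the models in that reference, and the cited results are precisely \eqref{eq:schur_ribbon_formulas}; I would present the self-contained argument below.) By Corollary~\ref{cor:free_fermion_schur_recovery} with $n=1$ we have $s_{\lambda/\mu}(\xx_1/\yy_1\dv\bal;\bbe)=\bra{\mu}\RTM_+(x/y|\bal;\bbe)\ket{\lambda}$ and $\widehat{s}_{\lambda/\mu}(\xx_1/\yy_1\dv\bal;\bbe)=\bra{\lambda}\RTM_-(x/y|\bal;\bbe)\ket{\mu}$; I treat $\RTM_+$, the case of $\RTM_-$ being \emph{mutatis mutandis}. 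Encode $\ket{\lambda}$ and ${}\bra{\mu}$ by their $01$-sequences, so that column $j$ carries a particle on its bottom (resp.\ top) edge exactly when $j\in S_\lambda:=\{\lambda_i-i+1\}$ (resp.\ $j\in S_\mu:=\{\mu_i-i+1\}$). Writing $h_j\in\{0,1\}$ for the label of the horizontal edge to the left of column $j$, line conservation at the vertex in column $j$ forces $h_j=h_{j+1}+[j\in S_\lambda]-[j\in S_\mu]$, together with $h_j=0$ for $|j|\gg0$; hence $h_j=|S_\lambda\cap[j,\infty)|-|S_\mu\cap[j,\infty)|$ and the whole state is determined, provided it is admissible (every forced local configuration is one of the six of Table~\ref{table:model_weights}).

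Next I would extract the dichotomy. Admissibility is exactly the condition $h_j\in\{0,1\}$ for all $j$: the inequality $h_j\ge0$ for all $j$ is equivalent to $\mu\subseteq\lambda$ (so this recovers Proposition~\ref{prop:containment}), and $h_j\le1$ for all $j$ translates, in the standard way, into the statement that $\lambda/\mu$ contains no $2\times2$ block, i.e.\ that $\lambda/\mu$ is a disjoint union of ribbons. If either fails there is no nonzero state and the partition function vanishes, giving the first assertion. If $\lambda/\mu$ is a disjoint union of ribbons, the partition function equals the weight of the unique admissible state, namely the product over $j\in\ZZ$ of the normalized weight $b_j(x/y|\bal;\bbe)$ of the vertex that $(h_j,h_{j+1},[j\in S_\lambda],[j\in S_\mu])$ forces in column $j$ (all but finitely many factors being $1$): the ``non-box'' columns carry $\tt{a}_1$ or $\tt{b}_1$, the columns traversed horizontally or vertically inside a ribbon carry $\tt{b}_2$ or $\tt{a}_2$ according to whether the corresponding box has a left or a bottom neighbour, and the two ends of each ribbon carry the corner vertices $\tt{c}_1$ and $\tt{c}_2$. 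Identifying $c(\bbb)$ with the index of the vertical line over which the box $\bbb\in\lambda/\mu$ sits, I sort the columns into those lying inside the content interval $[i,j)$ of a ribbon $R$ and the remaining non-box columns, and regroup the product: the columns attached to $R$ assemble into $\wt_{(\bal;\bbe)}(R)$, while the non-box columns together with the normalizations $\omega_j^{(m)}$ assemble into the prefactor $\prod_{k}\frac{1-\beta_{k-\lambda'_k}x}{1+\beta_{1-k}y}\frac{1+\beta_{\lambda_k-k+1}y}{1-\beta_k x}$. As a sanity check, the single-ribbon case is precisely Proposition~\ref{prop:teffect} (one particle moving from $q$ to $p$), and the general case could alternatively be derived from the determinant expansion supplied by the free fermion LGV Lemma \cite[Prop.~A.2]{NaprienkoFFS}, which is how \cite[Prop.~4.8]{NaprienkoFFS} proceeds.

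The main obstacle is precisely the last regrouping step: correctly matching lattice-model column indices with the contents $c(\bbb)$ of boxes of $\lambda/\mu$, reading off which of the six vertex types is forced in each column from the quadruple $(h_j,h_{j+1},[j\in S_\lambda],[j\in S_\mu])$, and — keeping careful track of the two cases of the normalization $\omega_j^{(m)}$ (for $j>m$ versus $j\leqslant m$) — showing that the column-by-column product separates cleanly into the stated prefactor and the per-ribbon weights $\wt_{(\bal;\bbe)}(R)$, with the split into the left-neighbour and bottom-neighbour cases in the definition of $\wt_{(\bal;\bbe)}(R)$ coming out of the $\tt{b}_2$ versus $\tt{a}_2$ dichotomy. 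This is routine but index-heavy; once it is checked for a single ribbon (equivalently, checked against Proposition~\ref{prop:teffect}) and for the non-box columns, the disjoint-union case follows because distinct ribbons occupy disjoint runs of columns, and an identical computation with Table~\ref{table:neg_model_weights} in place of Table~\ref{table:model_weights} yields the formula for $\widehat{s}_{\lambda/\mu}$.
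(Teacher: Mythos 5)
Your proposal is correct and follows essentially the same route as the paper: the paper obtains this corollary by combining the identification $s_{\lambda/\mu}(\xx_1/\yy_1\dv\bal;\bbe)=\bra{\mu}\RTM_+(x/y|\bal;\bbe)\ket{\lambda}$ (Theorem~\ref{thm:texph}, Corollary~\ref{cor:free_fermion_schur_recovery}) with the explicit single-row evaluation cited from~\cite[Prop.~4.8, Ex.~5.2]{NaprienkoFFS}, which is precisely the unique-admissible-state, no-$2\times2$-block, ribbon-by-ribbon computation you sketch. Your self-contained column-by-column argument (anchored to Proposition~\ref{prop:teffect} for a single ribbon) is just a fleshed-out version of that cited computation, not a different method.
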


A consequence of Corollary~\ref{cor:single_variable} and the branching rule (Proposition~\ref{prop:branching}), we obtain a tableaux formula for the (skew) double factorial Schur functions.
In~\cite[Eq.~(26)]{MiyauraMukaihiraFactorial}, a similar formula using reverse column-strict tableaux was given.

\begin{corollary}
The double factorial Schur function $s_{\lambda/\mu}(\xx_n/\yy_n\dv\bal;\bbe)$ and the dual double factorial Schur functions $\widehat{s}_{\lambda/\mu}(\xx_n/\yy_n\dv\bal;\bbe)$ are given as a sum over reverse plane partitions\footnote{A reverse plane partition is a filling of the Young diagram such that the entries weakly increase along rows and columns. See, \textit{e.g.},~\cite{ECII} for plane partitions (but with the inequalities reversed).} with entries $1, \dotsc, n$ such that the restriction to $i$, for each $1 \leqslant i \leqslant n$, is a disjoint union of ribbons and has weight given by \eqref{eq:schur_ribbon_formulas}.
\end{corollary}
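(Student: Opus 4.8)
The plan is to reduce to the single-variable case via the branching rule and then quote Corollary~\ref{cor:single_variable}. First, since the summands of $H_+(\pp+\pp'|\bal;\bbe) = H_+(\pp|\bal;\bbe) + H_+(\pp'|\bal;\bbe)$ commute (Theorem~\ref{thm:heisenberg_relations}), the branching rule (Proposition~\ref{prop:branching}) iterates: writing $\pp = \pp^{(1)} + \dotsb + \pp^{(n)}$ and using the containment property (Proposition~\ref{prop:containment}),
\[
s_{\lambda/\mu}\bigl(\pp^{(1)} + \dotsb + \pp^{(n)} \dv \bal;\bbe\bigr) = \sum_{\mu = \nu^{(0)} \subseteq \nu^{(1)} \subseteq \dotsb \subseteq \nu^{(n)} = \lambda} \; \prod_{i=1}^{n} s_{\nu^{(i)}/\nu^{(i-1)}}\bigl(\pp^{(i)} \dv \bal;\bbe\bigr),
\]
the sum being over chains of partitions and the skew shape $\nu^{(i)}/\nu^{(i-1)}$ receiving the argument $\pp^{(i)}$. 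I would then specialize $\pp^{(i)} = p_k(x_i/y_i) = x_i^k - (-y_i)^k$; by additivity of the power sums this turns the left side into $s_{\lambda/\mu}(\xx_n/\yy_n \dv \bal;\bbe)$ (equivalently one may factor $\RTM_+(\xx_n/\yy_n|\bal;\bbe)$ as in~\eqref{eq:multirow_transfer_defn}, insert a resolution of the identity between the single-row transfer matrices, and use Theorem~\ref{thm:texph} and Corollary~\ref{cor:free_fermion_schur_recovery}).

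Next, I would feed in Corollary~\ref{cor:single_variable}: each factor $s_{\nu^{(i)}/\nu^{(i-1)}}(\xx_1/\yy_1 \dv \bal;\bbe)$ vanishes unless $\nu^{(i)}/\nu^{(i-1)}$ contains no $2\times 2$ block --- equivalently, is a (possibly empty) disjoint union of ribbons --- in which case it equals the right-hand side of~\eqref{eq:schur_ribbon_formulas} evaluated on that skew shape with the single pair $x_i,y_i$. Finally I would invoke the standard combinatorial dictionary: a chain $\mu = \nu^{(0)} \subseteq \dotsb \subseteq \nu^{(n)} = \lambda$ is the same datum as a filling $T$ of the diagram of $\lambda/\mu$ with entries from $\{1,\dotsc,n\}$, where a box $\bbb$ gets the label $i$ with $\bbb \in \nu^{(i)}/\nu^{(i-1)}$; because each $\nu^{(i)}$ is a straight shape this $T$ is weakly increasing along rows and columns, i.e.\ a reverse plane partition, and conversely every reverse plane partition arises this way. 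Under this dictionary the restriction of $T$ to the value $i$ is exactly the skew shape $\nu^{(i)}/\nu^{(i-1)}$, so the surviving summands are precisely the reverse plane partitions $T$ whose restriction to each $i$ is a disjoint union of ribbons, and the term attached to such a $T$ is the product over $i \in \{1,\dotsc,n\}$ of the expression~\eqref{eq:schur_ribbon_formulas} for that restriction --- which is exactly the claimed formula. The dual statement for $\widehat{s}_{\lambda/\mu}(\xx_n/\yy_n \dv \bal;\bbe)$ would follow verbatim using the dual branching rule and the dual half of Corollary~\ref{cor:single_variable}.

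The content of this corollary is essentially all contained in the cited results, so I expect no serious obstacle; the only items needing (routine) care are the equivalence ``no $2\times 2$ block $\Leftrightarrow$ disjoint union of ribbons'' for a skew shape, the bijection between chains of partitions and reverse plane partitions, and the bookkeeping check that an empty skew shape $\nu^{(i)} = \nu^{(i-1)}$ contributes the factor $1$, so that reverse plane partitions which happen to omit some value $i$ are correctly weighted.
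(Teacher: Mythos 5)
Your proposal is correct and matches the paper's argument, which derives the corollary exactly as you do: iterate the branching rule (Proposition~\ref{prop:branching}) to decompose into single-variable factors, apply Corollary~\ref{cor:single_variable} to each, and identify chains of partitions with reverse plane partitions. The routine points you flag (empty layers contributing $1$, the chain/RPP dictionary) are indeed all that is needed.
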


Now we will give the skew-Pieri formulas as we can give explicit contour integral formulas under the specialization $p_k = p_k(x/y)$.
This is given explicitly by Corollary~\ref{cor:single_variable} taken together with Theorem~\ref{thm:dual_schur_identity} (\textit{cf}.~\cite[Prop.~4.9]{NaprienkoFFS}).

\begin{corollary}[Skew-Pieri formulas]
\label{cor:skew_pieri}
We have
\begin{subequations} \label{eq:skew-pieri}
\begin{align}
h_k(\pp'\dv\bal;\bbe)s_{\mu/\nu}(\pp'\dv\bal;\bbe) &= \sum_{\lambda,\eta} c_{k,\mu/\nu}^{\lambda/\eta} s_{\lambda/\eta}(\pp'\dv\bal;\bbe),
\\
e_k(\pp'\dv\bal;\bbe) s_{\mu/\nu}(\pp'\dv\bal;\bbe) &= (-1)^k \frac{1 - \alpha_1 \beta_1}{1 - \alpha_{1-k}\beta_{1-k}} \sum_{\lambda,\eta} \overline{c}_{k,\mu/\nu}^{\lambda/\eta} s_{\lambda/\eta}(\pp'\dv\bal;\bbe),
\end{align}
\end{subequations}
where the sums are over all partitions $\lambda,\eta$ such that $\lambda/\mu$ and $\nu/\eta$ are ribbons, and
\begin{subequations} \label{eq:skew-pieri-coeffs}
\begin{align}
c_{k,\mu/\nu}^{\lambda/\eta} = (1 - \alpha_k \beta_k)\oint \widehat{s}_{\nu/\eta}(\beta_0/(-z)\dv\bal;\bbe)  \widehat{s}_{\lambda/\mu}(z/(-\beta_0)\dv\bal;\bbe) \frac{(z; \bal)^{k-1}}{(z|\sigma_{\bbe}^{-1}\bbe)^{k+1}} \frac{dz}{2\pi\ii},
\\
\overline{c}_{k,\mu/\nu}^{\lambda/\eta} = (1 - \alpha_k \beta_k)\oint \widehat{s}_{\nu/\eta}(z/(-\beta_1)\dv\bal;\bbe) \widehat{s}_{\lambda/\mu}(\beta_1/(-z)\dv\bal;\bbe) \frac{(z|\sigma_{\bbe}\bbe)^{-k-1}}{(z; \bal)^{1-k}} \frac{dz}{2\pi\ii}.
\end{align}
\end{subequations}
\end{corollary}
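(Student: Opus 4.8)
The plan is to deduce~\eqref{eq:skew-pieri}--\eqref{eq:skew-pieri-coeffs} from the skew-Pieri generating formulas~\eqref{eq:skew_pieri_vertex} by specializing the alphabet $\pp$ to a single variable chosen so that $e^{\pm\xi(\pp;\pp')}$ becomes the generating series~\eqref{eq:h_gen_series} or~\eqref{eq:e_gen_series}, then extracting the coefficient of each $s_{\lambda/\eta}(\pp'\dv\bal;\bbe)$ by a contour integral via Proposition~\ref{prop:orthonormality}, and finally reading off the ribbon structure of the coefficients from the single-variable evaluations of Corollary~\ref{cor:single_variable}.

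For the first identity, in the first formula of~\eqref{eq:skew_pieri_vertex} set $p_k$ equal to $z^k-\beta_0^k$ (the single-variable specialization $z/(-\beta_0)$ in the notation of~\eqref{eq:skew-pieri-coeffs}). By bilinearity and symmetry of $\xi$ we then have $\xi(\pp;\pp') = \xi(\pp';z)-\xi(\pp';\beta_0)$, so by~\eqref{eq:h_gen_series} (with $\pp$ replaced by $\pp'$) the right-hand side equals $\bigl(\sum_{n\ge 0} h_n(\pp'\dv\bal;\bbe)\frac{(z|\sigma_{\bbe}^{-1}\bbe)^n}{(z;\bal)^n}\bigr) s_{\mu/\nu}(\pp'\dv\bal;\bbe)$, while on the left $-\pp$ becomes the alphabet $\beta_0/(-z)$, producing $\widehat{s}_{\nu/\eta}(\beta_0/(-z)\dv\bal;\bbe)$ and $\widehat{s}_{\lambda/\mu}(z/(-\beta_0)\dv\bal;\bbe)$. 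Multiplying both sides by $(1-\alpha_k\beta_k)\frac{(z;\bal)^{k-1}}{(z|\sigma_{\bbe}^{-1}\bbe)^{k+1}}$ and integrating over a small circle about $0$ as in Proposition~\ref{prop:orthonormality}, one uses the factorization $(z|\sigma_{\bbe}^{-1}\bbe)^n = (z-\beta_0)(z|\bbe)^{n-1}$ to rewrite the $z$-part of the $n$-th summand on the right as $(1-\alpha_k\beta_k)\frac{(z|\bbe)^{n-1}}{(z;\bal)^n}\frac{(z;\bal)^{k-1}}{(z|\bbe)^k}$, which integrates to $\delta_{nk}$ by Proposition~\ref{prop:orthonormality}. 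This leaves $h_k(\pp'\dv\bal;\bbe)s_{\mu/\nu}(\pp'\dv\bal;\bbe)$ on the right and precisely $\sum_{\lambda,\eta} c^{\lambda/\eta}_{k,\mu/\nu}s_{\lambda/\eta}(\pp'\dv\bal;\bbe)$ on the left, with $c^{\lambda/\eta}_{k,\mu/\nu}$ given by~\eqref{eq:skew-pieri-coeffs}. By Corollary~\ref{cor:single_variable} the integrand vanishes unless $\nu/\eta$ and $\lambda/\mu$ are disjoint unions of ribbons, and a residue/pole count — each ribbon of $\lambda/\mu$ and $\nu/\eta$ contributing a factor $z-\beta_0$ from the Boltzmann weight $x+y$, against the single such factor in $(z|\sigma_{\bbe}^{-1}\bbe)^{k+1}$, together with the extra vanishing of bent ribbons forced by the specialization $y=-\beta_0$ (resp.\ $x=\beta_0$) — collapses the sum to the ribbons in the statement.

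The second identity is the same argument run on the second formula of~\eqref{eq:skew_pieri_vertex} with $p_k$ equal to $w^k-\beta_1^k$: then $-\xi(\pp;\pp') = \xi(\pp';\beta_1)-\xi(\pp';w)$, and~\eqref{eq:e_gen_series} makes the right-hand side $\bigl(\sum_{n\ge 0}(-1)^n e_n(\pp'\dv\bal;\bbe)\frac{1-\alpha_{1-n}\beta_{1-n}}{1-\alpha_1\beta_1}\frac{(w;\bal)^{-n}}{(w|\sigma_{\bbe}^{-1}\bbe)^{-n}}\bigr)s_{\mu/\nu}(\pp'\dv\bal;\bbe)$. Extracting the $n=k$ term with the weight $(1-\alpha_k\beta_k)\frac{(z|\sigma_{\bbe}\bbe)^{-k-1}}{(z;\bal)^{1-k}}$ and Proposition~\ref{prop:orthonormality} gives the coefficients $\overline{c}^{\lambda/\eta}_{k,\mu/\nu}$, and the scalar $\frac{1-\alpha_{1-k}\beta_{1-k}}{1-\alpha_1\beta_1}$ left over from~\eqref{eq:e_gen_series} is exactly what produces the global prefactor $(-1)^k\frac{1-\alpha_1\beta_1}{1-\alpha_{1-k}\beta_{1-k}}$. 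Alternatively, one can reconcile the $\widehat{s}$'s appearing in the coefficients with the $s$-valued data of Corollary~\ref{cor:single_variable} using Theorem~\ref{thm:dual_schur_identity}, or run the computation on the dual skew-Pieri generating formulas.

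The step I expect to be the main obstacle is the last reduction: proving that after evaluating the contour integral the only nonzero $c^{\lambda/\eta}_{k,\mu/\nu}$ (resp.\ $\overline{c}^{\lambda/\eta}_{k,\mu/\nu}$) arise for $\lambda/\mu$ and $\nu/\eta$ the claimed single ribbons, and matching these residues against the ribbon weights $\widehat{\wt}_{(\bal;\bbe)}$ of Corollary~\ref{cor:single_variable}. This is where the combinatorics of ribbon decompositions and their specialized six-vertex weights genuinely enters, following the template of~\cite[Thm.~4.8]{IMS24} and~\cite[Sec.~7]{Yel19}; everything else — the bilinearity manipulations of $\xi$, the shifted-power rewritings needed to apply Proposition~\ref{prop:orthonormality}, and the sign bookkeeping — is routine.
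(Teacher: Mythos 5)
Your proposal follows the paper's proof essentially step for step: specialize $\pp$ in the generating formulas~\eqref{eq:skew_pieri_vertex} to the single supersymmetric variable $z/(-\beta_0)$ (resp.\ the one involving $\beta_1$), rewrite the resulting exponentials via~\eqref{eq:h_gen_series} and~\eqref{eq:e_gen_series}, extract the $k$-th term with Proposition~\ref{prop:orthonormality} using $(z|\sigma_{\bbe}^{-1}\bbe)^n=(z-\beta_0)(z|\bbe)^{n-1}$ (and the analogous factorization with $\beta_1$), and invoke Corollary~\ref{cor:single_variable} to cut down the support; the bookkeeping producing the prefactor $(-1)^k\tfrac{1-\alpha_1\beta_1}{1-\alpha_{1-k}\beta_{1-k}}$ is also exactly as in the paper.

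The one place you go beyond the paper is the final ``collapse to single ribbons,'' which you flag as the main obstacle. The paper does not attempt any such reduction: it only uses the vanishing from Corollary~\ref{cor:single_variable} (the term dies when the skew shape contains a $2\times 2$ block, i.e.\ is not a disjoint union of ribbons) and stops there. Moreover, your proposed mechanism --- extra vanishing of bent configurations forced by $y=-\beta_0$ (resp.\ $x=\beta_0$) together with a pole count at $z=\beta_0$ --- does not hold: at $\bal=\bbe=0$ the coefficients reduce to those of the classical skew Pieri rule, where $\lambda/\mu$ is a horizontal strip and $\nu/\eta$ a vertical strip of total size $k$, and these are in general \emph{disconnected} unions of ribbons with nonzero coefficient, so no collapse to connected ribbons occurs. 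Dropping that step (and reading the support condition as the paper does, via Corollary~\ref{cor:single_variable}) leaves you with precisely the paper's argument.
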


\begin{proof}
Starting from~\eqref{eq:skew_pieri_vertex}, we specialize $p_k = p_k(z/w) = z^k - (-w)^k$ with $w = -\beta_0$ or $w = -\beta_1$ to obtain
\begin{subequations}
\begin{align}
e^{\xi(\pp; z)} e^{-\xi(\pp; \beta_0)} s_{\mu/\nu}(\pp'\dv\bal;\bbe) &= \sum_{\lambda,\eta} \widehat{s}_{\nu/\eta}(\beta_0/(-z)\dv\bal;\bbe) s_{\lambda/\eta}(\pp'\dv\bal;\bbe) \widehat{s}_{\lambda/\mu}(z/(-\beta_0)\dv\bal;\bbe),
\\
e^{-\xi(\pp; z)} e^{\xi(\pp; \beta_1)}  s_{\mu/\nu}(\pp'\dv\bal;\bbe) &= \sum_{\lambda,\eta} \widehat{s}_{\nu/\eta}(z/(-\beta_1)\dv\bal;\bbe) s_{\lambda/\eta}(\pp'\dv\bal;\bbe) \widehat{s}_{\lambda/\mu}(\beta_1/(-z)\dv\bal;\bbe),
\end{align}
\end{subequations}
respectively.
Note that we have used the fact $-p_k(z/w) = p_k((-w)/(-z))$.
Next, apply~\eqref{eq:eh_gen_series}, and in the second equation, replace $z$ with $-z$, to obtain
\begin{subequations}
\begin{align}
\begin{aligned}
\sum_{k=0}^{\infty} h_k(\pp'\dv\bal;\bbe) & s_{\mu/\nu}(\pp'\dv\bal;\bbe) \frac{(z|\sigma_{\bbe}^{-1} \bbe)^k}{(z;\bal)^k}
\\ & =
\sum_{\lambda,\eta} \widehat{s}_{\nu/\eta}(\beta_0/(-z)\dv\bal;\bbe) \widehat{s}_{\lambda/\mu}(z/(-\beta_0)\dv\bal;\bbe) s_{\lambda/\eta}(\pp'\dv\bal;\bbe),
\end{aligned}
\allowdisplaybreaks \\
\begin{aligned}
\sum_{k=0}^{\infty} (-1)^k e_k(\pp'\dv\bal;\bbe) & s_{\mu/\nu}(\pp'\dv\bal;\bbe) \frac{1 - \alpha_{1-k}\beta_{1-k}}{1 - \alpha_1 \beta_1} \frac{(z;\bal)^{-k}}{(z|\sigma_{\bbe}^{-1}\bbe)^{-k}}
\\ & =
\sum_{\lambda,\eta} \widehat{s}_{\nu/\eta}(z/(-\beta_1)\dv\bal;\bbe) \widehat{s}_{\lambda/\mu}(\beta_1/(-z)\dv\bal;\bbe) s_{\lambda/\eta}(\pp'\dv\bal;\bbe).
\end{aligned}
\end{align}
\end{subequations}
Hence, $c_{k,\mu/\nu}^{\lambda/\eta}$ (resp.\ $\overline{c}_{k,\mu/\nu}^{\lambda/\eta}$) in~\eqref{eq:skew-pieri} is the coefficient of $\frac{(z|\sigma_{\bbe}^{-1}\bbe)^k}{(z;\bal)^k}$ (resp.\ of $\frac{(z;\bal)^{-k}}{(z|\sigma_{\bbe}\bbe)^{-k}}$) in
\[
\widehat{s}_{\lambda/\mu}(\beta_0/(-z)\dv\bal;\bbe)  \widehat{s}_{\nu/\eta}(z/(-\beta_0)\dv\bal;\bbe)
\qquad
\text{(resp. } \widehat{s}_{\lambda/\mu}(z/(-\beta_1)\dv\bal;\bbe) \widehat{s}_{\nu/\eta}(\beta_1/(-z)\dv\bal;\bbe) 
\text{)}.
\]
By Corollary~\ref{cor:single_variable}, we must have that $\lambda/\mu$ and $\nu/\eta$ are ribbons else the corresponding term is $0$. 
Recall from Proposition \ref{prop:orthonormality} that
\[
(1 - \alpha_k \beta_k)\oint_{\eta} \frac{(z|\bbe)^{k-1}}{(z;\bal)^k} \frac{(z; \bal)^{n-1}}{(z|\bbe)^n} \frac{dz}{2\pi\ii}
= \delta_{nk}.
\]
Applying this and noting $(z|\sigma_{\bbe}^{-1}\bbe)^k = (z-\beta_0)(z|\bbe)^{k-1}$ and $(z|\sigma_{\bbe}\bbe)^{-k} = (z-\beta_1)^{-1} (z|\bbe)^{1-k}$ (\textit{cf}.~\cite[Eq.~(2.12)]{MiyauraMukaihiraGeneralized}), we obtain $c_{k,\mu/\nu}^{\lambda/\eta}$ and $\overline{c}_{k,\mu/\nu}^{\lambda/\eta}$ as claimed.
\end{proof}

Similar formulas hold for the dual double factorial Schur functions; we leave the details to the interested reader.

\begin{problem}
Determine an explicit combinatorial formula for the coefficients~\eqref{eq:skew-pieri-coeffs} and for the dual double factorial Schur function Pieri rule.
\end{problem}

These integrals appear to be difficult to evaluate, even in the case $\nu = \varnothing$.
Indeed, in the case $\nu = \varnothing$, then we force $\eta = \varnothing$, reducing the skew Pieri rule to a (straight shape) Pieri rule, but the desired coefficients do not explicitly appear in the formulas.
It is possible that~\eqref{eq:skew-pieri-coeffs} can be computed using computations similar to those in the proof of Proposition~\ref{prop:deformed_shift_identities}.

We are also able to extend Corollary~\ref{cor:skew_pieri} to give a skew Littlewood--Richardson rule for $s_{\kappa}(\pp'\dv\bal;\bbe) s_{\mu/\nu}(\pp'\dv\bal;\bbe)$ by instead substituting $p_k = p_k(\zz_{\ell} /(-\bbe_{[1-\ell,0]}))$, where $\ell \geqslant \ell(\kappa)$ by using the generating~\eqref{eq:MMgen_recovery}.

\begin{corollary}[Skew-Littlewood--Richardson formula]
\label{cor:skew_LR}
We have
\begin{equation*}
s_{\kappa}(\pp'\dv\bal;\bbe)s_{\mu/\nu}(\pp'\dv\bal;\bbe) = \sum_{\substack{\lambda \supseteq \mu \\ \eta \subseteq \nu}} c_{k,\mu/\nu}^{\lambda/\eta} s_{\lambda/\eta}(\pp'\dv\bal;\bbe),
\end{equation*}
where
\begin{equation}
\begin{aligned}
c_{\kappa,\mu/\nu}^{\lambda/\eta} = \prod_{i=1}^{\ell} (1 - \alpha_{\kappa_i} \beta_{\kappa_i}) & \oint \widehat{s}_{\nu/\eta}(\bbe_{[1-\ell,0]}/(-\zz_{\ell}) \dv\bal;\bbe)  \widehat{s}_{\lambda/\mu}(\zz_{\ell}/(-\bbe_{[1-\ell,0]})\dv\bal;\bbe)
\\ & \hspace{10pt} \times \prod_{i<j} (z_i - z_j)(1 - \alpha_{1-i}\beta_{1-j}) \prod_{i=1}^{\ell} \frac{(z_i; \sigma_{\bal}^{1-i} \bal)^{k-1}}{(z_i|\sigma_{\bbe}^{-i}\bbe)^{k+1}} \frac{dz_i}{2\pi\ii}.
\end{aligned}
\end{equation}
A similar formula holds for the dual double factorial Schur functions by using the double factorial Schur functions in the contour integral.
\end{corollary}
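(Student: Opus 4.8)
The statement is proven by a direct adaptation of the proof of Corollary~\ref{cor:skew_pieri}, the only new ingredient being that the single auxiliary variable is replaced by an $\ell$-variable alphabet and the generating series~\eqref{eq:MMgen_recovery} is used in place of the single-variable series~\eqref{eq:eh_gen_series}. Fix $\ell \geqslant \ell(\kappa)$ and substitute $p_k = p_k(\zz_{\ell}/(-\bbe_{[1-\ell,0]})) = \sum_{i=1}^{\ell} z_i^k - \sum_{j=1-\ell}^{0} \beta_j^k$ for the auxiliary power sums in the first skew-Pieri generating identity of~\eqref{eq:skew_pieri_vertex}. Using $-p_k(\xx/\yy) = p_k((-\yy)/(-\xx))$, the two dual Schur factors become $\widehat{s}_{\nu/\eta}(\bbe_{[1-\ell,0]}/(-\zz_{\ell})\dv\bal;\bbe)$ and $\widehat{s}_{\lambda/\mu}(\zz_{\ell}/(-\bbe_{[1-\ell,0]})\dv\bal;\bbe)$; by the containment property (Proposition~\ref{prop:containment}, which holds for the dual functions too), these vanish unless $\eta\subseteq\nu$ and $\mu\subseteq\lambda$, which pins down the range of summation in the statement. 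On the right hand side, $\xi(\pp;\pp') = \sum_{i=1}^{\ell}\xi(\pp';z_i) - \sum_{j=1-\ell}^{0}\xi(\pp';\beta_j)$, and since $\Delta_{-\ell}(k|\bbe) = -\sum_{j=1-\ell}^{0}\beta_j^k$ we have $e^{\Lambda_{-\ell}(\pp'|\bbe)} = \prod_{j=1-\ell}^{0} e^{-\xi(\pp';\beta_j)}$, so $e^{\xi(\pp;\pp')} = \prod_{i=1}^{\ell} e^{\xi(\pp';z_i)}\cdot e^{\Lambda_{-\ell}(\pp'|\bbe)}$.

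Next I would invoke~\eqref{eq:MMgen_recovery}, which in the present notation reads
\[
\prod_{i=1}^{\ell} z_i e^{\xi(\pp';z_i)} (z_i|\bbe)^{-\ell} \prod_{i<j}(z_i-z_j)(1-\alpha_{1-i}\beta_{1-j}) e^{\Lambda_{-\ell}(\pp'|\bbe)} = \sum_{\kappa\in\ZZ^{\ell}} s_{\kappa}(\pp'\dv\bal;\bbe) \prod_{i=1}^{\ell} \frac{z_i(z_i|\bbe)^{\kappa_i-i}}{(z_i;\bal)^{\kappa_i-i+1}}.
\]
Multiplying the substituted identity~\eqref{eq:skew_pieri_vertex} through by $\prod_{i=1}^{\ell} z_i(z_i|\bbe)^{-\ell}\prod_{i<j}(z_i-z_j)(1-\alpha_{1-i}\beta_{1-j})$ then turns its right hand side into $\sum_{\kappa\in\ZZ^{\ell}} s_{\kappa}(\pp'\dv\bal;\bbe)\, s_{\mu/\nu}(\pp'\dv\bal;\bbe)\prod_{i=1}^{\ell}\frac{z_i(z_i|\bbe)^{\kappa_i-i}}{(z_i;\bal)^{\kappa_i-i+1}}$, while its left hand side becomes that same product of $z_i$-factors times $\sum_{\lambda\supseteq\mu,\ \eta\subseteq\nu}\widehat{s}_{\nu/\eta}(\bbe_{[1-\ell,0]}/(-\zz_{\ell})\dv\bal;\bbe)\, s_{\lambda/\eta}(\pp'\dv\bal;\bbe)\, \widehat{s}_{\lambda/\mu}(\zz_{\ell}/(-\bbe_{[1-\ell,0]})\dv\bal;\bbe)$. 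This is an identity of formal series in $z_1,\dotsc,z_{\ell}$ with coefficients in $\CC\FPS{\pp'}$ (and in $\bal,\bbe$).

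Finally, for the fixed partition $\kappa$ I would extract the coefficient of $\prod_{i=1}^{\ell}\frac{z_i(z_i|\bbe)^{\kappa_i-i}}{(z_i;\bal)^{\kappa_i-i+1}}$ from both sides by an $\ell$-fold contour integration. Using the relations~\eqref{eq:basic_spowers_rels} to rewrite $\frac{z_i(z_i|\bbe)^{\kappa_i-i}}{(z_i;\bal)^{\kappa_i-i+1}}$ against the kernels $\frac{(z_i;\sigma_{\bal}^{1-i}\bal)^{\kappa_i-1}}{(z_i|\sigma_{\bbe}^{-i}\bbe)^{\kappa_i+1}}$, and integrating each $z_i$ over the contour of Proposition~\ref{prop:orthonormality}, picks out exactly the tuple $\kappa$ (the shifted-power monomials being linearly independent, so only the $\kappa$-term of the sum contributes) and produces the normalization $\prod_{i=1}^{\ell}(1-\alpha_{\kappa_i}\beta_{\kappa_i})$. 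The right hand side collapses to $s_{\kappa}(\pp'\dv\bal;\bbe)\, s_{\mu/\nu}(\pp'\dv\bal;\bbe)$, and the left hand side, using the multilinearity of the integration in the $z_i$ to pull the sum over $\lambda,\eta$ outside, becomes $\sum_{\lambda\supseteq\mu,\ \eta\subseteq\nu} c_{\kappa,\mu/\nu}^{\lambda/\eta}\, s_{\lambda/\eta}(\pp'\dv\bal;\bbe)$ with $c_{\kappa,\mu/\nu}^{\lambda/\eta}$ exactly the displayed integral; the dual statement follows from the same argument, or by applying $\bal\leftrightarrow\bbe$ together with Theorem~\ref{thm:dual_schur_identity}. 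The main obstacle is precisely this last step: carefully matching the shifted powers $\frac{z_i(z_i|\bbe)^{\kappa_i-i}}{(z_i;\bal)^{\kappa_i-i+1}}$ to the kernels in Proposition~\ref{prop:orthonormality} via the $\sigma$-shift and inversion relations so that the orthonormality applies variable by variable with the correct normalization. Once the single-variable bookkeeping underlying the Skew-Pieri proof is in place, the $\ell$-variable case is a routine transcription.
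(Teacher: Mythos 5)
Your proposal follows exactly the route the paper takes: the corollary is stated there with only the remark that one extends Corollary~\ref{cor:skew_pieri} by substituting $p_k = p_k(\zz_{\ell}/(-\bbe_{[1-\ell,0]}))$ with $\ell \geqslant \ell(\kappa)$ and using the generating series~\eqref{eq:MMgen_recovery}, which is precisely the multivariable specialization of~\eqref{eq:skew_pieri_vertex}, expansion into double factorial Schur functions, and contour-integral extraction that you carry out. Your write-up simply supplies the bookkeeping the paper leaves implicit (the containment property restricting the sum, $e^{\Lambda_{-\ell}(\pp'|\bbe)}$ absorbing the $\bbe$-part of the specialization, and the shifted-power orthonormality of Proposition~\ref{prop:orthonormality}), so it is essentially the same argument.
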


\section{Deformed Miwa parameters}
\label{sec:deformed_powersums}

In this section, we consider an alternative deformed boson-fermion correspondence that produces a different (but isomorphic) representation of $\mcH$ on $\bosonfock$.
In Section~\ref{sec:deformed_correspondence}, we used the deformed shift operators to construct deformed (dual) shifted vacuums, but here we will instead use the normal shift operators.
This will lead to a deformation of the $p_i$ variables (known as Miwa variables/parameters), which are typically set to the powersum (super)symmetric functions for applications to symmetric functions and lattice models \textit{a la} Section~\ref{sec:lattice_models}.
Recall that $Q = s \partial_s$.

\begin{theorem}[The deformed boson-fermion correspondence via normal shifted vacuums]
\label{thm:deformed_boson_fermion_II}
There is an isomorphism of $\mcH$ algebras $\Theta^{(\bal;\bbe)} \colon \fermionfock \to \bosonfock$ defined by
\[
\ket{\eta} \longmapsto \sum_{m \in \ZZ} {}_m \bra{\varnothing} e^{H_+(\pp|\bal;\bbe)} \ket{\eta} \cdot \svar^m,
\]
for any $\ket{\eta} \in \fermionfock$.
The map $\Theta^{(\bal;\bbe)}$ induces a mapping of actions given by, for $k > 0 $,
\begin{gather*}
J_k^{(\bal;\bbe)} \mapsto k \frac{\partial}{\partial p_k},
\quad
J_{-k}^{(\bal;\bbe)} \mapsto p_k + \Delta_Q(k|\bal),
\quad
J_0^{(\bal;\bbe)} \mapsto Q,
\quad
\Sigma_{(\bal;\bbe)} \mapsto e^{\xi(\partial_{\pp}; \alpha_Q)} s e^{-\Lambda_Q(\alpha_{Q+1}|\bbe)},
\\
\begin{aligned}
\psi(z|\bal;\bbe) & \mapsto Y(z|\bal;\bbe) := e^{\widehat{H}_-(z|\bal;\bbe)} z^Q s e^{-\widehat{H}_+(z^{-1}|\bal;\bbe)+\xi(\widetilde{\partial}_{\pp}; \alpha_{Q+1})-\Lambda_Q(\alpha_{Q+1}|\bbe)},
\\
\psi^*(w|\bal;\bbe) & \mapsto Y^*(w|\bal;\bbe) := e^{-\widehat{H}_-(w|\bal;\bbe)} \svar^{-1} w^{-Q} (1-\alpha_Q\beta_Q) e^{\widehat{H}_+(w^{-1}|\bal;\bbe)-\xi(\widetilde{\partial}_{\pp}; \alpha_Q)+\Lambda_{Q-1}(\alpha_Q|\bbe)},
\end{aligned}
\end{gather*}
where $\widehat{H}_{\pm}(z|\bal;\bbe)$ is the image of $H_{\pm}(z|\bal;\bbe)$ under $\Theta^{(\bal;\bbe)}$.
\end{theorem}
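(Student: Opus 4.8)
The plan is to follow the arguments used for Theorem~\ref{thm:deformed_boson_fermion} and Theorem~\ref{thm:fermion_vertex_op}, with the single structural change that the \emph{classical} shift operator $\Sigma$ replaces $\Sigma_{(\bal;\bbe)}$, so the relevant covectors are the ordinary dual vacuums ${}_m\bra{\varnothing}=\bra{\varnothing}\Sigma^{-m}$ rather than the deformed ones ${}_{(m)}\bra{\varnothing}$. First I would note that $\Theta^{(\bal;\bbe)}$ is well defined: since $e^{H_+(\pp|\bal;\bbe)}$ preserves charge (Theorem~\ref{thm:heisenberg_relations}), each homogeneous $\ket{\eta}$ contributes a single term, and each matrix coefficient ${}_m\bra{\varnothing}e^{H_+(\pp|\bal;\bbe)}\ket{\lambda}_m$ is a well-defined element of the completion of $\bosonfock_m$ by the expansion~\eqref{exponential-expansion} and Remark~\ref{rem:operator_well_defined}.

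Next I would determine the images of the current operators. For $J_0^{(\bal;\bbe)}=J_0$ and for $J_k^{(\bal;\bbe)}$ with $k>0$ the computation is verbatim the one in the proof of Theorem~\ref{thm:deformed_boson_fermion}: it uses only $[H_+(\pp|\bal;\bbe),J_k^{(\bal;\bbe)}]=0$ for $k>0$ (so that $k\,\partial_{p_k}e^{H_+(\pp|\bal;\bbe)}=J_k^{(\bal;\bbe)}e^{H_+(\pp|\bal;\bbe)}$) and the fact that ${}_m\bra{\varnothing}$ reads off the charge $m$, neither of which sees the choice of vacuum, so $J_k^{(\bal;\bbe)}\mapsto k\,\partial_{p_k}$ and $J_0^{(\bal;\bbe)}\mapsto Q$. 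The new phenomenon is $J_{-k}^{(\bal;\bbe)}$: from $[H_+(\pp|\bal;\bbe),J_{-k}^{(\bal;\bbe)}]=p_k$ (Theorem~\ref{thm:heisenberg_relations}) the BCH formula gives $e^{H_+(\pp|\bal;\bbe)}J_{-k}^{(\bal;\bbe)}=(J_{-k}^{(\bal;\bbe)}+p_k)e^{H_+(\pp|\bal;\bbe)}$, and now, in contrast with the first correspondence where ${}_{(m)}\bra{\varnothing}J_{-k}^{(\bal;\bbe)}=0$, Corollary~\ref{cor:deformed_current_vacuum} gives ${}_m\bra{\varnothing}J_{-k}^{(\bal;\bbe)}=\Delta_m(k|\bal)\,{}_m\bra{\varnothing}$; reading $\Delta_m(k|\bal)$ off the charge yields $J_{-k}^{(\bal;\bbe)}\mapsto p_k+\Delta_Q(k|\bal)$. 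I would then check directly that the operators $k\,\partial_{p_k}$, $p_k+\Delta_Q(k|\bal)$, $Q$ satisfy the $\mcH$-relations (the $\Delta_Q$-terms are functions of $Q$, hence central), so that $\Theta^{(\bal;\bbe)}$ is an $\mcH$-module homomorphism onto $\bosonfock$ with this twisted action. Since $\Theta^{(\bal;\bbe)}(\ket{\varnothing}_m)=e^{\Lambda_m(\pp|\bbe)}\svar^m\neq 0$ by Corollary~\ref{cor:deformed_current_vacuum}, while each charge-$m$ component of $\fermionfock$ is an irreducible $\mcH$-module (because $\fermionfock\cong\bosonfock$ through $\Phi^{(\bal;\bbe)}$, Theorem~\ref{thm:deformed_boson_fermion}; see also Remark~\ref{rem:alt_proof_bfc_kac}) and $\bosonfock_m$ with the twisted action is irreducible (the translation automorphism $p_k\mapsto p_k-\Delta_m(k|\bal)$ carries it to the standard $\bosonfock_m$), the map $\Theta^{(\bal;\bbe)}$ is an isomorphism on each charge component, hence an isomorphism.

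It remains to identify the images of $\Sigma_{(\bal;\bbe)}$ and of the fermion fields. For the vertex operators I would push the factorization of Theorem~\ref{thm:fermion_vertex_op}, namely $\psi(z|\bal;\bbe)=e^{H_-(z|\bal;\bbe)}z^{J_0^{(\bal;\bbe)}}\Sigma_{(\bal;\bbe)}e^{-H_+(z^{-1}|\bal;\bbe)}$, through $\Theta^{(\bal;\bbe)}$. The outer factors follow at once from the current images: $\widehat{H}_-(z|\bal;\bbe)=\xi(\pp;z)+\Lambda_Q(z|\bal)$, $\widehat{H}_+(z^{-1}|\bal;\bbe)=\xi(\widetilde{\partial}_{\pp};z^{-1})$, and $z^{J_0^{(\bal;\bbe)}}\mapsto z^Q$. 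For $\Sigma_{(\bal;\bbe)}$ I would use that it commutes with $e^{H_+(\pp|\bal;\bbe)}$ (Proposition~\ref{prop:deformed_current_shift_commute}), so that $\Theta^{(\bal;\bbe)}(\Sigma_{(\bal;\bbe)}\ket{\eta})=\sum_m\big({}_m\bra{\varnothing}\Sigma_{(\bal;\bbe)}\big)e^{H_+(\pp|\bal;\bbe)}\ket{\eta}\,\svar^m$, expand ${}_m\bra{\varnothing}\Sigma_{(\bal;\bbe)}$ by~\eqref{eq:deform_shift_vacuum_bra} as an infinite combination of single-column covectors ${}_{m-1}\bra{1^k}$, rewrite the column matrix coefficients ${}_{m-1}\bra{1^k}e^{H_+(\pp|\bal;\bbe)}\ket{\eta}$ in terms of $\Theta^{(\bal;\bbe)}(\ket{\eta})$ (these are the $e_k$-type pieces governed by~\eqref{eq:e_gen_series}), and resum the series weighted by the $\alpha$-products; the result is the operator in the statement. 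Feeding this into the pushed-forward factorization and commuting the $Q$-dependent factors past $\svar$ (which shifts $Q\mapsto Q+1$) produces $Y(z|\bal;\bbe)$, and $Y^*(w|\bal;\bbe)$ comes out the same way from the $\psi^*$-factorization. Alternatively — and probably cleaner to write — one can bypass the explicit resummation and, exactly as in the proof of Theorem~\ref{thm:fermion_vertex_op}, verify the four vertex-operator identities on the generating matrix coefficients ${}_m\bra{\varnothing}e^{H_+(\pp|\bal;\bbe)}\psi^{(\ast)}(z|\bal;\bbe)e^{H_-(\pp'|\bal;\bbe)}\ket{\varnothing}_{\ell}$, which reduce to Corollary~\ref{cor:deformed_current_vacuum}, Equation~\eqref{eq:half_vertex_vacuum_actions}, and the explicit action of $\Sigma_{(\bal;\bbe)}$ on shifted vacuums.

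I expect the main obstacle to be the identification of $\Theta^{(\bal;\bbe)}(\Sigma_{(\bal;\bbe)})$. In the first correspondence $\Sigma_{(\bal;\bbe)}\mapsto\svar$ cleanly precisely because $\Sigma_{(\bal;\bbe)}$ was built so that ${}_{(m)}\bra{\varnothing}\Sigma_{(\bal;\bbe)}={}_{(m-1)}\bra{\varnothing}$; here ${}_m\bra{\varnothing}\Sigma_{(\bal;\bbe)}$ is genuinely an infinite sum of single-column covectors with the $\alpha$-product weights of~\eqref{eq:deform_shift_vacuum_bra}, so extracting the closed form in the statement forces one to resum that series and to keep careful track of the charge-dependent parameter shifts — the appearance of $\alpha_{Q+1}$ versus $\alpha_Q$, and of the exponentials of $\Lambda_Q$, both in the image of $\Sigma_{(\bal;\bbe)}$ and in $Y(z|\bal;\bbe)$, $Y^*(w|\bal;\bbe)$. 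The matrix-coefficient route avoids the resummation but replaces it by an equally delicate bookkeeping of the same shifts, so either way this is where the real work lies; everything else is a routine transcription of the proofs of Theorems~\ref{thm:heisenberg_relations},~\ref{thm:fermion_vertex_op}, and~\ref{thm:deformed_boson_fermion}.
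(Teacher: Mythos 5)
Your proposal is correct and, in its essentials, matches the paper's proof: the images of $J_k^{(\bal;\bbe)}$, $J_0^{(\bal;\bbe)}$ and $J_{-k}^{(\bal;\bbe)}$ are obtained exactly as you describe (BCH together with Corollary~\ref{cor:deformed_current_vacuum}, the only new term being $\Delta_Q(k|\bal)$ read off the charge), and the vertex operators $Y$, $Y^*$ are obtained by pushing the factorization of Theorem~\ref{thm:fermion_vertex_op} through $\Theta^{(\bal;\bbe)}$ and commuting the $Q$-dependent exponentials past $\svar$. The one genuine divergence is how $\Theta^{(\bal;\bbe)}(\Sigma_{(\bal;\bbe)})$ is pinned down: your primary plan is to expand ${}_m\bra{\varnothing}\Sigma_{(\bal;\bbe)}$ via~\eqref{eq:deform_shift_vacuum_bra} and resum the column covectors, which (as you anticipate) requires proving a nontrivial collapse --- in effect that the $\alpha$-weighted sum of the ${}_{m-1}\bra{1^k}$-coefficients amounts to the translation $p_k\mapsto p_k+\alpha_m^k$ times $e^{-\Lambda_{m-1}(\alpha_m|\bbe)}$ --- whereas the paper takes what you call the matrix-coefficient route: it evaluates ${}_m\bra{\varnothing}e^{H_+(\pp|\bal;\bbe)}\Sigma_{(\bal;\bbe)}e^{H_-(\pp'|\bal;\bbe)}\ket{\varnothing}_{m-1}$ in closed form as $e^{\Lambda_m(\pp'|\bal)}e^{\xi(\pp;\pp')}e^{\Lambda_{m-1}(\pp|\bbe)}$ using Proposition~\ref{prop:deformed_current_shift_commute} and Corollary~\ref{cor:deformed_shift_vacuum}, compares with the candidate acting on $\Theta^{(\bal;\bbe)}\bigl(e^{H_-(\pp'|\bal;\bbe)}\ket{\varnothing}_{m-1}\bigr)$, and reads off the stated operator by interpreting $e^{\xi(\widetilde{\partial}_{\pp};\alpha_Q)}$ as the shift $p_k\mapsto p_k+\alpha_Q^k$; since this two-parameter family generates each charge component, that determines the image and sidesteps the resummation entirely, so the obstacle you single out never has to be confronted head-on. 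Your added irreducibility/translation-automorphism argument for why $\Theta^{(\bal;\bbe)}$ is an isomorphism is left implicit in the paper (compare Remark~\ref{rem:alt_proof_bfc_kac}) and is a sound supplement rather than a deviation.
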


\begin{proof}
The proof of the image of the deformed current operators is analogous to the proof of Theorem~\ref{thm:deformed_boson_fermion}.
For example, let $k > 0 $, and we compute
\[
{}_m \bra{\varnothing} e^{H_+(\pp|\bal;\bbe)} J_{-k}^{(\bal;\bbe)} \ket{\eta}
= {}_m \bra{\varnothing} (p_k + J_{-k}^{(\bal;\bbe)}) e^{H_+(\pp|\bal;\bbe)} \ket{\eta}
= \bigl( p_k + \Delta_m(k|\bal) \bigr) \cdot {}_m \bra{\varnothing} e^{H_+(\pp|\bal;\bbe)} \ket{\eta}.
\]
We also provide an alternative proof for $J_k^{(\bal;\bbe)}$ for $k > 0$ using generating series:
\begin{align*}
{}_m \bra{\varnothing} e^{H_+(\pp|\bal;\bbe)} J_k^{(\bal;\bbe)} e^{H_-(\pp'|\bal;\bbe)} \ket{\varnothing}_m & = \bigl( p'_k + \Delta_m(k|\bbe) \bigr) \cdot {}_m \bra{\varnothing} e^{H_+(\pp|\bal;\bbe)} e^{H_-(\pp'|\bal;\bbe)} \ket{\varnothing}_m
\\ & = \bigl( p'_k + \Delta_m(k|\bbe) \bigr) e^{\Lambda_m(\pp'|\bal)} e^{\xi(\pp;\pp')} e^{\Lambda_m(\pp|\bbe)}
\\ & = k \frac{\partial}{\partial p_k} e^{\Lambda_m(\pp'|\bal)} e^{\xi(\pp;\pp')} e^{\Lambda_m(\pp|\bbe)},
\end{align*}
and comparing this to ${}_m \bra{\varnothing} e^{H_+(\pp|\bal;\bbe)}e^{H_-(\pp'|\bal;\bbe)} \ket{\varnothing}_m = e^{\Lambda_m(\pp'|\bal)} e^{\xi(\pp;\pp')} e^{\Lambda_m(\pp|\bbe)}$ yields the image of $J_k^{(\bal;\bbe)}$.

Next, we compute the image of the deformed shift operator.
Using generating series, we compute using Proposition~\ref{prop:deformed_current_shift_commute} and Corollay~\ref{cor:deformed_shift_vacuum}:
\begin{equation}
\label{eq:corr_deformed_shift}
{}_m \bra{\varnothing} e^{H_+(\pp|\bal;\bbe)} \Sigma_{(\bal;\bbe)} e^{H_-(\pp'|\bal;\bbe)} \ket{\varnothing}_{m-1} \cdot \svar^m = e^{\Lambda_m(\pp'|\bal)} e^{\xi(\pp;\pp')} e^{\Lambda_{m-1}(\pp|\bbe)} \cdot \svar^m.
\end{equation}
Let $S_{(\bal;\bbe)}$ denote the image of $\Sigma_{(\bal;\bbe)}$.
Note that the BCH formula yields
\begin{equation}
\label{eq:BCH_ket_image}
e^{\pm\xi(\widetilde{\partial}_{\pp}; z^{-1})} e^{\Lambda_m(\pp|\bbe)} \svar^m = e^{\pm\Lambda_m(z^{-1}|\bbe)} e^{\Lambda_m(\pp|\bbe)} e^{\pm\xi(\widetilde{\partial}_{\pp}; z^{-1})}.
\end{equation}
Thus, we compare~\eqref{eq:corr_deformed_shift} with
\[
S_{(\bal;\bbe)} \cdot {}_{m-1} \bra{\varnothing} e^{H_+(\pp|\bal;\bbe)} e^{H_-(\pp'|\bal;\bbe)} \ket{\varnothing}_{m-1} \cdot \svar^{m-1} = S_{(\bal;\bbe)} \cdot e^{\Lambda_{m-1}(\pp'|\bal)} e^{\xi(\pp;\pp')} e^{\Lambda_{m-1}(\pp|\bbe)} \cdot \svar^{m-1}
\]
to conclude
\[
S_{(\bal;\bbe)} = e^{\xi(\widetilde{\partial}_{\pp}; \alpha_Q)} s e^{-\Lambda_Q(\alpha_{Q+1}|\bbe)}
\]
by using the fact as a differential operator $e^{\xi(\widetilde{\partial}_{\pp}; \alpha_Q)} f(p_1, p_2, \cdots) = f(p_1+\alpha_Q, p_2+\alpha_Q^2, \cdots)$ for any function $f$ by its Taylor expansion (see, \textit{e.g.},~\cite[\S14.5]{KacInfinite}).

The image of the deformed fermion fields follows from taking the image~\eqref{eq:fermion_vertex_ops} under $\Theta^{(\bal;\bbe)}$.
\end{proof}

We make a few remarks regarding the deformed version of the boson-fermion correspondence in Theorem~\ref{thm:deformed_boson_fermion_II}.
First is that we can define $\bal$-deformed versions of the powersums by
\[
p_k^{(m|\bal)} := p_k + \Delta_m(k|\bal) = p_k + \sum_{0 < j \leqslant m} \alpha_j^k - \sum_{m < j \leqslant 0} \alpha_j^k
\]
for each $m \in \ZZ$.
Note that $p_k^{(0|\bal)} = p_k^{(m|0)} = p_k$.
However, there is an inherent asymmetry in our boson-fermion correspondence, which is reflected in the fact that we only deformed the powersums using $\bal$.
Although there is an analogous deformation that can be done on the differential side, it does not come from the deformed current operators.

This can also be seen as a manifestation of the image of $\Sigma_{(\bal;\bbe)}$ no longer simply being $s$, which agrees with the fact that $s$ does not commute with the image of $J_{-k}^{(\bal;\bbe)}$.
Indeed, we note that
\begin{equation}
\label{eq:deformed_powersum_s_comm}
[p_k + \Delta_Q(k|\bal), \svar] = \alpha_{Q+1}^k,
\qquad\qquad
[p_k + \Delta_Q(k|\bal), \svar^{-1}] = -\alpha_Q^k,
\end{equation}
On the other hand, we can also follow the alternative proof following Remark~\ref{rem:alt_proof_bfc_kac} to compute the image of the deformed fermion fields.
In more detail, we first perform a linear change of variables $u_k = p_k + \Delta_Q(k|\bal)$ and note that
\begin{subequations}
\begin{gather}
\label{eq:deformed_Heisenberg_relations}
\partial_{u_k} = \partial_{p_k},
\qquad\qquad
[u_k, \partial_{u_j}] = [u_k, \partial_{p_j}] = [p_k, \partial_{p_j}] = \delta_{kj},
\\ \label{eq:uY_commutator}
[u_k, Y(z|\bal;\bbe)] = z^{-k} Y(z|\bal;\bbe),
\end{gather}
\end{subequations}
where~\eqref{eq:uY_commutator} is the image of Proposition~\ref{prop:current_comm} under $\Theta^{(\bal;\bbe)}$.
In particular, the images of the deformed current operators still satisfy the Heisenberg algebra relations.
Therefore, we can now follow~\cite[Thm.~14.10(a)]{KacInfinite} and use~\cite[Lemma~14.5]{KacInfinite} to see that under $\Theta^{(\bal;\bbe)}$
\begin{align*}
Y(z|\bal;\bbe) & = 
e^{\xi(\pp;z)} e^{\Lambda_Q(z|\bal)} C_Q(z) e^{\xi(\widetilde{\partial}_{\pp}; \alpha_Q)} \svar e^{-\xi(\widetilde{\partial}_{\pp}; z^{-1})},
\\
Y^*(z|\bal;\bbe) & = 
e^{-\xi(\pp;w)} e^{-\Lambda_Q(w|\bal)} \svar^{-1} e^{-\xi(\widetilde{\partial}_{\pp}; \alpha_Q)} C_Q^*(w) e^{\xi(\widetilde{\partial}_{\pp}; w^{-1})},
\end{align*}
where 
$C_Q(z)$ and $C^*_Q(z)$ are constants (with respect to $\uu$) to be determined.
This is obtained by substituted back, but the extra factors $e^{\pm\xi(\widetilde{\partial}_{\pp}; \alpha_Q)}$ comes from the fact the variables used change based on the charge (\textit{i.e.}, the result of $Q$ changes) or to account for~\eqref{eq:deformed_powersum_s_comm}.
Next, from Lemma~\ref{lemma:fermion_expectation} and Lemma~\ref{lemma:dual_fermion_expectation}, respectively, we have
\begin{subequations}
\label{eq:fermion_field_deformed_image}
\begin{align}
\Theta^{(\bal;\bbe)}(\psi(z|\bal;\bbe) \ket{\varnothing}_m) & = e^{\xi(\pp;z)} \frac{z (z|\bbe)^m}{(z;\bal)^{m+1}} e^{\Lambda_m(\pp|\bbe)} \svar^{m+1},
\\
\Theta^{(\bal;\bbe)}(\psi^*(w|\bal;\bbe) \ket{\varnothing}_m) & = e^{-\xi(\pp;w)} \frac{(w;\bal)^{m-1}}{(w|\bbe)^m} (1 - \alpha_m \beta_m) e^{\Lambda_m(\pp|\bbe)} \svar^{m-1},
\end{align}
\end{subequations}
and note that $\Theta^{(\bal;\bbe)}(\ket{\varnothing}_m) = e^{\Lambda_m(\pp|\bbe)} \svar^m$.
Next, we apply~\eqref{eq:half_vertex_vacuum_actions} and~\eqref{eq:BCH_ket_image} in order to compare~\eqref{eq:fermion_field_deformed_image} with $Y(z|\bal;\bbe) \cdot e^{\Lambda_m(\pp|\bbe)} \svar^m$ and $Y^*(w|\bal;\bbe) \cdot e^{\Lambda_m(\pp|\bbe)} \svar^m$, from which we see
\begin{align*}
C_Q(z) & = z^Q e^{-\Lambda_{Q-1}(\alpha_Q|\bbe)},
&
C_Q^*(w) & = w^{-Q} e^{\Lambda_Q(\alpha_Q|\bbe)} = (1 - \alpha_Q \beta_Q) w^{-Q} e^{\Lambda_{Q-1}(\alpha_Q|\bbe)},
\end{align*}
where the last equality for $C_Q^*(w)$ used~\eqref{eq:half_vertex_vacuum_actions} and $e^{-\Lambda_Q(\alpha_Q|\bbe)} = (1 - \alpha_Q \beta_Q) e^{-\lambda_{Q-1}(\alpha_Q|\bbe)}$.
Hence, we could also have obtained the image of the deformed shift operator by comparing the image of Theorem~\ref{thm:fermion_vertex_op} under $\Theta^{(\bal;\bbe)}$ with $Y(z|\bal;\bbe)$ computed as above.

Next, if we want to consider the image of $\Sigma$, this will map to $s$ that also has a twisted multiplication that applies the shifts $\sigma_{\bal} \sigma_{\bbe}$ by Proposition~\ref{prop:deformed_current_classical_shift}.
If we tried to do the computations in Section~\ref{sec:KP_Toda} with the deformed boson-fermion correspondence $\Theta^{(\bal;\bbe)}$, a straightforward computation shows that each PDE in the corresponding hierarchy would comprise of infinite sums of differential operators.
Lastly, we sketch how one could arrive at the vertex operators~\cite[Eq.~(39)]{MiyauraMukaihiraFactorial} with specializing their $\overline{\mathbf{t}} = 0$.
The idea is to consider $Y(z|\bal;\bbe)$ from Theorem~\ref{thm:deformed_boson_fermion_II} but as a charge $0$ operator, but the operator would need to account for the factors coming from the change in the charge.

\bibliographystyle{habbrv}
\bibliography{focktorial}

\end{document}